\newtheorem{theorem}{Theorem}
\newtheorem{lemma}{Lemma}
\newtheorem{corollary}{Corollary}
\newtheorem{definition}{Definition}
\newtheorem{remark}{Remark}
\def\b{\ensuremath\boldsymbol}
\icmltitlerunning{}
\begin{document}


\twocolumn[
\icmltitle{KKT Conditions, First-Order and Second-Order Optimization, and Distributed Optimization: Tutorial and Survey}

\icmlauthor{Benyamin Ghojogh}{bghojogh@uwaterloo.ca}
\icmladdress{Department of Electrical and Computer Engineering, 
\\Machine Learning Laboratory, University of Waterloo, Waterloo, ON, Canada}
\icmlauthor{Ali Ghodsi}{ali.ghodsi@uwaterloo.ca}
\icmladdress{Department of Statistics and Actuarial Science \& David R. Cheriton School of Computer Science, 
\\Data Analytics Laboratory, University of Waterloo, Waterloo, ON, Canada}
\icmlauthor{Fakhri Karray}{karray@uwaterloo.ca}
\icmladdress{Department of Electrical and Computer Engineering, 
\\Centre for Pattern Analysis and Machine Intelligence, University of Waterloo, Waterloo, ON, Canada}
\icmlauthor{Mark Crowley}{mcrowley@uwaterloo.ca}
\icmladdress{Department of Electrical and Computer Engineering, 
\\Machine Learning Laboratory, University of Waterloo, Waterloo, ON, Canada}

\icmlkeywords{Tutorial}

\vskip 0.3in
]

\begin{abstract}
This is a tutorial and survey paper on Karush-Kuhn-Tucker (KKT) conditions, first-order and second-order numerical optimization, and distributed optimization. After a brief review of history of optimization, we start with some preliminaries on properties of sets, norms, functions, and concepts of optimization. Then, we introduce the optimization problem, standard optimization problems (including linear programming, quadratic programming, and semidefinite programming), and convex problems. We also introduce some techniques such as eliminating inequality, equality, and set constraints, adding slack variables, and epigraph form. We introduce Lagrangian function, dual variables, KKT conditions (including primal feasibility, dual feasibility, weak and strong duality, complementary slackness, and stationarity condition), and solving optimization by method of Lagrange multipliers. Then, we cover first-order optimization including gradient descent, line-search, convergence of gradient methods, momentum, steepest descent, and backpropagation. Other first-order methods are explained, such as accelerated gradient method, stochastic gradient descent, mini-batch gradient descent, stochastic average gradient, stochastic variance reduced gradient, AdaGrad, RMSProp, and Adam optimizer, proximal methods (including proximal mapping, proximal point algorithm, and proximal gradient method), and constrained gradient methods (including projected gradient method, projection onto convex sets, and Frank-Wolfe method). We also cover non-smooth and $\ell_1$ optimization methods including lasso regularization, convex conjugate, Huber function, soft-thresholding, coordinate descent, and subgradient methods. Then, we explain second-order methods including Newton's method for unconstrained, equality constrained, and inequality constrained problems. We explain the interior-point method, barrier methods, Wolfe conditions for line-search, fast solving system of equations (including decomposition methods and conjugate gradient), and quasi-Newton's method (including BFGS, LBFGS, Broyden, DFP, and SR1 methods). The sequential convex programming for non-convex optimization is also introduced. Finally, we explain distributed optimization including alternating optimization, dual decomposition methods, augmented Lagrangian, and alternating direction method of multipliers (ADMM). We also introduce some techniques for using ADMM for many constraints and variables. 
\end{abstract}



\onecolumn
\tableofcontents


\twocolumn

\section{Introduction}

\textbf{-- KKT conditions and numerical optimization:}
Numerical optimization has application in various fields of science. Many of the optimization methods can be explained in terms of the Karush-Kuhn-Tucker (KKT) conditions \cite{kjeldsen2000contextualized}, proposed in \cite{karush1939minima,kuhn1951nonlinear}. 
The KKT conditions discuss the primal and dual problems with primal and dual variables, respectively, where the dual minimum is a lower-bound on the primal optimum.
In an unconstrained problem, if setting the gradient of a cost function to zero gives a closed-form solution, the optimization is done; however, if we do not have a closed-form solution, we should use numerical optimization which finds the solution iteratively and gradually. Besides, if the optimization is constrained, constrained numerical optimization should be used. 
The numerical optimization methods can be divided into first-order and second-order methods. 

\hfill\break
\textbf{-- History of first-order optimization:}
The first-order methods are based on gradient while the second-order methods make use of Hessian or approximation of Hessian as well as gradient. 
The most well-known first-order method is gradient descent, first suggested by Cauchy in 1874 \cite{lemarechal2012cauchy} and Hadamard in 1908 \cite{hadamard1908memoire}, whose convergence was later analyzed in \cite{curry1944method}. Backpropagation, for training neural networks, was proposed in \cite{rumelhart1986learning} and it is gradient descent used with chain rule. 
It is found out in 1980's that gradient descent is not optimal in convergence rate. Therefore, Accelerated Gradient Method (AGM) was proposed by Nesterov \cite{nesterov1983method,nesterov1988approach,nesterov2005smooth} which had an optimal convergence rate in gradient methods. 
Stochastic methods were also proposed for large volume optimization when we have a dataset of points. They randomly sample points or batches of points for use in gradient methods. Stochastic Gradient Descent (SGD), first proposed in \cite{robbins1951stochastic}, was first used for machine learning in \cite{bottou1998online}. Stochastic Average Gradient (SAG) \cite{roux2012stochastic}, Stochastic Variance Reduced Gradient (SVRG) \cite{johnson2013accelerating} are two other example methods in this category. 
Some techniques, such as AdaGrad \cite{duchi2011adaptive}, Root Mean Square Propagation (RMSProp) \cite{tieleman2012lecture}, Adaptive Moment Estimation (Adam) \cite{kingma2014adam}, have also been proposed for adaptive learning rate in stochastic optimization. 

\hfill\break
\textbf{-- History of proximal methods:}
Another family of optimization methods are the proximal methods \cite{parikh2014proximal} which are based on the Moreau-Yosida regularization \cite{moreau1965proximite,yosida1965functional}. Some proximal methods are the proximal point algorithm \cite{rockafellar1976monotone} and the proximal gradient method \cite{nesterov2013gradient}. The proximal mapping can also be used for constrained gradient methods such as projected gradient method \cite{iusem2003convergence}. Another effective first-order method for constrained problems is the Frank-Wolfe method \cite{frank1956algorithm}. 

\hfill\break
\textbf{-- History of non-smooth optimization:}
Optimization of non-smooth functions is also very important especially because of use of $\ell_1$ norm for sparsity in many applications. Some techniques for $\ell_1$ norm optimization are $\ell_1$ norm approximation by Huber function \cite{huber1992robust}, soft-thresholding which is the proximal mapping of $\ell_1$ norm, and coordinate descent \cite{wright2015coordinate} which can be used for $\ell_1$ norm optimization \cite{wu2008coordinate}. Subgradient methods, including stochastic subgradient method \cite{shor1998nondifferentiable} and projected subgradient method \cite{alber1998projected}, can also be used for non-smooth optimization. 

\hfill\break
\textbf{-- History of second-order optimization:}
Second-order methods use Hessian, or inverse of Hessian, or their approximations. The family of second-order methods can be named the Newton's methods which are based on the Newton-Raphson method \cite{stoer2013introduction}. 
Constrained second-order methods can be solved using the interior-point method, first proposed in \cite{dikin1967iterative}. The interior-point method is also called the barrier methods \cite{boyd2004convex,nesterov2018lectures} and Sequential Unconstrained Minimization Technique (SUMT) \cite{fiacco1967sequential}.
Interior-point method is a very powerful method and is often the main method of solving optimization problems in optimization toolboxes such as CVX \cite{grant2009cvx}. 

The second-order methods are usually faster than the first-order methods because of using the Hessian information. However, computation of Hessian or approximation of Hessian in second-order methods is time-consuming and difficult. This might be the reason for why most machine learning algorithms, such as backpropagation for neural networks \cite{rumelhart1986learning}, use first-order methods. Although, note that some few machine learning algorithms, such as logistic regression and Sammon mapping \cite{sammon1969nonlinear}, use second-order optimization. 

The update of solution in either first-order or second-order methods can be stated as a system of linear equations. 
For large-scale optimization, the Newton's method becomes slow and intractable. 
Therefore, decomposition methods \cite{golub2013matrix}, conjugate gradient method \cite{hestenes1952methods}, and nonlinear conjugate gradient method \cite{fletcher1964function,polak1969note,hestenes1952methods,dai1999nonlinear} can be used for approximation of solution to the system of equations. 
Truncated Newton's methods \cite{nash2000survey}, used for large scale optimization, usually use conjugate gradient.
Another approach for approximating Newton's method for large-scale data is the quasi-Newton's method {\citep[Chapter 6]{nocedal2006numerical}} which approximates the Hessian or inverse Hessian matrix. 
The well-known algorithms for quasi-Newton's method are Broyden-Fletcher-Goldfarb-Shanno (BFGS) \cite{fletcher1987practical,dennis1996numerical}, limited-memory BFGS (LBFGS) \cite{nocedal1980updating,liu1989limited}, Davidon-Fletcher-Powell (DFP) \cite{davidon1991variable,fletcher1987practical}, Broyden method \cite{broyden1965class}, and Symmetric Rank-one (SR1) \cite{conn1991convergence}.

\hfill\break
\textbf{-- History of line-search:}
Both first-order and second-order optimization methods have a step size parameter to move toward their descent direction. This step size can be calculated at every iteration using line-search methods. Well-known line-search methods are the backtracking or Armijo line-search \cite{armijo1966minimization} and the Wolfe conditions \cite{wolfe1969convergence}. 

\hfill\break
\textbf{-- Standard problems:}
The terms ``programming" and ``program" are sometimes used to mean ``optimization" and "optimization problem", respectively, in the literature. 
Convex optimization or convex programming started to develop since 1940's \cite{tikhomirov1996evolution}. There exist some standard forms for convex problems which are linear programming, quadratic programming, quadratically constrained quadratic programming, second-order cone programming, and Semidefinite Programming (SDP). An important method for solving linear programs was the simplex method proposed in 1947 \cite{dantzig1983reminiscences}. SDP is also important because the standard convex problems can be stated as special cases of SDP and then may be solved using the interior-point method. 

\hfill\break
\textbf{-- History of non-convex optimization:}
There also exist methods for non-convex optimization. These methods are either local or global methods. The local methods are faster but find a local solution depending on the initial solution. The global methods, however, find the global solution but are slower. Examples for local and global non-convex methods are Sequential Convex Programming (SCP) \cite{dinh2010local} and branch and bound \cite{land1960automatic}, respectively. SCP uses trust region \cite{conn2000trust} and it solves a sequence of convex approximations of the problem. It is related to Sequential Quadratic Programming (SQP) \cite{boggs1995sequential} which is used for constrained nonlinear optimization. 
The branch and bound methods use a binary tree structure for optimizing on a non-convex cost function. 

\hfill\break
\textbf{-- History of distributed optimization:}
Distributed optimization has two benefits. First, it makes the problem able to run in parallel on several servers. Secondly, it can be used to solve problems with multiple optimization variables. Especially, for the second reason, it has been widely used in machine learning and signal processing. 
Two most well-known distributed optimization approaches are alternating optimization \cite{jain2017non,li2019alternating} and Alternating Direction Method of Multipliers (ADMM) \cite{gabay1976dual,glowinski1976finite,boyd2011distributed}.
Alternating optimization alternates between optimizing over variables one-by-one, iteratively. 
ADMM is based on dual decomposition \cite{dantzig1960decomposition,benders1962partitioning,everett1963generalized} and augmented Lagrangian \cite{hestenes1969multiplier,powell1969method}. ADMM has also been generalized for multiple variables and constraints \cite{giesen2016distributed,giesen2019combining}. 

\hfill\break
\textbf{-- History of iteratively decreasing the feasible set:}
Cutting-plane methods remove a part of feasible point at every iteration where the removed part does not contain the minimizer. The feasible set gets smaller and smaller until it converges to the solution. The most well-known cutting-plane method is the Analytic Center Cutting-Plane Method (ACCPM) \cite{goffin1993computation,nesterov1995cutting,atkinson1995cutting}. 
Ellipsoid method \cite{shor1977cut,yudin1976informational,yudin1977evaluation,yudin1977optimization} has a similar idea but it removes half of an ellipsoid around the current solution at every iteration. 
The ellipsoid method was initially applied to liner programming \cite{khachiyan1979polynomial}. 


\hfill\break
\textbf{-- History of other optimization approaches:}
There exist some other approaches for optimization. In this paper, for brevity, we do not explain the theory of these other approaches and we merely focus on the classical optimization. 
Riemannian optimization \cite{absil2009optimization,boumal2020introduction} is the extension of Euclidean optimization to the cases where the optimization variable lies on a possibly curvy Riemannian manifold \cite{hosseini2020recent,hu2020brief} such as the symmetric positive definite \cite{sra2015conic}, quotient \cite{lee2013quotient}, Grassmann \cite{bendokat2020grassmann}, and Stiefel \cite{edelman1998geometry} manifolds.

Metaheuristic optimization \cite{talbi2009metaheuristics}, in the field of soft computing, is a a family of methods finding the optimum of a cost function using efficient, and not brute-force, search. 
They use both local and global searches  for exploitation and exploration of the cost function, respectively. 
They can be used in highly non-convex optimization with many constraints, where classical optimization is a little difficult and slow to perform. 
These methods contain nature-inspired optimization \cite{yang2010nature}, evolutionary computing \cite{simon2013evolutionary}, and particle-based optimization. 
Two fundamental metaheuristic methods are genetic algorithm \cite{holland1992adaptation} and particle swarm optimization \cite{kennedy1995particle}.

\hfill\break
\textbf{-- Important books on optimization:}
Some important books on optimization are Boyd's book \cite{boyd2004convex}, Nocedal's book \cite{nocedal2006numerical}, Nesterov's books \cite{nesterov1998introductory,nesterov2003introductory,nesterov2018lectures} (The book \cite{nesterov2003introductory} is a good book on first-order methods), Beck's book \cite{beck2017first}, and some other books \cite{dennis1996numerical,avriel2003nonlinear,chong2004introduction,bubeck2014convex,jain2017non}, etc. 

In this paper, we introduce and explain these optimization methods and approaches. 

\section*{Required Background for the Reader}

This paper assumes that the reader has general knowledge of calculus and linear algebra. 

\section{Notations and Preliminaries}\label{section_preliminaries}

\subsection{Preliminaries on Sets and Norms}

\begin{definition}[Interior and boundary of set]
Consider a set $\mathcal{D}$ in a metric space $\mathcal{R}^d$. The point $\b{x} \in \mathcal{D}$ is an interior point of the set if:
\begin{align*}
\exists \epsilon > 0\,\,\,\, \text{such that}\,\,\,\, \{\b{y}\, |\, \|\b{y} - \b{x}\|_2 \leq \epsilon\} \subseteq \mathcal{D}.
\end{align*}
The interior of the set, denoted by $\textbf{int}(\mathcal{D})$, is the set containing all the interior points of the set. 
The closure of the set is defined as $\textbf{cl}(\mathcal{D}) := \mathbb{R}^d \setminus \textbf{int}(\mathbb{R}^d \setminus \mathcal{D})$.
The boundary of set is defined as $\textbf{bd}(\mathcal{D}) := \textbf{cl}(\mathcal{D}) \setminus \textbf{int}(\mathcal{D})$.
An open (resp. closed) set does not (resp. does) contain its boundary. 
The closure of set can be defined as the smallest closed set containing the set. In other words, the closure of set is the union of interior and boundary of the set. 
\end{definition}

\begin{definition}[Convex set and convex hull]
A set $\mathcal{D}$ is a convex set if it completely contains the line segment between any two points in the set $\mathcal{D}$:
\begin{align*}
\forall \b{x}, \b{y} \in \mathcal{D}, 0 \leq t \leq 1 \implies t \b{x} + (1 - t) \b{y} \in \mathcal{D}.
\end{align*}
The convex hull of a (not necessarily convex) set $\mathcal{D}$ is the smallest convex set containing the set $\mathcal{D}$. If a set is convex, it is equal to its convex hull. 
\end{definition}

\begin{definition}[Minimum, maximum, infimum, and supremum]
A minimum and maximum of a function $f: \mathbb{R}^d \rightarrow \mathbb{R}$, $f: \b{x} \mapsto f(\b{x})$, with domain $\mathcal{D}$, are defined as:
\begin{align*}
& \min_{\b{x}} f(\b{x}) \leq f(\b{y}),\,\, \forall \b{y} \in \mathcal{D}, \\
& \max_{\b{x}} f(\b{x}) \geq f(\b{y}),\,\, \forall \b{y} \in \mathcal{D}, 
\end{align*}
respectively. The minimum and maximum of a function belong to the range of function. Infimum and supremum are the lower-bound and upper-bound of function, respectively:
\begin{align*}
& \inf_{\b{x}} f(\b{x}) := \max\{\b{z} \in \mathbb{R}\, |\, \b{z} \leq f(\b{x}), \forall \b{x} \in \mathcal{D}\}, \\
& \sup_{\b{x}} f(\b{x}) := \min\{\b{z} \in \mathbb{R}\, |\, \b{z} \geq f(\b{x}), \forall \b{x} \in \mathcal{D}\}.
\end{align*}
Depending on the function, the infimum and supremum of a function may or may not belong to the range of function. Fig. \ref{figure_min_max_inf_sup} shows some examples for minimum, maximum, infimum, and supremum. The minimum and maximum of a function are also the infimum and supremum of function, respectively, but the converse is not necessarily true. 
If the minimum and maximum of function are minimum and maximum in the entire domain of function, they are the global minimum and global maximum, respectively. 
See Fig. \ref{figure_StationaryPoints} for examples of global minimum and maximum. 
\end{definition}

\begin{figure}[!t]
\centering
\includegraphics[width=3.2in]{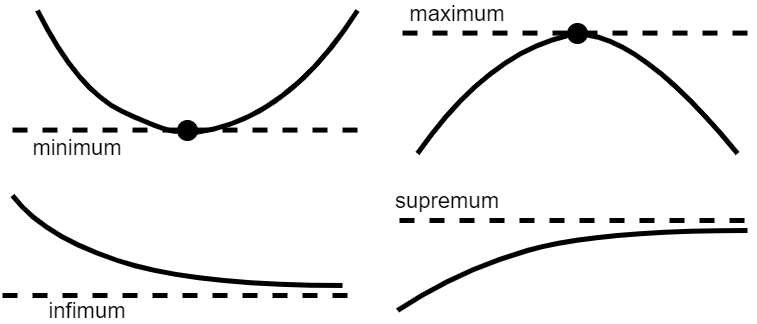}
\caption{Minimum, maximum, infimum, and supremum of example functions.}
\label{figure_min_max_inf_sup}
\end{figure}

\begin{figure}[!t]
\centering
\includegraphics[width=3.2in]{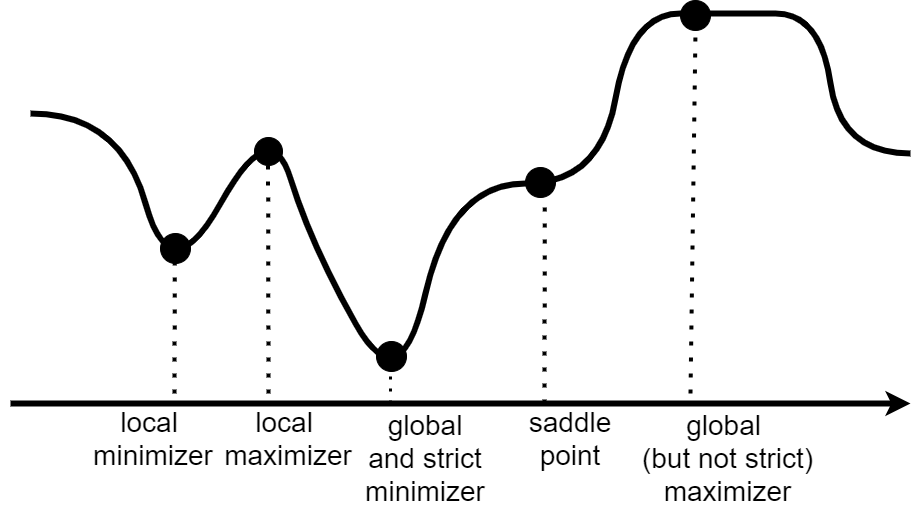}
\caption{Examples for stationary points such as local and global extreme points, strict and non-strict extreme points, and saddle point.}
\label{figure_StationaryPoints}
\end{figure}

\begin{lemma}[Inner product]
Consider two vectors $\b{x} = [x_1, \dots, x_d]^\top \in \mathbb{R}^d$ and $\b{y} = [y_1, \dots, y_d]^\top \in \mathbb{R}^d$. Their inner product, also called dot product, is:
\begin{align*}
& \langle \b{x}, \b{y} \rangle = \b{x}^\top \b{y} = \sum_{i=1}^d x_i\, y_i.
\end{align*}
We also have inner product between matrices $\b{X}, \b{Y} \in \mathbb{R}^{d_1 \times d_2}$. Let $\b{X}_{ij}$ denote the $(i,j)$-th element of matrix $\b{X}$. The inner product of $\b{X}$ and $\b{Y}$ is:
\begin{align*}
& \langle \b{X}, \b{Y} \rangle = \textbf{tr}(\b{X}^\top \b{Y}) = \sum_{i=1}^{d_1} \sum_{j=1}^{d_2} \b{X}_{i,j}\, \b{Y}_{i,j},
\end{align*}
where $\textbf{tr}(.)$ denotes the trace of matrix. 
\end{lemma}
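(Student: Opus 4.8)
The plan is to treat this statement as essentially definitional and to verify the two claimed equalities by direct computation from the definitions of matrix multiplication and the trace. For the vector case, the identity $\langle \b{x}, \b{y} \rangle = \b{x}^\top \b{y} = \sum_{i=1}^d x_i\, y_i$ is simply the definition of the matrix product of a $1 \times d$ row vector with a $d \times 1$ column vector, so nothing beyond unfolding the product is required.

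The substantive part is the matrix identity. First I would write out the $(k,\ell)$-th entry of the product $\b{X}^\top \b{Y}$ using the definition of matrix multiplication together with the fact that $(\b{X}^\top)_{k,i} = \b{X}_{i,k}$, obtaining $(\b{X}^\top \b{Y})_{k,\ell} = \sum_{i=1}^{d_1} \b{X}_{i,k}\, \b{Y}_{i,\ell}$. Then I would apply the definition of the trace as the sum of diagonal entries, setting $\ell = k$ and summing over $k$ from $1$ to $d_2$. This produces the double sum $\sum_{k=1}^{d_2} \sum_{i=1}^{d_1} \b{X}_{i,k}\, \b{Y}_{i,k}$, and after interchanging the order of the two finite summations and relabelling the index $k$ as $j$, I recover the claimed expression $\sum_{i=1}^{d_1} \sum_{j=1}^{d_2} \b{X}_{i,j}\, \b{Y}_{i,j}$.

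There is no genuine obstacle here: the interchange of the two finite sums is unconditionally valid, and each step invokes a standard definition rather than an inequality or a limiting argument, so the ``hard part'' is really just careful bookkeeping with indices. The one point worth stating explicitly is the consistency check that the matrix inner product reduces to the vector inner product when $d_2 = 1$, confirming that the two halves of the lemma are compatible. I would close by observing that the componentwise formulas make the symmetry and bilinearity of $\langle \cdot, \cdot \rangle$ immediate, which is precisely what justifies calling these expressions inner products.
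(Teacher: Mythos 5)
Your proposal is correct: the paper states this lemma without any proof, treating it as essentially definitional, and your direct index computation (expanding $(\b{X}^\top \b{Y})_{k,k} = \sum_{i=1}^{d_1} \b{X}_{i,k}\, \b{Y}_{i,k}$ and summing over the diagonal) is exactly the standard verification one would supply. Nothing is missing; the consistency check for $d_2 = 1$ is a nice touch but not required.
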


\begin{definition}[Norm]
A function $\|\cdot\|: \mathbb{R}^d \rightarrow \mathbb{R}$, $\|\cdot\|: \b{x} \mapsto \|\b{x}\|$ is a norm if it satisfies:
\begin{enumerate}
\item $\|\b{x}\| \geq 0, \forall \b{x}$
\item $\|a \b{x}\| = |a|\, \|\b{x}\|, \forall \b{x}$ and all scalars $a$
\item $\|\b{x}\| = 0$ if and only if $\b{x} = 0$
\item Triangle inequality: $\|\b{x} + \b{y}\| \leq \|\b{x}\| + \|\b{y}\|$.
\end{enumerate}
\end{definition}

\begin{definition}[Important norms]
Some important norms for a vector $\b{x} = [x_1, \dots, x_d]^\top$ are as follows. 
The $\ell_p$ norm is:
\begin{align*}
& \|\b{x}\|_p := \big(|x_1|^p + \dots + |x_d|^p\big)^{1/p},
\end{align*}
where $p \geq 1$ and $|.|$ denotes the absolute value. 
Two well-known $\ell_p$ norms are $\ell_1$ norm and $\ell_2$ norm (also called the Euclidean norm) with $p=1$ and $p=2$, respectively.
The $\ell_\infty$ norm, also called the infinity norm, the maximum norm, or the Chebyshev norm, is:
\begin{align*}
& \|\b{x}\|_\infty := \max\{|x_1| + \dots + |x_d|\}. 
\end{align*}
For the matrix $\b{X} \in \mathbb{R}^{d_1 \times d_2}$, the $\ell_p$ norm is:
\begin{align*}
\|\b{X}\|_p := \sup_{\b{y} \neq 0} \frac{\|\b{X}\b{y}\|_p}{\|\b{y}\|_p}.
\end{align*}
A special case for this is the $\ell_2$ norm, also called the spectral norm or the Euclidean norm. The spectral norm is related to the largest singular value of matrix:
\begin{align*}
\|\b{X}\|_2 = \sup_{\b{y} \neq 0} \frac{\|\b{X}\b{y}\|_2}{\|\b{y}\|_2} = \sqrt{\lambda_{\text{max}}(\b{X}^\top \b{X})} = \sigma_{\text{max}}(\b{X}),
\end{align*}
where $\lambda_{\text{max}}(\b{X}^\top \b{X})$ and $\sigma_{\text{max}}(\b{X})$ denote the largest eigenvalue of $\b{X}^\top \b{X}$ and the largest singular value of $\b{X}$, respectively.
Other specaial cases are the maximum-absolute-column-sum norm ($p=1$) and the maximum-absolute-row-sum norm ($p=\infty$):
\begin{align*}
& \|\b{X}\|_1 = \sup_{\b{y} \neq 0} \frac{\|\b{X}\b{y}\|_1}{\|\b{y}\|_1} = \max_{1 \leq j \leq d_2} \sum_{i=1}^{d_1} |\b{X}_{i,j}|, \\
& \|\b{X}\|_\infty = \sup_{\b{y} \neq 0} \frac{\|\b{X}\b{y}\|_\infty}{\|\b{y}\|_\infty} = \max_{1 \leq i \leq d_1} \sum_{j=1}^{d_2} |\b{X}_{i,j}|.
\end{align*}
The formulation of the Frobenius norm for a matrix is similar to the formulation of $\ell_2$ norm for a vector:
\begin{align*}
\|\b{X}\|_F := \sqrt{\sum_{i=1}^{d_1} \sum_{j=1}^{d_2} \b{X}_{i,j}^2},
\end{align*}
where $\b{X}_{ij}$ denotes the $(i,j)$-th element of $\b{X}$.

The $\ell_{2,1}$ norm of matrix $\b{X}$ is:
\begin{align*}
& \|\b{X}\|_{2,1} := \sum_{i=1}^{d_1} \sqrt{\sum_{j=1}^{d_2} \b{X}_{i,j}^2}.
\end{align*}

The Schatten $\ell_p$ norm of matrix $\b{X}$ is:
\begin{align*}
\|\b{X}\|_p := \bigg(\sum_{i=1}^{\min(d_1, d_2)} \big(\sigma_i(\b{X})\big)^p\bigg)^{1/p},
\end{align*}
where $\sigma_i(\b{X})$ denotes the $i$-th singular value of $\b{X}$.
A special case of the Schatten norm, with $p=1$, is called the nuclear norm or the trace norm \cite{fan1951maximum}:
\begin{align*}
\|\b{X}\|_* := \sum_{i=1}^{\min(d_1, d_2)} \sigma_i(\b{X}) = \textbf{tr}\Big(\!\sqrt{\b{X}^\top \b{X}}\,\Big),
\end{align*}
which is summation of the singular values of matrix. 
Note that similar to use of $\ell_1$ norm of vector for sparsity, the nuclear norm is also used to impose sparsity on matrix. 
\end{definition}

\begin{lemma}
We have:
\begin{align*}
& \|\b{x}\|_2^2 = \b{x}^\top \b{x} = \langle \b{x}, \b{x} \rangle, \\
& \|\b{X}\|_F^2 = \textbf{tr}(\b{X}^\top \b{X}) = \langle \b{X}, \b{X} \rangle, 
\end{align*}
which are convex and in quadratic forms. 
\end{lemma}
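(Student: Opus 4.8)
The plan is to establish each equality by directly expanding the relevant definitions introduced above, and then to read off convexity from the quadratic-form structure. There is no deep step here: the entire statement is a bookkeeping exercise in matching two summation expressions and recognizing a positive semidefinite quadratic form.

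First I would handle the vector identities. Setting $p = 2$ in the definition of the $\ell_p$ norm gives $\|\b{x}\|_2 = (\sum_{i=1}^d |x_i|^2)^{1/2} = (\sum_{i=1}^d x_i^2)^{1/2}$, so that $\|\b{x}\|_2^2 = \sum_{i=1}^d x_i^2$. The inner-product lemma simultaneously gives $\langle \b{x}, \b{x} \rangle = \b{x}^\top \b{x} = \sum_{i=1}^d x_i^2$. Comparing the two expressions yields the chain $\|\b{x}\|_2^2 = \b{x}^\top \b{x} = \langle \b{x}, \b{x} \rangle$. The matrix case is identical in spirit: the definition of the Frobenius norm gives $\|\b{X}\|_F^2 = \sum_{i=1}^{d_1}\sum_{j=1}^{d_2} \b{X}_{i,j}^2$, while the matrix inner product from the inner-product lemma gives $\langle \b{X}, \b{X} \rangle = \textbf{tr}(\b{X}^\top \b{X}) = \sum_{i=1}^{d_1}\sum_{j=1}^{d_2} \b{X}_{i,j}^2$, and matching the double sums closes the loop.

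Finally, for the quadratic-form and convexity claims, I would rewrite $\b{x}^\top \b{x} = \b{x}^\top \b{I} \b{x}$, exhibiting it as a quadratic form whose defining matrix is the identity $\b{I}$. Since the identity is positive semidefinite, this quadratic form has Hessian $2\b{I}$, which is positive semidefinite, and is therefore convex; equivalently, each coordinate map $x_i \mapsto x_i^2$ is convex and a nonnegative sum of convex functions is convex. The Frobenius case reduces to the vector case by stacking the entries of $\b{X}$ into a single vector, after which $\|\b{X}\|_F^2$ is again a quadratic form with identity matrix. The only point that requires a sentence of genuine argument, rather than pure symbol-matching, is this convexity claim, and even there the positive semidefiniteness of the Hessian settles it immediately; accordingly I do not anticipate any real obstacle in carrying out the proof.
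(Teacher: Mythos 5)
Your proposal is correct: the paper states this lemma without any proof, treating it as an immediate consequence of the definitions, and your argument (expanding the $\ell_p$ norm at $p=2$, the Frobenius norm, and the inner-product lemma, then reading off convexity from the quadratic form $\b{x}^\top \b{I}\, \b{x}$ with positive semidefinite Hessian $2\b{I}$, and vectorizing $\b{X}$ for the Frobenius case) is exactly the routine verification the paper leaves implicit. There is no gap; all equalities and the convexity claim are established.
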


\begin{definition}[Unit ball]\label{definition_unit_ball}
The unit ball for a norm $\|\cdot\|$ is:
\begin{align*}
& \mathcal{B} := \{\b{x} \in \mathbb{R}^d\, |\, \|\b{x}\| \leq 1 \}.
\end{align*}
The unit balls for some of the norms are shown in Fig. \ref{figure_unit_balls}.
\end{definition}

\begin{figure}[!t]
\centering
\includegraphics[width=3.2in]{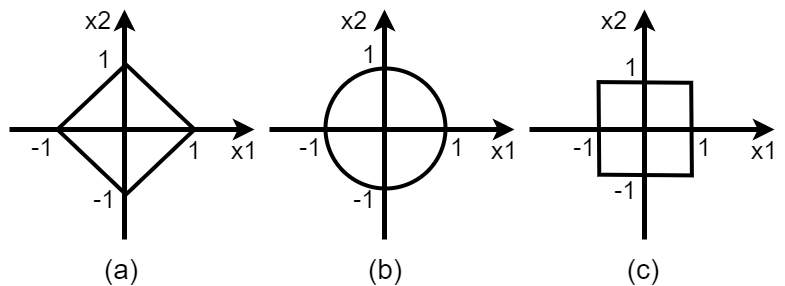}
\caption{The unit balls, in $\mathbb{R}^2$, for (a) $\ell_1$ norm, (b) $\ell_2$ norm, and (c) $\ell_\infty$ norm.}
\label{figure_unit_balls}
\end{figure}

\begin{definition}[Dual norm]
Let $\|.\|$ be a norm on $\mathbb{R}^d$. Its dual norm is:
\begin{align}
\|\b{x}\|_* := \sup\{\b{x}^\top \b{y}\, |\, \|\b{y}\| \leq 1\}. 
\end{align}
Note that the notation $\|\cdot\|_*$ should not be confused with the the nuclear norm despite of similarity of notations.
\end{definition}

\begin{lemma}[H{\"o}lder's \cite{holder1889ueber} and Cauchy-Schwarz inequalities \cite{steele2004cauchy}]
Let $p,q \in [1, \infty]$ and:
\begin{align}\label{Holder_inequality_1p_1q_equals_1}
\frac{1}{p} + \frac{1}{q} = 1.
\end{align}
These $p$ and $q$ are called the H{\"o}lder conjugates of each other. According to the H{\"o}lder's inequality, for functions $f(.)$ and $g(.)$, we have $\|fg\|_1 \leq \|f\|_p \|g\|_q$. 
A corollary of the H{\"o}lder's inequality is that Eq. (\ref{Holder_inequality_1p_1q_equals_1}) holds if the norms $\|.\|_p$ and $\|.\|_q$ are dual of each other. H{\"o}lder's inequality states that:
\begin{align*}
|\b{x}^\top \b{y}| \leq \|\b{x}\|_p \|\b{x}\|_q,
\end{align*}
where $p$ and $q$ satisfy Eq. (\ref{Holder_inequality_1p_1q_equals_1}). A special case of the H{\"o}lder's inequality is the Cauchy-Schwarz inequality, stated as $|\b{x}^\top \b{y}| \leq \|\b{x}\|_2 \|\b{x}\|_2$.
\end{lemma}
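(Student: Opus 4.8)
The plan is to prove the vector form $|\b{x}^\top \b{y}| \le \|\b{x}\|_p \|\b{y}\|_q$, from which the sequence version of the general H\"older inequality and the Cauchy--Schwarz special case ($p=q=2$) both follow. The whole argument rests on a single pointwise convexity estimate, Young's inequality, which I would establish first: for nonnegative scalars $a,b$ and conjugate exponents $p,q>1$,
\begin{align*}
ab \le \frac{a^p}{p} + \frac{b^q}{q}.
\end{align*}
I would derive this from the concavity of $\ln(\cdot)$ (equivalently, convexity of $-\ln$). Taking the weights to be $1/p$ and $1/q$, which sum to $1$ by Eq. (\ref{Holder_inequality_1p_1q_equals_1}), Jensen's inequality for the concave logarithm gives $\frac{1}{p}\ln(a^p) + \frac{1}{q}\ln(b^q) \le \ln\!\big(\frac{a^p}{p} + \frac{b^q}{q}\big)$; since the left side equals $\ln(ab)$ and $\ln$ is increasing, the claim follows after exponentiating. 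The cases $a=0$ or $b=0$ are immediate.

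Next I would normalize. Assuming first that $\|\b{x}\|_p \neq 0$ and $\|\b{y}\|_q \neq 0$ (otherwise one side vanishes and the inequality is trivial), set $a_i = |x_i|/\|\b{x}\|_p$ and $b_i = |y_i|/\|\b{y}\|_q$ for each coordinate. Applying Young's inequality coordinatewise and summing over $i = 1,\dots,d$ yields
\begin{align*}
\sum_{i=1}^d a_i b_i \le \frac{1}{p}\sum_{i=1}^d a_i^p + \frac{1}{q}\sum_{i=1}^d b_i^q = \frac{1}{p} + \frac{1}{q} = 1,
\end{align*}
where the two sums each equal $1$ because the normalization was chosen precisely so that $\sum_i a_i^p = \sum_i |x_i|^p/\|\b{x}\|_p^p = 1$ and likewise for $b$. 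Multiplying through by $\|\b{x}\|_p\|\b{y}\|_q$ recovers $\sum_i |x_i|\,|y_i| \le \|\b{x}\|_p\|\b{y}\|_q$, and the final bound on $|\b{x}^\top\b{y}|$ follows from $|\b{x}^\top\b{y}| = |\sum_i x_i y_i| \le \sum_i |x_i|\,|y_i|$.

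To finish, I would treat the boundary exponents $p=1,q=\infty$ (and symmetrically $p=\infty,q=1$), where Young's inequality does not apply directly; here the elementary bound $\sum_i |x_i|\,|y_i| \le (\max_i |y_i|)\sum_i |x_i| = \|\b{y}\|_\infty \|\b{x}\|_1$ does the job. The Cauchy--Schwarz inequality is then exactly the self-conjugate case $p=q=2$, and the stated corollary relating dual norms is read off from the definition of the dual norm together with the inequality just proved.

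The main obstacle is isolating and proving Young's inequality cleanly; once that pointwise estimate is in hand, the summation and normalization steps are routine. A secondary subtlety worth flagging is the degenerate bookkeeping---zero vectors and the infinite exponent---which must be handled separately, since the normalization step tacitly assumes finite, nonzero norms.
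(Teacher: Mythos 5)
Your proof is correct, but note that the paper itself does not prove this lemma at all: it states H\"older's and Cauchy--Schwarz inequalities as classical results, citing \cite{holder1889ueber} and \cite{steele2004cauchy}, and none of the appendices contain an argument for them. So your proposal supplies an actual proof where the paper has only a citation. The route you take is the standard one: Young's inequality $ab \le \frac{a^p}{p} + \frac{b^q}{q}$ derived from concavity of the logarithm, followed by the normalization $a_i = |x_i|/\|\b{x}\|_p$, $b_i = |y_i|/\|\b{y}\|_q$, coordinatewise application, and summation; the degenerate cases (zero vectors, $p=1$, $q=\infty$) are handled separately and correctly. Two small remarks. First, the paper's statement actually contains typos ($\|\b{x}\|_p\|\b{x}\|_q$ and $\|\b{x}\|_2\|\b{x}\|_2$ where the second factor should involve $\b{y}$); you have tacitly proved the corrected statement $|\b{x}^\top \b{y}| \le \|\b{x}\|_p\|\b{y}\|_q$, which is the version the paper uses elsewhere (e.g., in Appendix \ref{app_fundamental_theorem_calculus_corollary}). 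Second, the lemma as stated also asserts the function form $\|fg\|_1 \le \|f\|_p\|g\|_q$; your argument covers only the finite-dimensional vector case, though the extension is immediate --- apply Young's inequality pointwise to $|f(t)|/\|f\|_p$ and $|g(t)|/\|g\|_q$ and integrate rather than sum, with the same normalization making both integrals equal to one. Flagging that one-line extension would make the proof match the full scope of the statement.
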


According to Eq. (\ref{Holder_inequality_1p_1q_equals_1}), we have:
\begin{align}\label{equation_dual_norm_calculation}
&\|\cdot\|_p \implies \|\cdot\|_* = \|\cdot\|_{p / (p-1)}, \quad \forall p \in [1, \infty].
\end{align}
For example, the dual norm of $\|.\|_2$ is $\|.\|_2$ again and the dual norm of $\|.\|_1$ is $\|.\|_\infty$.

\begin{definition}[Cone and dual cone]
A set $\mathcal{K} \subseteq \mathbb{R}^d$ is a cone if:
\begin{enumerate}
\item it contains the origin, i.e., $\b{0} \in \mathcal{K}$,
\item $\mathcal{K}$ is a convex set,
\item for each $\b{x} \in \mathcal{K}$ and $\lambda \geq 0$, we have $\lambda \b{x} \in \mathcal{K}$.
\end{enumerate}
The dual cone of a cone $\mathcal{K}$ is:
\begin{align*}
\mathcal{K}^* := \{\b{y}\, |\, \b{y}^\top \b{x} \geq 0, \forall \b{x} \in \mathcal{K}\}.
\end{align*}
An example cone and its dual are depicted in Fig. \ref{figure_dual_cone}-a.
\end{definition}

\begin{figure}[!t]
\centering
\includegraphics[width=2.8in]{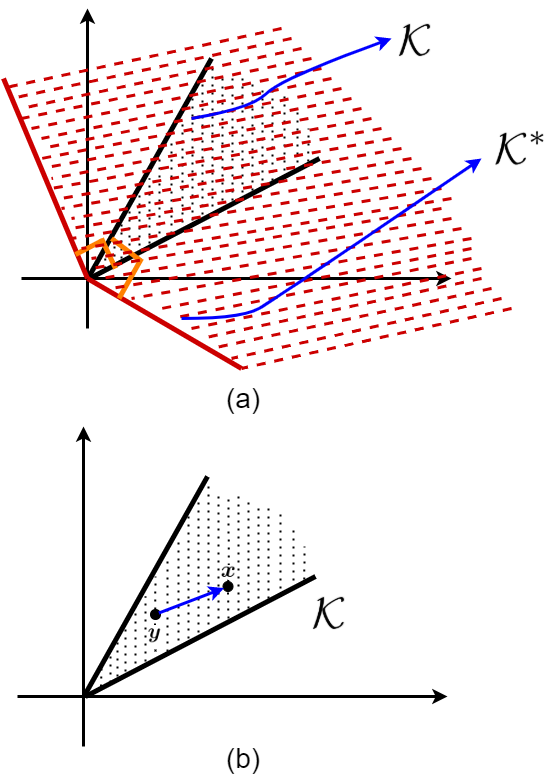}
\caption{(a) A cone $\mathcal{K}$ and its dual cone $\mathcal{K}^*$. Note that the borders of dual cone are perpendicular to the borders of the cone as shown in this figure. (b) An example for the generalized inequality $\b{x} \succeq_\mathcal{K} \b{y}$. As it is shown, the vector $(\b{x} - \b{y})$ belongs to the cone $\mathcal{K}$.}
\label{figure_dual_cone}
\end{figure}

\begin{definition}[Proper cone \cite{boyd2004convex}]
A convex cone $\mathcal{K} \subseteq \mathbb{R}^d$ is a proper cone if:
\begin{enumerate}
\item $\mathcal{K}$ is closed, i.e., it contains its boundary, 
\item $\mathcal{K}$ is solid, i.e., its interior is non-empty, 
\item $\mathcal{K}$ is pointed, i.e., it contains no line. In other words, it is not a two-sided cone around the origin.
\end{enumerate}
\end{definition}

\begin{definition}[Generalized inequality \cite{boyd2004convex}]\label{definition_generalized_inequality}
A generalized inequality, defined by a proper cone $\mathcal{K}$, is:
\begin{align*}
\b{x} \succeq_\mathcal{K} \b{y} \iff \b{x} - \b{y} \in \mathcal{K}.
\end{align*}
This means $\b{x} \succeq_\mathcal{K} \b{y} \iff \b{x} - \b{y} \in \textbf{int}(\mathcal{K})$. 
Note that $\b{x} \succeq_\mathcal{K} \b{y}$ can also be stated as $\b{x} - \b{y} \succeq_\mathcal{K} \b{0}$.
An example for a generalized inequality is shown in Fig. \ref{figure_dual_cone}-b.
\end{definition}

\begin{definition}[Important examples for generalized inequality]\label{definition_generalized_inequality_examples}
The generalized inequality defined by the non-negative orthant, $\mathcal{K} = \mathbb{R}_+^d$, is the default inequality for vectors $\b{x} = [x_1, \dots, x_d]^\top$, $\b{y} = [y_1, \dots, y_d]^\top$:
\begin{align*}
\b{x} \succeq \b{y} \iff \b{x} \succeq_{\mathbb{R}_+^d} \b{y}.
\end{align*}
It means component-wise inequality:
\begin{align*}
\b{x} \succeq \b{y} \iff x_i \geq y_i, \quad \forall i \in \{1, \dots, d\}.
\end{align*}
The generalized inequality defined by the positive definite cone, $\mathcal{K} = \mathbb{S}_+^d$, is the default inequality for symmetric matrices $\b{X}, \b{Y} \in \mathbb{S}^d$:
\begin{align*}
\b{X} \succeq \b{Y} \iff \b{X} \succeq_{\mathbb{S}_+^d} \b{Y}.
\end{align*}
It means $(\b{X} - \b{Y})$ is positive semi-definite. Note that if the inequality is strict, i.e. $\b{X} \succ \b{Y}$, it means that $(\b{X} - \b{Y})$ is positive definite.
In conclusion, $\b{x} \succeq \b{0}$ means all elements of vector $\b{x}$ are non-negative and $\b{X} \succeq \b{0}$ means the matrix $\b{X}$ is positive semi-definite. 
\end{definition}


\subsection{Preliminaries on Functions}

\begin{definition}[Fixed point]\label{definition_fixed_point}
A fixed point of a function $f(.)$ is a point $\b{x}$ which is mapped to itself by the function, i.e., $f(\b{x}) = \b{x}$.
\end{definition}

\begin{definition}[Convex function]
A function $f(.)$ with domain $\mathcal{D}$ is convex if:
\begin{align}\label{equation_convex_function}
f\big(\alpha \b{x} + (1-\alpha) \b{y}\big) \leq \alpha f(\b{x}) + (1-\alpha) f(\b{y}),
\end{align}
$\forall \b{x}, \b{y} \in \mathcal{D}$, where $\alpha \in [0,1]$. Eq. (\ref{equation_convex_function}) is depicted in Fig. \ref{figure_convex_function}-a.

Moreover, if the function $f(.)$ is differentiable, it is convex if:
\begin{align}\label{equation_convex_function_firstDerivative}
f(\b{x}) \geq f(\b{y}) + \nabla f(\b{y})^\top (\b{x} - \b{y}),
\end{align}
$\forall \b{x}, \b{y} \in \mathcal{D}$. 
Eq. (\ref{equation_convex_function_firstDerivative}) is depicted in Fig. \ref{figure_convex_function}-b.

Moreover, if the function $f(.)$ is twice differentiable, it is convex if its second-order derivative is positive semi-definite:
\begin{align}\label{equation_convex_function_secondDerivative}
\nabla^2 f(\b{x}) \succeq \b{0},
\end{align}
$\forall \b{x} \in \mathcal{D}$. 
\end{definition}
Each of the Eqs. (\ref{equation_convex_function}), (\ref{equation_convex_function_firstDerivative}), and (\ref{equation_convex_function_secondDerivative}) is a definition for the convex function. 
Note that if $\geq$ is changed to $\leq$ in Eqs. (\ref{equation_convex_function}) and (\ref{equation_convex_function_firstDerivative}) or if $\succeq$ is changed to $\preceq$ in Eq. (\ref{equation_convex_function_secondDerivative}), the function is \textit{concave}. 

\begin{figure}[!t]
\centering
\includegraphics[width=3.2in]{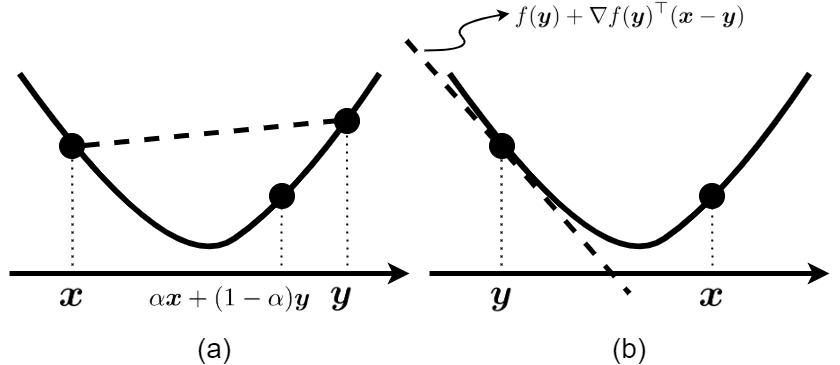}
\caption{Two definitions for the convex function: (a) Eq. (\ref{equation_convex_function}) meaning that the function value for the point $\alpha \b{x} + (1-\alpha) \b{y}$ is less than or equal to the hyper-line $\alpha f(\b{x}) + (1-\alpha) f(\b{y})$, and (b) Eq. (\ref{equation_convex_function_firstDerivative}) meaning that the function value for $\b{x}$, i.e. $f(\b{x})$, falls above the hyper-line $f(\b{y}) + \nabla f(\b{y})^\top (\b{x} - \b{y}), \forall \b{x}, \b{y} \in \mathcal{D}$.}
\label{figure_convex_function}
\end{figure}

\begin{definition}[Strongly convex function]
A differential function $f(.)$ with domain $\mathcal{D}$ is $\mu$-strongly convex if:
\begin{align}\label{equation_strongly_convex_function_firstDerivative}
f(\b{x}) \geq f(\b{y}) + \nabla f(\b{y})^\top (\b{x} - \b{y}) + \frac{\mu}{2} \|\b{x} - \b{y}\|_2^2,
\end{align}
$\forall \b{x}, \b{y} \in \mathcal{D}$ and $\mu > 0$.

Moreover, if the function $f(.)$ is twice differentiable, it is $\mu$-strongly convex if its second-order derivative is positive semi-definite:
\begin{align}\label{equation_strongly_convex_function_secondDerivative}
\b{y}^\top \nabla^2 f(\b{x}) \b{y} \geq \mu \|\b{y}\|_2^2,
\end{align}
$\forall \b{x}, \b{y} \in \mathcal{D}$ and $\mu > 0$.
A strongly convex function has a unique minimizer. See Fig. \ref{figure_stronglyConvex} for difference of convex and strongly convex functions. 
\end{definition}

\begin{figure}[!t]
\centering
\includegraphics[width=3.2in]{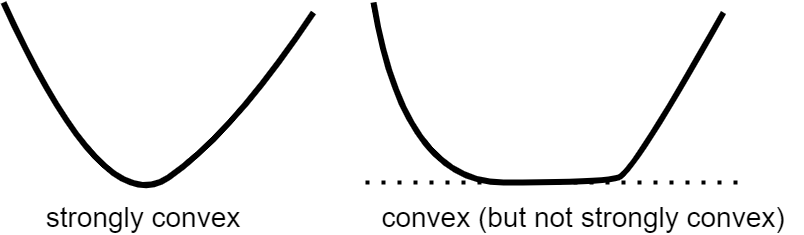}
\caption{Comparison of strongly convex and convex functions. The strongly convex function has only one strict minimizer while the convex function can have multiple minimizers with equal function values.}
\label{figure_stronglyConvex}
\end{figure}


\begin{definition}[H{\"o}lder and Lipschitz smoothness]
A function $f(.)$ with domain $\mathcal{D}$ belongs to a H{\"o}lder space $H(\alpha, L)$, with smoothness parameter $\alpha$ and the radius $L$ for ball (as the space can be seen as a ball), if:
\begin{align}
|f(\b{x}) - f(\b{y})| \leq L\, \|\b{x} - \b{y}\|_2^{\alpha}, \quad \forall \b{x}, \b{y} \in \mathcal{D}.
\end{align}
The H{\"o}lder space relates to local smoothness. A function in this space is called H{\"o}lder smooth (or H{\"o}lder continuous). 
A function is Lipschitz smooth (or Lipschitz continuous) if it is H{\"o}lder smooth with $\alpha=1$:
\begin{align}
|f(\b{x}) - f(\b{y})| \leq L\, \|\b{x} - \b{y}\|_2, \quad \forall \b{x}, \b{y} \in \mathcal{D}.
\end{align}
The parameter $L$ is called the Lipschitz constant.
A function with Lipschitz smoothness (with Lipschitz constant $L$) is called $L$-smooth. 
\end{definition}
H{\"o}lder and Lipschitz smoothness are used in many convergence and correctness proofs for optimization (e.g., see \cite{liu2021smooth}). 

The following lemma, which is based on the fundamental theorem of calculus, is widely used in proofs of optimization methods. 
\begin{lemma}[Fundamental theorem of calculus for multivariate functions]
Consider a differentiable function $f(.)$ with domain $\mathcal{D}$.
For any $\b{x}, \b{y} \in \mathcal{D}$, we have:
\begin{equation}\label{equation_fundamental_theorem_calculus}
\begin{aligned}
&f(\b{y}) = f(\b{x}) + \nabla f(\b{x})^\top (\b{y} - \b{x}) \\
&~~~~~~+ \int_0^1 \Big(\nabla f\big(\b{x} + t(\b{y} - \b{x})\big) - \nabla f(\b{x}) \Big)^\top (\b{y} - \b{x}) dt \\
&~~~~~= f(\b{x}) + \nabla f(\b{x})^\top (\b{y} - \b{x}) + o(\b{y} - \b{x}),
\end{aligned}
\end{equation}
where $o(.)$ is the small-$o$ complexity.
\end{lemma}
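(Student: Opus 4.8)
The plan is to reduce the multivariate statement to the one-dimensional fundamental theorem of calculus by restricting $f$ to the line segment joining $\b{x}$ and $\b{y}$. Concretely, I would introduce the scalar auxiliary function $g: [0,1] \to \mathbb{R}$ defined by $g(t) := f\big(\b{x} + t(\b{y} - \b{x})\big)$, which is well-defined provided the entire segment lies in $\mathcal{D}$ (implicitly assumed here, e.g. when $\mathcal{D}$ is convex). By the chain rule, $g$ is differentiable with $g'(t) = \nabla f\big(\b{x} + t(\b{y} - \b{x})\big)^\top (\b{y} - \b{x})$, since the derivative of the argument with respect to $t$ is the constant vector $(\b{y} - \b{x})$.

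Next I would apply the single-variable fundamental theorem of calculus to $g$, giving $g(1) - g(0) = \int_0^1 g'(t)\, dt$. Reading off $g(0) = f(\b{x})$ and $g(1) = f(\b{y})$ yields $f(\b{y}) - f(\b{x}) = \int_0^1 \nabla f\big(\b{x} + t(\b{y} - \b{x})\big)^\top (\b{y} - \b{x})\, dt$. To reach the stated form, I would add and subtract $\nabla f(\b{x})^\top (\b{y} - \b{x})$ inside the integrand; because this term is constant in $t$ its integral over $[0,1]$ is exactly $\nabla f(\b{x})^\top (\b{y} - \b{x})$, and what remains is precisely the claimed integral remainder. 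This establishes the first equality.

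For the second equality I would show that the remainder integral is $o(\b{y} - \b{x})$, meaning it is $o(\|\b{y}-\b{x}\|_2)$ as $\b{y} \to \b{x}$. Writing $R(\b{x},\b{y})$ for that integral and applying the Cauchy-Schwarz inequality (available from the H{\"o}lder lemma above) inside the integral gives $|R(\b{x},\b{y})| \le \|\b{y}-\b{x}\|_2 \int_0^1 \|\nabla f(\b{x} + t(\b{y}-\b{x})) - \nabla f(\b{x})\|_2\, dt$. Dividing by $\|\b{y}-\b{x}\|_2$, it then suffices to argue that the remaining integral factor tends to zero; this follows from continuity of $\nabla f$ at $\b{x}$, since every point $\b{x} + t(\b{y}-\b{x})$ with $t \in [0,1]$ lies within distance $\|\b{y}-\b{x}\|_2$ of $\b{x}$, so the integrand converges to $0$ uniformly in $t$ as $\b{y}\to\b{x}$.

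The main obstacle I anticipate is regularity rather than computation: the clean FTC argument together with the uniform estimate above implicitly use that $\nabla f$ is continuous (i.e. $f \in C^1$), so that $g'$ is integrable and the remainder is genuinely $o(\|\b{y}-\b{x}\|_2)$. If one assumes only differentiability at the single point $\b{x}$, the second equality reduces to the definition of (Fr{\'e}chet) differentiability and need not follow from the integral form at all. A careful write-up should therefore make explicit the continuity hypothesis on $\nabla f$ and the convexity (or at least segment-connectedness) of $\mathcal{D}$, which is the subtle point to watch rather than the mechanical steps.
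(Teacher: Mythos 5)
Your proof is correct. Note, however, that the paper never actually proves this lemma: it is presented as a known fact (``based on the fundamental theorem of calculus'') and is only \emph{used} as the starting point of other derivations, so there is no in-paper proof to compare against. Your argument --- restricting $f$ to the segment via $g(t) := f\big(\b{x} + t(\b{y}-\b{x})\big)$, applying the one-dimensional fundamental theorem of calculus, and adding and subtracting the constant term $\nabla f(\b{x})^\top (\b{y}-\b{x})$ under the integral --- is the standard derivation and is exactly what the paper implicitly takes for granted. Your treatment of the second equality also mirrors the one technique the paper does exhibit nearby: in the proof of Lemma \ref{lemma_fundamental_theorem_calculus_corollary} (Appendix \ref{app_fundamental_theorem_calculus_corollary}), the same remainder integral is bounded via Cauchy--Schwarz exactly as you do, before invoking $L$-smoothness where you instead invoke plain continuity of $\nabla f$. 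Finally, your two caveats are genuine and worth keeping: the lemma as stated assumes only differentiability on an unstructured domain $\mathcal{D}$, whereas the argument needs the segment $\{\b{x}+t(\b{y}-\b{x}) : t \in [0,1]\}$ to lie in $\mathcal{D}$ and needs $\nabla f$ continuous, so that $g'$ is integrable and the remainder is genuinely $o(\|\b{y}-\b{x}\|_2)$; with bare pointwise differentiability at $\b{x}$ the second equality reduces to the definition of Fr\'echet differentiability, as you say. These hypotheses are left implicit in the paper.
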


\begin{lemma}[Corollary of the fundamental theorem of calculus]\label{lemma_fundamental_theorem_calculus_corollary}
Consider a differentiable function $f(.)$, with domain $\mathcal{D}$, whose gradient is $L$-smooth:
\begin{align}\label{equation_gradient_L_smooth}
|\nabla f(\b{x}) - \nabla f(\b{y})| \leq L\, \|\b{x} - \b{y}\|_2, \quad \forall \b{x}, \b{y} \in \mathcal{D}.
\end{align}
For any $\b{x}, \b{y} \in \mathcal{D}$, we have:
\begin{equation}\label{equation_fundamental_theorem_calculus_Lipschitz}
\begin{aligned}
&f(\b{y}) \leq f(\b{x}) + \nabla f(\b{x})^\top (\b{y} - \b{x}) + \frac{L}{2} \|\b{y} - \b{x}\|_2^2.
\end{aligned}
\end{equation}
\end{lemma}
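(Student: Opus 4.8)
The plan is to start from the integral identity in Eq.~(\ref{equation_fundamental_theorem_calculus}), which expresses $f(\b{y})$ exactly as $f(\b{x}) + \nabla f(\b{x})^\top (\b{y} - \b{x})$ plus the remainder integral $\int_0^1 \big(\nabla f(\b{x} + t(\b{y}-\b{x})) - \nabla f(\b{x})\big)^\top (\b{y}-\b{x})\, dt$. Since the first two terms on the right-hand side of the target inequality~(\ref{equation_fundamental_theorem_calculus_Lipschitz}) already coincide with this identity, it suffices to bound the remainder integral from above by $\frac{L}{2}\|\b{y}-\b{x}\|_2^2$. This reduces the whole lemma to estimating a single scalar integral.

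First I would bound the integrand pointwise. Applying the Cauchy-Schwarz inequality (the special case of H{\"o}lder's inequality stated above) to the inner product inside the integral gives $\big(\nabla f(\b{x} + t(\b{y}-\b{x})) - \nabla f(\b{x})\big)^\top (\b{y}-\b{x}) \leq \|\nabla f(\b{x} + t(\b{y}-\b{x})) - \nabla f(\b{x})\|_2\, \|\b{y}-\b{x}\|_2$. Then I would invoke the $L$-smoothness of the gradient, Eq.~(\ref{equation_gradient_L_smooth}), applied to the pair $\b{x} + t(\b{y}-\b{x})$ and $\b{x}$; their difference is $t(\b{y}-\b{x})$, so the gradient gap is at most $L\,\|t(\b{y}-\b{x})\|_2 = L\,t\,\|\b{y}-\b{x}\|_2$, where I have used that $t \in [0,1]$ is non-negative. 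Combining the two estimates bounds the integrand by $L\,t\,\|\b{y}-\b{x}\|_2^2$.

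The final step is simply to integrate this bound over $t \in [0,1]$: since $\int_0^1 t\, dt = \frac{1}{2}$, the remainder integral is at most $\frac{L}{2}\|\b{y}-\b{x}\|_2^2$, and substituting back into the identity yields Eq.~(\ref{equation_fundamental_theorem_calculus_Lipschitz}).

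I do not expect a serious obstacle here; the argument is a routine chain of Cauchy-Schwarz, the smoothness hypothesis, and an elementary integration. The only points requiring care are, first, interpreting the left-hand side of Eq.~(\ref{equation_gradient_L_smooth}) — written there with absolute-value bars — as the Euclidean norm $\|\nabla f(\b{x}) - \nabla f(\b{y})\|_2$ of the gradient difference, which is the meaning needed for the Cauchy-Schwarz step to close; and second, tracking that the factor $t$ (rather than $t^2$) arises from the scaling of the argument inside the gradient, while the second factor $\|\b{y}-\b{x}\|_2$ comes from the fixed vector $(\b{y}-\b{x})$ appearing in the inner product.
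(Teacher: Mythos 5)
Your proof is correct and follows essentially the same route as the paper's own proof: start from the exact integral identity in Eq.~(\ref{equation_fundamental_theorem_calculus}), bound the remainder integrand via Cauchy--Schwarz and the $L$-smoothness of the gradient applied to the pair $\b{x} + t(\b{y}-\b{x})$ and $\b{x}$, and integrate $\int_0^1 t\, dt = \tfrac{1}{2}$. Your remark that the absolute-value bars in Eq.~(\ref{equation_gradient_L_smooth}) must be read as the Euclidean norm $\|\cdot\|_2$ is also exactly how the paper uses that hypothesis in its proof.
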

\begin{proof}
Proof is available in Appendix \ref{app_fundamental_theorem_calculus_corollary}.
\end{proof}

The following lemma is useful for proofs of convergence of first-order methods. 
\begin{lemma}\label{lemma_function_and_gradient_difference_bounds}
Consider a convex and differentiable function $f(.)$, with domain $\mathcal{D}$, whose gradient is $L$-smooth (see Eq. (\ref{equation_gradient_L_smooth})). We have:
\begin{align}
f(\b{y}) - f(\b{x}) \leq &\,\nabla f(\b{y})^\top (\b{y} - \b{x}) \nonumber \\
&- \frac{1}{2 L} \|\nabla f(\b{y}) - \nabla f(\b{x})\|_2^2, \label{equation_lemma_fy_fx_grady_y_x}
\end{align}
\begin{align}
& \big( \nabla f(\b{y}) - \nabla f(\b{x}) \big)^\top (\b{y} - \b{x}) \geq \frac{1}{L} \|\nabla f(\b{y}) - \nabla f(\b{x})\|_2^2. \label{equation_lemma_gradfy_gradfx_y_x}
\end{align}
\end{lemma}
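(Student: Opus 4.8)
The plan is to prove the two inequalities in sequence: establish the first bound (\ref{equation_lemma_fy_fx_grady_y_x}) directly from the descent lemma, and then obtain the co-coercivity inequality (\ref{equation_lemma_gradfy_gradfx_y_x}) as an immediate corollary by symmetrizing the first one.

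To prove (\ref{equation_lemma_fy_fx_grady_y_x}), the central trick is to introduce the auxiliary function $g(\b{z}) := f(\b{z}) - \nabla f(\b{y})^\top \b{z}$. Since $g$ is $f$ minus a linear term, it is still convex and its gradient $\nabla g(\b{z}) = \nabla f(\b{z}) - \nabla f(\b{y})$ inherits the same $L$-smoothness as $\nabla f$. The point of this particular shift is that $\nabla g(\b{y}) = \b{0}$, so by the first-order characterization of convexity (\ref{equation_convex_function_firstDerivative}) the point $\b{y}$ is a global minimizer of $g$. First I would apply the descent lemma (Lemma \ref{lemma_fundamental_theorem_calculus_corollary}) to $g$ at the base point $\b{x}$, obtaining the upper bound $g(\b{z}) \leq g(\b{x}) + \nabla g(\b{x})^\top (\b{z} - \b{x}) + \frac{L}{2}\|\b{z} - \b{x}\|_2^2$ valid for every $\b{z}$. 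The right-hand side is a simple quadratic in $\b{z}$ with minimizer $\b{z}^\star = \b{x} - \frac{1}{L}\nabla g(\b{x})$; evaluating at $\b{z}^\star$ and using $g(\b{y}) \leq g(\b{z}^\star)$ (because $\b{y}$ globally minimizes $g$) yields $g(\b{y}) \leq g(\b{x}) - \frac{1}{2L}\|\nabla g(\b{x})\|_2^2$. Substituting the definitions of $g$ and $\nabla g$ back in, the linear terms collapse into $\nabla f(\b{y})^\top(\b{y}-\b{x})$ and the squared norm becomes $\|\nabla f(\b{y}) - \nabla f(\b{x})\|_2^2$, which is exactly (\ref{equation_lemma_fy_fx_grady_y_x}).

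The second inequality then follows by symmetry. I would write (\ref{equation_lemma_fy_fx_grady_y_x}) once as stated and once with the roles of $\b{x}$ and $\b{y}$ interchanged, and add the two inequalities. The function-value differences $f(\b{y})-f(\b{x})$ and $f(\b{x})-f(\b{y})$ cancel, the two squared-norm terms combine to $-\frac{1}{L}\|\nabla f(\b{y})-\nabla f(\b{x})\|_2^2$, and the two linear terms combine into $\big(\nabla f(\b{y}) - \nabla f(\b{x})\big)^\top (\b{y}-\b{x})$; rearranging gives (\ref{equation_lemma_gradfy_gradfx_y_x}).

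The step I expect to require the most care is the auxiliary minimization together with the recognition that $\b{y}$ is a global — not merely stationary — minimizer of $g$. This is where convexity does the essential work: $L$-smoothness alone gives only the descent-lemma upper bound, whereas the identity $g(\b{y}) = \min_{\b{z}} g(\b{z})$ needs the first-order convexity characterization (\ref{equation_convex_function_firstDerivative}) to turn the vanishing gradient at $\b{y}$ into a genuine global lower control. Once that link is in place, the remaining substitutions and the symmetrization are purely algebraic and routine.
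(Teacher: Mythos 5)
Your proof is correct and is essentially the paper's own argument in different packaging: the paper bounds $f(\b{y})-f(\b{x})$ by splitting through an arbitrary point $\b{z}$, applying convexity of $f$ at $\b{y}$ and the descent lemma at $\b{x}$, and then minimizing the resulting upper bound over $\b{z}$ — which is precisely your chain $g(\b{y}) \leq g(\b{z}) \leq g(\b{x}) + \nabla g(\b{x})^\top (\b{z}-\b{x}) + \frac{L}{2}\|\b{z}-\b{x}\|_2^2$ with the linear shift unrolled, and the optimal point $\b{z} = \b{x} - \frac{1}{L}\big(\nabla f(\b{x}) - \nabla f(\b{y})\big)$ is identical in both. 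Your symmetrization step for the second inequality is exactly the paper's as well.
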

\begin{proof}
Proof is available in Appendix \ref{app_function_and_gradient_difference_bounds}.
\end{proof}

\subsection{Preliminaries on Optimization}

\begin{definition}[Local and global minimizers]
A point $\b{x} \in \mathcal{D}$ is a local minimizer of function $f(.)$ if and only if:
\begin{align}\label{equation_local_minimizer}
\exists\, \epsilon > 0 : \forall \b{y} \in \mathcal{D},\, \|\b{y} - \b{x}\|_2 \leq \epsilon \implies f(\b{x}) \leq f(\b{y}),
\end{align}
meaning that in an $\epsilon$-neighborhood of $\b{x}$, the value of function is minimum at $\b{x}$.
A point $\b{x} \in \mathcal{D}$ is a global minimizer of function $f(.)$ if and only if:
\begin{align}\label{equation_global_minimizer}
f(\b{x}) \leq f(\b{y}), \quad \forall \b{y} \in \mathcal{D}.
\end{align}
See Fig. \ref{figure_StationaryPoints} for examples of local minimizer and maximizer.
\end{definition}

\begin{definition}[Strict minimizers]
In Eqs. (\ref{equation_local_minimizer}) and (\ref{equation_global_minimizer}), if we have $f(\b{x}) < f(\b{y})$ rather than $f(\b{x}) \leq f(\b{y})$, the minimizer is a strict local and global minimizer, respectively. 
See Fig. \ref{figure_StationaryPoints} for examples of strict/non-strict minimizer and maximizer.
\end{definition}

\begin{lemma}[Minimizer in convex function]\label{lemma_global_min_convex_function}
In a convex function, any local minimizer is a global minimizer. 
\end{lemma}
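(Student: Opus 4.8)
The plan is to argue by contradiction, exploiting the defining inequality of convexity along the segment joining a putative local minimizer to any strictly better point. Suppose $\b{x} \in \mathcal{D}$ is a local minimizer but not a global minimizer. Then the negation of Eq. (\ref{equation_global_minimizer}) supplies some $\b{y} \in \mathcal{D}$ with $f(\b{y}) < f(\b{x})$. The goal is to manufacture points arbitrarily close to $\b{x}$ whose function value lies strictly below $f(\b{x})$, which will contradict the local minimality of $\b{x}$.

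First I would introduce the convex combination $\b{z}_t := t \b{y} + (1 - t) \b{x}$ for $t \in [0,1]$. Since speaking of a convex function presumes a convex domain, each $\b{z}_t$ lies in $\mathcal{D}$. Applying the convexity inequality Eq. (\ref{equation_convex_function}) with $\alpha = 1 - t$ gives
\begin{align*}
f(\b{z}_t) \leq t\, f(\b{y}) + (1 - t)\, f(\b{x}).
\end{align*}
Because $f(\b{y}) < f(\b{x})$, the right-hand side is strictly smaller than $t\, f(\b{x}) + (1 - t)\, f(\b{x}) = f(\b{x})$ for every $t \in (0, 1]$, so $f(\b{z}_t) < f(\b{x})$ all along the open segment.

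Next I would exploit that $\b{z}_t \to \b{x}$ as $t \to 0^{+}$, using $\b{z}_t - \b{x} = t(\b{y} - \b{x})$ and hence $\|\b{z}_t - \b{x}\|_2 = t\, \|\b{y} - \b{x}\|_2$. Given the neighborhood radius $\epsilon > 0$ furnished by the local-minimizer definition Eq. (\ref{equation_local_minimizer}), any choice of $t$ with $0 < t \leq \min\{1,\, \epsilon / \|\b{y} - \b{x}\|_2\}$ places $\b{z}_t$ inside the $\epsilon$-ball about $\b{x}$ while still satisfying $f(\b{z}_t) < f(\b{x})$. This directly violates Eq. (\ref{equation_local_minimizer}), producing the contradiction and establishing the claim.

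The argument is short, and the delicate points are bookkeeping rather than conceptual. One must ensure $\b{y} \neq \b{x}$ so that division by $\|\b{y} - \b{x}\|_2$ is legitimate, which holds automatically since $f(\b{y}) \neq f(\b{x})$ forces $\b{y} \neq \b{x}$; one must also invoke convexity of $\mathcal{D}$ so that the entire segment remains in the domain. I expect no genuine obstacle here: the only thing to handle with care is selecting $t$ small enough to enter the prescribed $\epsilon$-neighborhood while keeping $t \in (0, 1]$.
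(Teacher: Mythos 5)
Your proof is correct and uses essentially the same argument as the paper: take the convex combination of the local minimizer with another point, choose the combination parameter small enough that the resulting point lies in the $\epsilon$-neighborhood, and play the convexity inequality against local minimality. The only difference is that you phrase it as a contradiction while the paper derives $f(\b{x}) \leq f(\b{y})$ directly for arbitrary $\b{y}$, which is a purely logical repackaging of the same steps.
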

\begin{proof}
Proof is available in Appendix \ref{appendix_lemma_global_min_convex_function}.
\end{proof}

\begin{corollary}\label{corollary_only_one_min_convex_function}
In a convex function, there exists only one local minimizer which is the global minimizer. As an imagination, a convex function is like a multi-dimensional bowl with only one minimizer. 
\end{corollary}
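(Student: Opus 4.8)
The plan is to derive this corollary directly from Lemma~\ref{lemma_global_min_convex_function}, which already guarantees that every local minimizer of a convex function is in fact a global minimizer. What remains is to argue that the minimizers cannot split into several competing points with different function values, and to pin down the precise sense in which there is ``one'' minimizer.

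First I would suppose, for contradiction, that the function admits two distinct local minimizers $\b{x}_1 \neq \b{x}_2$. By Lemma~\ref{lemma_global_min_convex_function} both are global minimizers, so $f(\b{x}_1) = f(\b{x}_2) = f^*$, where $f^*$ denotes the global minimum value. Next I would examine an arbitrary convex combination $\b{x}_t = t\b{x}_1 + (1-t)\b{x}_2$ with $t \in [0,1]$. Applying the defining inequality of convexity, Eq.~(\ref{equation_convex_function}), gives $f(\b{x}_t) \leq t f(\b{x}_1) + (1-t) f(\b{x}_2) = f^*$; but $f^*$ is the global minimum, so simultaneously $f(\b{x}_t) \geq f^*$. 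Hence $f(\b{x}_t) = f^*$ for every $t$, which shows that the whole segment joining $\b{x}_1$ and $\b{x}_2$ consists of global minimizers. Consequently the set of minimizers is itself convex: a single connected region all of whose points attain the one common value $f^*$.

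This establishes the corollary in its intended ``bowl'' sense --- there is a unique global minimum value, and the minimizers form one convex set rather than several isolated minima. The subtlety I would flag is that genuine point-uniqueness does \emph{not} follow from plain convexity: a flat-bottomed convex function (as in Fig.~\ref{figure_stronglyConvex}) has an entire segment of minimizers, so the literal phrase ``only one minimizer'' must be read as ``one minimum value / one convex minimizer region.'' To obtain a genuinely unique minimizing point I would instead invoke strict (or strong) convexity: repeating the segment argument with the strict inequality forces $f(\b{x}_t) < f^*$ for $0 < t < 1$, contradicting minimality, so the two assumed minimizers coincide. I expect this distinction between the convex and strictly convex cases to be the main conceptual obstacle, whereas the convex-combination computation itself is routine once Lemma~\ref{lemma_global_min_convex_function} is in hand.
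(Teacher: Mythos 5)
Your proposal is correct, and it is essentially the paper's route: the paper states this corollary with no separate proof at all, treating it as an immediate consequence of Lemma~\ref{lemma_global_min_convex_function}, which is exactly the lemma you invoke. What you add beyond the paper is worthwhile, though. Your segment argument (any convex combination of two global minimizers satisfies $f(\b{x}_t) \leq f^*$ by Eq.~(\ref{equation_convex_function}) and $f(\b{x}_t) \geq f^*$ by minimality, hence equals $f^*$) shows the minimizer set is a single convex set, which is the only rigorous sense in which the corollary's phrase ``only one local minimizer'' can hold. Indeed, read literally as point-uniqueness the corollary is false --- a flat-bottomed convex function has a continuum of minimizers --- and the paper itself concedes this in the caption of Fig.~\ref{figure_stronglyConvex}, which says a convex function ``can have multiple minimizers with equal function values'' while only a \emph{strongly} convex function has one strict minimizer. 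Your closing observation that genuine uniqueness of the minimizing point requires strict (or strong) convexity is therefore not a digression but precisely the clarification needed to reconcile the corollary's wording with the paper's own figure; the corollary should be understood as asserting a unique minimum \emph{value} attained on one convex region, and your proof establishes exactly that.
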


\begin{lemma}[Gradient of a convex function at the minimizer point]\label{lemma_minimizer_gradient_zero}
When the function $f(.)$ is convex and differentiable, a point $\b{x}^*$ is a minimizer if and only if $\nabla f(\b{x}^*) = \b{0}$.
\end{lemma}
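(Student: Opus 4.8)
The plan is to prove the two implications of the biconditional separately, using differentiability of $f(.)$ for the necessity direction ($\b{x}^*$ a minimizer $\implies \nabla f(\b{x}^*) = \b{0}$) and convexity of $f(.)$ for the sufficiency direction ($\nabla f(\b{x}^*) = \b{0} \implies \b{x}^*$ a minimizer). This cleanly separates which hypothesis does the work in each half.

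For the sufficiency direction I would invoke the first-order characterization of convexity in Eq. (\ref{equation_convex_function_firstDerivative}). Taking the base point there to be $\b{x}^*$, that inequality reads $f(\b{x}) \geq f(\b{x}^*) + \nabla f(\b{x}^*)^\top (\b{x} - \b{x}^*)$ for every $\b{x} \in \mathcal{D}$. Substituting the hypothesis $\nabla f(\b{x}^*) = \b{0}$ annihilates the linear term and leaves $f(\b{x}) \geq f(\b{x}^*)$ for all $\b{x} \in \mathcal{D}$, which is precisely the definition of $\b{x}^*$ being a global minimizer in Eq. (\ref{equation_global_minimizer}). This direction is essentially a one-line substitution once the convexity inequality is in hand.

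For the necessity direction I would argue by contradiction and exploit differentiability through a directional-derivative (first-order Taylor) argument. Suppose $\b{x}^*$ is a minimizer but $\nabla f(\b{x}^*) \neq \b{0}$. Choosing the descent direction $\b{d} := -\nabla f(\b{x}^*)$ and considering the scalar map $t \mapsto f(\b{x}^* + t\,\b{d})$, its one-sided derivative at $t = 0$ equals $\nabla f(\b{x}^*)^\top \b{d} = -\|\nabla f(\b{x}^*)\|_2^2 < 0$. Hence for all sufficiently small $t > 0$ we obtain $f(\b{x}^* + t\,\b{d}) < f(\b{x}^*)$, contradicting minimality of $\b{x}^*$; therefore $\nabla f(\b{x}^*) = \b{0}$. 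I would note that this half uses only differentiability and (local) minimality, not convexity: convexity is exactly what the other direction needs to promote a stationary point to a global minimizer.

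The main obstacle, though a mild one, is making the necessity direction fully rigorous: the step ``$t \mapsto f(\b{x}^* + t\,\b{d})$ has strictly negative derivative at $0$, hence strictly decreases for small $t$'' rests on the definition of the gradient as the first-order term in the expansion of $f$ (cf. Eq. (\ref{equation_fundamental_theorem_calculus})), so one must confirm that the small-$o$ remainder is dominated by the linear term for $t$ small enough. Everything else reduces to the single substitution into the convexity inequality.
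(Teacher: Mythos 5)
Your proposal is correct and matches the paper's proof in both halves: the sufficiency direction is the same one-line substitution of $\nabla f(\b{x}^*) = \b{0}$ into the first-order convexity inequality (Eq. (\ref{equation_convex_function_firstDerivative})), and your necessity direction is the paper's directional-derivative argument with the same key choice of direction $-\nabla f(\b{x}^*)$, merely phrased as a contradiction (nonzero gradient gives a strict descent direction) rather than the paper's direct form (minimality forces $0 \leq -\|\nabla f(\b{x}^*)\|_2^2$). The difference is purely presentational, so no further comparison is needed.
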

\begin{proof}
Proof is available in Appendix \ref{appendix_lemma_minimizer_gradient_zero}.
\end{proof}

\begin{definition}[Stationary, extremum, and saddle points]
In a general (not-necessarily-convex) function $f(.)$, a point $\b{x}^*$ is a stationary if and only if $\nabla f(\b{x}^*) = \b{0}$. 
By passing through a saddle point, the sign of the second derivative flips to the opposite sign.  
Minimizer and maximizer points (locally or globally) minimize and maximize the function, respectively. A saddle point is neither minimizer nor maximizer, although the gradient at a saddle point is zero. 
Both minimizer and maximizer are also called the extremum points. 
As Fig. \ref{figure_StationaryPoints} shows, some of  stationary point can be either a minimizer, a maximizer, or a saddle point of function.
\end{definition}

\begin{lemma}[First-order optimality condition {\citep[Theorem 1.2.1]{nesterov2018lectures}}]\label{lemma_first_order_optimality_condition}
If $\b{x}^*$ is a local minimizer for a differentiable function $f(.)$, then:
\begin{align}\label{equation_first_order_optimality_condition}
\nabla f(\b{x}^*) = \b{0}.
\end{align}
Note that if $f(.)$ is convex, this equation is a necessary and sufficient condition for a minimizer. 
\end{lemma}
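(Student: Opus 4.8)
The plan is to prove the necessary condition by contradiction, and then recover the convex sufficiency directly from the first-order characterization of convexity already established in the excerpt.

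First I would assume that $\b{x}^*$ is a local minimizer but, toward a contradiction, that $\nabla f(\b{x}^*) \neq \b{0}$. The idea is to exhibit a nearby point with strictly smaller function value by stepping in the steepest-descent direction $\b{d} := -\nabla f(\b{x}^*)$. For a scalar $t > 0$, consider the point $\b{x}^* + t\b{d}$ and apply the fundamental theorem of calculus, Eq.~(\ref{equation_fundamental_theorem_calculus}), with $\b{x} = \b{x}^*$ and $\b{y} = \b{x}^* + t\b{d}$. This yields
\begin{align*}
f(\b{x}^* + t\b{d}) &= f(\b{x}^*) + t\, \nabla f(\b{x}^*)^\top \b{d} + o(t) \\
&= f(\b{x}^*) - t\, \|\nabla f(\b{x}^*)\|_2^2 + o(t).
\end{align*}
Since $\|\nabla f(\b{x}^*)\|_2^2 > 0$ by assumption, the linear term is strictly negative. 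As the remainder is $o(t)$, there exists a threshold $\bar{t} > 0$ such that for every $t \in (0, \bar{t})$ the remainder is dominated by the linear term, giving $f(\b{x}^* + t\b{d}) < f(\b{x}^*)$. Choosing $t$ small enough that in addition $\|t\b{d}\|_2 \leq \epsilon$, with $\epsilon$ the neighborhood radius from the definition of local minimizer in Eq.~(\ref{equation_local_minimizer}), this point lies inside the $\epsilon$-ball around $\b{x}^*$, contradicting $f(\b{x}^*) \leq f(\b{y})$ throughout that ball. Hence $\nabla f(\b{x}^*) = \b{0}$.

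For the convex case I would then establish sufficiency using the first-order convexity inequality, Eq.~(\ref{equation_convex_function_firstDerivative}): if $\nabla f(\b{x}^*) = \b{0}$, then for every $\b{y} \in \mathcal{D}$ we have $f(\b{y}) \geq f(\b{x}^*) + \nabla f(\b{x}^*)^\top (\b{y} - \b{x}^*) = f(\b{x}^*)$, so $\b{x}^*$ is a global minimizer; this is precisely the content of Lemma~\ref{lemma_minimizer_gradient_zero}. Combined with the necessity argument above, the condition $\nabla f(\b{x}^*) = \b{0}$ is therefore both necessary and sufficient when $f$ is convex.

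The main obstacle I anticipate is the bookkeeping around the $o(t)$ remainder: I must argue carefully that the negative linear term $-t\|\nabla f(\b{x}^*)\|_2^2$ eventually dominates the remainder for sufficiently small $t$, while simultaneously keeping the perturbed point inside the $\epsilon$-neighborhood so that local minimality is genuinely violated. Everything else reduces to a direct appeal to results already proved in the excerpt.
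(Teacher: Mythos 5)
Your proof is correct, and it uses the same core ingredients as the paper's own proof: the first-order expansion in Eq.~(\ref{equation_fundamental_theorem_calculus}) and a test point in the direction $-\nabla f(\b{x}^*)$, with the convex sufficiency handled exactly as the paper does via Eq.~(\ref{equation_convex_function_firstDerivative}) (i.e., Lemma~\ref{lemma_minimizer_gradient_zero}). The difference is in execution, and yours is actually the tighter argument. The paper proceeds directly: it plugs the expansion into local minimality, concludes $\nabla f(\b{x}^*)^\top (\b{y} - \b{x}^*) \geq 0$ by simply discarding the $o(\cdot)$ remainder, and then substitutes the fixed point $\b{y} = \b{x}^* - \nabla f(\b{x}^*)$ --- a point at distance $\|\nabla f(\b{x}^*)\|_2$ from $\b{x}^*$ that need not lie inside the $\epsilon$-neighborhood where local minimality is valid. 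Your contradiction framing repairs both of these implicit gaps: by scaling the step as $t\b{d}$ you can take $t$ small enough that (i) the strictly negative linear term $-t\|\nabla f(\b{x}^*)\|_2^2$ dominates the $o(t)$ remainder, and (ii) the perturbed point stays inside the $\epsilon$-ball, so local minimality is genuinely contradicted. What the paper's version buys is brevity; what yours buys is a proof that survives scrutiny of exactly the two points you flagged as the main obstacle.
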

\begin{proof}
Proof is available in Appendix \ref{app_first_order_optimality_condition}.
\end{proof}
Note that if setting the derivative to zero, i.e. Eq. (\ref{equation_first_order_optimality_condition}), gives a closed-form solution for $\b{x}^*$, the optimization is done. Otherwise, one should start with some random initialized solution and iteratively update it using the gradient. First-order or second-order methods can be used for iterative optimization (see Sections \ref{section_first_order_methods} and \ref{section_second_order_methods}). 

\begin{definition}[Arguments of minimization and maximization]
In the domain of function, the point which minimizes (resp. maximizes) the function $f(.)$ is the argument for the minimization (resp. maximization) of function. The minimizer and maximizer of function are denoted by $\arg\min_{\b{x}} f(\b{x})$ and $\arg\max_{\b{x}} f(\b{x})$, respectively. 
\end{definition}

\begin{remark}
We can convert convert maximization to minimization and vice versa: 
\begin{equation}\label{equation_max_min_conversion}
\begin{aligned}
& \underset{\b{x}}{\text{maximize}}\,\,\, f(\b{x}) = -\,\underset{\b{x}}{\text{minimize}}\,\,\, \big(\!\!-\!f(\b{x})\big), \\
& \underset{\b{x}}{\text{minimize}}\,\,\, f(\b{x}) = -\,\underset{\b{x}}{\text{maximize}}\,\,\, \big(\!\!-\!f(\b{x})\big).
\end{aligned}
\end{equation}
We can have similar conversions for the arguments of maximization and minimization but we the sign of optimal value of function is not important in argument, we do not have the negative sign before maximization and minimization:
\begin{equation}
\begin{aligned}
& \arg\max_{\b{x}} f(\b{x}) = \arg\min_{\b{x}} \big(\!\!-\!f(\b{x})\big), \\
& \arg\min_{\b{x}} f(\b{x}) = \arg\max_{\b{x}} \big(\!\!-\!f(\b{x})\big).
\end{aligned}
\end{equation}
\end{remark}

\begin{definition}[Convergence rates]\label{definition_convergence_rate}
If any sequence $\{\epsilon_0, \epsilon_1, \dots, \epsilon_{k}, \epsilon_{k+1}, \dots\}$ converges, its convergence rate has one of the following cases:
\begin{align}
\lim_{k \rightarrow \infty} \frac{\epsilon_{k+1}}{\epsilon_k} = 
\left\{
    \begin{array}{ll}
        0 & \mbox{superlinear rate}, \\
        \in (0,1) & \mbox{linear rate}, \\
        1 & \mbox{sublinear rate}.
    \end{array}
\right.
\end{align}
\end{definition}

\subsection{Preliminaries on Derivative}

\begin{remark}[Dimensionality of derivative]\label{remark_dimensionality_of_derivative}
Consider a function $f: \mathbb{R}^{d_1} \rightarrow \mathbb{R}^{d_2}$, $f: \b{x} \mapsto f(\b{x})$.
Derivative of function $f(\b{x}) \in \mathbb{R}^{d_2}$ with respect to (w.r.t.) $\b{x} \in \mathbb{R}^{d_1}$ has dimensionality $(d_1 \times d_2)$. This is because tweaking every element of $\b{x} \in \mathbb{R}^{d_1}$ can change every element of $f(\b{x}) \in \mathbb{R}^{d_2}$. The $(i,j)$-th element of the $(d_1 \times d_2)$-dimensional derivative states the amount of change in the $j$-th element of $f(\b{x})$ resulted by changing the $i$-th element of $\b{x}$. 

Note that one can use a transpose of the derivative as the derivative. This is okay as long as the dimensionality of other terms in equations of optimization coincide (i.e., they are all transposed). In that case, the dimensionality of derivative is $(d_2 \times d_1)$ where the $(i,j)$-th element of derivative states the amount of change in the $i$-th element of $f(\b{x})$ resulted by changing the $j$-th element of $\b{x}$. 

Some examples of derivatives are as follows. 
\begin{itemize}\setlength\itemsep{0.01em}
\item If the function is $f: \mathbb{R} \rightarrow \mathbb{R}, f: x \mapsto f(x)$, the derivative $(\partial f(x) / \partial x) \in \mathbb{R}$ is a scalar because changing the scalar $\b{x}$ can change the scalar $f(\b{x})$.
\item If the function is $f: \mathbb{R}^{d} \rightarrow \mathbb{R}, f: \b{x} \mapsto f(\b{x})$, the derivative $(\partial f(\b{x}) / \partial \b{x}) \in \mathbb{R}^d$ is a vector because changing every element of the vector $\b{x}$ can change the scalar $f(\b{x})$.
\item If the function is $f: \mathbb{R}^{d_1 \times d_2} \rightarrow \mathbb{R}, f: \b{X} \mapsto f(\b{X})$, the derivative $(\partial f(\b{X}) / \partial \b{X}) \in \mathbb{R}^{d_1 \times d_2}$ is a matrix because changing every element of the matrix $\b{X}$ can change the scalar $f(\b{X})$.
\item If the function is $f: \mathbb{R}^{d_1} \rightarrow \mathbb{R}^{d_2}, f: \b{x} \mapsto f(\b{x})$, the derivative $(\partial f(\b{x}) / \partial \b{x}) \in \mathbb{R}^{d_1 \times d_2}$ is a matrix because changing every element of the vector $\b{x}$ can change every element of the vector $f(\b{x})$.
\item If the function is $f: \mathbb{R}^{d_1 \times d_2} \rightarrow \mathbb{R}^{d_3}, f: \b{X} \mapsto f(\b{X})$, the derivative $(\partial f(\b{X}) / \partial \b{X})$ is a $(d_1 \times d_2 \times d_3)$-dimensional tensor because changing every element of the matrix $\b{X}$ can change every element of the vector $f(\b{X})$.
\item If the function is $f: \mathbb{R}^{d_1 \times d_2} \rightarrow \mathbb{R}^{d_3 \times d_4}, f: \b{X} \mapsto f(\b{X})$, the derivative $(\partial f(\b{X}) / \partial \b{X})$ is a $(d_1 \times d_2 \times d_3 \times d_4)$-dimensional tensor because changing every element of the matrix $\b{X}$ can change every element of the matrix $f(\b{X})$.
\end{itemize}
In other words, the derivative of a scalar w.r.t. a scalar is a scalar.
The derivative of a scalar w.r.t. a vector is a vector.
The derivative of a scalar w.r.t. a matrix is a matrix.
The derivative of a vector w.r.t. a vector is a matrix.
The derivative of a vector w.r.t. a matrix is a rank-3 tensor.
The derivative of a matrix w.r.t. a matrix is a rank-4 tensor.
\end{remark}

\begin{definition}[Gradient, Jacobian, and Hessian]
Consider a function $f: \mathbb{R}^{d} \rightarrow \mathbb{R}$, $f: \b{x} \mapsto f(\b{x})$. In optimizing the function $f$, the derivative of function w.r.t. its variable $\b{x}$ is called the gradient, denoted by:
\begin{align*}
\nabla f(\b{x}) := \frac{\partial f(\b{x})}{\partial \b{x}} \in \mathbb{R}^d.
\end{align*}
The second derivative of function w.r.t. to its derivative is called the Hessian matrix, denoted by 
\begin{align*}
\b{B} = \nabla^2 f(\b{x}) := \frac{\partial^2 f(\b{x})}{\partial \b{x}^2} \in \mathbb{R}^{d \times d}. 
\end{align*}
The Hessian matrix is symmetric. If the function is convex, its Hessian matrix is positive semi-definite. 

If the function is multi-dimensional, i.e., $f: \mathbb{R}^{d_1} \rightarrow \mathbb{R}^{d_2}$, $f: \b{x} \mapsto f(\b{x})$, the gradient becomes a matrix:
\begin{align*}
& \b{J} := \Big[\frac{\partial f}{\partial x_1}, \dots, \frac{\partial f}{\partial x_{d_1}}\Big]^\top\! = 
\begin{bmatrix}
\frac{\partial f_1}{\partial x_1} & \dots & \frac{\partial f_{d_2}}{\partial x_{d_1}}\\
\vdots & \ddots & \vdots \\
\frac{\partial f_{1}}{\partial x_{d_1}} & \dots & \frac{\partial f_{d_2}}{\partial x_{d_1}}
\end{bmatrix}
\!\!\in \mathbb{R}^{d_1 \times d_2},
\end{align*}
where $\b{x} = [x_1, \dots, x_{d_1}]^\top$ and $f(\b{x}) = [f_1, \dots, f_{d_2}]^\top$. 
This matrix derivative is called the Jacobian matrix. 
\end{definition}

\begin{corollary}[Technique for calculating derivative]
According to the size of derivative, we can easily calculate the derivatives. For finding the correct derivative for multiplications of matrices (or vectors), one can temporarily assume some dimensionality for every matrix and find the correct of matrices in the derivative. 
Let $\b{X} \in \mathbb{R}^{a \times b}$,
An example for calculating derivative is:
\begin{align}\label{equation_derivative_trace_wrt_matrix}
& \mathbb{R}^{a \times b} \ni \frac{\partial}{\partial \b{X}} \big(\textbf{tr}(\b{A}\b{X}\b{B})\big) = \b{A}^\top \b{B}^\top = (\b{BA})^\top.
\end{align}
This is calculated as explained in the following. We assume $\b{A} \in \mathbb{R}^{c \times a}$ and $\b{B} \in \mathbb{R}^{b \times c}$ so that we can have the matrix multiplication $\b{A}\b{X}\b{B}$ and its size is $\b{A}\b{X}\b{B} \in \mathbb{R}^{c \times c}$ because the argument of trace should be a square matrix. 
The derivative $\partial (\textbf{tr}(\b{A}\b{X}\b{B})) / \partial \b{X}$ has size $\mathbb{R}^{a \times b}$ because $\textbf{tr}(\b{A}\b{X}\b{B})$ is a scalar and $\b{X}$ is $(a \times b)$-dimensional. 
We know that the derivative should be a kind of multiplication of $\b{A}$ and $\b{B}$ because $\textbf{tr}(\b{A}\b{X}\b{B})$ is linear w.r.t. $\b{X}$. Now, we should find their order in multiplication. 
Based on the assumed sizes of $\b{A}$ and $\b{B}$, we see that $\b{A}^\top \b{B}^\top$ is the desired size and these matrices can be multiplied to each other. Hence, this is the correct derivative. 
\end{corollary}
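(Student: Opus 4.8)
The statement bundles an informal heuristic --- recover a matrix derivative by matching dimensions --- with one concrete identity, namely Eq. (\ref{equation_derivative_trace_wrt_matrix}). The heuristic itself is a mnemonic rather than a theorem, so the plan is to prove the identity rigorously by a direct entrywise computation, and then to exhibit the dimension-matching argument as a consistency check that confirms (rather than establishes) the result. I would adopt the convention from Remark \ref{remark_dimensionality_of_derivative}, under which $\partial f/\partial \b{X} \in \mathbb{R}^{a \times b}$ has $(i,j)$-th entry equal to $\partial f / \partial \b{X}_{i,j}$, with $\b{X} \in \mathbb{R}^{a \times b}$, $\b{A} \in \mathbb{R}^{c \times a}$, and $\b{B} \in \mathbb{R}^{b \times c}$ chosen so that the products are conformable and $\b{A}\b{X}\b{B}$ is square.

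First I would write the trace as a sum of diagonal entries and expand the matrix products, giving
\begin{align*}
\textbf{tr}(\b{A}\b{X}\b{B}) = \sum_{k=1}^{c}\sum_{l=1}^{a}\sum_{m=1}^{b} \b{A}_{k,l}\, \b{X}_{l,m}\, \b{B}_{m,k}.
\end{align*}
This reduces the problem to scalar calculus: the expression is linear in the entries of $\b{X}$, so differentiating with respect to a single entry $\b{X}_{i,j}$ is immediate. Using $\partial \b{X}_{l,m}/\partial \b{X}_{i,j} = \delta_{l,i}\,\delta_{m,j}$, the triple sum collapses to $\sum_{k=1}^{c} \b{A}_{k,i}\,\b{B}_{j,k}$. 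The last step is to recognize this scalar as an entry of the claimed matrix: rewriting $\b{A}_{k,i} = (\b{A}^\top)_{i,k}$ and $\b{B}_{j,k} = (\b{B}^\top)_{k,j}$ turns the sum into $\sum_{k} (\b{A}^\top)_{i,k}(\b{B}^\top)_{k,j} = (\b{A}^\top \b{B}^\top)_{i,j}$, which is exactly the $(i,j)$-th entry of $\b{A}^\top\b{B}^\top$; the identity $\b{A}^\top\b{B}^\top = (\b{B}\b{A})^\top$ then supplies the second form in Eq. (\ref{equation_derivative_trace_wrt_matrix}).

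As a cleaner alternative worth recording, I would also note the matrix-differential route: by linearity and the cyclic property of trace, $d\,\textbf{tr}(\b{A}\b{X}\b{B}) = \textbf{tr}(\b{A}\,(d\b{X})\,\b{B}) = \textbf{tr}(\b{B}\b{A}\,d\b{X})$, and matching this against the defining relation $d f = \textbf{tr}\big((\partial f/\partial\b{X})^\top d\b{X}\big)$ forces $(\partial f/\partial\b{X})^\top = \b{B}\b{A}$, i.e. $\partial f/\partial\b{X} = (\b{B}\b{A})^\top = \b{A}^\top\b{B}^\top$.

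There is no genuine analytic obstacle here; the content is entirely bookkeeping. The one place to be careful is the index convention: Remark \ref{remark_dimensionality_of_derivative} permits either the $(d_1\times d_2)$ layout or its transpose, so I must fix one convention at the outset and then verify that the output $\b{A}^\top\b{B}^\top \in \mathbb{R}^{a\times b}$ has the same shape as $\b{X}$. This dimension check is precisely the heuristic the corollary advertises: among the products of $\b{A}$ and $\b{B}$ that are conformable, only $\b{A}^\top\b{B}^\top$ (equivalently $(\b{B}\b{A})^\top$) has size $a\times b$, so dimensional analysis already pins the answer down up to the transposes, and the entrywise computation above confirms that this candidate is in fact correct.
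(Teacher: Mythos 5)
Your proposal is correct, and it takes a genuinely different route from the paper. The paper's justification for Eq. (\ref{equation_derivative_trace_wrt_matrix}) \emph{is} the dimension-matching heuristic itself: it notes that $\textbf{tr}(\b{A}\b{X}\b{B})$ is linear in $\b{X}$, asserts that the derivative must therefore be some product of $\b{A}$ and $\b{B}$ (up to transposes), and then selects $\b{A}^\top\b{B}^\top$ as the unique conformable candidate of size $a \times b$. You instead demote that argument to a consistency check and supply an actual proof: expanding $\textbf{tr}(\b{A}\b{X}\b{B}) = \sum_{k,l,m} \b{A}_{k,l}\,\b{X}_{l,m}\,\b{B}_{m,k}$ and differentiating entrywise to get $\sum_k \b{A}_{k,i}\,\b{B}_{j,k} = (\b{A}^\top\b{B}^\top)_{i,j}$, with the trace-differential identity $d\,\textbf{tr}(\b{A}\b{X}\b{B}) = \textbf{tr}(\b{B}\b{A}\,d\b{X})$ as a second route. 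What your version buys is rigor and coverage of degenerate cases: when the dimensions coincide (e.g., $a=b=c$), several products of $\b{A}$ and $\b{B}$ are conformable with the right shape, so dimension matching alone cannot single out the answer, and the paper's unproven premise that the derivative is ``a kind of multiplication of $\b{A}$ and $\b{B}$'' is exactly what your triple-sum computation establishes. What the paper's version buys is speed and its pedagogical point --- the corollary is advertising a practical shortcut, valid when one temporarily assumes generic, distinct dimensions --- and your framing correctly identifies that the heuristic pins down the answer only up to verification by a computation like yours.
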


\begin{lemma}[Derivative of matrix w.r.t. matrix]\label{lemma_deriavtive_matrix_wrt_matrix}
As explained in Remark \ref{remark_dimensionality_of_derivative}, the derivative of a matrix w.r.t. another matrix is a tensor. 
Working with tensors is difficult; hence, we can use Kronecker product for representing tensor as matrix.
This is the Magnus-Neudecker convention \cite{magnus1985matrix} in which all matrices are vectorized.
For example, if $\b{X} \in \mathbb{R}^{a \times b}$, $\b{A} \in \mathbb{R}^{c \times a}$, and $\b{B} \in \mathbb{R}^{b \times d}$, we have:
\begin{align}\label{equation_derivative_matrix_wrt_matrix}
\mathbb{R}^{(cd) \times (ab)} \ni \frac{\partial}{\partial \b{X}} (\b{A} \b{X} \b{B}) = \b{B}^\top \otimes \b{A},
\end{align}
where $\otimes$ denotes the Kronecker product. 
\end{lemma}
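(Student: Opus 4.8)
The plan is to reduce this matrix-by-matrix (tensor) derivative to the ordinary Jacobian of a vectorized linear map, which is the essence of the Magnus--Neudecker convention. Recall that this convention defines the derivative through the vectorization operator $\textbf{vec}(\cdot)$, which stacks the columns of a matrix into a single column vector: for a matrix-valued function $\b{F}(\b{X})$ one sets $\partial \b{F} / \partial \b{X} := \partial\, \textbf{vec}(\b{F}) / \partial (\textbf{vec}(\b{X}))^\top$. The whole statement therefore follows once I identify the linear map $\textbf{vec}(\b{X}) \mapsto \textbf{vec}(\b{A}\b{X}\b{B})$ and read off its matrix.

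The key step, and the only one requiring real work, is the ``vec trick'' identity
\begin{align*}
\textbf{vec}(\b{A}\b{X}\b{B}) = (\b{B}^\top \otimes \b{A})\, \textbf{vec}(\b{X}).
\end{align*}
I would prove it by expanding $\b{X}$ in the standard basis $\b{X} = \sum_{i,j} X_{ij}\, \b{e}_i \b{e}_j^\top$, so that $\b{A}\b{X}\b{B} = \sum_{i,j} X_{ij}\, (\b{A}\b{e}_i)(\b{B}^\top \b{e}_j)^\top$. Applying the rank-one vectorization identity $\textbf{vec}(\b{u}\b{v}^\top) = \b{v} \otimes \b{u}$ to each summand, and then the Kronecker mixed-product property $(\b{B}^\top \b{e}_j) \otimes (\b{A}\b{e}_i) = (\b{B}^\top \otimes \b{A})(\b{e}_j \otimes \b{e}_i)$, gives $\textbf{vec}(\b{A}\b{X}\b{B}) = (\b{B}^\top \otimes \b{A}) \sum_{i,j} X_{ij}\, (\b{e}_j \otimes \b{e}_i)$. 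Since $\sum_{i,j} X_{ij}\, (\b{e}_j \otimes \b{e}_i) = \textbf{vec}(\b{X})$, the identity is established.

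With the vec trick in hand, the remainder is immediate: the map $\textbf{vec}(\b{X}) \mapsto \textbf{vec}(\b{A}\b{X}\b{B})$ is linear with constant matrix $\b{B}^\top \otimes \b{A}$, and the Jacobian of a linear map $\b{y} = \b{M}\b{x}$ is simply $\b{M}$; hence $\partial (\b{A}\b{X}\b{B})/\partial \b{X} = \b{B}^\top \otimes \b{A}$. Finally I would check the bookkeeping: $\b{A}\b{X}\b{B} \in \mathbb{R}^{c \times d}$ so $\textbf{vec}(\b{A}\b{X}\b{B}) \in \mathbb{R}^{cd}$, while $\textbf{vec}(\b{X}) \in \mathbb{R}^{ab}$, giving a Jacobian of size $(cd) \times (ab)$; correspondingly $\b{B}^\top \in \mathbb{R}^{d \times b}$ and $\b{A} \in \mathbb{R}^{c \times a}$ produce $\b{B}^\top \otimes \b{A} \in \mathbb{R}^{(cd) \times (ab)}$, matching the claimed dimensionality. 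The main obstacle is purely the verification of the vec identity; differentiating the resulting linear map is trivial.
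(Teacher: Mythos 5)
Your proof is correct and complete. There is, in fact, no proof in the paper to compare against: the paper states Eq.~(\ref{equation_derivative_matrix_wrt_matrix}) as part of the Magnus--Neudecker convention and simply cites \cite{magnus1985matrix}, so your argument supplies the justification that the paper delegates to the literature. Your route is the standard one: interpret the matrix-by-matrix derivative as the Jacobian of the vectorized map $\textbf{vec}(\b{X}) \mapsto \textbf{vec}(\b{A}\b{X}\b{B})$, establish the vec identity $\textbf{vec}(\b{A}\b{X}\b{B}) = (\b{B}^\top \otimes \b{A})\,\textbf{vec}(\b{X})$ by expanding $\b{X}$ in the standard basis and combining the rank-one identity $\textbf{vec}(\b{u}\b{v}^\top) = \b{v} \otimes \b{u}$ with the Kronecker mixed-product property, and then read off the (constant) matrix of the resulting linear map as its Jacobian. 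Every step is valid, and your dimension bookkeeping --- $\b{A}\b{X}\b{B} \in \mathbb{R}^{c \times d}$ so the Jacobian is $(cd) \times (ab)$, matching $\b{B}^\top \otimes \b{A} \in \mathbb{R}^{(cd) \times (ab)}$ --- confirms the lemma as stated. One small remark: the layout you adopt, $\partial\, \textbf{vec}(\b{F}) / \partial (\textbf{vec}(\b{X}))^\top$, indexes rows by the output and columns by the input; this is what makes your answer agree with the $(cd) \times (ab)$ dimensionality claimed in the lemma, even though it is the transpose of the general convention sketched in Remark \ref{remark_dimensionality_of_derivative}. That inconsistency lies in the paper itself, which explicitly allows either transpose as the derivative, so it does not affect the correctness of your argument.
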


\begin{remark}[Chain rule in matrix derivatives]
When having composite functions (i.e., function of function), we use chain rule for derivative. When we have derivative of matrix w.r.t. matrix, this chain rule can get difficult but we can do it by checking compatibility of dimensions in matrix multiplications. We should use Lemma \ref{lemma_deriavtive_matrix_wrt_matrix} and vectorization technique in which the matrix is vectorized. Let $\textbf{vec}(.)$ denote vectorization of a $\mathbb{R}^{a \times b}$ matrix to a $\mathbb{R}^{ab}$ vector. Also, let $\textbf{vec}^{-1}_{a \times b}(.)$ be de-vectorization of a $\mathbb{R}^{ab}$ vector to a $\mathbb{R}^{a \times b}$ matrix.

For the purpose of tutorial, here we calculate derivative by chain rule as an example:
\begin{align*}
& f(\b{S}) = \textbf{tr}(\b{A}\b{S}\b{B}),\,\, \b{S} = \b{C}\widehat{\b{M}}\b{D},\,\, \widehat{\b{M}} = \frac{\b{M}}{\|\b{M}\|_F^2},
\end{align*}
where $\b{A} \in \mathbb{R}^{c \times a}$, $\b{S} \in \mathbb{R}^{a \times b}$, $\b{B} \in \mathbb{R}^{b \times c}$, $\b{C} \in \mathbb{R}^{a \times d}$, $\widehat{\b{M}} \in \mathbb{R}^{d \times d}$, $\b{D} \in \mathbb{R}^{d \times b}$, and $\b{M} \in \mathbb{R}^{d \times d}$. 
We have:
\begin{align*}
&\mathbb{R}^{a \times b} \ni \frac{\partial f(\b{S})}{\partial \b{S}} \overset{(\ref{equation_derivative_trace_wrt_matrix})}{=} (\b{BA})^\top. \\
&\mathbb{R}^{ab \times d^2} \ni \frac{\partial \b{S}}{\partial \widehat{\b{M}}} \overset{(\ref{equation_derivative_matrix_wrt_matrix})}{=} \b{D}^\top \otimes \b{C}, \\
&\mathbb{R}^{d^2 \times d^2} \ni \frac{\partial \widehat{\b{M}}}{\partial \b{M}} \overset{(a)}{=} \frac{1}{\|\b{M}\|_F^4} \big(\|\b{M}\|_F^2 \b{I}_{d^2} - 2 \b{M} \otimes \b{M}\big) \\
&~~~~~~~~~~~~~~~~~~~~~~~~~~~ = \frac{1}{\|\b{M}\|_F^2} \big(\b{I}_{d^2} - \frac{2}{\|\b{M}\|_F^2} \b{M} \otimes \b{M}\big),
\end{align*}
where $(a)$ is because of the formula for the derivative of fraction and $\b{I}_{d^2}$ is a $(d^2 \times d^2)$-dimensional identity matrix.
finally, by chain rule, we have:
\begin{align*}
& \mathbb{R}^{d \times d} \ni \frac{\partial f}{\b{M}} = \textbf{vec}^{-1}_{d \times d}\Big( \big(\frac{\partial \widehat{\b{M}}}{\partial \b{M}}\big)^\top \big(\frac{\partial \b{S}}{\partial \widehat{\b{M}}}\big)^\top  \textbf{vec}\big(\frac{\partial f(\b{S})}{\partial \b{S}}\big)\Big).
\end{align*}
Note that the chain rule in matrix derivatives usually is stated right to left in matrix multiplications while transpose is used for matrices in multiplication.
\end{remark}

More formulas for matrix derivatives can be found in the matrix cookbook \cite{petersen2012matrix} and similar resources.
Here, we discussed only real derivatives. When working with complex data (with imaginary part), we need complex derivative. The reader can refer to \cite{hjorungnes2007complex} and {\citep[Chapter 7, Complex Derivatives]{chong2021MH}} for techniques in complex derivatives. 

\section{Optimization Problems}

\subsection{Standard Problems}


Here, we review the standard forms for convex optimization and we explain why these forms are important. 
Note that the term ``programming" refers to solving optimization problems.

\subsubsection{General Optimization Problem}

Consider the function $f: \mathbb{R}^d \rightarrow \mathbb{R}$, $f: \b{x} \mapsto f(\b{x})$. Let the domain of function be $\mathcal{D}$ where $\b{x} \in \mathcal{D}, \b{x} \in \mathbb{R}^d$. 

Consider the following unconstrained minimization of a cost function $f(.)$:
\begin{equation}\label{equation_optimization_problem_unconstrained}
\begin{aligned}
& \underset{\b{x}}{\text{minimize}}
& & f(\b{x}),
\end{aligned}
\end{equation}
where $\b{x}$ is called the \textit{optimization variable} and the function $f(.)$ is called the \textit{objective function} or the \textit{cost function}.
This is an unconstrained problem where the optimization variable $\b{x}$ needs only be in the domain of function, i.e., $\b{x} \in \mathcal{D}$, while minimizing the function $f(.)$. 

The optimization problem can be constrained where the optimization variable $\b{x}$ should satisfy some equality and/or inequality constraints, in addition to being in the domain of function, while minimizing the function $f(.)$.
Consider a constrained optimization problem where we want to minimize the function $f(\b{x})$ while satisfying $m_1$ inequality constraints and $m_2$ equality constraint:
\begin{equation}\label{equation_optimization_problem}
\begin{aligned}
& \underset{\b{x}}{\text{minimize}}
& & f(\b{x}) \\
& \text{subject to}
& & y_i(\b{x}) \leq 0, \; i \in \{1, \ldots, m_1\}, \\
& & & h_i(\b{x}) = 0, \; i \in \{1, \ldots, m_2\},
\end{aligned}
\end{equation}
where $f(\b{x})$ is the objective function, every $y_i(\b{x}) \leq 0$ is an inequality constraint, and every $h_i(\b{x}) = 0$ is an equality constraint. 
Note that if some of the inequality constraints are not in the form $y_i(\b{x}) \leq 0$, we can restate them as:
\begin{align*}
& y_i(\b{x}) \geq 0 \implies -y_i(\b{x}) \leq 0, \\
& y_i(\b{x}) \leq c \implies y_i(\b{x}) - c \leq 0. 
\end{align*}
Therefore, all inequality constraints can be written in the form $y_i(\b{x}) \leq 0$. 
Furthermore, according to Eq. (\ref{equation_max_min_conversion}), if the optimization problem (\ref{equation_optimization_problem}) is a maximization problem rather than minimization, we can convert it to maximization by multiplying its objective function to $-1$:
\begin{equation}\label{equation_convert_maximize_to_minimize}
\begin{aligned}
& \underset{\b{x}}{\text{maximize}} ~~~~~ f(\b{x}) \\
& \text{subject to constraints}
\end{aligned}
\quad\equiv\quad\,\,\\
\begin{aligned}
& \underset{\b{x}}{\text{minimize}} ~~~~~ -f(\b{x}) \\
& \text{subject to constraints}
\end{aligned}
\end{equation}

\begin{definition}[Feasible point]
The point $\b{x}$ for the optimization problem (\ref{equation_optimization_problem}) is feasible if:
\begin{equation}\label{equation_feasible_point}
\begin{aligned}
& \b{x} \in \mathcal{D}, \text{  and} \\
& y_i(\b{x}) \leq 0, \quad \forall i \in \{1, \ldots, m_1\}, \text{  and} \\
& h_i(\b{x}) = 0, \quad \forall i \in \{1, \ldots, m_2\}.
\end{aligned}
\end{equation}
\end{definition}

The constrained optimization problem can also be stated as:
\begin{equation}\label{equation_constraint_set_optimization_problem}
\begin{aligned}
& \underset{\b{x}}{\text{minimize}}
& & f(\b{x}) \\
& \text{subject to}
& & \b{x} \in \mathcal{S},
\end{aligned}
\end{equation}
where $\mathcal{S}$ is the feasible set of constraints. 

\subsubsection{Convex Optimization Problem}

A convex optimization problem is of the form:
\begin{equation}
\begin{aligned}
& \underset{\b{x}}{\text{minimize}}
& & f(\b{x}) \\
& \text{subject to}
& & y_i(\b{x}) \leq 0, \; i \in \{1, \ldots, m_1\}, \\
& & & \b{Ax} = \b{b},
\end{aligned}
\end{equation}
where the functions $f(.)$ and $y_i(.), \forall i$ are all convex functions and the equality constraints are affine functions. The feasible set of a convex problem is a convex set. 

\subsubsection{Linear Programming}

A linear programming problem is of the form:
\begin{equation}
\begin{aligned}
& \underset{\b{x}}{\text{minimize}}
& & \b{c}^\top \b{x} + d \\
& \text{subject to}
& & \b{Gx} \preceq \b{h}, \\
& & & \b{Ax} = \b{b},
\end{aligned}
\end{equation}
where the objective function and equality constraints are affine functions. 
The feasible set of a linear programming problem is a a polyhedron set while the cost is planar (affine). 
A survey on linear programming methods is available in the book \cite{dantzig1963linear}. 
One of the well-known methods for solving linear programming is the \textit{simplex method}, initially appeared in 1947 \cite{dantzig1983reminiscences}. 
Simplex method moves between the vertices of a simplex, until convergence, for minimizing the objective function.
It is efficient and its proposal was a breakthrough in the field of optimization.

\subsubsection{Quadratic Programming}

A quadratic programming problem is of the form:
\begin{equation}
\begin{aligned}
& \underset{\b{x}}{\text{minimize}}
& & (1/2)\b{x}^\top \b{P} \b{x} + \b{q}^\top \b{x} + r \\
& \text{subject to}
& & \b{Gx} \preceq \b{h}, \\
& & & \b{Ax} = \b{b},
\end{aligned}
\end{equation}
where $\b{P} \succ \b{0}$ (which is the second derivative of objective function) is a symmetric positive definite matrix, the objective function is quadratic, and equality constraints are affine functions. 
The feasible set of a quadratic programming problem is a a polyhedron set while the cost is curvy (quadratic).

\subsubsection{Quadratically Constrained Quadratic Programming (QCQP)}

A QCQP problem is of the form:
\begin{equation}
\begin{aligned}
& \underset{\b{x}}{\text{minimize}}~~~~ (1/2)\b{x}^\top \b{P} \b{x} + \b{q}^\top \b{x} + r \\
& \text{subject to} \\
&~~~~~ (1/2)\b{x}^\top \b{M}_i \b{x} + \b{s}_i^\top \b{x} + z_i \leq 0, \; i \in \{1, \ldots, m_1\},  \\
&~~~~~ \b{Ax} = \b{b},
\end{aligned}
\end{equation}
where $\b{P}, \b{M}_i \succ \b{0}, \forall i$, the objective function and the inequality constraints are quadratic, and equality constraints are affine functions. 
The feasible set of a QCQP problem is intersection of $m_1$ ellipsoids and an affine set, while the cost is curvy (quadratic).

\subsubsection{Second-Order Cone Programming (SOCP)}

A SOCP problem is of the form:
\begin{equation}
\begin{aligned}
& \underset{\b{x}}{\text{minimize}}
& & \b{f}^\top \b{x} \\
& \text{subject to}
& & \|\b{A}_i \b{x} + \b{b}_i\|_2 \leq \b{c}_i^\top \b{x} + d_i, \; i \in \{1, \ldots, m_1\}, \\
& & & \b{Fx} = \b{g},
\end{aligned}
\end{equation}
where the inequality constraints are norm of an affine function being less than an affine function. The constraint $\|\b{A}_i \b{x} + \b{b}_i\|_2 - \b{c}_i^\top \b{x} - d_i \leq 0$ is called the \textit{second-order cone} whose shape is like an ice-cream cone. 

\subsubsection{Semidefinite Programming (SDP)}

A SDP problem is of the form:
\begin{equation}
\begin{aligned}
& \underset{\b{X}}{\text{minimize}}
& & \textbf{tr}(\b{C} \b{X}) \\
& \text{subject to}
& & \b{X} \succeq \b{0}, \\
& & & \textbf{tr}(\b{D}_i \b{X}) \leq \b{e}_i, \quad i \in \{1, \dots, m_1\}, \\
& & & \textbf{tr}(\b{A}_i \b{X}) = \b{b}_i, \quad i \in \{1, \dots, m_2\}, 
\end{aligned}
\end{equation}
where the optimization variable $\b{X}$ belongs to the positive semidefinite cone $\mathbb{S}_+^d$, $\textbf{tr}(.)$ denotes the trace of matrix, $\b{C}, \b{D}_i, \b{A}_i \in \mathbb{S}^d, \forall i$, and $\mathbb{S}^d$ denotes the cone of $(d \times d)$ symmetric matrices. 
The trace terms may be written in summation forms.
Note that $\textbf{tr}(\b{C}^\top \b{X})$ is the inner product of two matrices $\b{C}$ and $\b{X}$ and if the matrix $\b{C}$ is symmetric, this inner product is equal to $\textbf{tr}(\b{C} \b{X})$.  

Another form for SDP is: 
\begin{equation}
\begin{aligned}
& \underset{\b{x}}{\text{minimize}}
& & \b{c}^\top \b{x} \\
& \text{subject to}
& & \Big(\sum_{i=1}^d x_i \b{F}_i\Big) + \b{G} \preceq \b{0}, \\
& & & \b{Ax} = \b{b},
\end{aligned}
\end{equation}
where $\b{x} = [x_1, \dots, x_d]^\top$, $\b{G}, \b{F}_i \in \mathbb{S}^d, \forall i$, and $\b{A}$, $\b{b}$, and $\b{c}$ are constant matrices/vectors.

\subsubsection{Optimization Toolboxes}

All the standard optimization forms can be restated as SDP because their constraints can be written as belonging to some cones (see Definitions \ref{definition_generalized_inequality} and \ref{definition_generalized_inequality_examples}); hence, they are special cases of SDP.
The interior-point method, or the barrier method, introduced in Section \ref{section_interior_point_method}, can be used for solving various optimization problems including SDP \cite{nesterov1994interior,boyd2004convex}. Optimization toolboxes such as CVX \cite{grant2009cvx} often use interior-point method (see Section \ref{section_interior_point_method}) for solving optimization problems such as SDP. Note that the interior-point method is iterative and solving SDP usually is time consuming especially for large matrices. 
If the optimization problem is a convex optimization problem (e.g. SDP is a convex problem), it has only one local optima which is the global optima (see Corollary \ref{corollary_only_one_min_convex_function}).

\subsection{Eliminating Constraints and Equivalent Problems}

Here, we review some of the useful techniques in converting optimization problems to their equivalent forms. 

\subsubsection{Eliminating Inequality Constraints}

As was discussed in Section \ref{section_interior_point_method}, we can eliminate the inequality constraints by embedding the inequality constraints into the objective function using the indicator or barrier functions.

\subsubsection{Eliminating Equality Constraints}

Consider the optimization problem (\ref{equation_optimization_problem_inequalityConstraints_Ax_equals_b_constraint}). We can eliminate the equality constraints, $\b{Ax} = \b{b}$, as explained in the following. 
Let $\b{A} \in \mathbb{R}^{m_2 \times d}$, $m_2 < d$, $\mathcal{N}(\b{A}) := \{\b{x} \in \mathbb{R}^d\,|\,\b{Ax} = \b{0}\}$ denote the null-space of matrix $\b{A}$. We have:
\begin{align*}
& \forall \b{z} \in \mathcal{N}(\b{A}), \exists \b{u} \in \mathbb{R}^{d - m_2}, \b{C} \in \mathbb{R}^{m_2 \times (d-m_2)}: \\
& \mathbb{C}\text{ol}(\b{C}) = \mathcal{N}(\b{A}), \quad \b{z} = \b{C} \b{u},
\end{align*}
where $\mathbb{C}\text{ol}(.)$ is the column-space or range of matrix. 
Therefore, we can say:
\begin{align}
& \forall \b{z} \in \mathcal{N}(\b{A}): \b{A}(\b{x} - \b{z}) = \b{A}\b{x} - \b{A}\b{z} = \b{A}\b{x} - \b{0} = \b{Ax} \nonumber\\
&\implies \b{A}(\b{x} - \b{z}) = \b{Ax} = \b{b} \nonumber\\
&\implies \b{x} = \b{A}^\dagger \b{b} + \b{z} = \b{A}^\dagger \b{b} + \b{C} \b{u}, \label{equation_x_eliminateEqualityConstraint}
\end{align}
where $\b{A}^\dagger := \b{A}^\top (\b{A}\b{A}^\top)^{-1}$ is the pseudo-inverse of matrix $\b{A}$. 
Putting Eq. (\ref{equation_x_eliminateEqualityConstraint}) in problem (\ref{equation_optimization_problem_inequalityConstraints_Ax_equals_b_constraint}) changes the optimization variable and eliminates the equality constraint: 
\begin{equation}
\begin{aligned}
& \underset{\b{u}}{\text{minimize}}
& & f(\b{C} \b{u} + \b{A}^\dagger \b{b}) \\
& \text{subject to}
& & y_i(\b{C} \b{u} + \b{A}^\dagger \b{b}) \leq 0, \; i \in \{1, \ldots, m_1\}.
\end{aligned}
\end{equation}
If $\b{u}^*$ is the solution to this problem, the solution to problem (\ref{equation_optimization_problem_inequalityConstraints_Ax_equals_b_constraint}) is $\b{x}^* = \b{C} \b{u}^* + \b{A}^\dagger \b{b}$.

\subsubsection{Adding Equality Constraints}

Conversely, we can convert the problem:
\begin{equation}
\begin{aligned}
& \underset{\{\b{x}_i\}_{i=0}^{m_1}}{\text{minimize}}
& & f(\b{A}\b{x}_0 + \b{b}_0) \\
& \text{subject to}
& & y_i(\b{A}\b{x}_i + \b{b}_i) \leq 0, \; i \in \{1, \ldots, m_1\},
\end{aligned}
\end{equation}
to:
\begin{equation}
\begin{aligned}
& \underset{\{\b{u}_i, \b{x}_i\}_{i=0}^{m_1}}{\text{minimize}}
& & f(\b{u}_0) \\
& \text{subject to}
& & y_i(\b{u}_i) \leq 0, \; i \in \{1, \ldots, m_1\}, \\
& & & \b{u}_i = \b{A}\b{x}_i + \b{b}_i,\; i \in \{0, 1, \ldots, m_1\},
\end{aligned}
\end{equation}
by change of variables. 

\subsubsection{Eliminating Set Constraints}

As was discussed in Section \ref{section_projected_gradient_method}, we can convert problem (\ref{equation_constraint_set_optimization_problem}) to problem (\ref{equation_constraint_set_optimization_problem_withIndicator}) by using the indicator function. That problem can be solved iteratively where at every iteration, the solution is updated (by first- or second-order methods) without the set constraint and then the updated solution of iteration is projected onto the set. This procedure is repeated until convergence. 

\subsubsection{Adding Slack Variables}

Consider the following problem with inequality constraints:
\begin{equation}
\begin{aligned}
& \underset{\b{x}}{\text{minimize}}
& & f(\b{x}) \\
& \text{subject to}
& & y_i(\b{x}) \leq 0, \; i \in \{1, \ldots, m_1\}.
\end{aligned}
\end{equation}
Using the so-called slack variables, denoted by $\{\xi_i \in \mathbb{R}_+\}_{i=1}^{m_1}$, we can convert this problem to the following problem:
\begin{equation}
\begin{aligned}
& \underset{\b{x}, \{\xi_i\}_{i=1}^{m_1}}{\text{minimize}}
& & f(\b{x}) \\
& \text{subject to}
& & y_i(\b{x}) + \xi_i = 0, \; i \in \{1, \ldots, m_1\}, \\
& & & \xi_i \geq 0, \quad\qquad\,\,\, \; i \in \{1, \ldots, m_1\}.
\end{aligned}
\end{equation}
The slack variables should be non-negative because the inequality constraints are less than or equal to zero. 

\subsubsection{Epigraph Form}

We can convert the optimization problem (\ref{equation_optimization_problem}) to its \textit{epigraph form}:
\begin{equation}
\begin{aligned}
& \underset{\b{x}, t}{\text{minimize}}
& & t \\
& \text{subject to}
& & f(\b{x}) - t \leq 0, \\
& & & y_i(\b{x}) \leq 0, \; i \in \{1, \ldots, m_1\}, \\
& & & h_i(\b{x}) = 0, \; i \in \{1, \ldots, m_2\},
\end{aligned}
\end{equation}
because we can minimize an upper-bound $t$ on the objective function rather than minimizing the objective function.
Likewise, for a maximization problem, we can maximize a lower-bound of the objective function rather than maximizing the objective function.
The upper-/lower-bound does not necessarily need to be $t$; it can be any upper-/lower-bound function for the objective function.
This is a good technique because sometimes optimizing an upper-/lower-bound function is simpler than the objective function itself.

\section{Karush-Kuhn-Tucker (KKT) Conditions}

Many of the optimization algorithms are reduced to and can be explained by the Karush-Kuhn-Tucker (KKT) conditions. Therefore, KKT conditions are fundamental requirements for optimization.
In this section, we explain these conditions. 

\subsection{The Lagrangian Function}

\subsubsection{Lagrangian and Dual Variables}

\begin{definition}[Lagrangian and dual variables]
The Lagrangian function for the optimization problem (\ref{equation_optimization_problem}) is $\mathcal{L}: \mathbb{R}^d \times \mathbb{R}^{m_1} \times \mathbb{R}^{m_2} \rightarrow \mathbb{R}$, with domain $\mathcal{D} \times \mathbb{R}^{m_1} \times \mathbb{R}^{m_2}$, defined as:
\begin{equation}\label{equation_Lagrangian}
\begin{aligned}
\mathcal{L}(\b{x},\b{\lambda},\b{\nu}) &:= f(\b{x}) + \sum_{i=1}^{m_1} \lambda_i y_i(\b{x}) + \sum_{i=1}^{m_2} \nu_i h_i(\b{x}) \\
&= f(\b{x}) + \b{\lambda}^\top \b{y}(\b{x}) + \b{\nu}^\top \b{h}(\b{x}),
\end{aligned}
\end{equation}
where $\{\lambda_i\}_{i=1}^{m_1}$ and $\{\nu_i\}_{i=1}^{m_2}$ are the Lagrange multipliers, also called the dual variables, corresponding to inequality and equality constraints, respectively. Note that $\b{\lambda} := [\lambda_1, \dots, \lambda_{m_1}]^\top \in \mathbb{R}^{m_1}$, $\b{\nu} := [\nu_1, \dots, \nu_{m_2}]^\top \in \mathbb{R}^{m_2}$, $\b{y}(\b{x}) := [y_1(\b{x}), \dots, y_{m_1}(\b{x})]^\top \in \mathbb{R}^{m_1}$, and $\b{h}(\b{x}) := [h_1(\b{x}), \dots, h_{m_2}(\b{x})]^\top \in \mathbb{R}^{m_2}$. 
Eq. (\ref{equation_Lagrangian}) is also called the Lagrange relaxation of the optimization problem (\ref{equation_optimization_problem}). 
\end{definition}

\subsubsection{Sign of Terms in Lagrangian}

In some papers, the plus sign behind $\sum_{i=1}^{m_2} \nu_i h_i(\b{x})$ is replaced with the negative sign. As $h_i(\b{x})$ is for equality constraint, its sign is not important in the Lagrangian function. However, the sign of the term $\sum_{i=1}^{m_1} \lambda_i y_i(\b{x})$ is important because the sign of inequality constraint is important. We will discuss the sign of $\{\lambda_i\}_{i=1}^{m_1}$ later.
Moreover, according to Eq. (\ref{equation_convert_maximize_to_minimize}), if the problem (\ref{equation_dual_optimization_problem}) is a maximization problem rather than minimization, the Lagrangian function is $\mathcal{L}(\b{x},\b{\lambda},\b{\nu}) = -f(\b{x}) + \sum_{i=1}^{m_1} \lambda_i y_i(\b{x}) + \sum_{i=1}^{m_2} \nu_i h_i(\b{x})$ instead of Eq. (\ref{equation_Lagrangian}).

\subsubsection{Interpretation of Lagrangian}\label{section_Lagrangian_interpretation}

We can interpret Lagrangian using penalty. As Eq. (\ref{equation_optimization_problem}) states, we want to minimize the objective function $f(\b{x})$. 
We create a cost function consisting of the objective function. 
The optimization problem has constraints so its constraints should also be satisfied while minimizing the objective function. Therefore, we penalize the cost function if the constraints are not satisfied. For this, we can add the constraints to the objective function as the regularization (or penalty) terms and we minimize the regularized cost. The dual variables $\b{\lambda}$ and $\b{\nu}$ can be seen as the regularization parameters which weight the penalties compared to the objective function $f(\b{x})$.
This regularized cost function is the Lagrangian function or the Lagrangian relaxation of the problem (\ref{equation_optimization_problem}).
Minimization of the regularized cost function minimizes the function $f(\b{x})$ while trying to satisfy the constraints. 

\subsubsection{Lagrange Dual Function}

\begin{definition}[Lagrange dual function]
The Lagrange dual function (also called the dual function) $g: \mathbb{R}^{m_1} \times \mathbb{R}^{m_2} \rightarrow \mathbb{R}$ is defined as:
\begin{equation}\label{equation_dual_function}
\begin{aligned}
g(\b{\lambda}, \b{\nu}) &:= \inf_{\b{x} \in \mathcal{D}} \mathcal{L}(\b{x},\b{\lambda},\b{\nu}) \\
&= \inf_{\b{x} \in \mathcal{D}} \Big(f(\b{x}) + \sum_{i=1}^{m_1} \lambda_i y_i(\b{x}) + \sum_{i=1}^{m_2} \nu_i h_i(\b{x})\Big).
\end{aligned}
\end{equation}
\end{definition}
Note that the dual function $g$ is a concave function. We will see later, in Section \ref{section_dual_problem}, that we maximize this concave function in a so-called dual problem.

\subsection{Primal Feasibility}

\begin{definition}[The optimal point and the optimum]
The solution of optimization problem (\ref{equation_optimization_problem}) is the optimal point denoted by $\b{x}^*$.
The minimum function from this solution, i.e., $f^* := f(\b{x}^*)$, is called the optimum function of problem (\ref{equation_optimization_problem}).
\end{definition}

The optimal point $\b{x}^*$ is one of the feasible points which minimizes function $f(.)$ with constraints in problem (\ref{equation_optimization_problem}). Hence, the optimal point is a feasible point and according to Eq. (\ref{equation_feasible_point}), we have:
\begin{align}
& y_i(\b{x}^*) \leq 0, \quad \forall i \in \{1, \ldots, m_1\}, \\
& h_i(\b{x}^*) = 0, \quad \forall i \in \{1, \ldots, m_2\}.
\end{align}
These are called the \textit{primal feasibility}. 

The optimal point $\b{x}^*$ minimizes the Lagrangian function because Lagrangian is the relaxation of optimization problem to an unconstrained problem (see Section \ref{section_Lagrangian_interpretation}). On the other hand, according to Eq. (\ref{equation_dual_function}), the dual function is the minimum of Lagrangian w.r.t. $\b{x}$. Hence, we can write the dual function as:
\begin{equation}\label{equation_dual_function_2}
\begin{aligned}
g(\b{\lambda}, \b{\nu}) \overset{(\ref{equation_dual_function})}{=} \inf_{\b{x} \in \mathcal{D}} \mathcal{L}(\b{x},\b{\lambda},\b{\nu}) = \mathcal{L}(\b{x}^*,\b{\lambda},\b{\nu}).
\end{aligned}
\end{equation}

\subsection{Dual Feasibility}

\begin{lemma}[Dual function as a lower bound]\label{lemma_dual_function_lower_bound}
If $\b{\lambda} \succeq \b{0}$, then the dual function is a lower bound for $f^*$, i.e., $g(\b{\lambda}, \b{\nu}) \leq f^*$. 
\end{lemma}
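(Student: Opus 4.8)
The plan is to establish the classical weak-duality bound by evaluating the Lagrangian at the optimal primal point $\b{x}^*$ and then invoking the definition of the dual function as an infimum. First I would recall that $\b{x}^*$ is a feasible point of problem (\ref{equation_optimization_problem}), so by primal feasibility it satisfies $y_i(\b{x}^*) \leq 0$ for all $i \in \{1, \dots, m_1\}$ and $h_i(\b{x}^*) = 0$ for all $i \in \{1, \dots, m_2\}$.

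Next I would examine the three groups of terms in $\mathcal{L}(\b{x}^*, \b{\lambda}, \b{\nu})$ separately. The objective term is exactly $f(\b{x}^*) = f^*$. For the inequality term, the hypothesis $\b{\lambda} \succeq \b{0}$ (i.e., $\lambda_i \geq 0$ for every $i$, by Definition \ref{definition_generalized_inequality_examples}) combined with $y_i(\b{x}^*) \leq 0$ gives $\lambda_i\, y_i(\b{x}^*) \leq 0$ for each $i$, hence $\sum_{i=1}^{m_1} \lambda_i y_i(\b{x}^*) \leq 0$. For the equality term, $h_i(\b{x}^*) = 0$ forces $\sum_{i=1}^{m_2} \nu_i h_i(\b{x}^*) = 0$ regardless of the sign of $\b{\nu}$. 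Combining these three observations yields the key inequality
\begin{align*}
\mathcal{L}(\b{x}^*, \b{\lambda}, \b{\nu}) = f^* + \sum_{i=1}^{m_1} \lambda_i y_i(\b{x}^*) + \sum_{i=1}^{m_2} \nu_i h_i(\b{x}^*) \leq f^*.
\end{align*}

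Finally I would close the argument using Eq. (\ref{equation_dual_function}): since $g(\b{\lambda}, \b{\nu}) = \inf_{\b{x} \in \mathcal{D}} \mathcal{L}(\b{x}, \b{\lambda}, \b{\nu})$ is the infimum over all $\b{x} \in \mathcal{D}$, it is bounded above by the Lagrangian value at the particular point $\b{x}^* \in \mathcal{D}$, giving $g(\b{\lambda}, \b{\nu}) \leq \mathcal{L}(\b{x}^*, \b{\lambda}, \b{\nu}) \leq f^*$, as required.

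There is no genuinely difficult step here; the proof is a short chain of sign bookkeeping. The one subtlety I would be careful to state correctly is the final inequality: the bound follows because an infimum is at most the value at any admissible point, so I only need $g \leq \mathcal{L}(\b{x}^*, \b{\lambda}, \b{\nu})$ and do not need to claim that $\b{x}^*$ actually minimizes the Lagrangian. The only hypothesis that does real work is $\b{\lambda} \succeq \b{0}$, which is precisely what makes the penalty term non-positive at a feasible point; if any $\lambda_i$ were negative the term $\lambda_i y_i(\b{x}^*)$ could be positive and the lower-bound conclusion would fail.
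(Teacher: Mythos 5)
Your proof is correct and follows essentially the same route as the paper: feasibility of the primal point makes the equality terms vanish and, together with $\b{\lambda} \succeq \b{0}$, makes the inequality terms non-positive, after which the infimum property of $g$ closes the argument. The only cosmetic difference is that the paper first establishes $g(\b{\lambda},\b{\nu}) \leq f(\widetilde{\b{x}})$ for an arbitrary feasible $\widetilde{\b{x}}$ and then specializes to $\b{x}^*$, whereas you evaluate directly at $\b{x}^*$; your remark that one needs only the infimum bound, not that $\b{x}^*$ minimizes the Lagrangian, is exactly the right subtlety and is also how the paper's proof proceeds.
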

\begin{proof}
Let $\lambda \succeq \b{0}$ which means $\lambda_i \geq 0, \forall i$.
Consider a feasible $\widetilde{\b{x}}$ for problem (\ref{equation_optimization_problem}). According to Eq. (\ref{equation_feasible_point}), we have:
\begin{align}
\mathcal{L}(\widetilde{\b{x}},\b{\lambda},\b{\nu}) &\overset{(\ref{equation_Lagrangian})}{=} f(\widetilde{\b{x}}) + \sum_{i=1}^{m_1} \underbrace{\lambda_i}_{\geq 0} \underbrace{y_i(\widetilde{\b{x}})}_{\leq 0} + \sum_{i=1}^{m_2} \nu_i \underbrace{h_i(\widetilde{\b{x}})}_{=0} \nonumber\\
&\leq f(\widetilde{\b{x}}). \label{equation_proofLowerBound_dualFunction_1}
\end{align}
Therefore, we have:
\begin{align*}
f(\widetilde{\b{x}}) \overset{(\ref{equation_proofLowerBound_dualFunction_1})}{\geq} \mathcal{L}(\widetilde{\b{x}}, \b{\lambda}, \b{\nu}) \geq \inf_{\b{x} \in \mathcal{D}} \mathcal{L}(\b{x}, \b{\lambda}, \b{\nu}) \overset{(\ref{equation_dual_function})}{=} g(\b{\lambda}, \b{\nu}).
\end{align*}
Hence, the dual function is a lower bound for the function of all feasible points. As the optimal point $\b{x}^*$ is a feasible point, the dual function is a lower bound for $f^*$. Q.E.D.
\end{proof}

\begin{corollary}[Nonnegativity of dual variables for inequality constraints]\label{corollary_nonnegativity_dual_variables}
From Lemma \ref{lemma_dual_function_lower_bound}, we conclude that for having the dual function as a lower bound for the optimum function, the dual variable $\{\lambda_i\}_{i=1}^{m_1}$ for inequality constraints (less than or equal to zero) should be non-negative, i.e.:
\begin{align}\label{equation_nonnegativity_dual_variables}
\b{\lambda} \succeq \b{0} \quad\text{ or }\quad \lambda_i \geq 0, \,\, \forall i \in \{1, \dots, m_1\}.
\end{align}
Note that if the inequality constraints are greater than or equal to zero, we should have $\lambda_i \leq 0, \quad \forall i$ because $y_i(\b{x}) \geq 0 \implies -y_i(\b{x}) \leq 0$. In this paper, we assume that the inequality constraints are less than or equal to zero. If some of the inequality constraints are greater than or equal to zero, we convert them to less than or equal to zero by multiplying them to $-1$.
\end{corollary}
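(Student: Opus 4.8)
The plan is to read the conclusion off directly from the proof of Lemma~\ref{lemma_dual_function_lower_bound}, isolating the single place where the sign of $\b{\lambda}$ entered. First I would recall the chain of inequalities there: for any feasible $\widetilde{\b{x}}$ we wrote $\mathcal{L}(\widetilde{\b{x}},\b{\lambda},\b{\nu}) = f(\widetilde{\b{x}}) + \sum_{i=1}^{m_1} \lambda_i y_i(\widetilde{\b{x}}) + \sum_{i=1}^{m_2} \nu_i h_i(\widetilde{\b{x}})$ and then bounded it above by $f(\widetilde{\b{x}})$. The equality terms vanish because $h_i(\widetilde{\b{x}}) = 0$ at feasible points, so the entire bound rests on the sign of the remaining sum $\sum_{i=1}^{m_1} \lambda_i y_i(\widetilde{\b{x}})$.

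The key step is then to observe that feasibility forces $y_i(\widetilde{\b{x}}) \leq 0$ for every $i$, so the product $\lambda_i y_i(\widetilde{\b{x}})$ is guaranteed nonpositive exactly when $\lambda_i \geq 0$. Thus $\b{\lambda} \succeq \b{0}$ is precisely the condition that makes each penalty term nonpositive and hence validates the underbraced inequality in Eq.~(\ref{equation_proofLowerBound_dualFunction_1}), which in turn produces $g(\b{\lambda},\b{\nu}) \leq f^*$. I would present this as the sufficiency half of the corollary, essentially a re-reading of the lemma with the role of the hypothesis $\b{\lambda}\succeq\b{0}$ made explicit.

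For the converse — that negative multipliers genuinely break the lower-bound guarantee — I would argue that if some $\lambda_j < 0$, then at a feasible point with $y_j(\widetilde{\b{x}}) < 0$ the term $\lambda_j y_j(\widetilde{\b{x}})$ becomes strictly positive, so the inequality $\mathcal{L}(\widetilde{\b{x}},\b{\lambda},\b{\nu}) \leq f(\widetilde{\b{x}})$ that underpins the whole argument can fail. The main obstacle I anticipate lies here: since $g$ is an \emph{infimum} over all $\b{x}$, one point at which the Lagrangian exceeds $f(\widetilde{\b{x}})$ does not by itself push the infimum above $f^*$. To make the necessity claim airtight I would either weaken it to ``the lower-bound \emph{certificate} supplied by Lemma~\ref{lemma_dual_function_lower_bound} requires $\b{\lambda}\succeq\b{0}$'' or exhibit an explicit small instance — for example a one-dimensional problem with a single inactive inequality — in which a negative multiplier drives $g(\b{\lambda},\b{\nu})$ strictly above $f^*$.

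Finally, I would settle the sign convention: for a constraint written as $y_i(\b{x}) \geq 0$, the rewriting $-y_i(\b{x}) \leq 0$ flips the admissible multiplier sign to $\lambda_i \leq 0$, matching the note in the statement, so after multiplying any offending constraints by $-1$ the whole discussion collapses back to the $\leq 0$ convention and the requirement $\b{\lambda}\succeq\b{0}$.
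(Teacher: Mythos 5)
Your proposal is correct and follows essentially the same route as the paper: the paper gives no separate proof of Corollary \ref{corollary_nonnegativity_dual_variables}, reading it directly off the proof of Lemma \ref{lemma_dual_function_lower_bound}, where $\lambda_i \geq 0$ combined with $y_i(\widetilde{\b{x}}) \leq 0$ is exactly what makes $\sum_{i=1}^{m_1} \lambda_i y_i(\widetilde{\b{x}}) \leq 0$ and hence $g(\b{\lambda},\b{\nu}) \leq f^*$. Your caution about the converse is also well placed, since the paper claims only sufficiency and strict necessity is false in general (e.g., minimizing $x^2$ subject to $x \leq 0$ gives $g(-1) = -1/4 \leq f^* = 0$ despite the negative multiplier), so your weakened ``certificate'' reading is the right resolution.
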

The inequalities in Eq. (\ref{equation_nonnegativity_dual_variables}) are called the \textit{dual feasibility}. 

\subsection{The Dual Problem, Weak and Strong Duality, and Slater's Condition}\label{section_dual_problem}

According to Eq. (\ref{lemma_dual_function_lower_bound}), the dual function is a lower bound for the optimum function, i.e., $g(\b{\lambda}, \b{\nu}) \leq f^*$. We want to find the best lower bound so we maximize $g(\b{\lambda}, \b{\nu})$ w.r.t. the dual variables $\b{\lambda}, \b{\nu}$. Moreover, Eq. (\ref{equation_nonnegativity_dual_variables}) says that the dual variables for inequalities must be nonnegative. Hence, we have the following optimization:
\begin{equation}\label{equation_dual_optimization_problem}
\begin{aligned}
& \underset{\b{\lambda}, \b{\nu}}{\text{maximize}}
& & g(\b{\lambda}, \b{\nu}) \\
& \text{subject to}
& & \b{\lambda} \succeq \b{0}. 
\end{aligned}
\end{equation}
The problem (\ref{equation_dual_optimization_problem}) is called the \textit{Lagrange dual optimization problem} for problem (\ref{equation_optimization_problem}). The problem (\ref{equation_optimization_problem}) is also referred to as the \textit{primal optimization problem}.
The variable of problem (\ref{equation_optimization_problem}), i.e. $\b{x}$, is called the \textit{primal variable} while the variables of problem (\ref{equation_dual_optimization_problem}), i.e. $\b{\lambda}$ and $\b{\nu}$, are called the \textit{dual variables}. 
Let the solutions of the dual problem be denoted by $\b{\lambda}^*$ and $\b{\nu}^*$. We denote $g^* := g(\b{\lambda}^*, \b{\nu}^*) = \sup_{\b{\lambda}, \b{\nu}} g$. 

\begin{definition}[Weak and strong duality]
For all convex and nonconvex problems, the optimum dual problem is a lower bound for the optimum function:
\begin{align}\label{equation_weak_duality}
g^* \leq f^* \quad \text{i.e.,} \quad g(\b{\lambda}^*, \b{\nu}^*) \leq f(\b{x}^*).
\end{align}
This is called the weak duality. 
For some optimization problems, we have strong duality which is when the optimum dual problem is equal to the optimum function:
\begin{align}\label{equation_strong_duality}
g^* = f^* \quad \text{i.e.,} \quad g(\b{\lambda}^*, \b{\nu}^*) = f(\b{x}^*).
\end{align}
The strong duality usually holds for convex optimization problems. 
\end{definition}

\begin{corollary}\label{corollary_dual_optimal_lower_bound_primal_optimal}
Eqs. (\ref{equation_weak_duality}) and (\ref{equation_strong_duality}) show that the optimum dual function, $g*$, always provides a lower-bound for the optimum primal function, $f^*$. 
\end{corollary}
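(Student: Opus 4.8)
The plan is to derive the corollary directly from Lemma \ref{lemma_dual_function_lower_bound}, which already supplies the essential content: for every dual feasible pair $(\b{\lambda}, \b{\nu})$ with $\b{\lambda} \succeq \b{0}$, we have the pointwise bound $g(\b{\lambda}, \b{\nu}) \leq f^*$. The corollary asserts only that this bound survives the maximization that defines $g^*$, so the argument is a supremum-monotonicity observation rather than a new computation. I would emphasize up front that no convexity, differentiability, or constraint qualification is needed, which is exactly why the claim is stated for all convex and nonconvex problems.

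First I would recall that, by the dual problem (\ref{equation_dual_optimization_problem}), $g^* = \sup_{\b{\lambda} \succeq \b{0},\, \b{\nu}} g(\b{\lambda}, \b{\nu})$, the supremum being taken over the dual feasible region identified in Corollary \ref{corollary_nonnegativity_dual_variables}. Lemma \ref{lemma_dual_function_lower_bound} shows that every member of the set $\{ g(\b{\lambda}, \b{\nu}) : \b{\lambda} \succeq \b{0} \}$ is at most $f^*$; equivalently, $f^*$ is an upper bound for this set. Since the supremum is by definition the \emph{least} upper bound, it cannot exceed any upper bound, and hence $g^* \leq f^*$. This is precisely the weak duality statement in Eq. (\ref{equation_weak_duality}).

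Next I would split into the two regimes named in the corollary so as to match its phrasing. In the weak duality case the inequality $g^* \leq f^*$ has just been obtained. In the strong duality case, Eq. (\ref{equation_strong_duality}) gives $g^* = f^*$, which is the boundary instance of $g^* \leq f^*$. Combining the cases, $g^* \leq f^*$ holds universally, so $g^*$ is always a lower bound for $f^*$, as claimed. The only step that warrants care—and the closest thing to an obstacle here—is the passage from the pointwise bound to the bound on the supremum; but this is simply the least-upper-bound property of $\sup$ and requires no analytic machinery, so I expect the proof to be short.
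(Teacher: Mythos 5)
Your proposal is correct, but it is routed differently from how the paper treats this statement. The paper offers no derivation at all for Corollary~\ref{corollary_dual_optimal_lower_bound_primal_optimal}: it simply reads off the conclusion from Eqs.~(\ref{equation_weak_duality}) and (\ref{equation_strong_duality}), which are themselves asserted (inside the definition of weak and strong duality) without proof. You instead go one level deeper and actually establish the inequality $g^* \leq f^*$ from Lemma~\ref{lemma_dual_function_lower_bound}: since $g(\b{\lambda}, \b{\nu}) \leq f^*$ for every dual feasible pair with $\b{\lambda} \succeq \b{0}$, the scalar $f^*$ is an upper bound on the set of attainable dual values, and the supremum in problem (\ref{equation_dual_optimization_problem}) — being the least upper bound — cannot exceed it; strong duality is then just the equality case and does not disturb the bound. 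What your approach buys is self-containedness: it closes the logical gap between Lemma~\ref{lemma_dual_function_lower_bound} and the weak-duality display, showing that Eq.~(\ref{equation_weak_duality}) is a consequence of the lemma rather than an independent assumption, and it makes explicit that no convexity or constraint qualification is needed. What the paper's shortcut buys is brevity, at the cost of leaving weak duality formally unproved in the text. One small point of care in your write-up: your argument implicitly assumes the supremum $g^*$ exists (possibly as $+\infty$); when the dual problem is unbounded above, weak duality forces $f^* = +\infty$, i.e., the primal is infeasible, which is consistent with the statement but worth a clause if you want full generality.
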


The primal optimization problem, i.e. Eq. (\ref{equation_optimization_problem}), is minimization so its cost function is like a bowl as illustrated in Fig. \ref{figure_primal_dual_problems}. 
The dual optimization problem, i.e. Eq. (\ref{equation_dual_optimization_problem}), is maximization so its cost function is like a reversed bowl as shown in Fig. \ref{figure_primal_dual_problems}.
The domains for primal and dual problems are the domain of primal variable $\b{x}$ and the domain of dual variables $\b{\lambda}$ and $\b{\nu}$, respectively. As the figure shows, the optimal $\b{x}^*$ is corresponded to the optimal $\b{\lambda}^*$ and $\b{\nu}^*$. 
As shown in the figure, there is a possible nonnegative gap between the two bowls. In the best case, this gap is zero. If the gap is zero, we have strong duality; otherwise, a weak duality exists.

\begin{figure}[!t]
\centering
\includegraphics[width=3.2in]{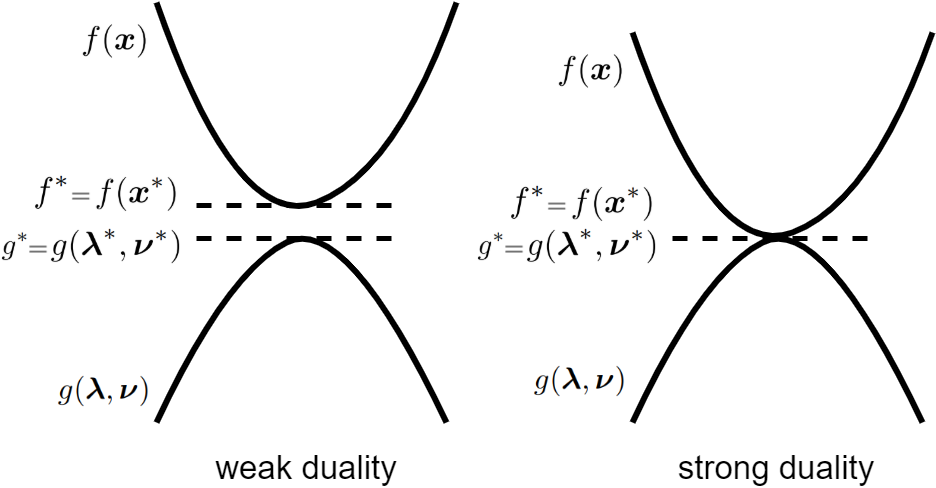}
\caption{Illustration of weak duality and strong duality.}
\label{figure_primal_dual_problems}
\end{figure}

\begin{figure}[!t]
\centering
\includegraphics[width=3.2in]{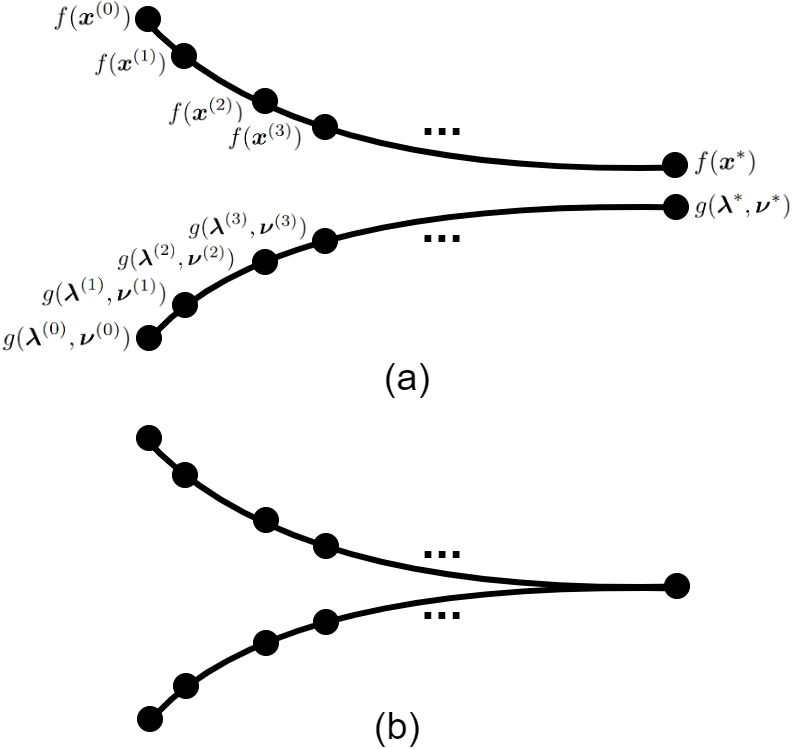}
\caption{Progress of iterative optimization: (a) gradual minimization of the primal function and maximization of dual function and (b) the primal optimal and dual optimal reach each other and become equal if strong duality holds.}
\label{figure_iterative_primal_dual}
\end{figure}

If optimization is iterative, the solution is updated iteratively until convergence. First-order and second-order numerical optimization, which we will introduce later, are iterative. 
In optimization, the series of primal optimal and dual optimal converge to the optimal solution and the dual optimal, respectively. The function values converge to the local minimum and the dual function values converge to the optimal (maximum) dual function. Let the superscript $(k)$ denotes the value of variable at iteration $k$. We have:
\begin{equation}\label{equation_iterative_optimization_series}
\begin{aligned}
& \{\b{x}^{(0)}, \b{x}^{(1)}, \b{x}^{(2)}, \dots\} \rightarrow \b{x}^*, \\
& \{\b{\nu}^{(0)}, \b{\nu}^{(1)}, \b{\nu}^{(2)}, \dots\} \rightarrow \b{\nu}^*, \\
& \{\b{\lambda}^{(0)}, \b{\lambda}^{(1)}, \b{\lambda}^{(2)}, \dots\} \rightarrow \b{\lambda}^*, \\
& f(\b{x}^{(0)}) \geq f(\b{x}^{(1)}) \geq f(\b{x}^{(2)}) \geq \dots \geq f(\b{x}^*), \\
& g(\b{\lambda}^{(0)}, \b{\nu}^{(0)}) \leq g(\b{\lambda}^{(1)}, \b{\nu}^{(1)}) \leq \dots \leq g(\b{\lambda}^*, \b{\nu}^*).
\end{aligned}
\end{equation}
Hence, the value of function goes down but the value of dual function goes up. As Fig. \ref{figure_iterative_primal_dual} depicts, they reach each other if strong duality holds; otherwise, there will be a gap between them after convergence.
Note that if the optimization problem is a convex problem, the eventually found solution is the global solution; otherwise, the solution is local. 

\begin{corollary}\label{corollary_dual_optimal_lower_bound_primal_optimal_iterative}
As every iteration of a numerical optimization must satisfy either the weak or strong duality, the optimum dual function at every iteration always provides a lower-bound for the optimum primal function at that iteration:
\begin{align}
g(\b{\lambda}^{(k)}, \b{\nu}^{(k)}) \leq f(\b{x}^{(k)}), \quad \forall k.
\end{align}
\end{corollary}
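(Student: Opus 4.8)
The plan is to recognize that this corollary is nothing more than the per-iterate specialization of the lower-bound argument already established in Lemma \ref{lemma_dual_function_lower_bound}. The key insight is that the proof of that lemma is in fact \emph{pointwise}: for \emph{any} feasible point $\widetilde{\b{x}}$ and any dual iterate with $\b{\lambda} \succeq \b{0}$, it shows $g(\b{\lambda}, \b{\nu}) \leq f(\widetilde{\b{x}})$, not merely $g(\b{\lambda}, \b{\nu}) \leq f^*$. So I would simply re-run that same chain of inequalities on the iterate $(\b{x}^{(k)}, \b{\lambda}^{(k)}, \b{\nu}^{(k)})$ for each $k$.

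First I would record the two feasibility facts maintained across the iterations in Eq. (\ref{equation_iterative_optimization_series}): the primal iterate $\b{x}^{(k)}$ is feasible, so $y_i(\b{x}^{(k)}) \leq 0$ and $h_i(\b{x}^{(k)}) = 0$ for all $i$, and the dual iterate satisfies the dual feasibility constraint of problem (\ref{equation_dual_optimization_problem}), i.e. $\b{\lambda}^{(k)} \succeq \b{0}$. Next, substituting the iterate into the Lagrangian (\ref{equation_Lagrangian}) and using these signs, every term $\lambda_i^{(k)} y_i(\b{x}^{(k)})$ is non-positive and every term $\nu_i^{(k)} h_i(\b{x}^{(k)})$ vanishes, so
\begin{align*}
\mathcal{L}(\b{x}^{(k)}, \b{\lambda}^{(k)}, \b{\nu}^{(k)}) \leq f(\b{x}^{(k)}).
\end{align*}

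Finally I would chain this with the infimum definition of the dual function. Since $g(\b{\lambda}^{(k)}, \b{\nu}^{(k)}) = \inf_{\b{x} \in \mathcal{D}} \mathcal{L}(\b{x}, \b{\lambda}^{(k)}, \b{\nu}^{(k)})$ is a lower bound over all admissible $\b{x}$, in particular it cannot exceed the value attained at $\b{x}^{(k)}$ itself, giving
\begin{align*}
g(\b{\lambda}^{(k)}, \b{\nu}^{(k)}) \leq \mathcal{L}(\b{x}^{(k)}, \b{\lambda}^{(k)}, \b{\nu}^{(k)}) \leq f(\b{x}^{(k)}),
\end{align*}
which holds for every $k$ and is exactly the claim.

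The main obstacle is not computational but conceptual: I must justify that the weak-duality inequality genuinely applies \emph{at each intermediate iterate}, which requires that the iterative scheme keep $\b{x}^{(k)}$ feasible and $\b{\lambda}^{(k)} \succeq \b{0}$ throughout. This is precisely the hypothesis encoded in the corollary's phrasing (``every iteration must satisfy either weak or strong duality'') and in the monotone convergence displayed in Eq. (\ref{equation_iterative_optimization_series}). For algorithms that pass through primal-infeasible points the bound would instead have to be read relative to the feasible iterates, so carefully flagging this feasibility assumption is the delicate part of the argument rather than any algebra.
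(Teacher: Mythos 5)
Your proof is correct and takes essentially the same route as the paper: the corollary is justified there precisely as the per-iterate application of the weak-duality bound of Lemma \ref{lemma_dual_function_lower_bound}, whose proof is pointwise in the feasible point, so specializing it to $(\b{x}^{(k)}, \b{\lambda}^{(k)}, \b{\nu}^{(k)})$ is exactly the intended argument. Your explicit flagging of the feasibility requirements $\b{x}^{(k)} \in \mathcal{D}$ with $y_i(\b{x}^{(k)}) \leq 0$, $h_i(\b{x}^{(k)}) = 0$, and $\b{\lambda}^{(k)} \succeq \b{0}$ is a sound reading of what the paper implicitly assumes in Eq. (\ref{equation_iterative_optimization_series}).
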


\begin{lemma}[Slater's condition \cite{slater1950lagrange}]
For a convex optimization problem in the form: 
\begin{equation}\label{equation_optimization_problem_inequalityConstraints_Ax_equals_b_constraint}
\begin{aligned}
& \underset{\b{x}}{\text{minimize}}
& & f(\b{x}) \\
& \text{subject to}
& & y_i(\b{x}) \leq 0, \; i \in \{1, \ldots, m_1\}, \\
& & & \b{Ax} = \b{b}, 
\end{aligned}
\end{equation}
we have strong duality if it is strictly feasible, i.e.:
\begin{equation}
\begin{aligned}
\exists \b{x} \in \textbf{int}(\mathcal{D}):\, & y_i(\b{x}) < 0, \quad \forall i \in \{1, \dots, m_1\}, \\
& \b{Ax} = \b{b}.
\end{aligned}
\end{equation}
In other words, for at least one point in the interior of domain (not on the boundary of domain), all the inequality constraints hold strictly. This is called the Slater's condition. 
\end{lemma}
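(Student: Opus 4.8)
The plan is to prove strong duality by exhibiting a dual-feasible pair $(\b{\lambda}^*, \b{\nu}^*)$ whose dual value equals the primal optimum $f^*$; since weak duality (Eq. \ref{equation_weak_duality}) already guarantees $g^* \leq f^*$, producing such a pair forces $g(\b{\lambda}^*, \b{\nu}^*) = g^* = f^*$. The engine of the argument is a separating-hyperplane construction carried out in the ``value space'' of the constraint and objective values. First I would define the convex set
\[\mathcal{A} := \{(\b{u}, \b{v}, t) \in \mathbb{R}^{m_1} \times \mathbb{R}^{m_2} \times \mathbb{R} \;:\; \exists\, \b{x} \in \mathcal{D},\; y_i(\b{x}) \leq u_i \;\forall i,\; \b{Ax} - \b{b} = \b{v},\; f(\b{x}) \leq t\}.\]
Convexity of $\mathcal{A}$ follows from the convexity of $f$ and of each $y_i$ together with the affineness of $\b{Ax} - \b{b}$: given two triples witnessed by $\b{x}_1, \b{x}_2$, their convex combination is witnessed by the corresponding combination of $\b{x}_1, \b{x}_2$, since convexity preserves the inequality directions and linearity preserves the equality.

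Next I would introduce the convex set $\mathcal{B} := \{(\b{0}, \b{0}, s) : s < f^*\}$ and observe $\mathcal{A} \cap \mathcal{B} = \varnothing$: any point of $\mathcal{A}$ with $\b{u} = \b{0}$ and $\b{v} = \b{0}$ is witnessed by a feasible $\b{x}$, whence $t \geq f(\b{x}) \geq f^*$. The separating-hyperplane theorem then furnishes a nonzero $(\b{\lambda}, \b{\nu}, \mu)$ and a scalar $\alpha$ with $\b{\lambda}^\top \b{u} + \b{\nu}^\top \b{v} + \mu t \geq \alpha$ on $\mathcal{A}$ and $\b{\lambda}^\top \b{u} + \b{\nu}^\top \b{v} + \mu t \leq \alpha$ on $\mathcal{B}$. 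Since $\mathcal{A}$ is unbounded in the directions of increasing each $u_i$ and increasing $t$, the separating functional cannot decrease along them, forcing $\b{\lambda} \succeq \b{0}$ and $\mu \geq 0$; letting $s \uparrow f^*$ in $\mathcal{B}$ pins down $\alpha = \mu f^*$, so that $f(\b{x}) + \b{\lambda}^\top \b{y}(\b{x}) + \b{\nu}^\top(\b{Ax} - \b{b}) \geq \mu f^*$ for every $\b{x} \in \mathcal{D}$ after taking $\b{u} = \b{y}(\b{x})$, $\b{v} = \b{Ax} - \b{b}$, $t = f(\b{x})$.

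The hard part is ruling out the degenerate vertical case $\mu = 0$, and this is exactly where Slater's condition enters. If $\mu = 0$, the separating inequality reads $\b{\lambda}^\top \b{y}(\b{x}) + \b{\nu}^\top(\b{Ax} - \b{b}) \geq 0$ for all $\b{x} \in \mathcal{D}$, with $(\b{\lambda}, \b{\nu}) \neq \b{0}$. Evaluating at a Slater point $\widehat{\b{x}} \in \textbf{int}(\mathcal{D})$ with $y_i(\widehat{\b{x}}) < 0$ and $\b{A}\widehat{\b{x}} = \b{b}$ yields $\b{\lambda}^\top \b{y}(\widehat{\b{x}}) \geq 0$; but each term $\lambda_i\, y_i(\widehat{\b{x}}) \leq 0$ (since $\lambda_i \geq 0$ and $y_i(\widehat{\b{x}}) < 0$), forcing every $\lambda_i = 0$, i.e. $\b{\lambda} = \b{0}$. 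This leaves $\b{\nu}^\top(\b{Ax} - \b{b}) \geq 0$ throughout $\mathcal{D}$ with $\b{\nu} \neq \b{0}$; because $\widehat{\b{x}}$ lies in the interior and satisfies the equality, small perturbations $\widehat{\b{x}} \pm \epsilon\b{d}$ remain in $\mathcal{D}$ and force $\b{A}^\top \b{\nu} = \b{0}$, which (reducing to the case of linearly independent equality constraints, i.e. $\b{A}$ of full row rank) gives $\b{\nu} = \b{0}$, contradicting nontriviality. I expect the delicate bookkeeping of this interior-perturbation step — ensuring $\b{A}^\top\b{\nu}=\b{0}$ genuinely forces $\b{\nu}=\b{0}$ rather than merely reducing the constraints — to be the principal obstacle.

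With $\mu > 0$ secured, I would divide the inequality through by $\mu$, set $\b{\lambda}^* := \b{\lambda}/\mu \succeq \b{0}$ and $\b{\nu}^* := \b{\nu}/\mu$, and obtain $f(\b{x}) + \b{\lambda}^{*\top} \b{y}(\b{x}) + \b{\nu}^{*\top}(\b{Ax} - \b{b}) \geq f^*$ for all $\b{x} \in \mathcal{D}$. Taking the infimum over $\b{x} \in \mathcal{D}$ on the left gives $g(\b{\lambda}^*, \b{\nu}^*) \geq f^*$ by the definition of the dual function (Eq. \ref{equation_dual_function}). Since $\b{\lambda}^* \succeq \b{0}$ is dual feasible, this shows $g^* \geq f^*$, and combined with weak duality $g^* \leq f^*$ we conclude $g^* = f^*$, which is the desired strong duality.
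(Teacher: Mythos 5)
The paper never proves this lemma: it states Slater's condition and defers entirely to the original reference \cite{slater1950lagrange}, and no proof appears in the appendices. So there is no "paper approach" to compare against; what you have written is the standard separating-hyperplane proof (essentially the one in {\citep[Section 5.3.2]{boyd2004convex}}), and it is correct. You build the achievable-values set $\mathcal{A}$, separate it from $\mathcal{B} = \{(\b{0},\b{0},s)\,:\,s < f^*\}$, use unboundedness of $\mathcal{A}$ in the $u_i$ and $t$ directions to force $\b{\lambda} \succeq \b{0}$ and $\mu \geq 0$, use the Slater point to kill the degenerate case $\mu = 0$, and then normalize by $\mu$ and take the infimum to get $g(\b{\lambda}^*,\b{\nu}^*) \geq f^*$, which together with weak duality, Eq. (\ref{equation_weak_duality}), yields equality (and, as a bonus, dual attainment). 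Three points to tighten before this is airtight: (i) the separation argument only gives $\alpha \geq \mu f^*$, not $\alpha = \mu f^*$ as you wrote; fortunately the inequality $\geq \alpha \geq \mu f^*$ is all you actually use. (ii) The construction of $\mathcal{B}$ tacitly assumes $f^* > -\infty$; the case $f^* = -\infty$ should be stated separately, where strong duality is immediate since weak duality forces $g \equiv -\infty$ on the dual-feasible set. (iii) The $\mu = 0$ case genuinely requires $\b{A}$ to have full row rank --- otherwise $\b{A}^\top \b{\nu} = \b{0}$ admits nonzero solutions and no contradiction follows --- and you correctly flag this and dispose of it by deleting redundant equality constraints, which is legitimate because the Slater point makes the system $\b{Ax} = \b{b}$ consistent, so dependent rows change neither the feasible set nor (after extending the reduced dual variable by zeros) the dual optimal value. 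With those caveats made explicit, your proposal is a complete, self-contained argument for a statement the paper leaves as a citation.
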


\subsection{Complementary Slackness}

Assume that the problem has strong duality, the primal optimal is $\b{x}^*$ and dual optimal variables are $\b{\lambda}^*$ and $\b{\nu}^*$. We have:
\begin{align}
f(\b{x}^*) &\overset{(\ref{equation_strong_duality})}{=} g(\b{\lambda}^*, \b{\nu}^*) \nonumber \\
&\overset{(\ref{equation_dual_function})}{=} \inf_{\b{x} \in \mathcal{D}} \Big(f(\b{x}) + \sum_{i=1}^{m_1} \lambda_i^* y_i(\b{x}) + \sum_{i=1}^{m_2} \nu_i^* h_i(\b{x})\Big) \nonumber \\
&\overset{(a)}{=} f(\b{x}^*) + \sum_{i=1}^{m_1} \lambda_i^* y_i(\b{x}^*) + \sum_{i=1}^{m_2} \nu_i^* h_i(\b{x}^*) \nonumber \\
&\overset{(b)}{=} f(\b{x}^*) + \sum_{i=1}^{m_1} \lambda_i^* y_i(\b{x}^*) \overset{(c)}{\leq} f(\b{x}^*), \label{equation_complementary_slackness_step1}
\end{align}
where $(a)$ is because $\b{x}^*$ is the primal optimal solution for problem (\ref{equation_optimization_problem}) and it minimizes the Lagrangian, $(b)$ is because $\b{x}^*$ is a feasible point and satisfies $h_i(\b{x}^*)=0$ in Eq. (\ref{equation_feasible_point}), and $(c)$ is because $\lambda_i^* \geq 0$ according to Eq. (\ref{equation_nonnegativity_dual_variables}) and the feasible $\b{x}^*$ satisfies $y_i(\b{x}^*) \leq 0$ in Eq. (\ref{equation_feasible_point}) so we have:
\begin{align}\label{equation_complementary_slackness_step2}
\lambda_i^* y_i(\b{x}^*) \leq 0, \quad \forall i \in \{1, \dots, m_1\}.
\end{align}
From Eq. (\ref{equation_complementary_slackness_step1}), we have:
\begin{align*}
&f(\b{x}^*) = f(\b{x}^*) + \sum_{i=1}^{m_1} \lambda_i^* y_i(\b{x}^*) \leq f(\b{x}^*) \\
&\implies \sum_{i=1}^{m_1} \lambda_i^* y_i(\b{x}^*) = 0 \overset{(\ref{equation_complementary_slackness_step2})}{\implies} \lambda_i^* y_i(\b{x}^*) = 0, \forall i. 
\end{align*}
Therefore, the multiplication of every optimal dual variable $\lambda_i^*$ with $y_i(.)$ of optimal primal solution $\b{x}^*$ must be zero. 
This is called the \textit{complementary slackness}:
\begin{align}
& \lambda_i^*\, y_i(\b{x}^*) = 0, \quad \forall i \in \{1, \dots, m_1\}.
\end{align}
These conditions can be restated as:
\begin{align}
&\lambda_i^* > 0 \implies y_i(\b{x}^*) = 0, \\
&y_i(\b{x}^*) < 0 \implies \lambda_i^* = 0, 
\end{align}
which means that, for an inequality constraint, if the dual optimal is nonzero, its inequality function of the primal optimal must be zero. If the inequality function of the primal optimal is nonzero, its dual optimal must be zero.

\subsection{Stationarity Condition}

As was explained before, the Lagrangian function can be interpreted as a regularized cost function to be minimized. Hence, the constrained optimization problem (\ref{equation_optimization_problem}) is converted to minimization of the Lagrangian function, Eq. (\ref{equation_Lagrangian}), which is an unconstrained optimization problem:
\begin{align}
\underset{\b{x}}{\text{minimize}}\,\, \mathcal{L}(\b{x},\b{\lambda},\b{\nu}).
\end{align}
Note that this problem is the dual function according to Eq. (\ref{equation_dual_function}). 
As this is an unconstrained problem, its optimization is easy. We can find its minimum by setting its derivative w.r.t. $\b{x}$, denoted by $\nabla_{\b{x}} \mathcal{L}$, to zero:
\begin{equation}
\begin{aligned}
&\nabla_{\b{x}} \mathcal{L}(\b{x},\b{\lambda},\b{\nu}) = 0 \overset{(\ref{equation_Lagrangian})}{\implies} \\
& \nabla_{\b{x}} f(\b{x}) + \sum_{i=1}^{m_1} \lambda_i \nabla_{\b{x}} y_i(\b{x}) + \sum_{i=1}^{m_2} \nu_i \nabla_{\b{x}} h_i(\b{x}) = 0.
\end{aligned}
\end{equation}
This equation is called the \textit{stationarity condition} because this shows that the gradient of Lagrangian w.r.t. $\b{x}$ should vanish to zero (n.b. a stationary point of a function is a point where the derivative of function is zero). This derivative holds for all dual variables and not just for the optimal dual variables. 
We can claim that the gradient of Lagrangian w.r.t. $\b{x}$ should vanish to zero because the dual function, defined in Eq. (\ref{equation_dual_function}), should exist.  

\subsection{KKT Conditions}

We derived the primal feasibility, dual feasibility, complementary slackness, and stationarity condition. These four conditions are called the \textit{Karush-Kuhn-Tucker (KKT) conditions} \cite{karush1939minima,kuhn1951nonlinear}. 
The primal optimal variable $\b{x}^*$ and the dual optimal variables $\b{\lambda}^* = [\lambda_1^*, \dots, \lambda_{m_1}^*]^\top$, $\b{\nu}^* = [\nu_1^*, \dots, \nu_{m_2}^*]^\top$ must satisfy the KKT conditions. 
We summarize the KKT conditions in the following: 
\begin{enumerate}
\item Stationarity condition:
\begin{equation}\label{equation_stationarity_condition}
\begin{aligned}
\nabla_{\b{x}} \mathcal{L}(\b{x},\b{\lambda},\b{\nu}) = &\nabla_{\b{x}} f(\b{x}) + \sum_{i=1}^{m_1} \lambda_i \nabla_{\b{x}} y_i(\b{x}) \\
&+ \sum_{i=1}^{m_2} \nu_i \nabla_{\b{x}} h_i(\b{x}) = 0.
\end{aligned}
\end{equation}
\item Primal feasibility:
\begin{align}
& y_i(\b{x}^*) \leq 0, \quad \forall i \in \{1, \ldots, m_1\}, \\
& h_i(\b{x}^*) = 0, \quad \forall i \in \{1, \ldots, m_2\}.
\end{align}
\item Dual feasibility:
\begin{align}\label{equation_dual_constraints}
\b{\lambda} \succeq \b{0} \quad\text{ or }\quad \lambda_i \geq 0, \,\, \forall i \in \{1, \dots, m_1\}.
\end{align}
\item Complementary slackness:
\begin{align}
\lambda_i^*\, y_i(\b{x}^*) = 0, \quad \forall i \in \{1, \dots, m_1\}.
\end{align}
\end{enumerate}
As listed above, KKT conditions impose constraints on the optimal dual variables of inequality constraints because the sign of inequalities are important. 

Recall the dual problem (\ref{equation_dual_optimization_problem}). The constraint in this problem is already satisfied by the dual feasibility in the KKT conditions. Hence, we can ignore the constraint of the dual problem (as it is automatically satisfied by dual feasibility):
\begin{equation}\label{equation_dual_optimization_problem_unconstrained}
\begin{aligned}
& \underset{\b{\lambda}, \b{\nu}}{\text{maximize}}
& & g(\b{\lambda}, \b{\nu}), 
\end{aligned}
\end{equation}
which should give us $\b{\lambda}^*$, $\b{\nu}^*$, and $g^* = g(\b{\lambda}^*, \b{\nu}^*)$.
This is an unconstrained optimization problem and for solving it, we should set the derivative of $g(\b{\lambda}, \b{\nu})$ w.r.t. $\b{\lambda}$ and $\b{\nu}$ to zero:
\begin{align}
& \nabla_{\b{\lambda}} g(\b{\lambda}, \b{\nu}) = 0 \overset{(\ref{equation_dual_function_2})}{\implies} \nabla_{\b{\lambda}} \mathcal{L}(\b{x}^*,\b{\lambda},\b{\nu}) = 0. \label{equation_derivative_Lagrangian_wrt_lambda_zero} \\
& \nabla_{\b{\nu}} g(\b{\lambda}, \b{\nu}) = 0 \overset{(\ref{equation_dual_function_2})}{\implies} \nabla_{\b{\nu}} \mathcal{L}(\b{x}^*,\b{\lambda},\b{\nu}) = 0. \label{equation_derivative_Lagrangian_wrt_nu_zero}
\end{align}
Note that setting the derivatives of Lagrangian w.r.t. dual variables always gives back the corresponding constraints in the primal optimization problem. 
Eqs. (\ref{equation_stationarity_condition}), (\ref{equation_derivative_Lagrangian_wrt_lambda_zero}), and (\ref{equation_derivative_Lagrangian_wrt_nu_zero}) state that the primal and dual residuals must be zero. 

Finally, Eqs. (\ref{equation_dual_function}) and (\ref{equation_dual_optimization_problem_unconstrained}) can be summarized into the following max-min optimization problem:
\begin{align}
\sup_{\b{\lambda}, \b{\nu}}\, g(\b{\lambda}, \b{\nu}) \overset{(\ref{equation_dual_function})}{=} \sup_{\b{\lambda}, \b{\nu}}\, \inf_{\b{x}}\, \mathcal{L}(\b{x},\b{\lambda}, \b{\nu}) = \mathcal{L}(\b{x}^*,\b{\lambda}^*, \b{\nu}^*).
\end{align}

The reason for the name KKT is as follows \cite{kjeldsen2000contextualized}. In 1952, Kuhn and Tucker published an important paper proposing the conditions \cite{kuhn1951nonlinear}. However, later it was found out that there is a master's these by Karush, in 1939, at the University of Chicago, Illinois \cite{karush1939minima}. That thesis had also proposed the conditions; however, researchers including Kuhn and Tucker were not aware of that thesis. Therefore, these conditions were named after all three of them. 

\subsection{Solving Optimization by Method of Lagrange Multipliers}\label{section_method_of_multipliers}

We can solve the optimization problem (\ref{equation_optimization_problem}) using duality and KKT conditions. 
This technique is also called the \textit{method of Lagrange multipliers}. 
For this, we should do the following steps:
\begin{enumerate}
\item We write the Lagrangian as Eq. (\ref{equation_Lagrangian}).
\item We consider the dual function defined in Eq. (\ref{equation_dual_function}) and we solve it: 
\begin{align}\label{equation_KKT_x_dagger}
\b{x}^\dagger := \arg \min_{\b{x}}\, \mathcal{L}(\b{x},\b{\lambda},\b{\nu}). 
\end{align}
It is an unconstrained problem and according to Eqs. (\ref{equation_dual_function}) and (\ref{equation_stationarity_condition}), we solve this problem by taking the derivative of Lagrangian w.r.t. $\b{x}$ and setting it to zero, i.e., $\nabla_{\b{x}}\mathcal{L}(\b{x},\b{\lambda},\b{\nu}) \overset{\text{set}}{=} 0$.
This gives us the dual function, according to Eq. (\ref{equation_Lagrangian}):
\begin{align}
g(\b{\lambda}, \b{\nu}) = \mathcal{L}(\b{x}^\dagger,\b{\lambda},\b{\nu}). 
\end{align}

\item We consider the dual problem, defined in Eq. (\ref{equation_dual_optimization_problem}) which is simplified to Eq. (\ref{equation_dual_optimization_problem_unconstrained}) because of Eq. (\ref{equation_dual_constraints}). This gives us the optimal dual variables $\b{\lambda}^*$ and $\b{\nu}^*$:
\begin{align}
\b{\lambda}^*, \b{\nu}^* := \arg \max_{\b{\lambda}, \b{\nu}}\, g(\b{\lambda},\b{\nu}). 
\end{align}
It is an unconstrained problem and according to Eqs. (\ref{equation_derivative_Lagrangian_wrt_lambda_zero}) and (\ref{equation_derivative_Lagrangian_wrt_nu_zero}), we solve this problem by taking the derivative of dual function w.r.t. $\b{\lambda}$ and $\b{\nu}$ and setting them to zero, i.e., $\nabla_{\b{\lambda}}g(\b{\lambda},\b{\nu}) \overset{\text{set}}{=} 0$ and $\nabla_{\b{\nu}}g(\b{\lambda},\b{\nu}) \overset{\text{set}}{=} 0$.
The optimum dual value is obtained as:
\begin{align}
g^* = \max_{\b{\lambda}, \b{\nu}}\, g(\b{\lambda},\b{\nu}) = g(\b{\lambda}^*,\b{\nu}^*).
\end{align}
\item We put the optimal dual variables $\b{\lambda}^*$ and $\b{\nu}^*$ in Eq. (\ref{equation_stationarity_condition}) to find the optimal primal variable:
\begin{align}
\b{x}^* := \arg \min_{\b{x}}\, \mathcal{L}(\b{x},\b{\lambda}^*,\b{\nu}^*). 
\end{align}
It is an unconstrained problem and we solve this problem by taking the derivative of Lagrangian at optimal dual variables w.r.t. $\b{x}$ and setting it to zero, i.e., $\nabla_{\b{x}}\mathcal{L}(\b{x},\b{\lambda}^*,\b{\nu}^*) \overset{\text{set}}{=} 0$.
The optimum primal value is obtained as:
\begin{align}
f^* = \min_{\b{x}}\, \mathcal{L}(\b{x},\b{\lambda}^*,\b{\nu}^*) = \mathcal{L}(\b{x}^*,\b{\lambda}^*,\b{\nu}^*).
\end{align}
\end{enumerate}

\section{First-Order Optimization: Gradient Methods}\label{section_first_order_methods}

\subsection{Gradient Descent}\label{section_gradient_descent}

\textit{Gradient descent} is one of the fundamental first-order methods. It was first suggested by Cauchy in 1874 \cite{lemarechal2012cauchy} and Hadamard in 1908 \cite{hadamard1908memoire} and its convergence was later analyzed in \cite{curry1944method}. In the following, we introduce this method. 

\subsubsection{Step of Update}\label{section_GD_step_update}

Consider the unconstrained optimization problem (\ref{equation_optimization_problem_unconstrained}).
Here, we denote $\b{x}^* := \arg \min_{\b{x}} f(\b{x})$ and $f^* := \min_{\b{x}} f(\b{x}) = f(\b{x}^*)$.
In numerical optimization for unconstrained optimization, we start with a random feasible initial point and iteratively update it by step $\Delta \b{x}$: 
\begin{align}\label{equation_update_point_numerical_optimization}
\b{x}^{(k+1)} := \b{x}^{(k)} + \Delta \b{x},
\end{align}
until we converge to (or get sufficiently close to) the desired optimal point $\b{x}^*$. 
Note that the step $\Delta\b{x}$ is also denoted by $\b{p}$ in the literature, i.e., $\b{p} := \Delta\b{x}$.
Let the function $f(.)$ be differentiable and its gradient is $L$-smooth. 
If we set $\b{x} = \b{x}^{(k)}$ and $\b{y} = \b{x}^{(k+1)} = \b{x}^{(k)} + \Delta \b{x}$ in Eq. (\ref{equation_fundamental_theorem_calculus_Lipschitz}), we have:
\begin{align}
&f(\b{x}^{(k)} + \Delta \b{x}) \leq f(\b{x}^{(k)}) + \nabla f(\b{x}^{(k)})^\top \Delta \b{x} + \frac{L}{2} \|\Delta \b{x}\|_2^2 \nonumber\\
&\implies f(\b{x}^{(k)} + \Delta \b{x}) - f(\b{x}^{(k)}) \nonumber\\
&~~~~~~~~~~~~~~~~~~~~~~~~~~~\leq \nabla f(\b{x}^{(k)})^\top \Delta \b{x} + \frac{L}{2} \|\Delta \b{x}\|_2^2. \label{equation_fundamental_theorem_calculus_Lipschitz_GD}
\end{align}
Until reaching the minimum, we want to decrease the cost function $f(.)$ in every iteration; hence, we desire:
\begin{align}\label{equation_GD_decrease_cost_function}
f(\b{x}^{(k)} + \Delta \b{x}) - f(\b{x}^{(k)}) < 0.
\end{align}
According to Eq. (\ref{equation_fundamental_theorem_calculus_Lipschitz_GD}), one way to achieve Eq. (\ref{equation_GD_decrease_cost_function}) is:
\begin{align*}
\nabla f(\b{x}^{(k)})^\top \Delta \b{x} + \frac{L}{2} \|\Delta \b{x}\|_2^2 < 0.
\end{align*}
Hence, we should minimize $\nabla f(\b{x}^{(k)})^\top \Delta \b{x} + \frac{L}{2} \|\Delta \b{x}\|_2^2$ w.r.t. $\Delta \b{x}$:
\begin{align}\label{equation_GD_min_RHS_of_corollary_fundamental}
\underset{\Delta\b{x}}{\text{minimize}}\,\, \nabla f(\b{x}^{(k)})^\top \Delta \b{x} + \frac{L}{2} \|\Delta \b{x}\|_2^2.
\end{align}
This function is convex w.r.t. $\Delta \b{x}$ and we can optimize it by setting its derivative to zero:
\begin{align}
&\frac{\partial }{\partial \Delta \b{x}} (\nabla f(\b{x}^{(k)})^\top \Delta \b{x} + \frac{L}{2} \|\Delta \b{x}\|_2^2) = \nabla f(\b{x}^{(k)}) + L \Delta \b{x} \nonumber\\
&\overset{\text{set}}{=} \b{0} \implies \Delta \b{x} = -\frac{1}{L} \nabla f(\b{x}^{(k)}). \label{equation_GD_step_by_L}
\end{align}
Using Eq. (\ref{equation_GD_step_by_L}) in Eq. (\ref{equation_fundamental_theorem_calculus_Lipschitz_GD}) gives:
\begin{align*}
f(\b{x}^{(k)} + \Delta \b{x}) - f(\b{x}^{(k)}) \leq -\frac{1}{2L} \|\nabla f(\b{x}^{(k)})\|_2^2 \leq 0,
\end{align*}
which satisfies Eq. (\ref{equation_GD_decrease_cost_function}).
Eq. (\ref{equation_GD_step_by_L}) means that it is better to move toward a scale of minus gradient for updating the solution. This inspires the name of algorithm which is \textit{gradient descent}. 

The problem is that often we either do not know the Lipschitz constant $L$ or it is hard to compute. Hence, rather than Eq. (\ref{equation_GD_step_by_L}), we use:
\begin{align}\label{equation_GD_step_by_eta}
\Delta \b{x} = -\eta \nabla f(\b{x}^{(k)}), \text{ i.e., } \b{x}^{(k+1)} := \b{x}^{(k)} - \eta \nabla f(\b{x}^{(k)}),
\end{align}
where $\eta > 0$ is the step size, also called the learning rate in data science literature. 
Note that if the optimization problem is maximization rather than minimization, the step should be $\Delta \b{x} = \eta \nabla f(\b{x}^{(k)})$ rather than Eq. (\ref{equation_GD_step_by_eta}). In that case, the name of method is \textit{gradient ascent}. 

Using Eq. (\ref{equation_GD_step_by_eta}) in Eq. (\ref{equation_fundamental_theorem_calculus_Lipschitz_GD}) gives:
\begin{align}
f(\b{x}^{(k)} + \Delta \b{x}) &- f(\b{x}^{(k)}) \nonumber\\
&\leq -\eta \|\nabla f(\b{x}^{(k)})\|_2^2 + \frac{L}{2} \eta^2 \|\nabla f(\b{x}^{(k)})\|_2^2 \label{equation_GD_step_size_eta_L_mid1}\\
&= \eta (\frac{L}{2} \eta - 1) \|\nabla f(\b{x}^{(k)})\|_2^2 \nonumber
\end{align}
If $\b{x}^{(k)}$ is not a stationary point, we have $\|\nabla f(\b{x}^{(k)})\|_2^2 > 0$. 
Noticing $\eta > 0$, for satisfying Eq. (\ref{equation_GD_decrease_cost_function}), we must set:
\begin{align}\label{equation_GD_step_size_eta_L}
\frac{L}{2} \eta - 1 < 0 \implies \eta < \frac{2}{L}.
\end{align}
On the other hand, we can minimize Eq. (\ref{equation_GD_step_size_eta_L_mid1}) by setting its derivative w.r.t. $\eta$ to zero:
\begin{align*}
&\frac{\partial }{\partial \eta} (-\eta \|\nabla f(\b{x}^{(k)})\|_2^2 + \frac{L}{2} \eta^2 \|\nabla f(\b{x}^{(k)})\|_2^2) \\
&= -\|\nabla f(\b{x}^{(k)})\|_2^2 + L \eta \|\nabla f(\b{x}^{(k)})\|_2^2 \\
&= (-1+L \eta) \|\nabla f(\b{x}^{(k)})\|_2^2 \overset{\text{set}}{=} 0 \implies \eta = \frac{1}{L}.
\end{align*}
If we set:
\begin{align}\label{equation_GD_step_size_eta_L_2}
\eta < \frac{1}{L},
\end{align}
then Eq. (\ref{equation_GD_step_size_eta_L_mid1}) becomes:
\begin{align}
&f(\b{x}^{(k)} + \Delta \b{x}) - f(\b{x}^{(k)}) \nonumber \\
&\leq -\frac{1}{L} \|\nabla f(\b{x}^{(k)})\|_2^2 + \frac{1}{2L} \|\nabla f(\b{x}^{(k)})\|_2^2 \nonumber \\
&= -\frac{1}{2L}\|\nabla f(\b{x}^{(k)})\|_2^2 < 0 \nonumber \\
&\implies f(\b{x}^{(k+1)}) \leq f(\b{x}^{(k)}) -\frac{1}{2L}\|\nabla f(\b{x}^{(k)})\|_2^2.
\label{equation_GD_decrease_cost}
\end{align}
Eq. (\ref{equation_GD_step_size_eta_L_2}) means that there should be an upper-bound, dependent on the Lipschitz constant, on the step size. Hence, $L$ is still required. 
Eq. (\ref{equation_GD_decrease_cost}) shows that every iteration of gradient descent decreases the cost function:
\begin{align}
f(\b{x}^{(k+1)}) \leq f(\b{x}^{(k)}),
\end{align}
and the amount of this decrease depends on the norm of gradient at that iteration. 
In conclusion, the series of solutions converges to the optimal solution while the function value decreases iteratively until the local minimum:
\begin{align*}
& \{\b{x}^{(0)}, \b{x}^{(1)}, \b{x}^{(2)}, \dots\} \rightarrow \b{x}^*, \\
& f(\b{x}^{(0)}) \geq f(\b{x}^{(1)}) \geq f(\b{x}^{(2)}) \geq \dots \geq f(\b{x}^*).
\end{align*}
If the optimization problem is a convex problem, the solution is the global solution; otherwise, the solution is local. 

\subsubsection{Line-Search}\label{section_GD_line_search}

As was shown in Section \ref{section_GD_step_update}, the step size of gradient descent requires knowledge of the Lipschitz constant for the smoothness of gradient. Hence, we can find the suitable step size $\eta$ by a search which is named the \textit{line-search}. In line-search of every optimization iteration, we start with $\eta=1$ and halve it, $\eta \gets \eta/2$, if it does not satisfy Eq. (\ref{equation_GD_decrease_cost_function}) with step $\Delta \b{x} = -\eta \nabla f(\b{x}^{(k)})$:
\begin{align}\label{equation_GD_line_search_condition}
f(\b{x}^{(k)} -\eta \nabla f(\b{x}^{(k)})) < f(\b{x}^{(k)}).
\end{align}
This halving step size is repeated until this equation is satisfied, i.e., until we have a decrease in the objective function. Note that this decrease will happen when the step size becomes small enough to satisfy Eq. (\ref{equation_GD_step_size_eta_L_2}). 
The algorithm of gradient descent with line-search is shown in Algorithm \ref{algorithm_line_search}.
As this algorithm shows, line-search has its own internal iterations inside every iteration of gradient descent. 

\begin{lemma}[Time complexity of line-search]\label{lemma_time_complexity_line_search}
In the worst-case, line-search takes $(\log L / \log 2)$ iterations until Eq. (\ref{equation_GD_line_search_condition}) is satisfied. 
\end{lemma}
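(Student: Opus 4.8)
The plan is to track the step size $\eta$ across the internal halving iterations of line-search and then invoke the sufficient descent condition established earlier in Eq. (\ref{equation_GD_step_size_eta_L_2}). First I would note that line-search initializes $\eta = 1$ and, whenever the decrease condition Eq. (\ref{equation_GD_line_search_condition}) fails, replaces $\eta$ by $\eta/2$. Consequently, after $k$ halvings the step size is exactly $\eta = 2^{-k}$, so the whole analysis reduces to counting how many halvings are needed before termination is guaranteed.

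Next I would recall from Eq. (\ref{equation_GD_step_size_eta_L_2}) that a strict decrease of the objective, and hence satisfaction of Eq. (\ref{equation_GD_line_search_condition}), is guaranteed as soon as $\eta < 1/L$. Termination is therefore certain once $2^{-k} < 1/L$. Solving this inequality gives $2^{-k} < 1/L \iff 2^{k} > L \iff k > \log L / \log 2$, where I have used $\log_2 L = \log L / \log 2$. Taking the smallest such integer $k$, the worst-case number of halvings is $\lceil \log L / \log 2 \rceil$, which is what the statement reports (up to the rounding implicit in writing it as $\log L / \log 2$).

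The main subtlety, which I would emphasize explicitly, is the direction of the bound. Equation (\ref{equation_GD_step_size_eta_L_2}) provides only a \emph{sufficient} condition for descent, so for a particular function the loop may exit well before $\eta$ drops below $1/L$. The quantity $\log L / \log 2$ is thus genuinely a worst-case count: it corresponds to the least favorable situation in which descent is achieved only once $\eta$ crosses the threshold $1/L$, giving the claimed upper bound rather than an exact count. I would also flag the minor edge case $L \leq 1$, where $\eta = 1$ already satisfies $\eta < 1/L$ and no halving is required; there $\log L / \log 2 \leq 0$ and the bound should be read as zero iterations, so the claim is understood to describe the regime $L > 1$. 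None of the steps requires heavy computation; the only care needed is in stating precisely that $\eta < 1/L$ is sufficient (not necessary) for Eq. (\ref{equation_GD_line_search_condition}), which is exactly what converts the geometric decay of $\eta$ into the logarithmic iteration count.
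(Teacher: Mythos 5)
Your proof is correct and follows essentially the same route as the paper's: track the geometric decay $\eta = 2^{-k}$ after $k$ halvings, invoke the sufficient-descent threshold $\eta < 1/L$ from Eq. (\ref{equation_GD_step_size_eta_L_2}), and solve $2^{-k} < 1/L$ to get $k > \log L/\log 2$. Your added remarks on sufficiency versus necessity (making the ``worst-case'' qualifier precise) and on the edge case $L \leq 1$ are refinements the paper omits, but they do not change the argument.
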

\begin{proof}
Proof is available in Appendix \ref{app_time_complexity_line_search}.
\end{proof}

\subsubsection{Backtracking Line-Search}\label{section_GD_Armijo_line_search}


A more sophisticated line-search method is the \textit{Armijo line-search} \cite{armijo1966minimization}, also called the \textit{backtracking line-search}. Rather than Eq. (\ref{equation_GD_line_search_condition}), it checks if the cost function is \textit{sufficiently} decreased:
\begin{align}\label{equation_general_Armijo_line_search_condition}
& f(\b{x}^{(k)} + \b{p}) \leq f(\b{x}^{(k)}) +  c\, \b{p}^\top f(\b{x}^{(k)}),
\end{align}
where $c \in (0. 0.5]$ is the parameter of Armijo line-search and $\b{p} = \Delta\b{x}$ is the search direction for update.
The value of $c$ should be small, e.g., $c = 10^{-4}$ \cite{nocedal2006numerical}.
This condition is called the \textit{Armijo condition} or the \textit{Armijo-Goldstein condition}.
In gradient descent, the search direction is $\b{p} = \Delta\b{x} = -\eta \nabla f(\b{x}^{(k)})$ according to Eq. (\ref{equation_GD_step_by_eta}). Hence, for gradient descent, it checks:
\begin{align}\label{equation_GD_Armijo_line_search_condition}
& f(\b{x}^{(k)} -\eta \nabla f(\b{x}^{(k)})) \leq f(\b{x}^{(k)}) - \eta\, \gamma \|\nabla f(\b{x}^{(k)})\|_2^2.
\end{align}
The algorithm of gradient descent with Armijo line-search is shown in Algorithm \ref{algorithm_line_search}.
Note that we can have more sophisticated line-search with \textit{Wolfe conditions} \cite{wolfe1969convergence}. This will be introduced in Section \ref{section_Wolfe_conditions}.

\SetAlCapSkip{0.5em}
\IncMargin{0.8em}
\begin{algorithm2e}[!t]
\DontPrintSemicolon
    Initialize $\b{x}^{(0)}$\;
    \For{iteration $k = 0, 1, \dots$}{
        Initialize $\eta := 1$\;
        \For{iteration $\tau = 1, 2, \dots$}{
            Check Eq. (\ref{equation_GD_line_search_condition}) or (\ref{equation_GD_Armijo_line_search_condition})\; 
            \uIf{not satisfied}{
                $\eta \gets \frac{1}{2} \times \eta$\;
            }
            \Else{
                $\b{x}^{(k+1)} := \b{x}^{(k)} -\eta \nabla f(\b{x}^{(k)})$\;
                \textbf{break} the loop\;
            }
        }
        Check the convergence criterion\;
        \If{converged}{
            \textbf{return} $\b{x}^{(k+1)}$\;
        }
    }
\caption{Gradient descent with line search}\label{algorithm_line_search}
\end{algorithm2e}
\DecMargin{0.8em}

\subsubsection{Convergence Criterion}\label{section_convergence_criterion}

For all numerical optimization methods including gradient descent, there exist several methods for convergence criterion to stop updating the solution and terminate optimization. Some of them are:
\begin{itemize}
\item Small norm of gradient: $\|\nabla f(\b{x}^{(k+1)})\|_2 \leq \epsilon$ where $\epsilon$ is a small positive number. The reason for this criterion is the first-order optimality condition (see Lemma \ref{lemma_first_order_optimality_condition}).
\item Small change of cost function: $|f(\b{x}^{(k+1)}) - f(\b{x}^{(k)})| \leq \epsilon$. 
\item Small change of gradient of function: $|\nabla f(\b{x}^{(k+1)}) - \nabla f(\b{x}^{(k)})| \leq \epsilon$. 
\item Reaching maximum desired number of iterations, denoted by $\max_k$: $k+1 < \max_k$. 
\end{itemize}

\subsubsection{Convergence Analysis for Gradient Descent}

We showed in Eq. (\ref{equation_GD_decrease_cost}) that the cost function value is decreased by gradient descent iterations. The following theorem provides the convergence rate of gradient descent. 
\begin{theorem}[Convergence rate and iteration complexity of gradient descent]\label{theorem_GD_convergence_rate}
Consider a differentiable function $f(.)$, with domain $\mathcal{D}$, whose gradient is $L$-smooth (see Eq. (\ref{equation_gradient_L_smooth})).
Starting from the initial point $\b{x}^{(0)}$, after $t$ iterations of gradient descent, we have:
\begin{align}\label{equation_GD_upperbound_min_norm_gradient}
\min_{0\leq k \leq t} \|\nabla f(\b{x}^{(k)})\|_2^2 \leq \frac{2 L (f(\b{x}^{(0)}) - f^*)}{t+1},
\end{align}
where $f^*$ is the minimum of cost function. In other words, after $t$ iterations, we have:
\begin{align}\label{equation_GD_norm_gradient_sublinear_rate}
\exists\, \b{x}^{(k)}: \|\nabla f(\b{x}^{(k)})\|_2^2 = \mathcal{O}(\frac{1}{t}),
\end{align}
which means the squared norm of gradient has sublinear convergence (see Definition \ref{definition_convergence_rate}) in gradient descent. 
Moreover, after:
\begin{align}\label{equation_GD_t_lowerbound_for_convergence}
t \geq \frac{2 L (f(\b{x}^{(0)}) - f^*)}{\epsilon} - 1,
\end{align}
iterations, gradient descent is guaranteed to satisfy $\|\nabla f(\b{x}^{(k)})\|_2^2 \leq \epsilon$. 
Hence, the iteration complexity of gradient descent is $\mathcal{O}(1/\epsilon)$. 
\end{theorem}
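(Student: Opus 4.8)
The plan is to build the entire argument on the single-iteration descent guarantee already established in Eq. (\ref{equation_GD_decrease_cost}), namely $f(\b{x}^{(k+1)}) \leq f(\b{x}^{(k)}) - \frac{1}{2L}\|\nabla f(\b{x}^{(k)})\|_2^2$, which holds for the step size $\eta = 1/L$ used there. First I would rearrange this inequality to isolate the gradient term, writing $\frac{1}{2L}\|\nabla f(\b{x}^{(k)})\|_2^2 \leq f(\b{x}^{(k)}) - f(\b{x}^{(k+1)})$, so that each squared gradient norm is controlled by the per-step decrease in the objective value.

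The key step is to sum this inequality over $k = 0, 1, \dots, t$. The right-hand side telescopes, collapsing to $f(\b{x}^{(0)}) - f(\b{x}^{(t+1)})$, and then the trivial bound $f(\b{x}^{(t+1)}) \geq f^*$ yields $\frac{1}{2L}\sum_{k=0}^{t}\|\nabla f(\b{x}^{(k)})\|_2^2 \leq f(\b{x}^{(0)}) - f^*$. This is the heart of the proof: it converts a statement about individual iterations into a uniform bound on the accumulated gradient energy, with the crucial feature that the right-hand side is a fixed, finite quantity independent of $t$.

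Next I would pass from the sum to the minimum by the elementary observation that the smallest of $t+1$ nonnegative terms is at most their average, i.e. $(t+1)\min_{0\leq k\leq t}\|\nabla f(\b{x}^{(k)})\|_2^2 \leq \sum_{k=0}^{t}\|\nabla f(\b{x}^{(k)})\|_2^2$. Combining this with the telescoped bound and solving for the minimum delivers Eq. (\ref{equation_GD_upperbound_min_norm_gradient}) directly. The sublinear rate of Eq. (\ref{equation_GD_norm_gradient_sublinear_rate}) is then immediate, since the right-hand side is $\mathcal{O}(1/t)$. Finally, to extract the iteration complexity I would demand that the bound itself fall below $\epsilon$, that is $\frac{2L(f(\b{x}^{(0)}) - f^*)}{t+1} \leq \epsilon$, and solve for $t$ to recover Eq. (\ref{equation_GD_t_lowerbound_for_convergence}) and the resulting $\mathcal{O}(1/\epsilon)$ complexity.

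As for the main obstacle: there is little analytical difficulty here, and the argument is essentially bookkeeping once Eq. (\ref{equation_GD_decrease_cost}) is in hand; the telescoping and the min-versus-average estimate are both routine. The one conceptual point worth stressing is that the guarantee is on $\min_{0\leq k\leq t}\|\nabla f(\b{x}^{(k)})\|_2^2$ rather than on $\|\nabla f(\b{x}^{(t)})\|_2^2$ or on the function-value gap $f(\b{x}^{(t)}) - f^*$. For a general, possibly non-convex, $L$-smooth function this is the natural and strongest claim available, since without a convexity assumption gradient descent can only be guaranteed to approach a stationary point, not a global minimizer.
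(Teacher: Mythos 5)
Your proposal is correct and follows essentially the same route as the paper's own proof in Appendix \ref{app_GD_convergence_rate}: rearranging Eq. (\ref{equation_GD_decrease_cost}), summing and telescoping over $k = 0, \dots, t$, bounding $f(\b{x}^{(t+1)})$ below by $f^*$, passing from the sum to the minimum via the min-versus-average estimate, and solving the resulting bound for $t$ to obtain the $\mathcal{O}(1/\epsilon)$ iteration complexity. No gaps; the argument is complete as outlined.
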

\begin{proof}
Proof is available in Appendix \ref{app_GD_convergence_rate}. 
\end{proof}

The above theorem provides the convergence rate of gradient descent for a general function. If the function is convex, we can simplify this convergence rate further, as stated in the following. 

\begin{theorem}[Convergence rate of gradient descent for convex functions]\label{theorem_GD_convergence_rate_convexFunction}
Consider a convex and differentiable function $f(.)$, with domain $\mathcal{D}$, whose gradient is $L$-smooth (see Eq. (\ref{equation_gradient_L_smooth})).
Starting from the initial point $\b{x}^{(0)}$, after $t$ iterations of gradient descent, we have:
\begin{align}\label{equation_GD_upperbound_min_norm_gradient_convexFunction}
f(\b{x}^{(t+1)}) - f^* \leq \frac{2 L \|\b{x}^{(0)} - \b{x}^*\|_2^2}{t+1},
\end{align}
where $f^*$ is the minimum of cost function and $\b{x}^*$ is the minimizer. In other words, after $t$ iterations, we have:
\begin{align}\label{equation_GD_norm_gradient_sublinear_rate_convexFunction}
f(\b{x}^{(t)}) - f^* = \mathcal{O}(\frac{1}{t}),
\end{align}
which means the distance of convex function value to its optimum has sublinear convergence (see Definition \ref{definition_convergence_rate}) in gradient descent. 
The iteration complexity is the same as Eq. (\ref{equation_GD_t_lowerbound_for_convergence}).
\end{theorem}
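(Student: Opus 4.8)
The plan is to combine the per-iteration descent inequality already established in Eq.~(\ref{equation_GD_decrease_cost}) with the first-order characterization of convexity, and then telescope everything against the squared distance to the optimum. Throughout I take the step size at its boundary value $\eta = 1/L$ (consistent with Eq.~(\ref{equation_GD_step_size_eta_L_2})) so that the update is $\b{x}^{(k+1)} = \b{x}^{(k)} - (1/L)\nabla f(\b{x}^{(k)})$ by Eq.~(\ref{equation_GD_step_by_eta}), and I abbreviate $\delta_k := f(\b{x}^{(k)}) - f^*$. The goal is a recursion that shrinks $\|\b{x}^{(k)} - \b{x}^*\|_2^2$ by a multiple of $\delta_{k+1}$ at each step, which then sums cleanly.

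I would assemble two ingredients. First, the descent inequality Eq.~(\ref{equation_GD_decrease_cost}) gives $\delta_{k+1} \leq \delta_k - \tfrac{1}{2L}\|\nabla f(\b{x}^{(k)})\|_2^2$; this simultaneously bounds $\|\nabla f(\b{x}^{(k)})\|_2^2 \leq 2L(\delta_k - \delta_{k+1})$ and shows the sequence $\{\delta_k\}$ is nonincreasing, a fact I will need at the end. Second, applying the convexity characterization Eq.~(\ref{equation_convex_function_firstDerivative}) with $\b{y} = \b{x}^{(k)}$ and evaluating at the minimizer $\b{x}^*$ yields $f^* \geq f(\b{x}^{(k)}) + \nabla f(\b{x}^{(k)})^\top(\b{x}^* - \b{x}^{(k)})$, i.e.\ $\delta_k \leq \nabla f(\b{x}^{(k)})^\top(\b{x}^{(k)} - \b{x}^*)$, which lower-bounds the inner product appearing in the distance expansion.

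Next I would expand the squared distance using the update rule:
\begin{align*}
\|\b{x}^{(k+1)} - \b{x}^*\|_2^2 &= \|\b{x}^{(k)} - \b{x}^*\|_2^2 - 2\eta\, \nabla f(\b{x}^{(k)})^\top(\b{x}^{(k)} - \b{x}^*) + \eta^2 \|\nabla f(\b{x}^{(k)})\|_2^2.
\end{align*}
Substituting $\eta = 1/L$, replacing the inner product with the convexity bound ($\geq \delta_k$), and replacing $\|\nabla f(\b{x}^{(k)})\|_2^2$ with $2L(\delta_k - \delta_{k+1})$ from the descent inequality, the $\delta_k$ terms cancel and leave $\|\b{x}^{(k+1)} - \b{x}^*\|_2^2 \leq \|\b{x}^{(k)} - \b{x}^*\|_2^2 - \tfrac{2}{L}\,\delta_{k+1}$. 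Summing this telescoping bound over $k = 0, \ldots, t$ and dropping the nonnegative final distance gives $\tfrac{2}{L}\sum_{k=0}^{t}\delta_{k+1} \leq \|\b{x}^{(0)} - \b{x}^*\|_2^2$. Finally, since $\{\delta_k\}$ is nonincreasing we have $(t+1)\,\delta_{t+1} \leq \sum_{k=0}^{t}\delta_{k+1}$, and rearranging yields $f(\b{x}^{(t+1)}) - f^* \leq \tfrac{L\|\b{x}^{(0)} - \b{x}^*\|_2^2}{2(t+1)}$, which is at least as strong as the stated bound $\tfrac{2L\|\b{x}^{(0)} - \b{x}^*\|_2^2}{t+1}$; the sublinear $\mathcal{O}(1/t)$ rate of Eq.~(\ref{equation_GD_norm_gradient_sublinear_rate_convexFunction}) follows immediately.

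The main obstacle is the algebraic cancellation in the third step: the bound must be organized so that the $+\eta^2\|\nabla f(\b{x}^{(k)})\|_2^2$ term, once controlled by the descent inequality, combines with the $-2\eta\,\delta_k$ contribution to convert the index from $\delta_k$ into $\delta_{k+1}$, since it is precisely this shift that makes the subsequent sum telescope into $\delta_{t+1}$ rather than into an uncontrolled running average. The only other point requiring care is justifying the replacement of the average of the $\delta_{k+1}$ by $\delta_{t+1}$, which is exactly where the monotonicity of $\{\delta_k\}$ guaranteed by the descent inequality is used; everything else is routine expansion and summation.
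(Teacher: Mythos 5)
Your proof is correct, but it follows a genuinely different route from the paper's. You combine the descent inequality Eq.~(\ref{equation_GD_decrease_cost}) with the convexity bound $f(\b{x}^{(k)}) - f^* \leq \nabla f(\b{x}^{(k)})^\top (\b{x}^{(k)} - \b{x}^*)$ inside the expansion of $\|\b{x}^{(k+1)} - \b{x}^*\|_2^2$, so that the $\delta_k$ terms cancel and the squared distance becomes a potential that drops by $\tfrac{2}{L}\delta_{k+1}$ per step; telescoping plus monotonicity of $\{\delta_k\}$ then gives the last-iterate bound. The paper instead first invokes the co-coercivity inequality Eq.~(\ref{equation_lemma_gradfy_gradfx_y_x}) of Lemma \ref{lemma_function_and_gradient_difference_bounds} to show the distances $\|\b{x}^{(k)} - \b{x}^*\|_2$ are nonincreasing, then uses convexity together with Cauchy--Schwarz to lower-bound $\|\nabla f(\b{x}^{(k)})\|_2^2$ by $\delta_k^2 / \|\b{x}^{(0)} - \b{x}^*\|_2^2$, and feeds this into the descent inequality to obtain the recursion $\delta_{k+1} \leq \delta_k - \mu \delta_k^2$, which it telescopes in reciprocal form via $\mu \leq \tfrac{1}{\delta_{k+1}} - \tfrac{1}{\delta_k}$. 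Your argument is shorter, avoids Lemma \ref{lemma_function_and_gradient_difference_bounds} and Cauchy--Schwarz entirely, and yields the sharper constant $\tfrac{L\|\b{x}^{(0)} - \b{x}^*\|_2^2}{2(t+1)}$, a factor of $4$ better than Eq.~(\ref{equation_GD_upperbound_min_norm_gradient_convexFunction}); what the paper's route buys is the intermediate fact (of independent interest) that the iterates monotonically approach $\b{x}^*$, and a reciprocal-recursion technique that applies whenever a bound of the form $\delta_{k+1} \leq \delta_k - \mu\delta_k^2$ is available. One cosmetic caveat: you describe $\eta = 1/L$ as ``consistent with'' Eq.~(\ref{equation_GD_step_size_eta_L_2}), which strictly requires $\eta < 1/L$; in fact the paper's own proof uses exactly the same step size $\eta = 1/L$ via Eq.~(\ref{equation_GD_step_by_L}), and Eq.~(\ref{equation_GD_decrease_cost}) does hold with that choice, so this does not affect correctness.
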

\begin{proof}
Proof is available in Appendix \ref{app_GD_convergence_rate_convexFunction}. 
\end{proof}

\begin{theorem}[Convergence rate of gradient descent for strongly convex functions]\label{theorem_GD_convergence_rate_stronglyConvexFunction}
Consider a $\mu$-strongly convex and differentiable function $f(.)$, with domain $\mathcal{D}$, whose gradient is $L$-smooth (see Eq. (\ref{equation_gradient_L_smooth})).
Starting from the initial point $\b{x}^{(0)}$, after $t$ iterations, the convergence rate and iteration complexity of gradient descent are:
\begin{align}
&f(\b{x}^{(t)}) - f^* \leq (1 - \frac{\mu}{L})^t \big(f(\b{x}^{(0)}) - f^*\big) \nonumber \\
&\implies f(\b{x}^{(t)}) - f^* = \mathcal{O}\big((1 - \frac{\mu}{L})^t\big), \label{equation_GD_norm_gradient_sublinear_rate_stronglyConvexFunction} \\
&t = \mathcal{O}(\log \frac{1}{\epsilon}), \label{equation_GD_stronglyConvexFunction_iterationComplexity}
\end{align}
respectively, where $f^*$ is the minimum of cost function. 
It means that gradient descent has linear convergence rate (see Definition \ref{definition_convergence_rate}) for strongly convex functions. 
\end{theorem}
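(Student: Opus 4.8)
The plan is to establish a one-step geometric contraction of the optimality gap, namely $f(\b{x}^{(k+1)}) - f^* \leq (1 - \frac{\mu}{L})(f(\b{x}^{(k)}) - f^*)$, and then unroll this recursion over $t$ iterations. Two ingredients feed into the contraction: the per-iteration descent guarantee already derived in Eq. (\ref{equation_GD_decrease_cost}), namely $f(\b{x}^{(k+1)}) \leq f(\b{x}^{(k)}) - \frac{1}{2L}\|\nabla f(\b{x}^{(k)})\|_2^2$ for step size $\eta = 1/L$, and a lower bound relating the squared gradient norm to the optimality gap. The latter is the key non-routine step, so I would develop it first.

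The crucial ingredient is the Polyak--\L{}ojasiewicz (gradient dominance) inequality $\|\nabla f(\b{x})\|_2^2 \geq 2\mu(f(\b{x}) - f^*)$, which I would derive from strong convexity. Starting from the strong-convexity lower bound in Eq. (\ref{equation_strongly_convex_function_firstDerivative}), I would fix $\b{x}$ and minimize the right-hand side $f(\b{x}) + \nabla f(\b{x})^\top(\b{y} - \b{x}) + \frac{\mu}{2}\|\b{y} - \b{x}\|_2^2$ over $\b{y}$. This is a convex quadratic in $\b{y}$ whose minimizer is $\b{y} = \b{x} - \frac{1}{\mu}\nabla f(\b{x})$; substituting it back and using $f^* \leq f(\b{y})$ for every $\b{y}$ yields $f^* \geq f(\b{x}) - \frac{1}{2\mu}\|\nabla f(\b{x})\|_2^2$, which rearranges into the desired inequality.

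Next I would combine the two bounds. Inserting the gradient-dominance inequality into the descent guarantee gives $f(\b{x}^{(k+1)}) \leq f(\b{x}^{(k)}) - \frac{\mu}{L}(f(\b{x}^{(k)}) - f^*)$, and subtracting $f^*$ from both sides produces the one-step contraction. Iterating from $k = 0$ through $k = t-1$ then yields $f(\b{x}^{(t)}) - f^* \leq (1 - \frac{\mu}{L})^t(f(\b{x}^{(0)}) - f^*)$, which is the claimed linear rate. For the iteration complexity, I would require this upper bound to be at most $\epsilon$, take logarithms (noting $\log(1 - \frac{\mu}{L}) < 0$), and apply the elementary estimate $-\log(1 - \frac{\mu}{L}) \geq \frac{\mu}{L}$ to conclude that $t \geq \frac{L}{\mu}\log\frac{f(\b{x}^{(0)}) - f^*}{\epsilon}$ iterations suffice, giving $t = \mathcal{O}(\log\frac{1}{\epsilon})$.

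The main obstacle is the derivation of the gradient-dominance inequality; everything afterward is a mechanical combination of established bounds and an unrolling of a scalar recursion. A secondary subtlety worth noting is that the contraction factor $1 - \frac{\mu}{L}$ must lie in $[0,1)$ for the argument to give a genuine decay; this holds because $L$-smoothness and $\mu$-strong convexity together force $0 < \mu \leq L$, so the curvature is sandwiched and the factor is a true contraction.
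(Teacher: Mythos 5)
Your proof is correct, but there is an important point of comparison to make: the paper itself gives no proof of this theorem. Unlike Theorems \ref{theorem_GD_convergence_rate} and \ref{theorem_GD_convergence_rate_convexFunction}, which are proved in the appendices, Theorem \ref{theorem_GD_convergence_rate_stronglyConvexFunction} is stated bare, with only a pointer to the external reference \cite{gower2018convergence}; your argument therefore fills a genuine gap rather than duplicating an existing proof. Each of your steps checks out. The Polyak--\L{}ojasiewicz inequality $\|\nabla f(\b{x})\|_2^2 \geq 2\mu\,(f(\b{x}) - f^*)$ follows exactly as you describe by minimizing the right-hand side of Eq. (\ref{equation_strongly_convex_function_firstDerivative}) over the free point (minimizer $\b{x} - \frac{1}{\mu}\nabla f(\b{x})$) and then taking the infimum of the left-hand side; combining it with the descent estimate in Eq. (\ref{equation_GD_decrease_cost}) gives the one-step contraction $f(\b{x}^{(k+1)}) - f^* \leq (1 - \frac{\mu}{L})(f(\b{x}^{(k)}) - f^*)$, which unrolls to Eq. (\ref{equation_GD_norm_gradient_sublinear_rate_stronglyConvexFunction}), and the bound $-\log(1-\frac{\mu}{L}) \geq \frac{\mu}{L}$ yields the iteration complexity in Eq. (\ref{equation_GD_stronglyConvexFunction_iterationComplexity}). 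Your closing observation that $0 < \mu \leq L$ is also needed and correct: sandwiching the difference $f(\b{y}) - f(\b{x}) - \nabla f(\b{x})^\top(\b{y}-\b{x})$ between $\frac{\mu}{2}\|\b{y}-\b{x}\|_2^2$ (strong convexity) and $\frac{L}{2}\|\b{y}-\b{x}\|_2^2$ (Lemma \ref{lemma_fundamental_theorem_calculus_corollary}) forces $\mu \leq L$, so the factor is a true contraction. As a final remark, your route is slightly more general than the theorem demands: convexity enters only through the gradient-dominance inequality, which can hold for non-convex functions as well, so the same argument proves linear convergence for the whole Polyak--\L{}ojasiewicz class.
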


Note that some convergence proofs and analyses for gradient descent can be found in \cite{gower2018convergence}.

\subsubsection{Gradient Descent with Momentum}\label{section_momentum}


Gradient descent and other first-order methods can have a momentum term. 
Momentum, proposed in \cite{rumelhart1986learning}, makes the change of solution $\Delta \b{x}$ a little similar to the previous change of solution. Hence, the change adds a history of previous change to Eq. (\ref{equation_GD_step_by_eta}):
\begin{align}\label{equation_GD_momentum_update}
&(\Delta \b{x})^{(k)} := \alpha (\Delta \b{x})^{(k-1)} - \eta^{(k)} \nabla f(\b{x}^{(k)}), 
\end{align}
where $\alpha>0$ is the momentum parameter which weights the importance of history compared to the descent direction. We use this $(\Delta \b{x})^{(k)}$ in Eq. (\ref{equation_update_point_numerical_optimization}) for updating the solution.
Because of faithfulness to the track of previous updates, momentum reduces the amount of oscillation of updates in gradient descent optimization. 

\subsubsection{Steepest Descent}


\textit{Steepest descent} is similar to gradient descent but there is a difference between them. In steepest descent, we move toward the negative gradient as much as possible to reach the smallest function value which can be achieved at every iteration. Hence, the step size at iteration $k$ of steepest descent is calculated as \cite{chong2004introduction}:
\begin{align}\label{equation_steepest_descent_update}
\eta^{(k)} := \arg\min_{\eta}\, f\big(\b{x}^{(k)} - \eta \nabla f(\b{x}^{(k)})\big),
\end{align}
and then, the solution is updated using Eq. (\ref{equation_GD_step_by_eta}) as in gradient descent. 

Another interpretation of steepest descent is as follows, according to {\citep[Chapter 9.4]{boyd2004convex}}. The first-order Taylor expansion of function is $f(\b{x} + \b{v}) \approx f(\b{x}) + \nabla f(\b{x})^\top \b{v}$. Hence, the step size in the normalized steepest descent, at iteration $k$, is obtained as:
\begin{align}
\Delta \b{x} = \arg\min_{\b{v}}\{\nabla f(\b{x}^{(k)})^\top \b{v}\,|\, \|\b{v}\|_2 \leq 1\},
\end{align}
which is used in Eq. (\ref{equation_update_point_numerical_optimization}) for updating the solution.

\subsubsection{Backpropagation}\label{section_backpropagation}

Backpropagation \cite{rumelhart1986learning} is the most well-known optimization method used in neural networks. It is actually gradient descent with chain rule in derivatives because of having layers of parameters. 
Consider Fig. \ref{figure_network_layers} which shows three neurons in three layers of a network. 
Let $x_{ji}$ denote the weight connecting neuron $i$ to neuron $j$. Let $a_i$ and $z_i$ be the output of neuron $i$ before and after applying its activation function $\sigma_i(.): \mathbb{R} \rightarrow \mathbb{R}$, respectively. In other words, $z_i := \sigma_i(a_i)$.

\begin{figure}[!t]
\centering
\includegraphics[width=3in]{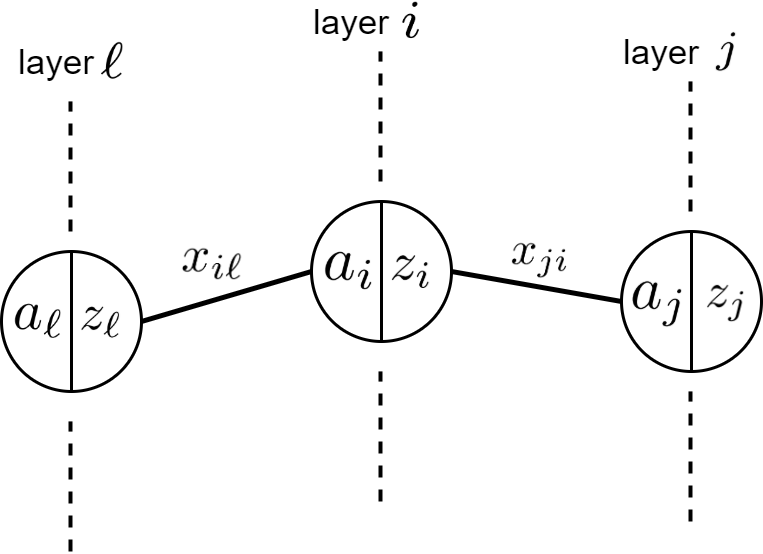}
\caption{Neurons in three layers of a neural network.}
\label{figure_network_layers}
\end{figure}

According to neural network, we have $a_i = \sum_{\ell} x_{i\ell} z_\ell$ which sums over the neurons in layer $\ell$.
By chain rule, the gradient of error $e$ w.r.t. to the weight between neurons $\ell$ and $i$ is:
\begin{align}\label{equation_backpropagation_partial_e_partial_x}
& \frac{\partial e}{\partial x_{i\ell}} = \frac{\partial e}{\partial a_i} \times \frac{\partial a_i}{\partial x_{i\ell}} \overset{(a)}{=} \delta_i \times z_\ell,
\end{align}
where $(a)$ is because $a_i = \sum_{\ell} x_{i\ell} z_\ell$ and we define $\delta_i := \partial e / \partial a_i$. 
If layer $i$ is the last layer, $\delta_i$ can be computed by derivative of error (loss function) w.r.t. the output. However, if $i$ is one of the hidden layers, $\delta_i$ is computed by chain rule as:
\begin{align}\label{equation_backpropagation_delta_i}
\delta_i = \frac{\partial e}{\partial a_i} = \sum_j \Big(\frac{\partial e}{\partial a_j} \times \frac{\partial a_j}{\partial a_i}\Big) = \sum_j \Big(\delta_j \times \frac{\partial a_j}{\partial a_i}\Big).
\end{align}
The term $\partial a_j / \partial a_i$ is calculated by chain rule as:
\begin{align}\label{equation_backpropagation_partial_a_partial_a}
& \frac{\partial a_j}{\partial a_i} = \frac{\partial a_j}{\partial z_i} \times \frac{\partial z_i}{\partial a_i} \overset{(a)}{=} x_{ji}\, \sigma'(a_i),
\end{align}
where $(a)$ is because $a_j = \sum_i x_{ji} z_i$ and $z_i = \sigma(a_i)$ and $\sigma'(.)$ denotes the derivative of activation function. 
Putting Eq. (\ref{equation_backpropagation_partial_a_partial_a}) in Eq. (\ref{equation_backpropagation_delta_i}) gives:
\begin{align*}
& \delta_i = \sigma'(a_i) \sum_j (\delta_j\, x_{ji}). 
\end{align*}
Putting this equation in Eq. (\ref{equation_backpropagation_partial_e_partial_x}) gives:
\begin{align}\label{equation_backpropagation_gradient}
& \frac{\partial e}{\partial x_{i\ell}} = z_\ell\, \sigma'(a_i) \sum_j (\delta_j\, x_{ji}).
\end{align}
Backpropagation uses the gradient in Eq. (\ref{equation_backpropagation_gradient}) for updating the weight $x_{i\ell}, \forall i, \ell$ by gradient descent:
\begin{align*}
x_{i\ell}^{(k+1)} := x_{i\ell}^{(k)} - \eta^{(k)} \frac{\partial e}{\partial x_{i\ell}}. 
\end{align*}
This tunes the weights from last layer to the first layer for every iteration of optimization. 

\subsection{Accelerated Gradient Method}



It was shown in the literature that gradient descent is not optimal in convergence rate and can be improved. 
It was at that time that Nesterov proposed \textit{Accelerated Gradient Method (AGM)} \cite{nesterov1983method} to make the convergence rate of gradient descent optimal {\citep[Chapter 2.2]{nesterov2003introductory}}. AGM is also called the \textit{Nesterov's accelerated gradient method} or \textit{Fast Gradient Method (FGM)}. A series of Nesterov's papers improved AGM \cite{nesterov1983method,nesterov1988approach,nesterov2005smooth,nesterov2013gradient}. 

Consider a sequence $\{\gamma^{(k)}\}$ which satisfies:
\begin{align}
& \prod_{i=0}^k (1 - \gamma^{(i)}) \geq (\gamma^{(k)})^2, \quad \forall k \geq 0, \gamma^{(k)} \in [0,1].
\end{align}
An example sequence, satisfying this condition, is $\gamma^{(0)} = \gamma^{(1)} = \gamma^{(2)} = \gamma^{(3)} = 0, \gamma^{(k)} = 2/k, \forall k \geq 4$.
The AGM updates the solution iteratively as \cite{nesterov1983method}:
\begin{align}
& \b{x}^{(k+1)} := \b{y}^{(k)} - \eta^{(k)} \nabla f(\b{y}^{(k)}), \\
& \b{y}^{(k+1)} := (1 - \gamma^{(k)}) \b{x}^{(k+1)} + \gamma^{(k)} \b{x}^{(k)},
\end{align}
until convergence.

\begin{theorem}[Convergence rate of AGM for convex functions {\citep[under Eq. 7]{nesterov1983method}}, {\citep[Theorem 3.19]{bubeck2014convex}}]\label{theorem_AGM_convergence_rate_convexFunction}
Consider a convex and differentiable function $f(.)$, with domain $\mathcal{D}$, whose gradient is $L$-smooth (see Eq. (\ref{equation_gradient_L_smooth})).
Starting from the initial point $\b{x}^{(0)}$, after $t$ iterations of AGM, we have:
\begin{align}\label{equation_AGM_upperbound_min_norm_gradient_convexFunction}
f(\b{x}^{(t+1)}) - f^* \leq \frac{2 L \|\b{x}^{(0)} - \b{x}^*\|_2^2}{(t+1)^2} = \mathcal{O}(\frac{1}{t^2}),
\end{align}
where $f^*$ is the minimum of cost function and $\b{x}^*$ is the minimizer. 
It means the distance of convex function value to its optimum has sublinear convergence (see Definition \ref{definition_convergence_rate}) in AGM.
\end{theorem}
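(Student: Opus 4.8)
The plan is to follow the estimate-sequence argument of Nesterov (equivalently, the Lyapunov/potential-function argument behind Bubeck's Theorem~3.19), whose only analytic input is the descent consequence of $L$-smoothness in Lemma~\ref{lemma_fundamental_theorem_calculus_corollary} together with the first-order convexity inequality~(\ref{equation_convex_function_firstDerivative}). Everything rests on a single \emph{one-step master inequality}. Taking $\eta^{(k)}=1/L$, the gradient step $\b{x}^{(k+1)}=\b{y}^{(k)}-\tfrac{1}{L}\nabla f(\b{y}^{(k)})$ fed into Lemma~\ref{lemma_fundamental_theorem_calculus_corollary} gives $f(\b{x}^{(k+1)})\le f(\b{y}^{(k)})-\tfrac{1}{2L}\|\nabla f(\b{y}^{(k)})\|_2^2$, exactly as in Eq.~(\ref{equation_GD_decrease_cost}). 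Inserting the convexity bound $f(\b{y}^{(k)})\le f(\b{z})+\nabla f(\b{y}^{(k)})^\top(\b{y}^{(k)}-\b{z})$ yields, for \emph{every} $\b{z}$,
\begin{align}
f(\b{x}^{(k+1)})-f(\b{z}) \le \nabla f(\b{y}^{(k)})^\top(\b{y}^{(k)}-\b{z}) - \tfrac{1}{2L}\|\nabla f(\b{y}^{(k)})\|_2^2. \nonumber
\end{align}
This is the engine of the whole proof.

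Next I would build the estimate sequence. Set $\lambda_0:=1$ and $\lambda_{k+1}:=(1-\gamma^{(k)})\lambda_k$, so that $\lambda_{k+1}=\prod_{i=0}^{k}(1-\gamma^{(i)})$; the hypothesis on $\{\gamma^{(k)}\}$ then reads $\lambda_{k+1}\ge(\gamma^{(k)})^2$. Define the quadratic models $\phi_0(\b{x}):=f(\b{x}^{(0)})+\tfrac{L}{2}\|\b{x}-\b{x}^{(0)}\|_2^2$ and, recursively,
\begin{align}
\phi_{k+1}(\b{x}):=(1-\gamma^{(k)})\,\phi_k(\b{x})+\gamma^{(k)}\big[f(\b{y}^{(k)})+\nabla f(\b{y}^{(k)})^\top(\b{x}-\b{y}^{(k)})\big]. \nonumber
\end{align}
Each $\phi_k$ stays a perfect quadratic, $\phi_k(\b{x})=\phi_k^*+\tfrac{L\lambda_k}{2}\|\b{x}-\b{v}^{(k)}\|_2^2$ with minimizer $\b{v}^{(k)}$, and I would prove two invariants by induction on $k$: (i) the estimate-sequence property $\phi_k(\b{x})\le(1-\lambda_k)f(\b{x})+\lambda_k\phi_0(\b{x})$ for all $\b{x}$, which follows immediately from convexity; and (ii) $f(\b{x}^{(k)})\le\phi_k^*$. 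Combining these at $\b{x}=\b{x}^*$ gives $f(\b{x}^{(k)})-f^*\le\lambda_k\big(\phi_0(\b{x}^*)-f^*\big)$, and since $\nabla f(\b{x}^*)=\b{0}$ the descent lemma bounds $\phi_0(\b{x}^*)-f^*\le L\|\b{x}^{(0)}-\b{x}^*\|_2^2$.

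The main obstacle is invariant~(ii). Expanding $\phi_{k+1}^*$ from the recursion in terms of $\phi_k^*$, $\b{v}^{(k)}$, and $\nabla f(\b{y}^{(k)})$, and matching it against the descent bound on $f(\b{x}^{(k+1)})$, forces the algorithm's exact choices: the extrapolation point $\b{y}^{(k)}$ must be the specific convex combination appearing in the $\b{y}$-update, and the gain $-\tfrac{1}{2L}\|\nabla f(\b{y}^{(k)})\|_2^2$ from the master inequality must exactly dominate the residual produced by completing the square. This cancellation goes through \emph{precisely} because $\lambda_{k+1}\ge(\gamma^{(k)})^2$; this is where the stated condition on $\{\gamma^{(k)}\}$ is consumed, and tracking the minimizer sequence $\b{v}^{(k)}$ through the convex combinations is the delicate bookkeeping. (Equivalently, one may repackage $a_{k-1}^2(f(\b{x}^{(k)})-f^*)+\tfrac{L}{2}\|\b{u}^{(k)}-\b{x}^*\|_2^2$ as a nonincreasing potential and telescope; the condition $\prod_{i=0}^{k}(1-\gamma^{(i)})\ge(\gamma^{(k)})^2$ is exactly what makes the potential monotone.)

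Finally I would extract the rate. It remains to bound $\lambda_{t+1}=\prod_{i=0}^{t}(1-\gamma^{(i)})$ from above by a standard induction: any sequence satisfying the hypothesis obeys $\lambda_{t+1}\le 2/(t+1)^2$ (with the tightest admissible sequence attaining this order, consistent with the example $\gamma^{(k)}=2/k$). Substituting into $f(\b{x}^{(t+1)})-f^*\le\lambda_{t+1}\cdot L\|\b{x}^{(0)}-\b{x}^*\|_2^2$ gives the claimed $\tfrac{2L\|\b{x}^{(0)}-\b{x}^*\|_2^2}{(t+1)^2}=\mathcal{O}(1/t^2)$, which by Definition~\ref{definition_convergence_rate} is sublinear but with the accelerated $1/t^2$ scaling. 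The only genuinely nonroutine step is the verification of invariant~(ii); the rest is the induction on $\lambda_{t+1}$ and bookkeeping I would not grind through here.
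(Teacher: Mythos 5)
The paper never actually proves this theorem---it defers entirely to \cite{nesterov1983method} and to Theorem 3.19 of \cite{bubeck2014convex}---so your attempt has to stand on its own, and it contains a fatal gap at the final step. You claim that ``any sequence satisfying the hypothesis obeys $\lambda_{t+1} \leq 2/(t+1)^2$.'' This is false, and no induction can rescue it: the hypothesis $\prod_{i=0}^{k}(1-\gamma^{(i)}) \geq (\gamma^{(k)})^2$ only \emph{upper}-bounds each $\gamma^{(k)}$ relative to $\lambda_{k+1}$, which (as you correctly say) is exactly what makes invariant (ii) go through, but it imposes no lower bound on the $\gamma^{(k)}$'s and hence no decay on $\lambda_{t+1}$ whatsoever. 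The sequence $\gamma^{(k)} \equiv 0$ satisfies the hypothesis, gives $\lambda_{t+1} = 1$, and reduces the stated AGM recursions to plain gradient descent, whose worst-case rate is $\Theta(1/t)$. Even the paper's own example sequence ($\gamma^{(k)} = 2/k$ for $k \geq 4$, zero before) gives $\lambda_{t+1} = \prod_{i=4}^{t} \frac{i-2}{i} = \frac{6}{t(t-1)}$, which exceeds $2/(t+1)^2$ for every $t$; and even at exact equality $(\gamma^{(k)})^2 = \lambda_{k+1}$ (Nesterov's actual choice) one only gets $\lambda_t \approx 4/(t+1)^2$, so your route yields constant $4$, not $2$. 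The decay of $\lambda_t$ requires a \emph{lower} bound on the $\gamma^{(k)}$'s (near-equality in the hypothesis), which is simply not available from the statement; the condition as written cannot be ``consumed'' twice, once for invariant (ii) and once for the rate.

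There is a second, structural gap in your verification of invariant (ii). You assert that completing the square ``forces'' $\b{y}^{(k)}$ to be the convex combination appearing in the paper's $\b{y}$-update. It does not: the estimate-sequence algebra forces $\b{y}^{(k)}$ to be a specific combination of $\b{x}^{(k)}$ and the minimizer $\b{v}^{(k)}$ of the quadratic model $\phi_k$, and when Nesterov's three-sequence scheme is collapsed to two sequences it produces \emph{extrapolation}, $\b{y}^{(k+1)} = \b{x}^{(k+1)} + \beta_k(\b{x}^{(k+1)} - \b{x}^{(k)})$ with $\beta_k > 0$, i.e.\ $\gamma^{(k)} = -\beta_k < 0$ in the paper's parametrization. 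This is incompatible with the constraint $\gamma^{(k)} \in [0,1]$, under which $\b{y}^{(k+1)}$ merely interpolates between the last two iterates---an averaging scheme that cannot accelerate. So the ``delicate bookkeeping'' you deferred is precisely where the proof breaks: invariant (ii) cannot be matched to the update rule as written. Your master inequality and invariant (i) are correct, and estimate sequences (or the equivalent potential $a_{k-1}^2(f(\b{x}^{(k)})-f^*) + \frac{L}{2}\|\b{u}^{(k)}-\b{x}^*\|_2^2$) are indeed the right machinery for this theorem, but they prove the $\mathcal{O}(1/t^2)$ rate only for Nesterov's specific parameter choices (extrapolation with near-equality weights), not under the hypothesis and update rule as stated here.
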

Comparing Eqs. (\ref{equation_GD_upperbound_min_norm_gradient_convexFunction}) and (\ref{equation_AGM_upperbound_min_norm_gradient_convexFunction}) shows that AGM converges much faster than gradient descent. 
A book chapter on AGM is {\citep[Section 3.7]{bubeck2014convex}}.
Various versions of AGM have been unified in \cite{tseng2008accelerated}. 
Moreover, the connection of AGM with ordinary differential equations has been investigated in \cite{su2016differential}.

\subsection{Stochastic Gradient Methods}

\subsubsection{Stochastic Gradient Descent}



Assume we have a dataset of $n$ data points, $\{\b{a}_i \in \mathbb{R}^d\}_{i=1}^n$ and their labels $\{l_i \in \mathbb{R}\}_{i=1}^n$.
Let the cost function $f(.)$ be decomposed into summation of $n$ terms $\{f_i(\b{x})\}_{i=1}^n$. Some well-known examples for the cost function terms are:
\begin{itemize}
\item Least squares error: $f_i(\b{x}) = 0.5 (\b{a}_i^\top \b{x} - l_i)^2$,
\item Absolute error: $f_i(\b{x}) = \b{a}_i^\top \b{x} - l_i$,
\item Hinge loss (for $l_i \in \{-1,1\}$): $f_i(\b{x}) = \max(0, 1 - l_i \b{a}_i^\top \b{x})$.
\item Logistic loss (for $l_i \in \{-1,1\}$): $\log(\frac{1}{1 + \exp(-l_i \b{a}_i^\top \b{x})})$.
\end{itemize}
The optimization problem (\ref{equation_optimization_problem_unconstrained}) becomes:
\begin{equation}\label{equation_optimization_problem_unconstrained_SGD}
\begin{aligned}
& \underset{\b{x}}{\text{minimize}}
& & \frac{1}{n} \sum_{i=1}^n f_i(\b{x}).
\end{aligned}
\end{equation}
In this case, the full gradient is the average gradient, i.e:
\begin{align}\label{equation_gradient_average_of_gradients}
\nabla f(\b{x}) = \frac{1}{n} \sum_{i=1}^n \nabla f_i(\b{x}),
\end{align}
so Eq. (\ref{equation_GD_step_by_L}) becomes $\Delta \b{x} = -(1/(Ln)) \sum_{i=1}^n \nabla f_i(\b{x}^{(k)})$.
This is what gradient descent uses in Eq. (\ref{equation_update_point_numerical_optimization}) for updating the solution at every iteration. However, calculation of this full gradient is time-consuming and inefficient for large values of $n$, especially as it needs to be recalculated at every iteration. 
\textit{Stochastic Gradient Descent (SGD)}, also called \textit{stochastic gradient method}, approximates gradient descent stochastically and samples (i.e. bootstraps) one of the points at every iteration for updating the solution. Hence, it uses:
\begin{align}\label{equation_SGD_step_by_eta}
\b{x}^{(k+1)} := \b{x}^{(k)} - \eta^{(k)} \nabla f_i(\b{x}^{(k)}),
\end{align}
ragther than Eq. (\ref{equation_GD_step_by_eta}). 
The idea of stochastic approximation was first proposed in \cite{robbins1951stochastic}. It was first used for machine learning in \cite{bottou1998online}. 

As Eq. (\ref{equation_SGD_step_by_eta}) states, SGD often uses an adaptive step size which changes in every iteration. The step size can be decreasing because in initial iterations, where we are far away from the optimal solution, the step size can be large; however, it should be small in the last iterations which is supposed to be close to the optimal solution. Some well-known adaptations for the step size are:
\begin{align}
& \eta^{(k)} := \frac{1}{k}, \quad \eta^{(k)} := \frac{1}{\sqrt{k}}, \quad \eta^{(k)} := \eta. 
\end{align}

\begin{theorem}[Convergence rates for SGD]
Consider a function $f(\b{x}) = \sum_{i=1}^n f_i(\b{x})$ and which is bounded below and each $f_i$ is differentiable. Let the domain of function $f(.)$ be $\mathcal{D}$ and its gradient be $L$-smooth (see Eq. (\ref{equation_gradient_L_smooth})). Assume $\mathbb{E}[\|\nabla f_i(\b{x}_k)\|_2^2\,|\,\b{x}_k] \leq \beta^2$ where $\beta$ is a constant. Depending on the step size, the convergence rate of SGD is:
\begin{align}
& \mathcal{O}(\frac{1}{\log t}) \quad \text{if} \quad \eta^{(k)} = \frac{1}{k}, \\
& \mathcal{O}(\frac{\log t}{\sqrt{t}}) \quad \text{if} \quad \eta^{(k)} = \frac{1}{\sqrt{k}}, \\
& \mathcal{O}(\frac{1}{t} + \eta) \quad \text{if} \quad \eta^{(k)} = \eta, \label{equation_convergence_rate_SGD_3}
\end{align}
where $t$ denotes the iteration index. 
If the functions $f_i$'s are $\mu$-strongly convex, then the convergence rate of SGD is:
\begin{align}
& \mathcal{O}(\frac{1}{t}) \quad \text{if} \quad \eta^{(k)} = \frac{1}{\mu k}, \\
& \mathcal{O}\big((1 - \frac{\mu}{L})^t + \eta\big) \quad \text{if} \quad \eta^{(k)} = \eta. \label{equation_convergence_rate_SGD_3_stronglyConvex}
\end{align}
\end{theorem}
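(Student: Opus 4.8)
The plan is to reduce all six rates to one per-iteration inequality and then specialize to each step-size schedule. Write $g_k := \nabla f_i(\b{x}^{(k)})$ for the stochastic gradient, with $i$ the index sampled at iteration $k$, so the update is $\b{x}^{(k+1)} = \b{x}^{(k)} - \eta^{(k)} g_k$. Uniform sampling makes $g_k$ an unbiased estimate of the full gradient, $\mathbb{E}[g_k \,|\, \b{x}^{(k)}] = \nabla f(\b{x}^{(k)})$, and the hypothesis gives $\mathbb{E}[\|g_k\|_2^2 \,|\, \b{x}^{(k)}] \leq \beta^2$. Applying the descent lemma (Eq.~(\ref{equation_fundamental_theorem_calculus_Lipschitz})) with $\b{x} = \b{x}^{(k)}$ and $\b{y} = \b{x}^{(k+1)}$, substituting the update, and taking the conditional expectation over $i$ yields the master recursion
\begin{align*}
\mathbb{E}[f(\b{x}^{(k+1)}) \,|\, \b{x}^{(k)}] \leq f(\b{x}^{(k)}) - \eta^{(k)} \|\nabla f(\b{x}^{(k)})\|_2^2 + \frac{L \beta^2}{2} (\eta^{(k)})^2.
\end{align*}

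For the general (not strongly convex) case, I would isolate $\eta^{(k)}\|\nabla f(\b{x}^{(k)})\|_2^2$, take total expectation, and sum from $k=0$ to $t$; the function differences telescope to $f(\b{x}^{(0)}) - f^*$, which is finite since $f$ is bounded below. Lower-bounding the sum on the left by the minimum gradient norm times $\sum_k \eta^{(k)}$ gives
\begin{align*}
\min_{0 \leq k \leq t} \mathbb{E}[\|\nabla f(\b{x}^{(k)})\|_2^2] \leq \frac{f(\b{x}^{(0)}) - f^* + \tfrac{L\beta^2}{2}\sum_{k}(\eta^{(k)})^2}{\sum_{k}\eta^{(k)}}.
\end{align*}
The three rates then follow by estimating the two sums for each schedule: for $\eta^{(k)} = 1/k$ the denominator grows like $\log t$ while the numerator stays bounded (as $\sum 1/k^2$ converges), giving $\mathcal{O}(1/\log t)$; for $\eta^{(k)} = 1/\sqrt{k}$ one has $\sum \eta^{(k)} \sim \sqrt{t}$ and $\sum (\eta^{(k)})^2 \sim \log t$, giving $\mathcal{O}(\log t / \sqrt{t})$; and for constant $\eta$ both sums are $\Theta(t)$, leaving the residual $\mathcal{O}(1/t + \eta)$.

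For the strongly convex case I would instead track the squared distance $\delta_k := \mathbb{E}[\|\b{x}^{(k)} - \b{x}^*\|_2^2]$. Expanding $\|\b{x}^{(k+1)} - \b{x}^*\|_2^2$, taking conditional expectation, and using strong monotonicity of the gradient (a consequence of Eq.~(\ref{equation_strongly_convex_function_firstDerivative}) together with $\nabla f(\b{x}^*) = \b{0}$), namely $\nabla f(\b{x}^{(k)})^\top(\b{x}^{(k)} - \b{x}^*) \geq \mu \|\b{x}^{(k)} - \b{x}^*\|_2^2$, gives the contraction
\begin{align*}
\delta_{k+1} \leq (1 - 2\mu\eta^{(k)})\,\delta_k + \beta^2 (\eta^{(k)})^2.
\end{align*}
For $\eta^{(k)} = 1/(\mu k)$ an induction on $k$ shows $\delta_k \leq C/k$ for a suitable constant $C$, and $L$-smoothness converts this into $\mathbb{E}[f(\b{x}^{(t)})] - f^* \leq \tfrac{L}{2}\delta_t = \mathcal{O}(1/t)$. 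For constant $\eta$ the recursion is geometric; unrolling gives $\delta_t \leq (1 - 2\mu\eta)^t \delta_0 + \tfrac{\eta\beta^2}{2\mu}$, i.e. $\mathcal{O}((1-2\mu\eta)^t + \eta)$, matching the stated $\mathcal{O}((1-\mu/L)^t + \eta)$ once $\eta$ is fixed to calibrate the contraction factor.

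The main obstacle is the inductive step for the $1/(\mu k)$ schedule, where the base constant $C$ must be chosen compatibly with the coefficient $(1-2/k)$ and the additive $\beta^2/(\mu^2 k^2)$ term so that the induction closes. A secondary subtlety is bookkeeping the normalization in the unbiasedness identity (the $1/n$ factor distinguishing $f = \sum_i f_i$ from its average) so that $\mathbb{E}[g_k \,|\, \b{x}^{(k)}] = \nabla f(\b{x}^{(k)})$ holds as used; and pinning $\eta$ to recover the exact contraction constant $(1-\mu/L)$ rather than leaving it free.
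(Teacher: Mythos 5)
The paper states this theorem without any proof (it is presented as a survey result; there is no corresponding appendix section), so there is no in-paper argument to compare against and your proposal must stand on its own --- which it does. The master recursion you derive from Eq.~(\ref{equation_fundamental_theorem_calculus_Lipschitz}), unbiasedness, and the variance bound is the standard and correct starting point; the three general-case rates follow exactly as you say from the telescoped bound together with $\sum_{k \leq t} 1/k \sim \log t$, $\sum_{k \leq t} 1/\sqrt{k} \sim \sqrt{t}$, and $\sum_{k \leq t} 1/k^2 = \mathcal{O}(1)$. The strongly convex analysis via $\delta_k := \mathbb{E}[\|\b{x}^{(k)} - \b{x}^*\|_2^2]$ and the contraction $\delta_{k+1} \leq (1 - 2\mu\eta^{(k)})\,\delta_k + \beta^2(\eta^{(k)})^2$ is also sound: the induction for $\eta^{(k)} = 1/(\mu k)$ closes once $C \geq \beta^2/\mu^2$, since $\frac{C}{k} - \frac{C}{k+1} \leq \frac{C}{k^2}$ makes the needed inequality reduce to $C \leq 2C - \beta^2/\mu^2$ (and for $k = 1, 2$ the factor $1 - 2/k$ is nonpositive, which only helps, so the induction can be started at $k=2$); converting $\delta_t$ to function values through $L$-smoothness with $\nabla f(\b{x}^*) = \b{0}$ is legitimate.

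Two remarks to tighten the write-up. First, you are right to flag the normalization: with $f = \sum_{i=1}^n f_i$ as literally written in the theorem, uniform sampling gives $\mathbb{E}[g_k \,|\, \b{x}^{(k)}] = \frac{1}{n} \nabla f(\b{x}^{(k)})$, so the linear term in your master recursion carries a factor $1/n$; this only rescales constants, not rates, but it should be written out rather than assumed away. Second, for the strongly convex constant-step case there is a route that avoids both the distance recursion and the conversion back to function values: combine the master recursion with the Polyak-Lojasiewicz inequality $\|\nabla f(\b{x})\|_2^2 \geq 2\mu\,(f(\b{x}) - f^*)$, which is implied by $\mu$-strong convexity, to get $\mathbb{E}[f(\b{x}^{(k+1)})] - f^* \leq (1 - 2\mu\eta)\big(\mathbb{E}[f(\b{x}^{(k)})] - f^*\big) + \frac{L\beta^2\eta^2}{2}$ directly in function values; the stated factor $(1 - \mu/L)$ then corresponds to the particular choice $\eta = 1/(2L)$, exactly the calibration you describe.
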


Eqs. (\ref{equation_convergence_rate_SGD_3}) and (\ref{equation_convergence_rate_SGD_3_stronglyConvex}) shows that with a fixed step size $\eta$, SGD converges sublinearly for a non-convex function and linearly for a strongly convex function (see Definition \ref{definition_convergence_rate}) in the initial iterations. However, in the late iterations, it stagnates to a neighborhood around the optimal point and never reaches it. Hence, SGD has less accuracy than gradient descent. 
The advantage of SGD over gradient descent is that its every iteration is much faster than every iteration of gradient descent because of less computations for gradient. This faster pacing of every iteration shows off more when $n$ is huge. In summary, SGD has fast convergence to low accurate optimal point. 

It is noteworthy that the full gradient is not available in SGD to use for checking convergence, as discussed in Section \ref{section_convergence_criterion}. One can use other criteria in that section or merely check the norm of gradient for the sampled point. 
Moreover, note that SGD can be used with the line-search methods, too. 
SGD can also use a momentum term (see Section \ref{section_momentum}).

\subsubsection{Mini-batch Stochastic Gradient Descent}


Gradient descent uses the entire $n$ data points and SGD uses one randomly sampled point at every iteration. 
For large datasets, gradient descent is very slow and intractable in every iteration while SGD will need a significant number of iterations to roughly cover all data. Besides, SGD has low accuracy in convergence to the optimal point. 
We can have a middle case where we use a batch of $b$ randomly sampled points at every iteration. 
This method is named the \textit{mini-batch SGD} or the \textit{hybrid deterministic-stochastic gradient} method. 
This batch-wise approach is wise for large datasets because of the mentioned problems gradient descent and SGD face in big data optimization \cite{bottou2018optimization}. 

Usually, before start of optimization, the $n$ data points are randomly divided into $\lfloor n / b \rfloor$ batches of size $b$. This is equivalent to simple random sampling for sampling points into batches without replacement. 
We denote the dataset by $\mathcal{D}$ (where $|\mathcal{D}|=n$) and the $i$-th batch by $\mathcal{B}_i$ (where $|\mathcal{B}_i|=b$). The batches are disjoint:
\begin{align}
&\bigcup_{i=1}^{\lfloor n / b \rfloor} \mathcal{B}_i = \mathcal{D}, \label{equation_minibatch_SGD_batches1} \\
& \mathcal{B}_i \cap \mathcal{B}_j = \varnothing, ~~~ \forall i,j \in \{1, \dots, \lfloor n / b \rfloor\}, ~ i \neq j. \label{equation_minibatch_SGD_batches2}
\end{align}
Another less-used approach for making batches is to sample points for a batch during optimization. This is equivalent to bootstrapping for sampling points into batches with replacement. In this case, the batches are not disjoint anymore and Eqs. (\ref{equation_minibatch_SGD_batches1}) and (\ref{equation_minibatch_SGD_batches2}) do not hold. 

\begin{definition}[Epoch]
In mini-batch SGD, when all $\lfloor n / b \rfloor$ batches of data are used for optimization once, an \textit{epoch} is completed. After completion of an epoch, the next epoch is started and epochs are repeated until convergence of optimization. 
\end{definition}

In mini-batch SGD, if the $k$-th iteration of optimization is using the $k'$-th batch, the update of solution is done as:
\begin{align}\label{equation_minibatch_SGD_step_by_eta}
\b{x}^{(k+1)} := \b{x}^{(k)} - \eta^{(k)} \frac{1}{b} \sum_{i \in \mathcal{B}_{k'}} \nabla f_i(\b{x}^{(k)}).
\end{align}
The scale factor $1/b$ is sometimes dropped for simplicity. 
Mini-batch SGD is used significantly in machine learning, especially in neural networks \cite{bottou1998online,goodfellow2016deep}. Because of dividing data into batches, mini-batch SGD can be solved on parallel servers as a distributed optimization method. 

\begin{theorem}[Convergence rates for mini-batch SGD]
Consider a function $f(\b{x}) = \sum_{i=1}^n f_i(\b{x})$ which is bounded below and each $f_i$ is differentiable. Let the domain of function $f(.)$ be $\mathcal{D}$ and its gradient be $L$-smooth (see Eq. (\ref{equation_gradient_L_smooth})) and assume $\eta^{(k)} = \eta$ is fixed.
The batch-wise gradient is an approximation to the full gradient with some error $e_t$ for the $t$-th iteration:
\begin{align}\label{equation_minibatch_SGD_error}
\frac{1}{b} \sum_{i \in \mathcal{B}_{t'}} \nabla f_i(\b{x}^{(t)}) = \nabla f(\b{x}^{(t)}) + e_t.
\end{align}
The convergence rate of mini-batch SGD for non-convex and convex functions are:
\begin{align}\label{equation_minibatch_SGD_convergence_rate_nonconvex}
& \mathcal{O}\big(\frac{1}{t} + \|e_t\|_2^2\big),
\end{align}
where $t$ denotes the iteration index. 
If the functions $f_i$'s are $\mu$-strongly convex, then the convergence rate of mini-batch SGD is:
\begin{align}\label{equation_minibatch_SGD_convergence_rate_strongly_convex}
& \mathcal{O}\big((1 - \frac{\mu}{L})^t + \|e_t\|_2^2\big).
\end{align}
\end{theorem}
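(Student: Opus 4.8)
The plan is to recognize that mini-batch SGD is nothing but gradient descent with a perturbed descent direction, and then to rerun the analyses of Theorems~\ref{theorem_GD_convergence_rate}, \ref{theorem_GD_convergence_rate_convexFunction}, and \ref{theorem_GD_convergence_rate_stronglyConvexFunction} while carrying the error term $e_t$ through every inequality. Using Eq.~(\ref{equation_minibatch_SGD_error}), the update (\ref{equation_minibatch_SGD_step_by_eta}) with fixed $\eta$ reads $\b{x}^{(k+1)} = \b{x}^{(k)} - \eta(\nabla f(\b{x}^{(k)}) + e_k)$, so the step is $\Delta\b{x} = -\eta(\nabla f(\b{x}^{(k)}) + e_k)$. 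First I would substitute this step into the $L$-smoothness descent inequality (\ref{equation_fundamental_theorem_calculus_Lipschitz}) of Lemma~\ref{lemma_fundamental_theorem_calculus_corollary} with $\b{x}=\b{x}^{(k)}$ and $\b{y}=\b{x}^{(k+1)}$.

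Expanding $\|\nabla f(\b{x}^{(k)}) + e_k\|_2^2$ and collecting terms gives a per-iteration bound of the form
\begin{align*}
f(\b{x}^{(k+1)}) \leq f(\b{x}^{(k)}) &- \eta\big(1 - \tfrac{L\eta}{2}\big)\|\nabla f(\b{x}^{(k)})\|_2^2 \\
&+ (L\eta^2 - \eta)\,\nabla f(\b{x}^{(k)})^\top e_k + \tfrac{L\eta^2}{2}\|e_k\|_2^2.
\end{align*}
The cross term is the one genuinely new feature relative to exact gradient descent; I would control it with Young's inequality, $|\nabla f(\b{x}^{(k)})^\top e_k| \leq \tfrac{\alpha}{2}\|\nabla f(\b{x}^{(k)})\|_2^2 + \tfrac{1}{2\alpha}\|e_k\|_2^2$, choosing $\alpha$ and restricting $\eta < 1/L$ (exactly as in Eq.~(\ref{equation_GD_step_size_eta_L_2})) so that the coefficient of $\|\nabla f(\b{x}^{(k)})\|_2^2$ stays negative. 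What survives is a clean descent inequality $\tfrac{\eta}{2}\|\nabla f(\b{x}^{(k)})\|_2^2 \leq f(\b{x}^{(k)}) - f(\b{x}^{(k+1)}) + c\,\|e_k\|_2^2$. Summing from $k=0$ to $t$, telescoping the function differences, and using that $f$ is bounded below by $f^*$ yields $\min_{0\leq k\leq t}\|\nabla f(\b{x}^{(k)})\|_2^2 \leq \tfrac{2(f(\b{x}^{(0)}) - f^*)}{\eta(t+1)} + \tfrac{2c}{t+1}\sum_{k=0}^t \|e_k\|_2^2$, which is the advertised $\mathcal{O}(1/t + \|e_t\|_2^2)$ once the per-iteration errors are treated as of comparable order $\|e_t\|_2^2$. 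For the convex case the same estimate combines with the convexity inequality (\ref{equation_convex_function_firstDerivative}) to transfer the guarantee from the gradient norm to the function gap $f(\b{x}^{(t)}) - f^*$, mirroring the passage from Theorem~\ref{theorem_GD_convergence_rate} to Theorem~\ref{theorem_GD_convergence_rate_convexFunction}.

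For the $\mu$-strongly convex case I would instead turn the descent inequality into a linear recursion. Strong convexity (\ref{equation_strongly_convex_function_firstDerivative}) implies a Polyak--{\L}ojasiewicz-type bound $\|\nabla f(\b{x}^{(k)})\|_2^2 \geq 2\mu\,(f(\b{x}^{(k)}) - f^*)$, which I would use to replace the gradient-norm term in the descent inequality, obtaining $f(\b{x}^{(k+1)}) - f^* \leq (1 - \tfrac{\mu}{L})\big(f(\b{x}^{(k)}) - f^*\big) + c'\|e_k\|_2^2$ (with $\eta = 1/L$, as in Theorem~\ref{theorem_GD_convergence_rate_stronglyConvexFunction}). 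Unrolling this recursion gives $f(\b{x}^{(t)}) - f^* \leq (1 - \tfrac{\mu}{L})^t\big(f(\b{x}^{(0)}) - f^*\big) + c'\sum_{j=0}^{t-1}(1-\tfrac{\mu}{L})^{\,t-1-j}\|e_j\|_2^2$. The geometric weight sums to a constant, so the accumulated error is again of order $\|e_t\|_2^2$, yielding $\mathcal{O}\big((1-\tfrac{\mu}{L})^t + \|e_t\|_2^2\big)$.

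The main obstacle is the treatment of the error. Unlike the classical stochastic analysis, the statement is deterministic, so I cannot kill the cross term $\nabla f(\b{x}^{(k)})^\top e_k$ by taking an expectation over an unbiased sample; it must be absorbed by Young's inequality, which is what forces the $\|e_t\|_2^2$ (rather than $\|e_t\|_2$) dependence and the restriction on $\eta$. The second delicate point is that the clean $\|e_t\|_2^2$ appearing in the rate is really a summary of the whole sequence $\{\|e_k\|_2^2\}$; making the bound rigorous requires either assuming the per-iteration errors are uniformly of order $\|e_t\|_2^2$ or stating the result as an averaged (resp.\ worst-case) error over the iterates, so I would be explicit about that convention rather than hiding it inside the $\mathcal{O}(\cdot)$.
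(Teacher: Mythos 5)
The first thing to note is that the paper contains no proof of this theorem: like the neighboring SGD and SAG rate theorems, it is quoted as a known result, and the appendices only prove Theorems \ref{theorem_GD_convergence_rate} and \ref{theorem_GD_convergence_rate_convexFunction} and a handful of lemmas. So there is no proof to compare yours against; your proposal has to stand on its own, and in substance it does. Viewing the mini-batch step as gradient descent with a deterministic perturbation, expanding the $L$-smooth descent inequality (\ref{equation_fundamental_theorem_calculus_Lipschitz}), absorbing the cross term $\nabla f(\b{x}^{(k)})^\top e_k$ by Young's inequality under $\eta < 1/L$, and then telescoping (general case) or converting to a linear recursion via the Polyak--{\L}ojasiewicz consequence $\|\nabla f(\b{x})\|_2^2 \geq 2\mu\,(f(\b{x}) - f^*)$ of strong convexity is the standard and correct route to exactly these rates; the geometric weights in the unrolled recursion sum to at most $L/\mu$, so the accumulated error indeed stays $\mathcal{O}(\|e\|_2^2)$. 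Your closing caveat is also correct and worth keeping: the $\|e_t\|_2^2$ in the theorem's big-$\mathcal{O}$ is honestly a running average (or uniform bound on) the $\|e_k\|_2^2$, not the error at the final iterate, and the theorem as printed is informal on this point.

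The one place your sketch is genuinely thinner than the rest is the convex (not strongly convex) clause. The paper's own passage from gradient norms to the function gap in Theorem \ref{theorem_GD_convergence_rate_convexFunction} leans on the exact-gradient fact that $\|\b{x}^{(k)} - \b{x}^*\|_2$ is non-increasing (Eq. (\ref{equation3_theorem_GD_convergence_rate_convexFunction}) in its proof); with the perturbation $e_k$ this monotonicity only holds up to error terms, and the subsequent telescoping of $1/\delta_k$ picks up extra contributions. Carrying $e_k$ through that argument takes more than the word ``mirroring'' --- alternatively, one can settle for the gradient-norm statement in the convex case, which is all that Eq. (\ref{equation_minibatch_SGD_convergence_rate_nonconvex}) literally asserts. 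Either fix is routine, but it should be said rather than implied.
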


According to Eq. (\ref{equation_minibatch_SGD_error}), the expected error of mini-batch SGD at the $k$-th iteration is:
\begin{align}\label{equation_minibatch_SGD_error_expectation}
\mathbb{E}[\|e_t\|_2^2] = \mathbb{E}\Big[\big\|\nabla f(\b{x}^{(t)}) - \frac{1}{b} \sum_{i \in \mathcal{B}_{t'}} \nabla f_i(\b{x}^{(t)})\big\|_2^2\Big],
\end{align}
which is variance of estimation. 
If we sample the batches without replacement (i.e., sampling batches by simple random sampling before start of optimization) or with replacement (i.e., bootstrapping during optimization), the expected error is {\citep[Proposition 3]{ghojogh2020sampling}}:
\begin{align}
&\mathbb{E}[\|e_t\|_2^2] = (1 - \frac{b}{n}) \frac{\sigma^2}{b}, \label{equation_minibatch_SGD_without_replacement} \\
&\mathbb{E}[\|e_t\|_2^2] = \frac{\sigma^2}{b}, \label{equation_minibatch_SGD_with_replacement}
\end{align}
respectively, where $\sigma^2$ is the variance of whole dataset. 
According to Eqs. (\ref{equation_minibatch_SGD_without_replacement}) and (\ref{equation_minibatch_SGD_with_replacement}), the accuracy of SGD by sampling without and with replacement increases by $b \rightarrow n$ and $b \rightarrow \infty$, respectively. However, this increase makes every iteration slower so there is trade-off between accuracy and speed. 
Also, comparing Eqs. (\ref{equation_minibatch_SGD_convergence_rate_nonconvex}) and (\ref{equation_minibatch_SGD_convergence_rate_strongly_convex}) with Eqs. (\ref{equation_GD_norm_gradient_sublinear_rate}) and (\ref{equation_GD_norm_gradient_sublinear_rate_stronglyConvexFunction}), while noticing Eqs. (\ref{equation_minibatch_SGD_without_replacement}) and (\ref{equation_minibatch_SGD_with_replacement}), shows that the convergence rate of mini-batch gets closer to that of gradient descent if the batch size increases. 

\subsection{Stochastic Average Gradient Methods}

\subsubsection{Stochastic Average Gradient}


SGD is faster than gradient descent but its problem is its lower accuracy compared to gradient descent. \textit{Stochastic Average Gradient} (SAG) \cite{roux2012stochastic} keeps a trade-off between accuracy and speed. 
Consider the optimization problem (\ref{equation_optimization_problem_unconstrained_SGD}). 
Let $\nabla f_{i}(\b{x}^{(k)})$ be the gradient of $f_i(.)$, evaluated in point $\b{x}^{(k)}$, at iteration $k$. 
According to Eqs. (\ref{equation_GD_step_by_eta}) and (\ref{equation_gradient_average_of_gradients}), gradient descent updates the solution as:
\begin{align*}
\b{x}^{(k+1)} := \b{x}^{(k)} - \frac{\eta^{(k)}}{n} \sum_{i=1}^n \nabla f_{i}(\b{x}^{(k)}).
\end{align*}
SAG randomly samples one of the points and updates its gradient among the gradient terms. If the sampled point is the $j$-th one, we have:
\begin{equation}\label{equation_update_SAG}
\begin{aligned}
\b{x}^{(k+1)} := \b{x}^{(k)} - \frac{\eta^{(k)}}{n} &\Big( \nabla f_{j}(\b{x}^{(k)}) - \nabla f_{j}(\b{x}^{(k-1)}) \\
&+ \sum_{i=1}^n \nabla f_{i}(\b{x}^{(k-1)}) \Big).
\end{aligned}
\end{equation}
In other words, we subtract the $j$-th gradient from the summation of all $n$ gradients in previous iteration $(k-1)$ by $\sum_{i=1}^n \nabla f_{i}(\b{x}^{(k-1)}) - \nabla f_{j}(\b{x}^{(k-1)})$; then, we add back the new $j$-th gradient in this iteration by adding $\nabla f_{j}(\b{x}^{(k)})$. 

\begin{theorem}[Convergence rates for SAG {\citep[Proposition 1]{roux2012stochastic}}]
Consider a function $f(\b{x}) = \sum_{i=1}^n f_i(\b{x})$ which is bounded below and each $f_i$ is differentiable. Let the domain of function $f(.)$ be $\mathcal{D}$ and its gradient be $L$-smooth (see Eq. (\ref{equation_gradient_L_smooth})). 
The convergence rate of SAG is $\mathcal{O}(1/t)$ where $t$ denotes the iteration index. 
\end{theorem}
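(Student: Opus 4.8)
The plan is to follow the potential-function (Lyapunov) argument behind the original SAG analysis. First I would fix notation for the stored gradients: let $\b{y}_i^{(k)}$ denote the most recently evaluated gradient of $f_i$ that SAG keeps in memory at iteration $k$, so that after sampling index $j$ we set $\b{y}_j^{(k)} = \nabla f_j(\b{x}^{(k)})$ and leave $\b{y}_i^{(k)} = \b{y}_i^{(k-1)}$ for $i \neq j$. With this convention, the update of Eq. (\ref{equation_update_SAG}) is exactly $\b{x}^{(k+1)} = \b{x}^{(k)} - \frac{\eta^{(k)}}{n}\sum_{i=1}^n \b{y}_i^{(k)}$; that is, the step direction is the \emph{average of stored gradients} rather than the true full gradient $\nabla f(\b{x}^{(k)}) = \frac{1}{n}\sum_{i=1}^n \nabla f_i(\b{x}^{(k)})$.

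Second, I would compute the conditional expectation over the uniformly sampled $j$. Since each index is chosen with probability $1/n$, the memory evolves in expectation as $\mathbb{E}[\b{y}_i^{(k)}\mid \b{x}^{(k)}] = \frac{1}{n}\nabla f_i(\b{x}^{(k)}) + (1-\frac{1}{n})\b{y}_i^{(k-1)}$, so the expected descent direction is a convex combination of the exact gradient and the previous stale direction. The central quantity to control is therefore the \emph{gradient-approximation error} $\b{e}^{(k)} := \frac{1}{n}\sum_{i=1}^n \b{y}_i^{(k)} - \nabla f(\b{x}^{(k)})$, which measures how far the stored average is from the exact gradient. Applying $L$-smoothness through Lemma \ref{lemma_fundamental_theorem_calculus_corollary} (Eq. (\ref{equation_fundamental_theorem_calculus_Lipschitz})) with $\b{x}=\b{x}^{(k)}$ and $\b{y}=\b{x}^{(k+1)}$ yields a per-iteration descent inequality for $f(\b{x}^{(k+1)}) - f(\b{x}^{(k)})$ in which the favorable term $-\eta\|\nabla f(\b{x}^{(k)})\|_2^2$ is contaminated by cross terms involving $\b{e}^{(k)}$ and by the quadratic $\frac{L}{2}(\eta^{(k)})^2\|\frac{1}{n}\sum_i\b{y}_i^{(k)}\|_2^2$.

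Third, and this is the technical heart, I would introduce a Lyapunov function coupling the suboptimality gap with the stored-gradient error, of the schematic form $\mathcal{L}^{(k)} = \big(f(\b{x}^{(k)}) - f^*\big) + \frac{c}{n}\sum_{i=1}^n \|\b{y}_i^{(k)} - \nabla f_i(\b{x}^*)\|_2^2$ for a constant $c$ chosen in terms of $L$, $n$, and the step size. The key claim is that, for a sufficiently small constant step size (of order $1/(Ln)$ in the original analysis), this potential obeys a contraction of the type $\mathbb{E}[\mathcal{L}^{(k+1)}\mid\b{x}^{(k)}] \le \mathcal{L}^{(k)} - \alpha\,\mathbb{E}[f(\b{x}^{(k)}) - f^*]$ for some $\alpha > 0$ (with the gradient-norm $\|\nabla f(\b{x}^{(k)})\|_2^2$ replacing the gap in the non-convex setting). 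Establishing this inequality requires bounding the growth of $\sum_i\|\b{y}_i^{(k)} - \nabla f_i(\b{x}^*)\|_2^2$ against the decrease in the objective, which reduces to verifying that a fixed matrix whose entries depend on $L$, $n$, $\eta$, and $c$ is positive semidefinite. I expect this positive-semidefiniteness certificate — which Le Roux et al.\ established with computer assistance — to be the main obstacle, since the admissible constants are delicate and do not follow from a one-line estimate.

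Finally, I would telescope the contraction. Taking total expectations and summing $\alpha\,\mathbb{E}[f(\b{x}^{(k)}) - f^*] \le \mathbb{E}[\mathcal{L}^{(k)}] - \mathbb{E}[\mathcal{L}^{(k+1)}]$ from $k=0$ to $t$ gives $\sum_{k=0}^{t}\mathbb{E}[f(\b{x}^{(k)}) - f^*] \le \mathcal{L}^{(0)}/\alpha$, a constant independent of $t$. Applying Jensen's inequality to the running average $\bar{\b{x}}^{(t)} = \frac{1}{t+1}\sum_{k=0}^t \b{x}^{(k)}$ (or simply bounding the minimum gap by the average) then yields $\mathbb{E}[f(\bar{\b{x}}^{(t)}) - f^*] = \mathcal{O}(1/t)$, which is the claimed rate. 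The assumption that $f$ is bounded below is what guarantees $\mathcal{L}^{(0)}$ is finite and keeps the telescoping well posed.
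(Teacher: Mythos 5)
The paper itself gives no proof of this theorem: it is quoted directly from the literature, citing Proposition~1 of the original SAG paper \cite{roux2012stochastic}, so there is no in-paper argument to compare yours against. Judged on its own terms, your proposal correctly reconstructs the skeleton of that original analysis --- the memory variables $\b{y}_i^{(k)}$, the expected update $\mathbb{E}[\b{y}_i^{(k)} \mid \b{x}^{(k)}] = \frac{1}{n}\nabla f_i(\b{x}^{(k)}) + (1 - \frac{1}{n}) \b{y}_i^{(k-1)}$, a Lyapunov function coupling the suboptimality gap to the staleness of the stored gradients, and a final telescoping step. But there is a genuine gap, and you name it yourself: the contraction inequality $\mathbb{E}[\mathcal{L}^{(k+1)} \mid \b{x}^{(k)}] \leq \mathcal{L}^{(k)} - \alpha\, \mathbb{E}[f(\b{x}^{(k)}) - f^*]$ is the entire mathematical content of the theorem, and you assert it rather than establish it. Choosing the constant $c$ in the potential, the admissible step size, and $\alpha$, and then verifying the resulting quadratic-form positive-semidefiniteness condition is precisely the part that required delicate, partly computer-assisted work in the original paper; a proof that stops at ``I expect this to be the main obstacle'' is a plan, not a proof. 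Everything after that point (telescoping plus Jensen) is routine and correct, but it rests entirely on the unproven step.

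A second, smaller issue: the theorem as stated (following the paper's loose phrasing) assumes only that $f$ is bounded below and its gradient is $L$-smooth, with no convexity hypothesis, yet your Lyapunov function and telescoping are written in terms of $\b{x}^*$, $\nabla f_i(\b{x}^*)$, and the gap $f(\b{x}^{(k)}) - f^*$ --- objects whose role in this argument requires each $f_i$ to be convex, exactly as in the cited Proposition~1. You gesture at the non-convex variant (gradient norm replacing the gap), but that is a different statement with a different potential; as written, your argument can only deliver the claimed $\mathcal{O}(1/t)$ function-gap rate under a convexity assumption that the statement omits. Either add that hypothesis, matching the cited source, or rework the potential so that the conclusion is on $\min_{0 \leq k \leq t} \|\nabla f(\b{x}^{(k)})\|_2^2$ rather than on the function gap.
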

Comparing the convergence rates of SAG, gradient descent, and SGD shows that SAG has the same rate order as gradient descent; although, it usually needs some more iterations to converge. 
Practical experiments have shown that SAG requires many parameter fine-tuning to perform perfectly. 
Some other variants of SAG are optimization of a finite sum of smooth convex functions \cite{schmidt2017minimizing} and its second-order version named Stochastic Average Newton (SAN) \cite{chen2021san}.

\subsubsection{Stochastic Variance Reduced Gradient}


Another effective first-order method is the Stochastic Variance Reduced Gradient (SVRG) \cite{johnson2013accelerating} which updates the solution according to Algorithm \ref{algorithm_SVRG}.
As this algorithm shows, the update of solution is similar to SAG (see Eq. (\ref{equation_update_SAG})) but for every iteration, it updates the solution for $m$ times. SVRG is an efficient method and its convergence rate is similar to that of SAG. It is shown in \cite{johnson2013accelerating} that both SAG and SVRG reduce the variance of solution to optimization. 
Recently, SVRG has been used for semidefinite programming optimization \cite{zeng2021stochastic}. 

\SetAlCapSkip{0.5em}
\IncMargin{0.8em}
\begin{algorithm2e}[!t]
\DontPrintSemicolon
    Initialize $\widetilde{\b{x}}^{(0)}$\;
    \For{iteration $k = 1, 2, \dots$}{
        $\widetilde{\b{x}} := \widetilde{\b{x}}^{(k-1)}$\;
        $\nabla f(\widetilde{\b{x}}) \overset{(\ref{equation_gradient_average_of_gradients})}{:=} \frac{1}{n} \sum_{i=1}^n \nabla f_i(\widetilde{\b{x}})$\;
        $\b{x}^{(0)} := \widetilde{\b{x}}$\;
        \For{iteration $\tau = 0, 1, \dots, m-1$}{
            Randomly sample $j$ from $\{1, \dots, n\}$.\;
            $\b{x}^{(\tau+1)} := \b{x}^{(\tau)} - \eta^{(\tau)} \big( \nabla f_{j}(\b{x}^{(\tau)}) - \nabla f_{j}(\widetilde{\b{x}}) + \nabla f(\widetilde{\b{x}}) \big)$.\;
        }
        $\widetilde{\b{x}}^{(k)} := \b{x}^{(m)}$\;
    }
\caption{The SVRG algorithm}\label{algorithm_SVRG}
\end{algorithm2e}
\DecMargin{0.8em}

\subsubsection{Adapting Learning Rate with AdaGrad, RMSProp, and Adam}


Consider the optimization problem (\ref{equation_optimization_problem_unconstrained_SGD}).
We can adapt the learning rate in stochastic gradient methods. In the following, we introduce the three most well-known methods for adapting the learning rate, which are AdaGrad, RMSProp, and Adam.

\hfill\break
\textbf{-- Adaptive Gradient (AdaGrad):}
\textit{AdaGrad} method \cite{duchi2011adaptive} updates the solution iteratively as:
\begin{align}\label{equation_AdaGrad_update}
\b{x}^{(k+1)} := \b{x}^{(k)} - \eta^{(k)} \b{G}^{-1} \nabla f_{i}(\b{x}^{(k)}),
\end{align}
where $\b{G}$ is a $(d \times d)$ diagonal matrix whose $(j,j)$-th element is:
\begin{align}\label{equation_AdaGrad_G}
\b{G}(j,j) := \sqrt{\varepsilon + \sum_{\tau=0}^k \big(\nabla_j f_{i_\tau}(\b{x}^{(\tau)})\big)^2},
\end{align}
where $\varepsilon \geq 0$ is for stability (making $\b{G}$ full rank), $i_\tau$ is the randomly sampled point (from $\{1, \dots, n\}$) at iteration $\tau$, and $\nabla_j f_{i_\tau}(.)$ is the partial derivative of $f_{i_\tau}(.)$ w.r.t. its $j$-th element (n.b. $f_{i_\tau}(.)$ is $d$-dimensional). 
Putting Eq. (\ref{equation_AdaGrad_G}) in Eq. (\ref{equation_AdaGrad_update}) can simplify AdaGrad to:
\begin{align}\label{equation_AdaGrad_update_2}
\b{x}_j^{(k+1)} := \b{x}_j^{(k)} - \frac{\eta^{(k)}}{\sqrt{\varepsilon + \sum_{\tau=0}^k \big(\nabla_j f_{i_\tau}(\b{x}^{(\tau)})\big)^2}} \nabla f_{j}(\b{x}_j^{(k)}).
\end{align}
AdaGrad keeps a history of the sampled points and it takes derivative for them to use. During the iterations so far, if a dimension has changed significantly, it dampens the learning rate for that dimension (see the inverse in Eq. (\ref{equation_AdaGrad_update})); hence, it gives more weight for changing the dimensions which have not changed noticeably. 



\hfill\break
\textbf{-- Root Mean Square Propagation (RMSProp):}
RMSProp was first proposed in \cite{tieleman2012lecture} which is unpublished. 
It is an improved version of Rprop (resilient backpropagation) \cite{riedmiller1992rprop} which uses the sign of gradient in optimization.  
Inspired by momentum in Eq. (\ref{equation_GD_momentum_update}), it updates a scalar variable $v$ as \cite{hinton2012neural}:
\begin{align}
v^{(k+1)} := \gamma v^{(k)} + (1 - \gamma) \|\nabla f_i(\b{x}^{(k)})\|_2^2,
\end{align}
where $\gamma \in [0,1]$ is the forgetting factor (e.g. $\gamma = 0.9$).
Then, it uses this $v$ to weight the learning rate:
\begin{align}\label{equation_RMSProp_update}
\b{x}^{(k+1)} := \b{x}^{(k)} - \frac{\eta^{(k)}}{\sqrt{\varepsilon + v^{(k+1)}}} \nabla f_{j}(\b{x}_j^{(k)}),
\end{align}
where $\epsilon \geq 0$ is for stability to not have division by zero.
Comparing Eqs. (\ref{equation_AdaGrad_update_2}) and (\ref{equation_RMSProp_update}) shows that RMSProp has a similar form to AdaGrad.

\hfill\break
\textbf{-- Adaptive Moment Estimation (Adam):}
Adam optimizer \cite{kingma2014adam} improves over RMSProp by adding a momentum term (see Section \ref{section_momentum}). 
It updates the scalar $v$ and the vector $\b{m} \in \mathbb{R}^d$ as:
\begin{align}
& \b{m}^{(k+1)} := \gamma_1 \b{m}^{(k)} + (1 - \gamma_1) \nabla f_i(\b{x}^{(k)}), \\
& v^{(k+1)} := \gamma_2 v^{(k)} + (1 - \gamma_2) \|\nabla f_i(\b{x}^{(k)})\|_2^2,
\end{align}
where $\gamma_1, \gamma_2 \in [0,1]$. It normalizes these variables as:
\begin{align*}
& \widehat{\b{m}}^{(k+1)} := \frac{1}{1-\gamma_1^k} \b{m}^{(k+1)}, \quad \widehat{v}^{(k+1)} := \frac{1}{1-\gamma_2^k} v^{(k+1)}. 
\end{align*}
Then, it updates the solution as:
\begin{align}\label{equation_Adam_update}
\b{x}^{(k+1)} := \b{x}^{(k)} - \frac{\eta^{(k)}}{\sqrt{\varepsilon + \widehat{v}^{(k+1)}}} \widehat{\b{m}}^{(k+1)},
\end{align}
which is stochastic gradient descent with momentum while using RMSProp. 
Convergences of RMSProp and Adam methods have been discussed in \cite{zou2019sufficient}. 
The Adam optimizer is one of the mostly used optimizers in neural networks. 

\subsection{Proximal Methods}\label{section_proximal_methods}


\subsubsection{Proximal Mapping and Projection}

\begin{definition}[Proximal mapping/operator \cite{parikh2014proximal}]
The proximal mapping or proximal operator of a convex function $g(.)$ is:
\begin{align}\label{equation_proximal_mapping}
\textbf{prox}_g(\b{x}) := \arg\min_{\b{u}} \Big( g(\b{u}) + \frac{1}{2} \|\b{u} - \b{x}\|_2^2 \Big).
\end{align}
In case the function $g(.)$ is scaled by a scalar $\lambda$ (e.g., this often holds in Eq. (\ref{equation_composite_optimization}) where $\lambda$ can scale $g(.)$ as the regularization parameter), the proximal mapping is defined as:
\begin{align}\label{equation_proximal_mapping_scaled}
\textbf{prox}_{\lambda g}(\b{x}) := \arg\min_{\b{u}} \Big( g(\b{u}) + \frac{1}{2\lambda} \|\b{u} - \b{x}\|_2^2 \Big).
\end{align}
\end{definition}

The proximal mapping is related to the Moreau-Yosida regularization defined below. 
\begin{definition}[Moreau-Yosida regularization or Moreau envelope \cite{moreau1965proximite,yosida1965functional}]\label{definition_Moreau_envelope}
The Moreau-Yosida regularization or the Moreau envelope of function $g(.)$ is:
\begin{align}\label{equation_Moreau_envelope}
M_{\lambda g}(\b{x}) := \inf_{\b{u}} \Big( g(\b{u}) + \frac{1}{2} \|\b{u} - \b{x}\|_2^2 \Big).
\end{align}
This Moreau-Yosida regularized function has the same minimizer as the function $g(.)$ \cite{lemarechal1997practical}. 
\end{definition}

\begin{lemma}[Moreau decomposition \cite{moreau1962decomposition}]
We always have the following decomposition, named the Moreau decomposition:
\begin{align}
& \b{x} = \textbf{prox}_{g}(\b{x}) + \textbf{prox}_{g^*}(\b{x}), \label{equation_Moreau_decomposition} \\
& \b{x} = \textbf{prox}_{\lambda g}(\b{x}) + \lambda\, \textbf{prox}_{\frac{1}{\lambda} g^*}(\frac{\b{x}}{\lambda}), \label{equation_Moreau_decomposition_with_lambda}
\end{align}
where $g(.)$ is a function in a space and $g^*(.)$ is its corresponding function in the dual space (e.g., if $g(.)$ is a norm, $g^*(.)$ is its dual norm or if $g(.)$ is projection onto a cone, $g^*(.)$ is projection onto the dual cone). 
\end{lemma}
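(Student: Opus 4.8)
The plan is to prove the unscaled identity (\ref{equation_Moreau_decomposition}) first, via the subgradient characterization of the proximal operator, and then obtain the scaled identity (\ref{equation_Moreau_decomposition_with_lambda}) as a corollary by a change of variables.

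First I would record the first-order optimality condition for the minimization in Eq. (\ref{equation_proximal_mapping}). Since $g(.)$ is convex and $\frac{1}{2}\|\b{u}-\b{x}\|_2^2$ is strongly convex, the objective is strongly convex with a unique minimizer, so $\textbf{prox}_g(\b{x})$ is well defined, and $\b{u} = \textbf{prox}_g(\b{x})$ holds if and only if $\b{0} \in \partial g(\b{u}) + (\b{u} - \b{x})$, i.e.
\begin{align*}
\b{u} = \textbf{prox}_g(\b{x}) \iff \b{x} - \b{u} \in \partial g(\b{u}).
\end{align*}
The key ingredient I would then invoke is the conjugate subgradient relation: for a closed proper convex $g(.)$ with conjugate $g^*(.)$, one has $\b{w} \in \partial g(\b{u}) \iff \b{u} \in \partial g^*(\b{w})$ (equivalently, the Fenchel--Young inequality holds with equality). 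Setting $\b{u} := \textbf{prox}_g(\b{x})$ and $\b{w} := \b{x} - \b{u}$, the optimality condition gives $\b{w} \in \partial g(\b{u})$, hence $\b{u} \in \partial g^*(\b{w})$. But $\b{u} = \b{x} - \b{w}$, so $\b{x} - \b{w} \in \partial g^*(\b{w})$, which is exactly the optimality condition characterizing $\b{w} = \textbf{prox}_{g^*}(\b{x})$. Therefore $\b{x} = \b{u} + \b{w} = \textbf{prox}_g(\b{x}) + \textbf{prox}_{g^*}(\b{x})$, which is (\ref{equation_Moreau_decomposition}).

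For the scaled identity, I would apply (\ref{equation_Moreau_decomposition}) to the function $\lambda g(.)$, using that its conjugate is $(\lambda g)^*(\b{y}) = \lambda g^*(\b{y}/\lambda)$ for $\lambda > 0$. This yields $\b{x} = \textbf{prox}_{\lambda g}(\b{x}) + \textbf{prox}_{(\lambda g)^*}(\b{x})$, and it remains to rewrite the second term. Expanding $\textbf{prox}_{(\lambda g)^*}(\b{x}) = \arg\min_{\b{u}}\big(\lambda g^*(\b{u}/\lambda) + \frac{1}{2}\|\b{u}-\b{x}\|_2^2\big)$ and substituting $\b{v} := \b{u}/\lambda$ turns the objective (after dividing by $\lambda$) into $g^*(\b{v}) + \frac{\lambda}{2}\|\b{v} - \b{x}/\lambda\|_2^2$, whose minimizer is $\textbf{prox}_{\frac{1}{\lambda} g^*}(\b{x}/\lambda)$ by the scaled definition (\ref{equation_proximal_mapping_scaled}) with scaling parameter $1/\lambda$; undoing the substitution gives $\textbf{prox}_{(\lambda g)^*}(\b{x}) = \lambda\,\textbf{prox}_{\frac{1}{\lambda} g^*}(\b{x}/\lambda)$, establishing (\ref{equation_Moreau_decomposition_with_lambda}).

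The part requiring the most care is the conjugate subgradient inversion $\b{w}\in\partial g(\b{u}) \iff \b{u}\in\partial g^*(\b{w})$; this is where the closed-proper-convex hypothesis on $g(.)$ is essential (so that $g^{**}=g$ and the biconjugate coincides with $g$), and everything else is routine optimality-condition bookkeeping together with the scaling change of variables.
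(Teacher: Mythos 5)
Your proof is correct, but there is nothing in the paper to compare it against: the paper states the Moreau decomposition lemma with only a citation to Moreau's 1962 paper and gives no proof of its own (unlike, e.g., Lemma \ref{lemma_projection_onto_set}, which carries an inline proof). Taken on its own merits, your argument is the standard one and is sound: the subgradient optimality characterization $\b{u} = \textbf{prox}_g(\b{x}) \iff \b{x} - \b{u} \in \partial g(\b{u})$, the conjugate-subgradient inversion $\b{w} \in \partial g(\b{u}) \iff \b{u} \in \partial g^*(\b{w})$ (where you rightly flag that $g$ must be closed, proper, convex so that $g^{**} = g$ --- a hypothesis the paper's definition of the prox operator does not state explicitly), and the change of variables $\b{v} = \b{u}/\lambda$ for the scaled identity, which correctly matches the paper's scaled definition in Eq. (\ref{equation_proximal_mapping_scaled}) with scaling parameter $1/\lambda$. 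One further point worth making explicit: you interpret $g^*$ as the Fenchel convex conjugate, whereas the paper's prose loosely says ``if $g(.)$ is a norm, $g^*(.)$ is its dual norm.'' Your interpretation is the right one --- the conjugate of a norm is the indicator of the dual-norm unit ball, whose prox is the projection $\Pi_{\mathcal{B}}$ --- and it is precisely this reading that makes the paper's subsequent corollary, Eq. (\ref{equation_Moreau_decomposition_norm}), follow from Eq. (\ref{equation_Moreau_decomposition_with_lambda}).
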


\begin{lemma}[Projection onto set]\label{lemma_projection_onto_set}
Consider an indicator function $\mathbb{I}(.)$ which is zero if its condition is satisfied and is infinite otherwise. 
The proximal mapping of the indicator function to a convex set $\mathcal{S}$, i.e. $\mathbb{I}(\b{x} \in \mathcal{S})$, is projection of the point $\b{x}$ onto the set $\mathcal{S}$. Hence, projection of $\b{x}$ onto set $\mathcal{S}$, denoted by $\Pi_{\mathcal{S}}(\b{x})$, is defined as:
\begin{align}\label{equation_projection_onto_set}
\Pi_{\mathcal{S}}(\b{x}) := \textbf{prox}_{\mathbb{I}(. \in \mathcal{S})}(\b{x}) = \arg\min_{\b{u} \in \mathcal{S}} \Big( \frac{1}{2} \|\b{u} - \b{x}\|_2^2 \Big).
\end{align}
As Fig. \ref{figure_projection_onto_set} shows, this projection simply means projecting the point $\b{x}$ onto the closest point of set from the point $\b{x}$; hence, the vector connecting the points $\b{x}$ and $\Pi_{\mathcal{S}}(\b{x})$ is orthogonal to the set $\mathcal{S}$.
\end{lemma}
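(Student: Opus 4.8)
The plan is to unwind the definition of the proximal operator in Eq.~(\ref{equation_proximal_mapping}) for the specific choice $g(\b{u}) = \mathbb{I}(\b{u} \in \mathcal{S})$ and then perform a case split on membership in $\mathcal{S}$. First I would substitute this $g$ into Eq.~(\ref{equation_proximal_mapping}) to obtain
\begin{align*}
\textbf{prox}_{\mathbb{I}(\cdot \in \mathcal{S})}(\b{x}) = \arg\min_{\b{u}} \Big( \mathbb{I}(\b{u} \in \mathcal{S}) + \frac{1}{2}\|\b{u} - \b{x}\|_2^2 \Big).
\end{align*}
For any candidate $\b{u} \notin \mathcal{S}$ the indicator contributes $+\infty$, so the whole objective is $+\infty$, whereas for any $\b{u} \in \mathcal{S}$ the indicator vanishes and the objective equals the finite value $\frac{1}{2}\|\b{u} - \b{x}\|_2^2$. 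Hence, provided $\mathcal{S}$ is nonempty, a minimizer can never lie outside $\mathcal{S}$, and the unconstrained minimization collapses to the constrained one $\arg\min_{\b{u} \in \mathcal{S}} \tfrac{1}{2}\|\b{u} - \b{x}\|_2^2$, which is exactly the right-hand side of the claimed equation.

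Next I would argue that this projection is well defined. Since $\frac{1}{2}\|\b{u} - \b{x}\|_2^2$ is (one-)strongly convex in $\b{u}$ and $\mathcal{S}$ is convex --- and closed, so that a minimizer exists --- the constrained problem has a \emph{unique} minimizer, using that a strongly convex function has a unique minimizer (as noted after Eq.~(\ref{equation_strongly_convex_function_secondDerivative})). This legitimizes writing $\Pi_{\mathcal{S}}(\b{x})$ as a single point rather than a set, and it is where convexity of $\mathcal{S}$ is genuinely needed.

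For the geometric orthogonality reading in Fig.~\ref{figure_projection_onto_set}, I would invoke the first-order optimality condition for minimizing a differentiable convex objective over a convex set. Setting $\b{p} := \Pi_{\mathcal{S}}(\b{x})$ and perturbing along the feasible segment $\b{p} + t(\b{u} - \b{p}) \in \mathcal{S}$ for $\b{u} \in \mathcal{S}$ and $t \in [0,1]$, I would differentiate $\frac{1}{2}\|\b{p} + t(\b{u} - \b{p}) - \b{x}\|_2^2$ at $t=0$ and use that $t=0$ is a minimizer to obtain the variational inequality
\begin{align*}
(\b{x} - \b{p})^\top (\b{u} - \b{p}) \leq 0, \quad \forall \b{u} \in \mathcal{S}.
\end{align*}

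The main obstacle is this last step. The substitution and the reduction to constrained minimization are routine, but the assertion that $\b{x} - \Pi_{\mathcal{S}}(\b{x})$ is ``orthogonal to the set,'' read literally, is only correct as the obtuse-angle variational inequality above for a general convex set; it sharpens to exact orthogonality only when $\b{p}$ lies on a flat (affine) piece of the boundary. Deriving the inequality cleanly requires the feasible-direction perturbation argument together with convexity of $\mathcal{S}$ to guarantee that the segment stays inside $\mathcal{S}$, so I would be careful to state the orthogonality as this variational inequality and present the figure's wording as its geometric interpretation.
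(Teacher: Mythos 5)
Your proposal is correct, and its core step --- substituting $g(\b{u}) = \mathbb{I}(\b{u} \in \mathcal{S})$ into the proximal-mapping definition and observing that any $\b{u} \notin \mathcal{S}$ makes the objective infinite, so the unconstrained $\arg\min$ collapses to $\arg\min_{\b{u} \in \mathcal{S}} \frac{1}{2}\|\b{u}-\b{x}\|_2^2$ --- is exactly the paper's proof. The additional material you supply (existence and uniqueness of the minimizer via strong convexity and closedness of $\mathcal{S}$, and the variational inequality $(\b{x}-\Pi_{\mathcal{S}}(\b{x}))^\top(\b{u}-\Pi_{\mathcal{S}}(\b{x})) \leq 0$ as the precise meaning of the figure's ``orthogonality'' remark) goes beyond the paper's two-line argument and is a sound, worthwhile strengthening rather than a different route.
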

\begin{proof}
\begin{align*}
\textbf{prox}_{\mathbb{I}(. \in \mathcal{S})}(\b{x}) &= \arg\min_{\b{u}} \Big( \mathbb{I}(\b{x} \in \mathcal{S}) + \frac{1}{2} \|\b{u} - \b{x}\|_2^2 \Big) \\
&\overset{(a)}{=} \arg\min_{\b{u} \in \mathcal{S}} \Big( \frac{1}{2} \|\b{u} - \b{x}\|_2^2 \Big),
\end{align*}
where $(a)$ is because $\mathbb{I}(\b{x} \in \mathcal{S})$ becomes infinity if $\b{x} \not\in \mathcal{S}$. Q.E.D.
\end{proof}

\begin{figure}[!t]
\centering
\includegraphics[width=1.7in]{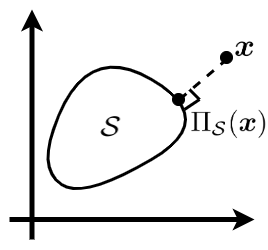}
\caption{Projection of point $\b{x}$ onto a set $\mathcal{S}$.}
\label{figure_projection_onto_set}
\end{figure}

\begin{corollary}[Moreau decomposition for norm]
If the function is a scaled norm, $g(.) = \lambda \|.\|$, we have from re-arranging Eq. (\ref{equation_Moreau_decomposition_with_lambda}) that:
\begin{align}\label{equation_Moreau_decomposition_norm}
& \textbf{prox}_{\lambda \|.\|}(\b{x}) = \b{x} - \lambda\, \Pi_\mathcal{B}(\frac{\b{x}}{\lambda}), 
\end{align}
where $\mathcal{B}$ is the unit ball of dual norm (see Definition \ref{definition_unit_ball}). 
\end{corollary}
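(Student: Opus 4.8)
The plan is to specialize the Moreau decomposition with scaling, Eq. (\ref{equation_Moreau_decomposition_with_lambda}), to the function $g(.) = \|.\|$, and to recognize the second proximal term as a projection onto the dual-norm unit ball. Substituting $g = \|.\|$ into Eq. (\ref{equation_Moreau_decomposition_with_lambda}) gives
\begin{align*}
\b{x} = \textbf{prox}_{\lambda \|.\|}(\b{x}) + \lambda\, \textbf{prox}_{\frac{1}{\lambda} (\|.\|)^*}\Big(\frac{\b{x}}{\lambda}\Big),
\end{align*}
so everything reduces to identifying $\textbf{prox}_{\frac{1}{\lambda} (\|.\|)^*}$ and then re-arranging for $\textbf{prox}_{\lambda \|.\|}(\b{x})$.

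The key step is to compute the convex conjugate $(\|.\|)^*(\b{y}) = \sup_{\b{u}} \big(\b{y}^\top \b{u} - \|\b{u}\|\big)$ and show it equals the indicator function $\mathbb{I}(\b{y} \in \mathcal{B})$, where $\mathcal{B} = \{\b{y}\,|\,\|\b{y}\|_* \leq 1\}$ is the unit ball of the dual norm. I would split into two cases using the definition of the dual norm. If $\|\b{y}\|_* \leq 1$, then the dual-norm inequality gives $\b{y}^\top \b{u} \leq \|\b{y}\|_* \|\b{u}\| \leq \|\b{u}\|$ for every $\b{u}$, so the expression inside the supremum is nonpositive and is attained as $0$ at $\b{u} = \b{0}$; hence the conjugate is $0$. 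If $\|\b{y}\|_* > 1$, then by definition of the dual norm there is a $\b{u}_0$ with $\|\b{u}_0\| \leq 1$ and $\b{y}^\top \b{u}_0 > 1$; feeding $t\b{u}_0$ into the supremum and letting $t \to \infty$ drives $\b{y}^\top(t\b{u}_0) - \|t\b{u}_0\| = t\big(\b{y}^\top \b{u}_0 - \|\b{u}_0\|\big)$ to $+\infty$, so the conjugate is $+\infty$. This yields exactly $(\|.\|)^* = \mathbb{I}(. \in \mathcal{B})$.

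Finally I would note that scaling an indicator function, which takes only the values $0$ and $+\infty$, by the positive factor $1/\lambda$ leaves it unchanged, so $\frac{1}{\lambda}(\|.\|)^* = \mathbb{I}(. \in \mathcal{B})$, and by Lemma \ref{lemma_projection_onto_set} its proximal operator is the projection, $\textbf{prox}_{\mathbb{I}(. \in \mathcal{B})} = \Pi_\mathcal{B}$. Substituting this into the displayed identity and solving for the first term gives $\textbf{prox}_{\lambda \|.\|}(\b{x}) = \b{x} - \lambda\, \Pi_\mathcal{B}(\b{x}/\lambda)$, which is the claim of Eq. (\ref{equation_Moreau_decomposition_norm}). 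The main obstacle is the conjugate computation: the paper's parenthetical remark that ``$g^*$ is the dual norm'' when $g$ is a norm is imprecise, and the decomposition only closes correctly once one establishes that the conjugate is instead the \emph{indicator} of the dual-norm unit ball.
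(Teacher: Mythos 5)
Your proposal is correct and follows essentially the same route the paper intends: specialize the Moreau decomposition, Eq. (\ref{equation_Moreau_decomposition_with_lambda}), to $g(.) = \|.\|$, identify $\textbf{prox}_{\frac{1}{\lambda} g^*}$ as the projection $\Pi_\mathcal{B}$ onto the dual-norm unit ball, and re-arrange. The only difference is that you make rigorous the step the paper states loosely---the convex conjugate of a norm is the indicator $\mathbb{I}(. \in \mathcal{B})$ of the dual-norm unit ball (not the dual norm itself), it is invariant under positive scaling, and its proximal operator is $\Pi_\mathcal{B}$ by Lemma \ref{lemma_projection_onto_set}---which is precisely the identification needed for the decomposition to close.
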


Derivation of proximal operator for various $g(.)$ functions are available in {\citep[Chapter 6]{beck2017first}}.
Here, we review the proximal mapping of some mostly used functions.
If $g(\b{x}) = 0$, proximal mapping becomes an identity mapping:
\begin{align*}
\textbf{prox}_{\lambda 0}(\b{x}) \overset{(\ref{equation_proximal_mapping_scaled})}{=} \arg\min_{\b{u}} \Big( \frac{1}{2\lambda} \|\b{u} - \b{x}\|_2^2 \Big) = \b{x}.
\end{align*}


\begin{lemma}[Proximal mapping of $\ell_2$ norm {\citep[Example 6.19]{beck2017first}}]\label{lemma_prox_l2_norm}
The proximal mapping of the $\ell_2$ norm is:
\begin{equation}\label{equation_proximal_mapping_scaled_l2_norm}
\begin{aligned}
\textbf{prox}_{\lambda \|.\|_2}(\b{x}) &= \Big( 1 - \frac{\lambda}{\max(\|\b{x}\|_2, \lambda)} \Big) \b{x} \\
&= 
\left\{
    \begin{array}{ll}
        \big( 1 - \frac{\lambda}{\|\b{x}\|_2} \big) \b{x} & \mbox{if } \|\b{x}\|_2 \geq \lambda \\
        0 & \mbox{if } \|\b{x}\|_2 < \lambda.
    \end{array}
\right.
\end{aligned}
\end{equation}
\end{lemma}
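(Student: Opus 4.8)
The plan is to compute the minimizer in the definition of the proximal operator directly. Writing $g(\cdot) = \|\cdot\|_2$ in Eq.~(\ref{equation_proximal_mapping_scaled}) and scaling the objective by $\lambda > 0$ (which does not change the argmin), I would set
\[
\textbf{prox}_{\lambda\|.\|_2}(\b{x}) = \arg\min_{\b{u}}\, \phi(\b{u}), \qquad \phi(\b{u}) := \lambda\|\b{u}\|_2 + \tfrac{1}{2}\|\b{u} - \b{x}\|_2^2.
\]
Because the quadratic term makes $\phi$ strongly convex, it has a unique minimizer $\b{u}^*$, characterized by the first-order optimality condition $\b{0} \in \partial\phi(\b{u}^*) = \lambda\,\partial\|\b{u}^*\|_2 + (\b{u}^* - \b{x})$. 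The only subtlety is that $\|\cdot\|_2$ is non-differentiable at the origin, so I would split into two cases according to whether $\b{u}^*$ is zero.

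For $\b{u}^* \ne \b{0}$, the norm is differentiable and $\partial\|\b{u}^*\|_2 = \{\b{u}^*/\|\b{u}^*\|_2\}$. Setting the gradient to zero gives $\big(1 + \lambda/\|\b{u}^*\|_2\big)\,\b{u}^* = \b{x}$, which forces $\b{u}^*$ to be a positive multiple of $\b{x}$. Taking norms yields $\|\b{u}^*\|_2 = \|\b{x}\|_2 - \lambda$, valid precisely when $\|\b{x}\|_2 > \lambda$; back-substitution then gives $\b{u}^* = \big(1 - \lambda/\|\b{x}\|_2\big)\,\b{x}$. For $\b{u}^* = \b{0}$, the subdifferential of the norm at the origin is the closed unit $\ell_2$ ball, so the optimality condition reads $\b{x} \in \lambda\,\{\b{g} : \|\b{g}\|_2 \le 1\}$, i.e.\ $\|\b{x}\|_2 \le \lambda$. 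Combining the two cases reproduces the stated piecewise formula, which I would finally rewrite compactly using $\max(\|\b{x}\|_2, \lambda)$ in the denominator.

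The main obstacle is exactly this non-differentiability at the origin: one must correctly identify $\partial\|\b{u}\|_2$ at $\b{u}=\b{0}$ as the closed unit ball and verify that the two cases partition the space of $\b{x}$ exactly along the sphere $\|\b{x}\|_2 = \lambda$, with neither overlap nor gap. A cleaner alternative, if I instead lean on material already in the excerpt, is to invoke the Moreau decomposition for norms, Eq.~(\ref{equation_Moreau_decomposition_norm}), together with the fact that the dual of $\|\cdot\|_2$ is $\|\cdot\|_2$ (so $\mathcal{B}$ is the unit $\ell_2$ ball) and the elementary projection $\Pi_{\mathcal{B}}(\b{z}) = \b{z}/\max(\|\b{z}\|_2, 1)$; substituting $\b{z} = \b{x}/\lambda$ into $\b{x} - \lambda\,\Pi_{\mathcal{B}}(\b{x}/\lambda)$ yields the same result with essentially no casework, since $\lambda\,\Pi_{\mathcal{B}}(\b{x}/\lambda) = \lambda\b{x}/\max(\|\b{x}\|_2,\lambda)$.
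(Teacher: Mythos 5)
Your primary argument is correct, but it follows a genuinely different route from the paper's. The paper proves this lemma exactly by what you call your ``cleaner alternative'': it invokes the Moreau decomposition for norms, Eq.~(\ref{equation_Moreau_decomposition_norm}), notes from Eq.~(\ref{equation_dual_norm_calculation}) that the $\ell_2$ norm is self-dual, writes down the projection onto the unit $\ell_2$ ball, and substitutes --- three lines, no casework on subdifferentials. Your main proof instead computes the minimizer from scratch: strong convexity of $\phi(\b{u}) = \lambda\|\b{u}\|_2 + \tfrac{1}{2}\|\b{u}-\b{x}\|_2^2$ gives a unique minimizer, and the optimality condition $\b{0} \in \lambda\,\partial\|\b{u}^*\|_2 + (\b{u}^*-\b{x})$ is solved in the two cases $\b{u}^* \neq \b{0}$ (where the norm is differentiable, forcing $\b{u}^*$ to be a positive multiple of $\b{x}$ and yielding $\b{u}^* = (1-\lambda/\|\b{x}\|_2)\b{x}$ precisely when $\|\b{x}\|_2 > \lambda$) and $\b{u}^* = \b{0}$ (where the subdifferential is the closed unit ball, giving $\|\b{x}\|_2 \leq \lambda$); the cases meet consistently on the sphere $\|\b{x}\|_2 = \lambda$, where both formulas return $\b{0}$, so the $\geq$/$<$ split in the statement is unambiguous. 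The trade-off is clear: your subdifferential computation is self-contained --- it does not rest on the Moreau decomposition, which the paper states without proof --- and it treats the non-smoothness at the origin rigorously; the paper's route (your alternative) is shorter and modular, reducing the prox of any norm to a projection onto the dual-norm ball, which is also exactly how the paper derives soft-thresholding for the $\ell_1$ norm in Lemma~\ref{lemma_prox_l1_norm}.
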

\begin{proof}
Let $g(.) = \|.\|_2$ and $\mathcal{B}$ be the unit $\ell_2$ ball (see Definition \ref{definition_unit_ball}) because $\ell_2$ is the dual norm of $\ell_2$ according to Eq. (\ref{equation_dual_norm_calculation}). We have:
\begin{align*}
&\Pi_\mathcal{B}(\b{x}) = 
\left\{
    \begin{array}{ll}
        \b{x} / \|\b{x}\|_2 & \mbox{if } \|\b{x}\|_2 \geq 1 \\
        \b{x} & \mbox{if } \|\b{x}\|_2 < 1.
    \end{array}
\right. \\
& \implies \textbf{prox}_{\lambda \|.\|_2}(\b{x}) \overset{(\ref{equation_Moreau_decomposition_norm})}{=} \b{x} - \lambda\, \Pi_\mathcal{B}(\frac{\b{x}}{\lambda}) \\
&~~~~~~~~~~~= 
\left\{
    \begin{array}{ll}
        \big( 1 - \frac{\lambda}{\|\b{x}\|_2} \big) \b{x} & \mbox{if } \|\b{x}\|_2 \geq \lambda \\
        \b{x} - \lambda (\b{x} / \lambda) = 0 & \mbox{if } \|\b{x}\|_2 < \lambda.
    \end{array}
\right.
\end{align*}
Q.E.D.
\end{proof}

\begin{lemma}[Proximal mapping of $\ell_1$ norm {\citep[Example 6.8]{beck2017first}}]\label{lemma_prox_l1_norm}
Let $x_j$ denote the $j$-th element of $\b{x} = [x_1, \dots, x_d]^\top \in \mathbb{R}^d$ and let $[\,\textbf{prox}_{\lambda \|.\|_1}(\b{x})]_j$ denote the $j$-th element of the $d$-dimensional $\textbf{prox}_{\lambda \|.\|_1}(\b{x})$ mapping. 
The $j$-th element of proximal mapping of the $\ell_1$ norm is:
\begin{equation}\label{equation_proximal_mapping_scaled_l1_norm}
\begin{aligned}
&[\,\textbf{prox}_{\lambda \|.\|_1}(\b{x})]_j = \max(0, |x_j|-\lambda)\, \textbf{sign}(x_j) \\
&~~~~~~~~~ = s_\lambda(x_j) := 
\left\{
    \begin{array}{ll}
        x_j - \lambda & \mbox{if } x_j \geq \lambda \\
        0 & \mbox{if } |x_j| < \lambda \\
        x_j + \lambda & \mbox{if } x_j \leq -\lambda,
    \end{array}
\right.
\end{aligned}
\end{equation}
for all $j \in \{1, \dots, d\}$.
Eq. (\ref{equation_proximal_mapping_scaled_l1_norm}) is called the \textit{soft-thresholding} function, denoted here by $s_\lambda(.)$. It is depicted in Fig. \ref{figure_soft_thresholding}. 
\end{lemma}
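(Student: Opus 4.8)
The plan is to follow the same route just used for the $\ell_2$ norm, namely the Moreau decomposition for a scaled norm in Eq.~(\ref{equation_Moreau_decomposition_norm}). The first step is to identify the relevant dual-norm unit ball $\mathcal{B}$ appearing in that decomposition. Since the dual norm of $\|\cdot\|_1$ is $\|\cdot\|_\infty$ by Eq.~(\ref{equation_dual_norm_calculation}), the ball $\mathcal{B}$ is the unit $\ell_\infty$ ball, $\mathcal{B} = \{\b{y}\,|\,\|\b{y}\|_\infty \leq 1\}$.

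The second step is to compute the projection $\Pi_\mathcal{B}$ onto this ball. Because the constraint $\|\b{y}\|_\infty \leq 1$ is equivalent to the decoupled constraints $|y_j| \leq 1$ for every coordinate $j$, the projection acts coordinate-wise as a clipping operation:
\begin{align*}
[\Pi_\mathcal{B}(\b{y})]_j = \max(-1, \min(1, y_j)), \quad \forall j.
\end{align*}
I would verify this briefly by noting that the separable least-squares problem $\min_{\b{u} \in \mathcal{B}} \frac{1}{2}\|\b{u} - \b{y}\|_2^2$ from Eq.~(\ref{equation_projection_onto_set}) splits into $d$ one-dimensional problems $\min_{|u_j| \leq 1} \frac{1}{2}(u_j - y_j)^2$, each solved by clipping $y_j$ to $[-1,1]$.

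The third step is to substitute this projection into Eq.~(\ref{equation_Moreau_decomposition_norm}), which yields $[\textbf{prox}_{\lambda\|.\|_1}(\b{x})]_j = x_j - \lambda\,[\Pi_\mathcal{B}(\b{x}/\lambda)]_j$, and then to carry out a case analysis on the size of $x_j$ relative to $\lambda$. When $x_j \geq \lambda$ the clip returns $1$ and the expression becomes $x_j - \lambda$; when $x_j \leq -\lambda$ the clip returns $-1$ and the expression becomes $x_j + \lambda$; when $|x_j| < \lambda$ the clip returns the unclipped value $x_j/\lambda$, so the $\lambda$ factors cancel and the result is $0$. These three cases reproduce exactly the soft-thresholding formula in Eq.~(\ref{equation_proximal_mapping_scaled_l1_norm}).

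I do not expect a genuine obstacle here; the argument is entirely routine once the dual ball is identified. The only step requiring care is the final case analysis, particularly the middle regime $|x_j| < \lambda$ where the cancellation to $0$ must be tracked correctly. As an alternative (and arguably more self-contained) route, I could bypass the Moreau decomposition and exploit separability directly: the objective $\|\b{u}\|_1 + \frac{1}{2\lambda}\|\b{u} - \b{x}\|_2^2$ in Eq.~(\ref{equation_proximal_mapping_scaled}) decomposes over coordinates into $\min_{u_j}\big(|u_j| + \frac{1}{2\lambda}(u_j - x_j)^2\big)$, and I would then impose $0 \in \partial(\cdot)$ on this convex scalar function, using $\partial|u_j| = \textbf{sign}(u_j)$ for $u_j \neq 0$ and the subdifferential $[-1,1]$ at $u_j = 0$ to handle the non-smooth point. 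The same three cases emerge, with the middle case corresponding precisely to the condition $|x_j| \leq \lambda$ under which $u_j = 0$ is optimal.
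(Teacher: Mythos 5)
Your primary route is exactly the paper's own proof: invoke the Moreau decomposition for a scaled norm from Eq.~(\ref{equation_Moreau_decomposition_norm}), identify the dual ball as the unit $\ell_\infty$ ball via Eq.~(\ref{equation_dual_norm_calculation}), compute the coordinate-wise clipping projection, and finish with the same three-case analysis. The proposal is correct, and the alternative separable-subdifferential argument you sketch at the end is also sound, though the paper does not use it.
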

\begin{proof}
Let $g(.) = \|.\|_1$ and $\mathcal{B}$ be the unit $\ell_\infty$ ball (see Definition \ref{definition_unit_ball}) because $\ell_\infty$ is the dual norm of $\ell_1$ according to Eq. (\ref{equation_dual_norm_calculation}). The $j$-th element of projection is:
\begin{align*}
&[\Pi_\mathcal{B}(\b{x})]_j = 
\left\{
    \begin{array}{ll}
        1 & \mbox{if } x_j \geq 1 \\
        x_j & \mbox{if } |x_j| < 1 \\
        -1 & \mbox{if } x_j \leq -1 \\
    \end{array}
\right. \\
& \implies [\textbf{prox}_{\lambda \|.\|_1}(\b{x})]_j \overset{(\ref{equation_Moreau_decomposition_norm})}{=} x_j - \lambda\, [\Pi_\mathcal{B}(\frac{\b{x}}{\lambda})]_j \\
&~~~~~~~~~~~= 
\left\{
    \begin{array}{ll}
        x_j - \lambda & \mbox{if } x_j \geq \lambda \\
        0 & \mbox{if } |x_j| < \lambda \\
        x_j + \lambda & \mbox{if } x_j \leq -\lambda.
    \end{array}
\right.
\end{align*}
Q.E.D.
\end{proof}

\begin{figure}[!t]
\centering
\includegraphics[width=2.5in]{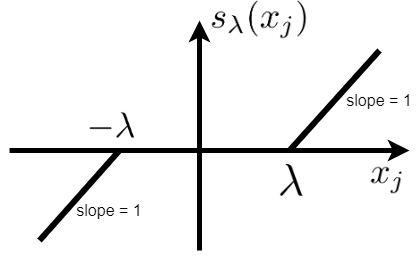}
\caption{Soft-thresholding function.}
\label{figure_soft_thresholding}
\end{figure}

\subsubsection{Proximal Point Algorithm}

The term $g(\b{u}) + 1 / (2\lambda) \|\b{u} - \b{x}\|_2^2$ in Eq. (\ref{equation_proximal_mapping_scaled}) is strongly convex; hence, the proximal point, $\textbf{prox}_{\lambda g}(\b{x})$, is unique. 

\begin{lemma}
The point $\b{x}^*$ minimizes the function $f(.)$ if and only if $\b{x}^* = \textbf{prox}_{\lambda f}(\b{x}^*)$.
In other words, the optimal point $\b{x}^*$ is the fixed point of the $\textbf{prox}_{\lambda f}(.)$ operator (see Definition \ref{definition_fixed_point}). 
\end{lemma}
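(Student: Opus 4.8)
The plan is to prove both implications at once by reducing the fixed-point condition to the first-order optimality condition for $f$. Write the (scaled) proximal objective as $\phi(\b{u}) := f(\b{u}) + \frac{1}{2\lambda}\|\b{u} - \b{x}^*\|_2^2$, so that by Eq. (\ref{equation_proximal_mapping_scaled}) the statement $\b{x}^* = \textbf{prox}_{\lambda f}(\b{x}^*)$ means precisely that $\b{x}^*$ is the minimizer of $\phi$. As already noted just before the lemma, $\phi$ is strongly convex (a convex $f$ plus a strongly convex quadratic), so it has a unique minimizer, characterized by its gradient vanishing.

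First I would compute the gradient of the penalty term and evaluate the optimality condition at the proximity center itself. Since $\nabla_{\b{u}}\big(\frac{1}{2\lambda}\|\b{u}-\b{x}^*\|_2^2\big) = \frac{1}{\lambda}(\b{u}-\b{x}^*)$, we get $\nabla\phi(\b{u}) = \nabla f(\b{u}) + \frac{1}{\lambda}(\b{u}-\b{x}^*)$. The crux is to evaluate this at $\b{u}=\b{x}^*$: the penalty gradient is then $\frac{1}{\lambda}(\b{x}^*-\b{x}^*)=\b{0}$, so $\nabla\phi(\b{x}^*) = \nabla f(\b{x}^*)$. Hence $\b{x}^*$ minimizes $\phi$ if and only if $\nabla f(\b{x}^*)=\b{0}$.

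The two implications then follow immediately from Lemma \ref{lemma_minimizer_gradient_zero}. For the forward direction, if $\b{x}^*$ minimizes $f$ then $\nabla f(\b{x}^*)=\b{0}$, so $\nabla\phi(\b{x}^*)=\b{0}$ and $\b{x}^*$ is the unique minimizer of $\phi$, i.e. $\b{x}^*=\textbf{prox}_{\lambda f}(\b{x}^*)$. For the reverse direction, if $\b{x}^*=\textbf{prox}_{\lambda f}(\b{x}^*)$ then $\b{x}^*$ minimizes $\phi$, so $\nabla\phi(\b{x}^*)=\nabla f(\b{x}^*)=\b{0}$, and by Lemma \ref{lemma_minimizer_gradient_zero} $\b{x}^*$ minimizes $f$. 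An alternative self-contained argument for the forward direction avoids optimality conditions: if $f(\b{x}^*)\le f(\b{u})$ for all $\b{u}$, then $f(\b{u})+\frac{1}{2\lambda}\|\b{u}-\b{x}^*\|_2^2 \ge f(\b{x}^*)$ with equality if and only if $\b{u}=\b{x}^*$, which directly identifies $\b{x}^*$ as the minimizer of $\phi$.

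The main obstacle is a smoothness technicality rather than a computation. Proximal methods are designed exactly for nonsmooth objectives (e.g. the $\ell_1$ norm, whose proximal map is soft-thresholding), so $f$ here need not be differentiable and $\nabla f(\b{x}^*)$ may fail to exist. The clean fix is to replace the gradient by the subdifferential and invoke Fermat's rule: $\b{x}^*$ minimizes $\phi$ if and only if $\b{0}\in\partial\phi(\b{x}^*)=\partial f(\b{x}^*)+\frac{1}{\lambda}(\b{x}^*-\b{x}^*)=\partial f(\b{x}^*)$, and $\b{0}\in\partial f(\b{x}^*)$ is precisely the condition that $\b{x}^*$ minimizes the convex function $f$. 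Since the subdifferential machinery is only developed later in the paper, I expect the intended write-up either to assume $f$ differentiable and cite Lemma \ref{lemma_minimizer_gradient_zero}, or to rely on the direct inequality argument above, which sidesteps differentiability for at least the forward implication.
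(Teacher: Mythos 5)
Your proof is correct; the point of comparison here is that the paper itself states this lemma \emph{without} any proof (unlike most of its lemmas, there is no proof block and no appendix reference), so your write-up fills a genuine gap rather than diverging from an existing argument. Your reduction is the right one: $\b{x}^* = \textbf{prox}_{\lambda f}(\b{x}^*)$ means exactly that $\b{x}^*$ minimizes $\phi(\b{u}) = f(\b{u}) + \frac{1}{2\lambda}\|\b{u}-\b{x}^*\|_2^2$, whose strong convexity (noted in the sentence preceding the lemma) gives uniqueness of that minimizer. Your direct inequality argument for the forward direction is the cleanest piece: if $f(\b{u}) \geq f(\b{x}^*)$ for all $\b{u}$, then $\phi(\b{u}) - \phi(\b{x}^*) = \big(f(\b{u}) - f(\b{x}^*)\big) + \frac{1}{2\lambda}\|\b{u}-\b{x}^*\|_2^2 \geq 0$ with equality forcing $\b{u} = \b{x}^*$; this needs neither differentiability nor convexity of $f$. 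You are also right to flag that the gradient-based argument is too restrictive for the proximal setting and to replace it with Fermat's rule, $\b{0} \in \partial\phi(\b{x}^*) = \partial f(\b{x}^*)$, since the quadratic term has zero gradient at the prox center. One point worth making explicit: the reverse direction (fixed point $\Rightarrow$ minimizer) genuinely requires convexity of $f$ — for a nonconvex $f$, a strict local minimizer that is not global is still a fixed point of $\textbf{prox}_{\lambda f}$ for small enough $\lambda$, since the fixed-point condition only yields $f(\b{u}) \geq f(\b{x}^*) - \frac{1}{2\lambda}\|\b{u}-\b{x}^*\|_2^2$. This is consistent with the paper, whose definition of the proximal operator (Eq. (\ref{equation_proximal_mapping_scaled})) assumes $f$ convex, but your proof would be stronger if it stated where convexity is indispensable.
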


Consider the optimization problem (\ref{equation_optimization_problem_unconstrained}). 
\textit{Proximal point algorithm}, also called \textit{proximal minimization}, was proposed in \cite{rockafellar1976monotone}. It finds the optimal point of this problem by iteratively updating the solution as:
\begin{align}
\b{x}^{(k+1)} &:= \textbf{prox}_{\lambda f}(\b{x}^{(k)}) \nonumber\\
&\overset{(\ref{equation_proximal_mapping_scaled})}{=} \arg\min_{\b{u}} \Big( f(\b{u}) + \frac{1}{2\lambda} \|\b{u} - \b{x}^{(k)}\|_2^2 \Big),
\end{align}
until convergence, where $\lambda$ can be seen as a parameter related to the step size. 
In other words, proximal gradient method applies gradient descent on the Moreau envelope $M_{\lambda f}(\b{x})$ (recall Eq. (\ref{equation_Moreau_envelope})) rather than on the function $f(.)$ itself.

\subsubsection{Proximal Gradient Method}




\textbf{-- Composite Problems:}
Consider the following optimization problem:
\begin{align}\label{equation_composite_optimization}
\underset{\b{x}}{\text{minimize}}\quad f(\b{x}) + g(\b{x}),
\end{align}
where $f(\b{x})$ is a smooth function and $g(\b{x})$ is a convex function which is not smooth necessarily. According to the following definition, this is a composite optimization problem. 

\begin{definition}[Composite objective function \cite{nesterov2013gradient}]
In optimization, if a function is stated as a summation of two terms, $f(\b{x}) + g(\b{x})$, it is called a composite function and its optimization is a composite optimization problem. 
\end{definition}

Composite problems are widely used in machine learning and regularized problems because $f(\b{x})$ can be the cost function to be minimized while $g(\b{x})$ is the penalty or regularization term \cite{ghojogh2019theory}. 

\hfill\break
\textbf{-- Proximal Gradient Method for Composite Optimization:}

For solving problem (\ref{equation_composite_optimization}), we can approximate the function $f(.)$ by its quadratic approximation around point $\b{x}$ because it is smooth (differentiable):
\begin{align*}
f(\b{u}) \approx f(\b{x}) + \nabla f(\b{x})^\top (\b{u} - \b{x}) + \frac{1}{2 \eta} \|\b{u} - \b{x}\|_2^2,
\end{align*}
where we have replaced $\nabla^2 f(\b{x})$ with scaled identity matrix, $(1/\eta) \b{I}$. Hence, the solution of problem (\ref{equation_composite_optimization}) can be approximated as:
\begin{align}
&\b{x} = \arg \min_{\b{u}} \big(f(\b{u}) + g(\b{u})\big) \nonumber\\
&\approx \arg \min_{\b{u}} \big(f(\b{x}) + \nabla f(\b{x})^\top (\b{u} - \b{x}) + \frac{1}{2 \eta} \|\b{u} - \b{x}\|_2^2 \nonumber\\
&+ g(\b{u})\big) = \arg \min_{\b{u}} \big(\frac{1}{2\eta} \|\b{u} - (\b{x} - \eta \nabla f(\b{x}))\|_2^2 + g(\b{u})\big). \label{equation_proximal_gradient_method_proof}
\end{align}
The first term in Eq. (\ref{equation_proximal_gradient_method_proof}) keeps the solution close to the solution of gradient descent for minimizing the function $f(.)$ (see Eq. (\ref{equation_GD_step_by_eta})) and the second term in Eq. (\ref{equation_proximal_gradient_method_proof}) makes the function $g(.)$ small. 

\textit{Proximal gradient method}, also called \textit{proximal gradient descent}, uses Eq. (\ref{equation_proximal_gradient_method_proof}) for solving the composite problem (\ref{equation_composite_optimization}). 
It was first proposed in \cite{nesterov2013gradient} and also in \cite{beck2009fast} for $g = \|.\|_1$.
It finds the optimal point by iteratively updating the solution as:
\begin{align}
&\b{x}^{(k+1)}\! \overset{(\ref{equation_proximal_gradient_method_proof})}{:=}\! \nonumber\\
&\arg \min_{\b{u}} \big(\frac{1}{2\eta^{(k)}} \|\b{u} - (\b{x}^{(k)} - \eta^{(k)} \nabla f(\b{x}^{(k)}))\|_2^2 + g(\b{u})\big) \nonumber\\
&\overset{(\ref{equation_proximal_mapping_scaled})}{=} \textbf{prox}_{\eta^{(k)} g}\big(\b{x}^{(k)} - \eta^{(k)} \nabla f(\b{x}^{(k)})\big), \label{equation_proximal_gradient_method_update}
\end{align}
until convergence, where $\eta^{(k)}$ is the step size which can be fixed or found by line-search. 
In Eq. (\ref{equation_composite_optimization}), the function $g(.)$ can be a regularization term such as $\ell_2$ or $\ell_1$ norm. In these cases, we use Lemmas \ref{lemma_prox_l2_norm} and \ref{lemma_prox_l1_norm} for calculating Eq. (\ref{equation_proximal_gradient_method_update}). 
The convergence rates of proximal gradient method is discussed in \cite{schmidt2011convergence}.
A distributed version of this method is also proposed in \cite{chen2012fast}.

\subsection{Gradient Methods for Constrained Problems}

\subsubsection{Projected Gradient Method}\label{section_projected_gradient_method}




\textit{Projected gradient method} \cite{iusem2003convergence}, also called \textit{gradient projection method} and \textit{projected gradient descent}, considers $g(\b{x})$ to be the indicator function $\mathbb{I}(\b{x} \in \mathcal{S})$ in problem (\ref{equation_composite_optimization}). In other words, the optimization problem is a constrained problem as problem (\ref{equation_constraint_set_optimization_problem}), which can be restated to:
\begin{equation}\label{equation_constraint_set_optimization_problem_withIndicator}
\begin{aligned}
& \underset{\b{x}}{\text{minimize}}
& & f(\b{x}) + \mathbb{I}(\b{x} \in \mathcal{S}),
\end{aligned}
\end{equation}
because the indicator function becomes infinity if its condition is not satisfied. 
According to Eq. (\ref{equation_proximal_gradient_method_update}), the solution is updated as:
\begin{align}
\b{x}^{(k+1)} & \overset{(\ref{equation_proximal_gradient_method_update})}{:=} \textbf{prox}_{\eta^{(k)} \mathbb{I}(. \in \mathcal{S})}\big(\b{x}^{(k)} - \eta^{(k)} \nabla f(\b{x}^{(k)})\big) \nonumber\\
&\overset{(\ref{equation_projection_onto_set})}{=} \Pi_{\mathcal{S}}\big(\b{x}^{(k)} - \eta^{(k)} \nabla f(\b{x}^{(k)})\big). \label{equation_projected_gradient_method_update}
\end{align}
In other words, projected gradient method performs a step of gradient descent and then projects the solution onto the set of constraint. This procedure is repeated until convergence of solution. 

\begin{lemma}[Projection onto the cone of orthogonal matrices {\citep[Section 6.7.2]{parikh2014proximal}}]\label{lemma_projection_onto_orthogonal_cone}
A function $g: \mathbb{R}^{d_1 \times d_2} \rightarrow \mathbb{R}$ is orthogonally invariant if $g(\b{U}\b{X}\b{V}^\top) = g(\b{X})$, for all $\b{U} \in \mathbb{R}^{d_1 \times d_1}$, $\b{X} \in \mathbb{R}^{d_1 \times d_2}$, and $\b{V} \in \mathbb{R}^{d_2 \times d_2}$ where $\b{U}$ and $\b{V}$ are orthogonal matrices. 

Let $g$ be a convex and orthogonally invariant function, and it works on the singular values of a matrix variable $\b{X} \in \mathbb{R}^{d_1 \times d_2}$, i.e., $g = \widehat{g} \circ \sigma$ where the function $\sigma(\b{X})$ gives the vector of singular values of $\b{X}$. In this case, we have:
\begin{align}\label{equation_prox_projection_orthogonal_matrices}
\textbf{prox}_{\lambda, g}(\b{X}) = \b{U}\,\, \textbf{diag}\Big(\textbf{prox}_{\lambda, g}\big(\sigma(\b{X})\big)\Big)\,\, \b{V}^\top,
\end{align}
where $\textbf{diag}(.)$ makes a diagonal matrix with its input as the diagonal, and $\b{U} \in \mathbb{R}^{d_1 \times d_1}$ and $\b{V} \in \mathbb{R}^{d_2 \times d_2}$ are the matrices of left and right singular vectors of $\b{X}$, respectively.

Consider the constraint for projection onto the cone of orthogonal matrices, i.e., $\b{X}^\top \b{X} = \b{I}$.
In this constraint, the function $g$ deals with the singular values of $\b{X}$. The reason is that, from the Singular Value Decomposition (SVD) of $\b{X}$, we have: $\b{X} \overset{\text{SVD}}{=} \b{U} \b{\Sigma} \b{V}^\top \implies \b{X}^\top \b{X} = \b{U} \b{\Sigma} \b{V}^\top \b{V} \b{\Sigma} \b{U}^\top \overset{(a)}{=} \b{U} \b{\Sigma}^2 \b{U}^\top \overset{\text{set}}{=} \b{I} \implies \b{U} \b{\Sigma}^2 \b{U}^\top \b{U} = \b{U} \overset{(b)}{\implies} \b{U} \b{\Sigma}^2 = \b{U} \implies \b{\Sigma} = \b{I}$, where $(a)$ and $(b)$ are because $\b{U}$ and $\b{V}$ are orthogonal matrices. 
Therefore, the constraint $\b{X}^\top \b{X} = \b{I}$ (i.e., projecting onto the cone of orthogonal matrices) can be modeled by Eq. (\ref{equation_prox_projection_orthogonal_matrices}) which is simplified to setting all singular values of $\b{X}$ to one:
\begin{align}\label{equation_prox_projection_orthogonal_matrices2}
\textbf{prox}_{\lambda, g}(\b{X}) = \Pi_{\mathcal{O}} = \b{U} \b{I} \b{V}^\top,
\end{align}
where $\b{I} \in \mathbb{R}^{d_1 \times d_2}$ is a rectangular identity matrix and $\mathcal{O}$ denotes the cone of orthogonal matrices.
If the constraint is scaled orthogonality, i.e. $\b{X}^\top \b{X} = \lambda \b{I}$ with $\lambda$ as the scale, the projection is setting all singular values to $\lambda$ by $\b{U} (\lambda \b{I}) \b{V}^\top = \lambda \b{U} \b{I} \b{V}^\top$.
\end{lemma}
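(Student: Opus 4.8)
The plan is to establish both displayed identities---the general prox formula (\ref{equation_prox_projection_orthogonal_matrices}) and its orthogonal-projection specialization (\ref{equation_prox_projection_orthogonal_matrices2})---from a single engine: \emph{von Neumann's trace inequality}, which states that for any $\b{Y}, \b{X} \in \mathbb{R}^{d_1 \times d_2}$ with singular values $\sigma_1(\cdot) \geq \sigma_2(\cdot) \geq \cdots$ one has $\textbf{tr}(\b{Y}^\top \b{X}) \leq \sum_i \sigma_i(\b{Y})\, \sigma_i(\b{X})$, with equality precisely when $\b{Y}$ and $\b{X}$ admit a simultaneous singular value decomposition (i.e.\ can be written with the \emph{same} left and right singular-vector matrices). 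To avoid the notational clash with the singular-vector matrix $\b{U}$, I would write the proximal minimization with a dummy variable $\b{Y}$, so that $\textbf{prox}_{\lambda g}(\b{X}) = \arg\min_{\b{Y}} \big( g(\b{Y}) + \frac{1}{2\lambda}\|\b{Y} - \b{X}\|_F^2 \big)$.

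First I would expand the Frobenius penalty as $\|\b{Y} - \b{X}\|_F^2 = \|\b{Y}\|_F^2 - 2\,\textbf{tr}(\b{Y}^\top \b{X}) + \|\b{X}\|_F^2$, and note that $\|\b{Y}\|_F^2 = \sum_i \sigma_i(\b{Y})^2$ and $\|\b{X}\|_F^2 = \sum_i \sigma_i(\b{X})^2$ depend only on singular values, while by hypothesis $g(\b{Y}) = \widehat{g}(\sigma(\b{Y}))$ likewise depends only on $\sigma(\b{Y})$. Hence the \emph{only} term coupling $\b{Y}$ to the singular vectors of $\b{X}$ is the cross term $-2\,\textbf{tr}(\b{Y}^\top\b{X})$, so minimizing over $\b{Y}$ for a fixed singular-value profile $\sigma(\b{Y})$ amounts to \emph{maximizing} $\textbf{tr}(\b{Y}^\top \b{X})$. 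Von Neumann's inequality caps this by $\sum_i \sigma_i(\b{Y})\sigma_i(\b{X})$, attained by choosing $\b{Y}$ with the same singular-vector matrices $\b{U}, \b{V}$ as $\b{X}$. Substituting this alignment reduces the matrix problem to the vector problem $\min_{\b{s}} \big( \widehat{g}(\b{s}) + \frac{1}{2\lambda}\|\b{s} - \sigma(\b{X})\|_2^2 \big)$, whose minimizer is exactly $\textbf{prox}_{\lambda g}(\sigma(\b{X}))$; re-attaching the singular vectors yields (\ref{equation_prox_projection_orthogonal_matrices}).

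For the specialization to the orthogonal ``cone'' $\mathcal{O} = \{\b{X} : \b{X}^\top\b{X} = \b{I}\}$, I would take $g$ to be the indicator $\mathbb{I}(\cdot \in \mathcal{O})$ (orthogonally invariant, since the constraint is unchanged under $\b{X} \mapsto \b{U}\b{X}\b{V}^\top$), so that the prox becomes the projection $\Pi_{\mathcal{O}}$ of Lemma~\ref{lemma_projection_onto_set}. As the lemma's SVD computation shows, membership in $\mathcal{O}$ forces every singular value to equal one, so $\|\b{Y}\|_F^2$ is constant over $\mathcal{O}$ and minimizing $\|\b{Y}-\b{X}\|_F^2$ reduces once more to maximizing $\textbf{tr}(\b{Y}^\top\b{X})$. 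Applying von Neumann with $\sigma_i(\b{Y}) \equiv 1$ gives the bound $\sum_i \sigma_i(\b{X})$, attained by the $\b{Y}$ that shares singular vectors with $\b{X}$, namely $\b{Y} = \b{U}\b{I}\b{V}^\top$; this is (\ref{equation_prox_projection_orthogonal_matrices2}). The scaled case $\b{X}^\top\b{X} = \lambda\b{I}$ is identical with $\sigma_i(\b{Y}) \equiv \lambda$, giving $\lambda\,\b{U}\b{I}\b{V}^\top$.

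The main obstacle is von Neumann's trace inequality together with its \emph{equality condition}: the whole argument rests on knowing not just the bound but that equality holds exactly when the two matrices are simultaneously diagonalizable by a shared pair of orthogonal singular-vector matrices, which is what lets me peel off the singular vectors and collapse to a scalar problem. A secondary technical point is the bookkeeping for rectangular $\b{X}$ (the index $i$ runs to $\min(d_1,d_2)$, $\b{I}$ is a rectangular identity, and $\mathcal{O}$ implicitly requires $d_1 \geq d_2$ so that $\b{X}^\top\b{X} = \b{I}$ is feasible); and, for the indicator case, observing that although $\mathcal{O}$ is non-convex the projection argument goes through unchanged, since it never used convexity of the constraint set, only orthogonal invariance and the fixed singular-value profile.
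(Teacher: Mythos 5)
Your proof is correct, but it takes a genuinely different---and more self-contained---route than the paper. The paper does not prove Eq. (\ref{equation_prox_projection_orthogonal_matrices}) at all: it cites it from \cite{parikh2014proximal}, and its only original reasoning is the SVD computation $\b{X}^\top\b{X} = \b{I} \implies \b{\Sigma} = \b{I}$ showing that membership in $\mathcal{O}$ pins every singular value to one, after which the cited formula is said to ``simplify'' to $\b{U}\b{I}\b{V}^\top$. You instead derive both displays from von Neumann's trace inequality: expand $\|\b{Y}-\b{X}\|_F^2$, observe that only the cross term $\textbf{tr}(\b{Y}^\top\b{X})$ couples the singular vectors of $\b{Y}$ to those of $\b{X}$, use the equality condition to align them, and collapse to the vector prox. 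This buys two things the paper's argument lacks. First, it closes a real logical gap: knowing that every member of $\mathcal{O}$ has unit singular values says nothing, by itself, about \emph{which} member is closest to $\b{X}$; it is precisely von Neumann's equality condition that forces the minimizer to share $\b{X}$'s singular vectors, and the paper never supplies that step. Second, your remark that $\mathcal{O}$ is non-convex (and not actually a cone, despite the name) matters: the paper's prox and projection machinery (Eq. (\ref{equation_proximal_mapping_scaled}), Lemma \ref{lemma_projection_onto_set}) is stated for convex functions and convex sets, so strictly speaking it does not even apply to the indicator of $\mathcal{O}$, whereas your trace-inequality argument never uses convexity and is therefore the honest justification of Eq. (\ref{equation_prox_projection_orthogonal_matrices2}). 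One shared blemish: in the scaled case, $\b{X}^\top\b{X} = \lambda\b{I}$ forces singular values equal to $\sqrt{\lambda}$, not $\lambda$, so both you and the paper should either let the scale act on $\b{\Sigma}$ directly or write the constraint as $\b{X}^\top\b{X} = \lambda^2\b{I}$.
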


Although most often projected gradient method is used for Eq. (\ref{equation_projected_gradient_method_update}), there are few other variants of projected gradient methods such as \cite{drummond2004projected}: 
\begin{align}
& \b{y}^{(k)} := \Pi_{\b{S}} \big(\b{x}^{(k)} - \eta^{(k)} \nabla f(\b{x}^{(k)})\big), \\
& \b{x}^{(k+1)} := \b{x}^{(k)} + \gamma^{(k)} (\b{y}^{(k)} - \b{x}^{(k)}),
\end{align}
where $\eta^{(k)}$ and $\gamma^{(k)}$ are positive step sizes at iteration $k$. In this alternating approach, we find an additional variable $\b{y}$ by gradient descent followed by projection and then update $\b{x}$ to get close to the found $\b{y}$ while staying close to the previous solution by line-search.

\subsubsection{Projection Onto Convex Sets (POCS) and Averaged Projections}


Assume we want to project a point onto the intersection of $c$ closed convex sets, i.e., $\bigcap_{j=1}^c \mathcal{S}_j$. 
We can model this by an optimization problem with a fake objective function:
\begin{equation}\label{equation_optimization_problem_projection_onto_sets}
\begin{aligned}
& \underset{\b{x}}{\text{minimize}}
& & \b{x} \in \mathbb{R}^d \\
& \text{subject to}
& & \b{x} \in \mathcal{S}_1, \dots, \b{x} \in \mathcal{S}_c.
\end{aligned}
\end{equation}
\textit{Projection Onto Convex Sets (POCS)} solves this problem, similar to projected gradient method, by projecting onto the sets one-by-one \cite{bauschke1996projection}:
\begin{align}
& \b{x}^{(k+1)} := \Pi_{\b{S}_1}(\Pi_{\b{S}_2}(\dots \Pi_{\b{S}_c}(\b{x}^{(k)})\dots)),
\end{align}
and repeating it until convergence.
Another similar method for solving problem (\ref{equation_optimization_problem_projection_onto_sets}) is the \textit{averaged projections} which updates the solution as:
\begin{align}
& \b{x}^{(k+1)} :=
\frac{1}{c}
\Big(\Pi_{\b{S}_1}(\b{x}^{(k)}) + \dots + \Pi_{\b{S}_c}(\b{x}^{(k)})\Big).
\end{align}

\subsubsection{Frank-Wolfe Method}




\textit{Frank-Wolfe method}, also called \textit{conditional gradient method} and \textit{reduced gradient algorithm}, was first proposed in \cite{frank1956algorithm} and it can be used for solving the constrained problem (\ref{equation_constraint_set_optimization_problem}) using gradient of objective function \cite{levitin1966constrained}. 
It updates the solution as:
\begin{align}
& \b{y}^{(k)} := \arg \min_{\b{y} \in \mathcal{S}} \nabla f(\b{x}^{(k)})^\top \b{y}, \label{equation_Frank_Wolfe_update_y} \\
& \b{x}^{(k+1)} := (1 - \gamma^{(k)})\, \b{x}^{(k)} + \gamma^{(k)} \b{y}^{(k)}, \label{equation_Frank_Wolfe_update_x}
\end{align}
until convergence, where $\gamma^{(k)}$ is the step size at iteration $k$. 
Eq. (\ref{equation_Frank_Wolfe_update_y}) finds the direction to move toward at the iteration and Eq. (\ref{equation_Frank_Wolfe_update_x}) updates the solution while staying close to the previous solution by line-search. 
A stochastic version of Frank-Wolfe method is proposed in \cite{reddi2016stochastic}. 

\section{Non-smooth and L1 Norm Optimization Methods}

\subsection{Lasso Regularization}\label{section_lasso_regularization}

The $\ell_1$ norm can be used for sparsity \cite{ghojogh2019theory}. We explain the reason in the following. 
Sparsity is very useful and effective because of betting on sparsity principal \cite{friedman2001elements} and the  Occam's razor \cite{domingos1999role}. If $\b{x} = [x_1, \dots, x_d]^\top$, for having sparsity, we should use \textit{subset selection} for the regularization of a cost function $\Omega_0(\b{x})$:
\begin{align}\label{equation_regularization_optimization_l0}
\underset{\b{x}}{\text{minimize}} ~~~ \Omega(\b{x}) := \Omega_0(\b{x}) + \lambda\, ||\b{x}||_0,
\end{align}
where:
\begin{align}
||\b{x}||_0 := \sum_{j=1}^d \mathbb{I}(x_j \neq 0) = 
\left\{
    \begin{array}{ll}
      0 & \text{if } x_j = 0, \\
      1 & \text{if } x_j \neq 0,
    \end{array}
\right.
\end{align}
is the ``$\ell_0$'' norm, which is not a norm (so we use ``.'' for it) because it does not satisfy the norm properties \cite{boyd2004convex}. The ``$\ell_0$'' norm counts the number of non-zero elements so when we penalize it, it means that we want to have sparser solutions with many zero entries.
According to \cite{donoho2006most}, the convex relaxation of ``$\ell_0$'' norm (subset selection) is $\ell_1$ norm. Therefore, we write the regularized optimization as:
\begin{align}\label{equation_regularization_optimization_l1}
\underset{\b{x}}{\text{minimize}} ~~~ \Omega(\b{x}) := \Omega_0(\b{x}) + \lambda\, ||\b{x}||_1.
\end{align}
Note that the $\ell_1$ regularization is also referred to as \textit{lasso} (least absolute shrinkage and selection operator) regularization \cite{tibshirani1996regression,hastie2019statistical}.
Different methods exist for solving optimization having $\ell_1$ norm, such as its approximation by Huber function \cite{huber1992robust}, proximal algorithm and soft thresholding \cite{parikh2014proximal}, coordinate descent \cite{wright2015coordinate,wu2008coordinate}, and subgradients. 
In the following, we explain these methods.

\subsection{Convex Conjugate}


\subsubsection{Convex Conjugate}

Consider Fig. \ref{figure_convex_conjugate} showing a line which supports the function $f$ meaning that it is tangent to the function and the function upper-bounds it. In other words, if the line goes above where it is, it will intersect the function in more than a point. 
Now let the support line be multi-dimensional to be a support hyperplane. 
For having this tangent support hyperplane with slope $\b{y} \in \mathbb{R}^d$ and intercept $\beta \in \mathbb{R}$, we should have:
\begin{align*}
& \b{y}^\top \b{x} + \beta = f(\b{x}) \implies \beta = f(\b{x}) - \b{y}^\top \b{x}.
\end{align*}
We want the smallest intercept for the support hyperplane:
\begin{align*}
& \beta^* = \min_{\b{x} \in \mathbb{R}^d} \big( f(\b{x}) - \b{y}^\top \b{x} \big) \overset{(\ref{equation_max_min_conversion})}{=} - \max_{\b{x} \in \mathbb{R}^d} \big( \b{y}^\top \b{x} - f(\b{x}) \big).
\end{align*}
We define $f^*(\b{y}) := -\beta^*$ to have the convex conjugate defined as below.

\begin{definition}[Convex conjugate of function]
The conjugate gradient of function $f(.)$ is defined as:
\begin{align}\label{equation_convex_conjugate}
f^*(\b{y}) := \sup_{\b{x} \in \mathbb{R}^d} \big( \b{y}^\top \b{x} - f(\b{x}) \big). 
\end{align}
\end{definition}

\begin{figure}[!t]
\centering
\includegraphics[width=2.6in]{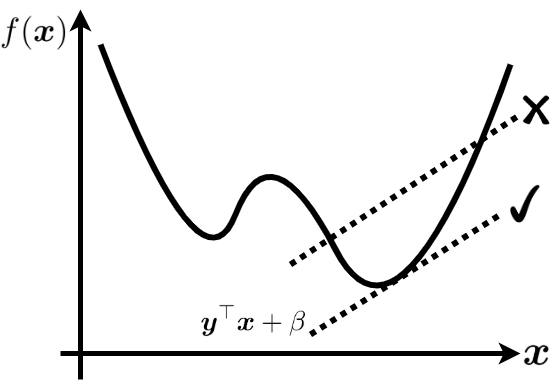}
\caption{Supporting line (or hyper-plane) to the function.}
\label{figure_convex_conjugate}
\end{figure}

The convex conjugate of a function is always convex, even if the function itself is not convex, because it is point-wise maximum of affine functions. 

\begin{lemma}[Conjugate of convex conjugate]
the conjugate of convex conjugate of a function is:
\begin{align}
f^{**}(\b{x}) = \sup_{\b{y} \in \textbf{dom}(f^*)} \big( \b{x}^\top \b{y} - f^*(\b{y}) \big). 
\end{align}
It is always a lower-bound for the function, i.e., $f^{**}(\b{x}) \leq f(\b{x})$. If the function $f(.)$ is convex, we have $f^{**}(\b{x}) = f(\b{x})$; hence, for a convex function, we have:
\begin{align}\label{equation_f_equals_conjigate_of_convex_conjugate}
f(\b{x}) = \sup_{\b{y} \in \textbf{dom}(f^*)} \big( \b{x}^\top \b{y} - f^*(\b{y}) \big). 
\end{align}
\end{lemma}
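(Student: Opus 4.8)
The plan is to handle the three assertions in turn, each resting on the definition of the convex conjugate in Eq.~(\ref{equation_convex_conjugate}). The first claim is essentially definitional: applying Eq.~(\ref{equation_convex_conjugate}) to the function $f^*$ in place of $f$ gives
\begin{align*}
f^{**}(\b{x}) = \sup_{\b{y}} \big( \b{x}^\top \b{y} - f^*(\b{y}) \big).
\end{align*}
Restricting the supremum to $\b{y} \in \textbf{dom}(f^*)$ changes nothing, since for $\b{y} \notin \textbf{dom}(f^*)$ we have $f^*(\b{y}) = +\infty$ and the corresponding term equals $-\infty$, so it never attains the supremum. This yields the displayed formula for $f^{**}$.

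Next I would establish the lower-bound inequality $f^{**}(\b{x}) \leq f(\b{x})$, which holds with no convexity assumption. The key is the Fenchel--Young inequality, which follows directly from the definition of the conjugate: for every fixed $\b{x}$ and every $\b{y}$,
\begin{align*}
f^*(\b{y}) = \sup_{\b{z}} \big( \b{y}^\top \b{z} - f(\b{z}) \big) \geq \b{y}^\top \b{x} - f(\b{x}),
\end{align*}
where the inequality is obtained by keeping only the term $\b{z} = \b{x}$ inside the supremum. Rearranging gives $\b{x}^\top \b{y} - f^*(\b{y}) \leq f(\b{x})$ for every $\b{y}$, and taking the supremum over $\b{y}$ on the left-hand side preserves the inequality, so $f^{**}(\b{x}) \leq f(\b{x})$.

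Finally, for the equality $f^{**} = f$ when $f$ is convex, I would prove the reverse inequality $f(\b{x}) \leq f^{**}(\b{x})$. The idea is that a convex function coincides with the supremum of its affine minorants. Assuming $f$ differentiable, I would fix a point $\b{x}_0$ and invoke the first-order condition for convexity, Eq.~(\ref{equation_convex_function_firstDerivative}), which states that the tangent hyperplane at $\b{x}_0$ lies below $f$:
\begin{align*}
f(\b{x}) \geq f(\b{x}_0) + \nabla f(\b{x}_0)^\top (\b{x} - \b{x}_0), \quad \forall \b{x}.
\end{align*}
Setting $\b{y} = \nabla f(\b{x}_0)$, this says $\b{y}^\top \b{x} - f(\b{x}) \leq \b{y}^\top \b{x}_0 - f(\b{x}_0)$ for all $\b{x}$, with equality at $\b{x} = \b{x}_0$, so that $f^*(\b{y}) = \b{y}^\top \b{x}_0 - f(\b{x}_0)$. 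Substituting into the formula for $f^{**}$ and evaluating at $\b{x}_0$ gives $f^{**}(\b{x}_0) \geq \b{y}^\top \b{x}_0 - f^*(\b{y}) = f(\b{x}_0)$. Combined with $f^{**} \leq f$ from the previous step, this forces $f^{**}(\b{x}_0) = f(\b{x}_0)$.

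The main obstacle is this last step in full generality: the clean tangent-hyperplane argument uses differentiability through Eq.~(\ref{equation_convex_function_firstDerivative}), whereas for a non-differentiable convex $f$ one must instead produce a supporting affine minorant at $\b{x}_0$ via a separating-hyperplane argument applied to the epigraph of $f$, equivalently the existence of a subgradient. This requires $f$ to be closed (lower semicontinuous) and proper, which is the content of the Fenchel--Moreau theorem; I would remark that the differentiable case treated here is the version relevant to the smooth conjugate computations used later in the paper.
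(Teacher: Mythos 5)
Your proof is correct, and there is in fact nothing in the paper to compare it against: the lemma is stated without any proof, neither inline nor in the appendices, so your argument supplies a justification the paper omits. The three steps you give are the standard ones and are sound at the paper's level of rigor: the formula for $f^{**}$ is Eq.~(\ref{equation_convex_conjugate}) applied to $f^*$ (with the restriction to $\textbf{dom}(f^*)$ harmless, as you note); the bound $f^{**}(\b{x}) \leq f(\b{x})$ is the Fenchel--Young inequality, obtained by evaluating the supremum defining $f^*(\b{y})$ at the single point $\b{z} = \b{x}$; and the equality for differentiable convex $f$ follows by choosing $\b{y} = \nabla f(\b{x}_0)$, for which Eq.~(\ref{equation_convex_function_firstDerivative}) shows that the supremum defining $f^*(\b{y})$ is attained at $\b{x}_0$, so that $f^*(\b{y}) = \b{y}^\top \b{x}_0 - f(\b{x}_0)$ is finite (hence $\b{y} \in \textbf{dom}(f^*)$) and $f^{**}(\b{x}_0) \geq f(\b{x}_0)$. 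Your closing caveat is well placed and actually identifies an imprecision in the lemma as stated: convexity alone does not yield $f^{**} = f$; one also needs $f$ to be proper and closed (lower semicontinuous), which is the content of the Fenchel--Moreau theorem, and for a convex but non-closed $f$ the biconjugate is only the closure of $f$. Since the paper applies this lemma to smooth functions, norms, and the absolute value (where subgradients exist everywhere), your differentiable-case argument together with the subgradient remark covers every use the paper makes of Eq.~(\ref{equation_f_equals_conjigate_of_convex_conjugate}).
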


\begin{lemma}[Gradient in terms of convex conjugate]
For any function $f(.)$, we have:
\begin{align}\label{equation_gradient_intermsof_convex_conjugate}
\nabla f(\b{x}) = \arg\max_{\b{y} \in \textbf{dom}(f^*)} \big( \b{x}^\top \b{y} - f^*(\b{y}) \big).
\end{align}
\end{lemma}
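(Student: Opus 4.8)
The plan is to prove the statement via the Fenchel--Young inequality together with the biconjugate identity (\ref{equation_f_equals_conjigate_of_convex_conjugate}). First I would fix $\b{x}$ and regard $\varphi(\b{y}) := \b{x}^\top \b{y} - f^*(\b{y})$ as the function to be maximized over $\b{y} \in \textbf{dom}(f^*)$. Since $f^*$ is always convex (as noted below Eq. (\ref{equation_convex_conjugate}), being a point-wise maximum of affine functions), the map $\varphi$ is concave, so any stationary point is a global maximizer and the first-order condition $\nabla_{\b{y}} \varphi(\b{y}) = \b{x} - \nabla f^*(\b{y}) = \b{0}$ characterizes the argmax.

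Next I would produce an explicit upper bound for $\varphi$. Directly from the definition (\ref{equation_convex_conjugate}), for every $\b{y}$ we have $f^*(\b{y}) \geq \b{y}^\top \b{x} - f(\b{x})$, which rearranges to the Fenchel--Young inequality $\varphi(\b{y}) = \b{x}^\top \b{y} - f^*(\b{y}) \leq f(\b{x})$, holding for all $\b{y}$. This shows $f(\b{x})$ is an upper bound on the objective, consistent with the already-established $f^{**}(\b{x}) \leq f(\b{x})$.

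The crucial step is to exhibit that this bound is attained exactly at $\b{y} = \nabla f(\b{x})$. Setting $\b{y}^\star := \nabla f(\b{x})$ and examining the supremum that defines $f^*(\b{y}^\star)$ in (\ref{equation_convex_conjugate}), namely $\sup_{\b{z}}(\b{y}^{\star\top}\b{z} - f(\b{z}))$, the inner objective has gradient $\b{y}^\star - \nabla f(\b{z})$, which vanishes at $\b{z} = \b{x}$; hence the supremum is attained there and $f^*(\b{y}^\star) = \nabla f(\b{x})^\top \b{x} - f(\b{x})$. Substituting back gives $\varphi(\b{y}^\star) = \b{x}^\top \nabla f(\b{x}) - \big(\nabla f(\b{x})^\top \b{x} - f(\b{x})\big) = f(\b{x})$, so the upper bound from the previous step is met. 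Because $f(\b{x})$ is simultaneously the supremal value (by the biconjugate identity (\ref{equation_f_equals_conjigate_of_convex_conjugate}) for convex $f$) and the value of $\varphi$ at $\b{y}^\star$, I conclude that $\b{y}^\star = \nabla f(\b{x})$ is a maximizer, which is precisely (\ref{equation_gradient_intermsof_convex_conjugate}).

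I expect the main obstacle to be justifying that the argmax is attained and is unique, so that the set-valued $\arg\max$ collapses to the single point $\nabla f(\b{x})$; this is where differentiability of $f$ (giving a single-valued subdifferential) and convexity (giving $f^{**} = f$ together with the inverse relationship between $\nabla f$ and $\nabla f^*$) genuinely enter, so the phrase \emph{for any function} should be read as tacitly assuming these regularity conditions.
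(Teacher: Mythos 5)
The paper states this lemma without any proof (there is no corresponding appendix entry), so there is no internal argument to compare yours against; your proposal has to stand on its own, and it essentially does. Your route is the standard Fenchel--Young argument: the inequality $\b{x}^\top \b{y} - f^*(\b{y}) \leq f(\b{x})$ for all $\b{y}$ follows directly from the definition (\ref{equation_convex_conjugate}), and exhibiting equality at $\b{y}^\star = \nabla f(\b{x})$ shows that $\b{y}^\star$ is a maximizer. The opening observation about concavity of $\varphi$ is actually dispensable in your argument, since the bound-plus-attainment reasoning never needs $f^*$ to be differentiable.

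The one imprecision is in your ``crucial step'': you argue that because the inner objective $\b{z} \mapsto \b{y}^{\star\top}\b{z} - f(\b{z})$ has vanishing gradient at $\b{z} = \b{x}$, ``the supremum is attained there.'' That inference (stationarity implies global maximum) requires the inner objective to be concave, i.e., $f$ to be convex. You do flag at the end that convexity and differentiability must be tacitly assumed, but you locate their role in the attainment/uniqueness of the outer $\arg\max$, whereas convexity is needed already inside this step, to establish the Fenchel--Young equality $f^*(\nabla f(\b{x})) = \b{x}^\top \nabla f(\b{x}) - f(\b{x})$. Your instinct that ``for any function'' cannot be taken literally is correct: for non-convex $f$ the maximizer set of $\b{y} \mapsto \b{x}^\top\b{y} - f^*(\b{y})$ is $\partial f^{**}(\b{x})$, the subdifferential of the convex envelope, which generally differs from $\nabla f(\b{x})$; e.g., for $f(x) = (x^2-1)^2$ at $x = 1/2$ one has $\nabla f(1/2) = -3/2$ while the right-hand side is $\{0\}$. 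Finally, the uniqueness piece you leave open is short: since the maximum value equals $f(\b{x})$, any maximizer $\b{y}'$ satisfies $f^*(\b{y}') = \b{x}^\top\b{y}' - f(\b{x})$, and combining this with $f^*(\b{y}') \geq \b{y}'^\top\b{z} - f(\b{z})$ gives $f(\b{z}) \geq f(\b{x}) + \b{y}'^\top(\b{z} - \b{x})$ for all $\b{z}$; taking $\b{z} = \b{x} + t\b{d}$ and letting $t \to 0^+$ forces $\b{y}' = \nabla f(\b{x})$ by differentiability. With the hypotheses ``$f$ convex and differentiable at $\b{x}$'' stated explicitly and these two repairs made, your proof is complete.
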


\subsubsection{Huber Function: Smoothing L1 Norm by Convex Conjugate}




\begin{lemma}[The convex conjugate of $\ell_1$ norm]
The convex conjugate of $f(.) = \|.\|_1$ is:
\begin{align}\label{equation_convex_conjugate_l1_norm}
f^*(\b{y}) = 
\left\{
    \begin{array}{ll}
        0 & \mbox{if } \|\b{y}\|_{\infty} \leq 1 \\
        \infty & \mbox{Otherwise.}
    \end{array}
\right.
\end{align}
\end{lemma}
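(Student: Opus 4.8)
The plan is to work directly from the definition of the convex conjugate, Eq.~(\ref{equation_convex_conjugate}), which gives $f^*(\b{y}) = \sup_{\b{x} \in \mathbb{R}^d} \big( \b{y}^\top \b{x} - \|\b{x}\|_1 \big)$, and to split the analysis into two cases according to whether $\|\b{y}\|_\infty \leq 1$ or $\|\b{y}\|_\infty > 1$. The crucial tool is H{\"o}lder's inequality together with the fact, recorded in Eq.~(\ref{equation_dual_norm_calculation}), that $\ell_\infty$ is the dual norm of $\ell_1$; this yields the bound $\b{y}^\top \b{x} \leq |\b{y}^\top \b{x}| \leq \|\b{y}\|_\infty \|\b{x}\|_1$ for every $\b{x}$, which is the single estimate driving both cases.

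First I would handle the case $\|\b{y}\|_\infty \leq 1$. Using the H{\"o}lder bound, every $\b{x}$ satisfies $\b{y}^\top \b{x} - \|\b{x}\|_1 \leq \big(\|\b{y}\|_\infty - 1\big)\|\b{x}\|_1 \leq 0$, so the supremum is at most $0$. Since the choice $\b{x} = \b{0}$ attains the value $0$, the supremum equals exactly $0$, as claimed.

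Next I would treat the case $\|\b{y}\|_\infty > 1$. Here there exists an index $j$ with $|y_j| > 1$. I would probe the supremum along the ray $\b{x} = t\,\textbf{sign}(y_j)\,\b{e}_j$ with $t > 0$, where $\b{e}_j$ denotes the $j$-th standard basis vector. For this choice one computes $\b{y}^\top \b{x} = t\,|y_j|$ and $\|\b{x}\|_1 = t$, so that $\b{y}^\top \b{x} - \|\b{x}\|_1 = t\,(|y_j| - 1) \to +\infty$ as $t \to \infty$. Hence the supremum is $\infty$, completing the case split and the proof.

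The argument is essentially routine once the right probing direction is identified; the only subtle point — and the step I would be most careful about — is the second case, where the supremum must be shown \emph{unbounded} rather than merely positive. The construction relies on concentrating $\b{x}$ on a single coordinate where $|y_j|$ exceeds the dual-norm threshold, so that the linear gain $t\,|y_j|$ strictly outpaces the $\ell_1$ penalty $t$. This is precisely the mechanism by which the conjugate of a positively homogeneous norm collapses to the indicator of the dual-norm unit ball.
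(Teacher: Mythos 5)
Your proof is correct, but it takes a genuinely more hands-on route than the paper's. The paper's proof is essentially a one-liner: it rewrites the norm via its dual representation, $\|\b{x}\|_1 = \max_{\|\b{z}\|_\infty \leq 1} \b{x}^\top \b{z}$, substitutes this into Eq. (\ref{equation_convex_conjugate}), and reads off Eq. (\ref{equation_convex_conjugate_l1_norm}) — implicitly invoking the fact that the conjugate of the support function of a set is the indicator function of that set. You instead compute $\sup_{\b{x}}\big(\b{y}^\top \b{x} - \|\b{x}\|_1\big)$ directly from the definition, splitting on whether $\|\b{y}\|_\infty \leq 1$: H{\"o}lder's inequality caps the supremum at $\big(\|\b{y}\|_\infty - 1\big)\|\b{x}\|_1 \leq 0$ (attained at $\b{x} = \b{0}$) in the first case, and the single-coordinate ray $\b{x} = t\,\textbf{sign}(y_j)\,\b{e}_j$ with $|y_j| > 1$ drives the objective to $+\infty$ in the second. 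Both arguments rest on the same $\ell_1$--$\ell_\infty$ duality of Eq. (\ref{equation_dual_norm_calculation}), but yours fills in exactly what the paper leaves implicit: the paper never actually verifies that its substituted expression equals $0$ on the dual unit ball and $+\infty$ off it, and doing so requires precisely your two cases, including the unboundedness construction you rightly flag as the subtle step. What the paper's route buys is brevity and generality — the identical argument conjugates any norm into the indicator of its dual-norm ball — whereas your route buys a self-contained, fully rigorous proof whose only $\ell_1$-specific ingredient is the coordinate probe.
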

\begin{proof}
We can write $\ell_1$ norm as:
\begin{align*}
f(\b{x}) = \|\b{x}\|_1 = \max_{\|\b{z}\|_\infty \leq 1} \b{x}^\top \b{z}.
\end{align*}
Using this in Eq. (\ref{equation_convex_conjugate}) results in Eq. (\ref{equation_convex_conjugate_l1_norm}). Q.E.D.
\end{proof}

According to Eq. (\ref{equation_gradient_intermsof_convex_conjugate}), we have $\nabla f(\b{x}) = \arg\max_{\|\b{y}\|_\infty \leq 1} \b{x}^\top \b{y}$. 
For $\b{x} = \b{0}$, we have $\nabla f(\b{x}) = \arg\max_{\|\b{y}\|_\infty \leq 1} \b{0}$ which has many solutions. Therefore, at $\b{x} = 0$, the function $\|.\|_1$ norm is not differentiable and not smooth because the gradient at that point is not unique. 

We can smooth the $\ell_1$ norm at $\b{x}=\b{0}$ using convex conjugate. Let $\b{x} = [x_1, \dots, x_d]^\top$. As we have $f(\b{x}) = \|\b{x}\|_1 = \sum_{j=1}^d |x_j|$, we can use the convex conjugate for every dimension $f(x_j) = |x_j|$:
\begin{align}\label{equation_convex_conjugate_of_absolute_value}
f^*(y_j) = 
\left\{
    \begin{array}{ll}
        0 & \mbox{if } |y_j| \leq 1 \\
        \infty & \mbox{Otherwise.}
    \end{array}
\right.
\end{align}
According to Eq. (\ref{equation_f_equals_conjigate_of_convex_conjugate}), we have:
\begin{align*}
|x_j| = \sup_{y \in \mathbb{R}} \big( x_j y_j - f^*(y_j) \big) \overset{(\ref{equation_convex_conjugate_of_absolute_value})}{=} \max_{|y_j| \leq 1} x_j y_j.
\end{align*}
This is not unique for $x_j = 0$. Hence, we add a $\mu$-strongly convex function to the above equation to make the solution unique at $x_j=0$ also.
This added term is named the \textit{proximity function} defined below.

%
\begin{definition}[Proximity function \cite{banaschewski1961proximity}]
A proximity function $p(\b{y})$ for a closed convex set $\mathcal{S} \in \textbf{dom}(p)$ is a function which is continuous and strongly convex.
We can change Eq. (\ref{equation_f_equals_conjigate_of_convex_conjugate}) to:
\begin{align}\label{equation_f_equals_conjigate_of_convex_conjugate_withProximityFunction}
f(\b{x}) \approx f_{\mu}(\b{x}) := \sup_{\b{y} \in \textbf{dom}(f^*)} \big( \b{x}^\top \b{y} - f^*(\b{y}) - \mu\, p(\b{y}) \big),
\end{align}
where $\mu > 0$.
\end{definition}
Using Eq. (\ref{equation_f_equals_conjigate_of_convex_conjugate_withProximityFunction}), we can have:
\begin{align*}
&|x_j| \approx \sup_{y \in \mathbb{R}} \big( x_j y_j - f^*(y_j) - \frac{\mu}{2} y_j^2 \big) \\
&\overset{(\ref{equation_convex_conjugate_of_absolute_value})}{=} \max_{|y_j| \leq 1} (x_j y_j - \frac{\mu}{2} y_j^2) = 
\left\{
    \begin{array}{ll}
        \frac{x_j^2}{2 \mu} & \mbox{if } |x_j| \leq \mu \\
        |x_j| - \frac{\mu}{2} & \mbox{if } |x_j| > \mu.
    \end{array}
\right.
\end{align*}
This approximation to $\ell_1$ norm, which is differentiable everywhere, including at $x_j = 0$, is named the Huber function defined below. 
Note that the Huber function is the  Moreau envelope of absolute value (see Definition \ref{definition_Moreau_envelope}).

\begin{figure*}[!t]
\centering
\includegraphics[width=7in]{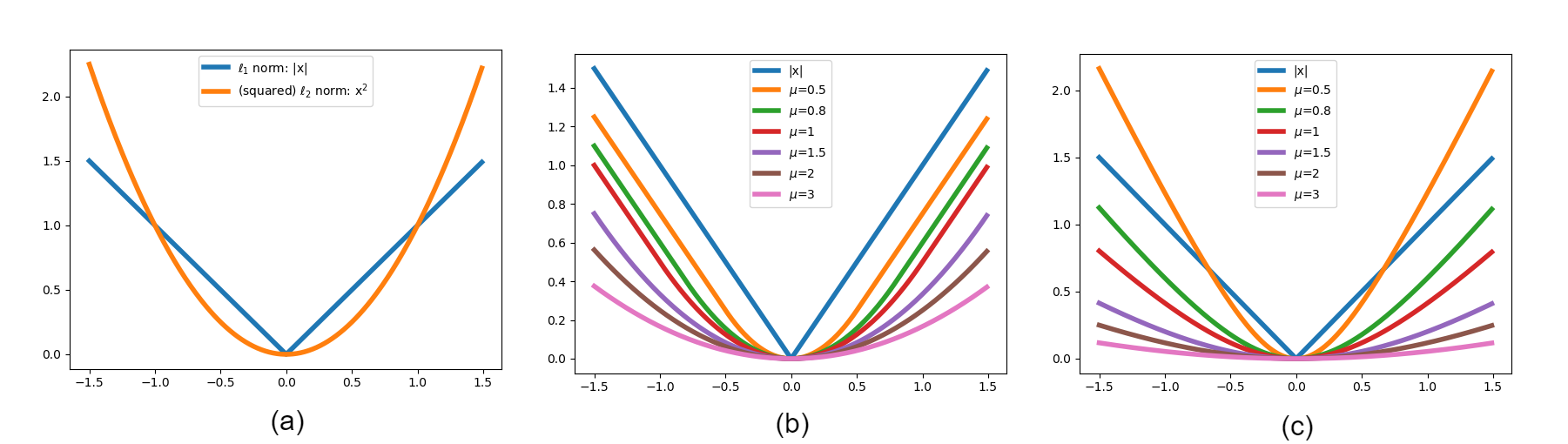}
\caption{(a) Comparison of $\ell_1$ and $\ell_2$ norms in $\mathbb{R}^1$, (b) comparison of $\ell_1$ norm (i.e., absolute value in $\mathbb{R}^1$) and the Huber function, and (c) comparison of $\ell_1$ norm (i.e., absolute value in $\mathbb{R}^1$) and the pseudo-Huber function.}
\label{figure_Huber}
\end{figure*}

\begin{definition}[Huber and pseudo-Huber functions \cite{huber1992robust}]
The Huber function and pseudo-Huber functions are:
\begin{align}
& h_\mu(x) =
\left\{
    \begin{array}{ll}
        \frac{x^2}{2 \mu} & \mbox{if } |x| \leq \mu \\
        |x| - \frac{\mu}{2} & \mbox{if } |x| > \mu,
    \end{array}
\right. 
\label{equation_Huber_function}\\
& \widehat{h}_{\mu}(x) = \sqrt{\Big(\frac{x}{\mu}\Big)^2 + 1} - 1, \label{equation_pseudo_Huber_function}
\end{align}
respectively, where $\mu > 0$.
The derivative of these functions is easily calculated. For example, the derivative of Huber function is:
\begin{align*}
\nabla h_\mu(x) =
\left\{
    \begin{array}{ll}
        \frac{x}{\mu} & \mbox{if } |x| \leq \mu \\
        \textbf{sign}(x) & \mbox{if } |x| > \mu.
    \end{array}
\right. 
\end{align*}
\end{definition}

The Huber and pseudo-Huber functions are shown for different $\mu$ values in Fig. \ref{figure_Huber}.
As this figure shows, in contrast to $\ell_1$ norm or absolute value, these two functions are smooth so they approximate the $\ell_1$ norm smoothly. 
This figure also shows that the Huber function is always upper-bounded by absolute value ($\ell_1$ norm); however, this does not hold for pseudo-Huber function. 
We can also see that the approximation of Huber function is better than the approximation of pseudo-Huber function; however, its calculation is harder than pseudo-Huber function because it is a piece-wise function (compare Eqs. (\ref{equation_Huber_function}) and (\ref{equation_pseudo_Huber_function})). 
Moreover, the figure shows a smaller positive value $\mu$ give better approximations, although it makes calculation of the Huber and pseudo-Huber functions harder. 

\subsection{Soft-thresholding and Proximal Methods}


Proximal mapping was introduced in Section \ref{section_proximal_methods}. We can use proximal mapping of non-smooth functions to solve non-smooth optimization by proximal methods introduced in Section \ref{section_proximal_methods}.
For example, we can solve an optimization problem containing $\ell_1$ norm in its objective function using Eq. (\ref{equation_proximal_mapping_scaled_l1_norm}). That equation, named soft-thresholding, is the proximal mapping of $\ell_1$ norm. Then, we can use any of the proximal methods such as proximal point method and proximal gradient method. 
For solving the regularized problem (\ref{equation_regularization_optimization_l1}), which is optimizing a composite function, we can use the proximal gradient method introduced in Section \ref{section_proximal_methods}. 

\subsection{Coordinate Descent}

\subsubsection{Coordinate Method}




Assume $\b{x} = [x_1, \dots, x_d]^\top$.
For solving Eq. (\ref{equation_optimization_problem_unconstrained}), \textit{coordinate method} \cite{wright2015coordinate} updates the dimensions (coordinates) of solution one-by-one and not all dimensions together at once:
\begin{equation}\label{equation_coordinate_descent}
\begin{aligned}
& x_1^{(k+1)} := \arg\min_{x_1} f(x_1, x_2^{(k)}, x_3^{(k)}, \dots, x_d^{(k)}), \\
& x_2^{(k+1)} := \arg\min_{x_2} f(x_1^{(k+1)}, x_2, x_2^{(k)}, \dots, x_d^{(k)}), \\
&\vdots \\
& x_d^{(k+1)} := \arg\min_{x_d} f(x_1^{(k+1)}, x_2^{(k+1)}, x_3^{(k+1)}, \dots, x_d),
\end{aligned}
\end{equation}
until convergence of all dimensions of solution. 
Note that the update of every dimension uses the latest update of previously updated dimensions. 
The order of updates for the dimensions does not matter. 
The idea of coordinate descent algorithm is similar to the idea of Gibbs sampling \cite{geman1984stochastic,ghojogh2020sampling} where we work on the dimensions of the variable one by one.

If we use a step of gradient descent (i.e. Eq. (\ref{equation_GD_step_by_eta})) for every of the above updates, the method is named \textit{coordinate descent}.
If we use proximal gradient method (i.e., Eq. (\ref{equation_proximal_gradient_method_update})) for every update in coordinate method, the method is named the \textit{proximal coordinate descent}. 
Note that we can group some of the dimensions (features) together and alternate between updating the blocks (groups) of features. That method is named \textit{block coordinate descent}.
The convergence analysis of coordinate descent and block coordinate descent methods can be found in \cite{luo1992convergence,luo1993error} and \cite{tseng2001convergence}, respectively. 
They show that if the function $f(.)$ is continuous, proper, and closed, the coordinate descent method converges to a stationary point. 
There exist some other faster variants of coordinate descent named \textit{accelerated coordinate descent} \cite{lee2013efficient,fercoq2015accelerated}.

similar to SGD, the full gradient is not available in coordinate descent to use for checking convergence, as discussed in Section \ref{section_convergence_criterion}. One can use other criteria in that section. 
Moreover, note that SGD can be used with the line-search methods, too. 
Although coordinate descent methods are very simple and shown to work properly for $\ell_1$ norm optimization \cite{wu2008coordinate}, they have not sufficiently attracted the attention of researchers in the field of optimization \cite{wright2015coordinate}.

\subsubsection{L1 Norm Optimization}


Coordinate descent method can be used for $\ell_1$ norm (lasso) optimization \cite{wu2008coordinate} because every coordinate of the $\ell_1$ norm is an absolute value ($\|\b{x}\|_1 = \sum_{j=1}^d |x_j|$ for $\b{x} = [x_1, \dots, x_d]^\top$) and the derivative of absolute value a simple sign function (note that we have subgradient for absolute value at zero, which will be introduced in Section \ref{section_subgradient}).
One of the well-known $\ell_1$ optimization methods is the lasso regression \cite{tibshirani1996regression,friedman2001elements,hastie2019statistical}:
\begin{align}\label{equation_lasso_regression}
\underset{\b{\beta}}{\text{minimize}} ~~~ \frac{1}{2} \|\b{y} - \b{X} \b{\beta}\|_2^2 + \lambda \|\b{\beta}\|_1,
\end{align}
where $\b{y} \in \mathbb{R}^n$ are the labels, $\b{X} = [\b{x}_1, \dots, \b{x}_d] \in \mathbb{R}^{n \times d}$ are the observations, $\b{\beta} = [\beta_1, \dots, \beta_d]^\top \in \mathbb{R}^d$ are the regression coefficients, and $\lambda$ is the regularization parameter. 
The lasso regression is sparse which is effective because of the reasons explained in Section \ref{section_lasso_regularization}. 

Let $c$ denote the objective function in Eq. (\ref{equation_lasso_regression}). 
The objective function can be simplified as $0.5(\b{y}^\top \b{y} - 2 \b{\beta}^\top \b{X}^\top \b{y} + \b{\beta}^\top \b{X}^\top \b{X} \b{\beta}) + \lambda \|\b{\beta}\|_1$. 
We can write the $j$-th element of this objective, denoted by $c_j$, as:
\begin{align*}
& c_j = \frac{1}{2} (\b{y}^\top \b{y} - 2 \b{x}_j^\top \b{y} \beta_j \\
&~~~~~~~~~~~~~~~~~~~ + \beta_j \b{x}_j^\top \b{x}_j \beta_j + \beta_j \b{x}_j^\top \b{X}_{-j} \b{\beta}_{-j}) + \lambda |\beta_j|,
\end{align*}
where $\b{X}_{-j} := [\b{x}_1, \dots, \b{x}_{j-1}, \b{x}_{j+1}, \dots, \b{x}_d]$ and $\b{\beta}_{-j} := [\beta_1, \dots, \beta_{j-1}, \beta_{j+1}, \dots, \beta_d]^\top$. 
For coordinate descent, we need gradient of objective function w.r.t. every coordinate. 
The derivatives of other coordinates of objective w.r.t. $\beta_j$ are zero so we need $c_j$ for derivative w.r.t. $\beta_j$. 
Taking derivative of $c_j$ w.r.t. $\beta_j$ and setting it to zero gives:
\begin{align*}
&\frac{\partial c}{\partial \beta_j} = \frac{\partial c_j}{\partial \beta_j} \\
&= \b{x}_j^\top \b{x}_j \beta_j + \b{x}_j^\top (\b{X}_{-j} \b{\beta}_{-j} - \b{y}) + \lambda\, \textbf{sign}(\beta_j) \overset{\text{set}}{=} 0 \\
&\implies 
\beta_j = s_{\frac{\lambda}{\|\b{x}_j\|_2^2}}\Big(\frac{\b{x}_j^\top (\b{y} - \b{X}_{-i} \b{\beta}_{-i})}{\b{x}_j^\top \b{x}_j}\Big) \\
&=
\left\{
    \begin{array}{ll}
        \frac{\b{x}_j^\top (\b{y} - \b{X}_{-i} \b{\beta}_{-i})}{\|\b{x}_j\|_2^2} - \frac{\lambda}{\|\b{x}_j\|_2^2} & \mbox{if } \frac{\b{x}_j^\top (\b{y} - \b{X}_{-i} \b{\beta}_{-i})}{\b{x}_j^\top \b{x}_j} \geq \frac{\lambda}{\|\b{x}_j\|_2^2} \\
        0 & \mbox{if } |\frac{\b{x}_j^\top (\b{y} - \b{X}_{-i} \b{\beta}_{-i})}{\b{x}_j^\top \b{x}_j}|\! <\! \frac{\lambda}{\|\b{x}_j\|_2^2} \\
        \frac{\b{x}_j^\top (\b{y} - \b{X}_{-i} \b{\beta}_{-i})}{\|\b{x}_j\|_2^2} + \frac{\lambda}{\|\b{x}_j\|_2^2} & \mbox{if } \frac{\b{x}_j^\top (\b{y} - \b{X}_{-i} \b{\beta}_{-i})}{\b{x}_j^\top \b{x}_j}\! \leq\! -\frac{\lambda}{\|\b{x}_j\|_2^2},
    \end{array}
\right.
\end{align*}
which is a soft-thresholding function (see Eq. (\ref{equation_proximal_mapping_scaled_l1_norm})). Therefore, coordinate descent for $\ell_1$ optimization finds the soft-thresholding solution, the same as the proximal mapping. 
We can use this soft-thresholding in coordinate descent where we use $\beta_j$'s in Eq. (\ref{equation_coordinate_descent}) rather than $x_j$'s.

\subsection{Subgradient Methods}



\subsubsection{Subgradient}\label{section_subgradient}

We know that the convex conjugate $f^*(\b{y})$ is always convex. 
If the convex conjugate $f^*(\b{y})$ is strongly convex, then we have only one gradient according to Eq. (\ref{equation_gradient_intermsof_convex_conjugate}). However, if the convex conjugate is only convex and not strongly convex, Eq. (\ref{equation_gradient_intermsof_convex_conjugate}) might have several solutions so the gradient may not be unique. For the points in which the function does not have a unique gradient, we can have a set of subgradients, defined below. 

\begin{definition}[Subgradient]
Consider a convex function $f(.)$ with domain $\mathcal{D}$.
The vector $\b{g} \in \mathbb{R}^d$ is a subgradient of $f(.)$ at $\b{x} \in \mathcal{D}$ if it satisfies:
\begin{align}\label{equation_subgradient}
f(\b{y}) \geq f(\b{x}) + \b{g}^\top (\b{y} - \b{x}), \quad \forall \b{y} \in \mathcal{D}.
\end{align}
\end{definition}
As Fig. \ref{figure_subgradient} shows, if the function is not smooth at a point, it has multiple subgradients at that point. This is while there is only one subgradient (which is the gradient) for a point at which the function is smooth. 

\begin{definition}[subdifferential]
The subdifferential of a convex function $f(.)$, with domain $\mathcal{D}$, at a point $\b{x} \in \mathcal{D}$ is the set of all subgradients at that point:
\begin{align}
\partial f(\b{x}) := \{\b{g}\, |\, \b{g}^\top (\b{y} - \b{x}) \overset{(\ref{equation_subgradient})}{\leq} f(\b{y}) - f(\b{x}),\, \forall \b{y} \in \mathcal{D}\}.
\end{align}
The subdifferential is a closed convex set. Every subgradient is a member of the subdifferential, i.e., $\b{g} \in \partial f(\b{x})$. An example subdifferential is shown in Fig. \ref{figure_subgradient}.
\end{definition}

\begin{figure}[!t]
\centering
\includegraphics[width=3.2in]{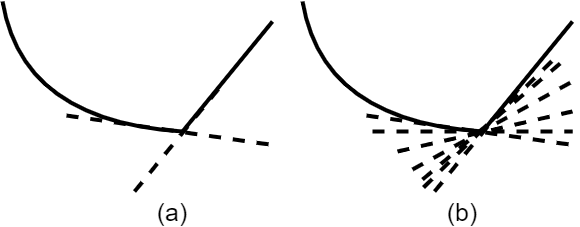}
\caption{Subgradients: (a) the two extreme subgradients at the non-smooth point and (b) some examples of the subgradients in the subdifferential at the non-smooth point.}
\label{figure_subgradient}
\end{figure}

An example of subgradient is the subdifferential of absolute value, $f(.) = |.|$:
\begin{align}\label{equation_subgradient_of_absolute_value}
\partial f(\b{x}) =
\left\{
    \begin{array}{ll}
        1 & \mbox{if } x > 0 \\
        \in [-1,1] & \mbox{if } x = 0 \\
        -1 & \mbox{if } x < 0. 
    \end{array}
\right. 
\end{align}
The subgradient of absolute value is equal to the gradient for $x<0$ and $x>0$ while there exists a set of subgradients at $x=0$ because absolute value is not smooth at that point. 
We can also compute the subgradient of $\ell_1$ norm because we have $f(\b{x}) = \|\b{x}\|_1 = \sum_{i=1}^d |x_i| = \sum_{i=1}^d f_i(\b{x}_i)$ for $\b{x} = [x_1, \dots, x_d]^\top$.
We take Eq. (\ref{equation_subgradient_of_absolute_value}) as the subdifferential of the $i$-th dimension, denoted by $\partial f_i(x_i)$. Hence, for $f(\b{x}) = \|\b{x}\|_1$, we have $\partial f(\b{x}) = \partial f_1(x_1) \times \dots \times \partial f_d(x_d)$ where $\times$ denotes the Cartesian product of sets.

We can have the first-order optimality condition using subgradients by generalizing Lemma \ref{lemma_first_order_optimality_condition} as follows.
\begin{lemma}[First-order optimality condition with subgradient]\label{lemma_first_order_optimality_condition_subgradient}
If $\b{x}^*$ is a local minimizer for a function $f(.)$, then:
\begin{align}\label{equation_first_order_optimality_condition_subgradient}
\b{0} \in \partial f(\b{x}^*).
\end{align}
Note that if $f(.)$ is convex, this equation is a necessary and sufficient condition for a minimizer. 
\end{lemma}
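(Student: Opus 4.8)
The plan is to reduce the statement to the definition of the subdifferential and to observe that membership of the zero vector encodes global minimality directly. First I would substitute the candidate subgradient $\b{g} = \b{0}$ into the defining inequality (\ref{equation_subgradient}) for a subgradient at $\b{x}^*$. This yields
\[ f(\b{y}) \geq f(\b{x}^*) + \b{0}^\top(\b{y} - \b{x}^*) = f(\b{x}^*), \quad \forall \b{y} \in \mathcal{D}, \]
so that $\b{0} \in \partial f(\b{x}^*)$ holds if and only if $f(\b{y}) \geq f(\b{x}^*)$ for every $\b{y}$, which is precisely the condition (\ref{equation_global_minimizer}) defining a global minimizer. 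Thus the whole lemma collapses to the equivalence that $\b{0} \in \partial f(\b{x}^*)$ if and only if $\b{x}^*$ is a global minimizer.

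With this equivalence in hand I would dispatch the two implications. For sufficiency (the ``if convex'' converse), assume $\b{0} \in \partial f(\b{x}^*)$; the displayed inequality immediately certifies that $\b{x}^*$ is a global, hence local, minimizer, and no convexity is even needed for this direction. For necessity, suppose $\b{x}^*$ is a local minimizer of the convex function $f(.)$. By Lemma \ref{lemma_global_min_convex_function}, a local minimizer of a convex function is automatically a global minimizer, so $f(\b{y}) \geq f(\b{x}^*)$ for all $\b{y} \in \mathcal{D}$, and reading this inequality backwards through (\ref{equation_subgradient}) shows that $\b{0}$ satisfies the subgradient condition, i.e. $\b{0} \in \partial f(\b{x}^*)$. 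This mirrors the smooth argument of Lemma \ref{lemma_first_order_optimality_condition}, with the subdifferential playing the role of the gradient.

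The main subtlety, rather than a genuine obstacle, is the passage from local to global in the necessity direction. The subdifferential as given in the excerpt is defined through the \emph{global} inequality (\ref{equation_subgradient}), so for a genuinely nonconvex $f(.)$ the zero vector need not lie in $\partial f(\b{x}^*)$ at an isolated local minimizer, and the clean equivalence above can fail. I would therefore make explicit that the substantive content is the convex case, where Lemma \ref{lemma_global_min_convex_function} bridges local and global minimality; the nonconvex reading of the first sentence should be understood only in the weak sense that $\b{0} \in \partial f(\b{x}^*)$ is the natural generalization of the stationarity condition $\nabla f(\b{x}^*) = \b{0}$. Everything else is a direct manipulation of the defining inequality and requires no estimates.
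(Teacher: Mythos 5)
Your proposal is correct, and it is in fact more complete than the paper's own proof. The paper's argument consists solely of the substitution $\b{g} = \b{0}$ into Eq. (\ref{equation_subgradient}): it shows that $\b{0} \in \partial f(\b{x}^*)$ forces $f(\b{y}) \geq f(\b{x}^*)$ for all $\b{y} \in \mathcal{D}$, i.e., only the sufficiency half, and it never addresses the implication actually asserted in the lemma's first sentence (local minimizer $\implies \b{0} \in \partial f(\b{x}^*)$). You prove both halves: the same substitution gives sufficiency, with no convexity needed, and for necessity you invoke Lemma \ref{lemma_global_min_convex_function} to pass from local to global minimality in the convex case, after which the global inequality $f(\b{y}) \geq f(\b{x}^*)$ read backwards through Eq. (\ref{equation_subgradient}) exhibits $\b{0}$ as a subgradient. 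Your caveat about nonconvex functions is also correct and worth retaining: because the paper defines the subdifferential through the \emph{global} inequality (\ref{equation_subgradient}), a local-but-not-global minimizer of a nonconvex function generally has $\b{0} \notin \partial f(\b{x}^*)$ (indeed the subdifferential can be empty there), so the lemma's first sentence is only valid when read in the convex setting --- a point the paper's terse proof glosses over entirely. In short, you buy a proof of the stated implication plus an honest delineation of its scope, at the cost of one extra appeal to Lemma \ref{lemma_global_min_convex_function}.
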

\begin{proof}
According to Eq. (\ref{equation_subgradient}), we have $f(\b{y}) \geq f(\b{x}^*) + \b{g}^\top (\b{y} - \b{x}^*), \forall \b{y}$.
If we have $\b{g} = \b{0} \in \partial f(\b{x}^*)$, we have $f(\b{y}) \geq f(\b{x}^*) + \b{0}^\top (\b{y} - \b{x}^*) = f(\b{x}^*)$ which means that $\b{x}^*$ is a minimizer. Q.E.D.
\end{proof}

The following lemma can be useful for calculation of subdifferential of functions. 
\begin{lemma}\label{lemma_properties_subdifferential}
Some useful properties for calculation of subdifferential of functions:
\begin{itemize}
\item For a smooth function or at points where the function is smooth, subdifferential has only one member which is the gradient: $\partial f(\b{x}) = \{\nabla f(\b{x})\}$.
\item Linear combination: If $f(\b{x}) = \sum_{i=1}^n a_i f_i(\b{x})$ with $a_i \geq 0$, then $\partial f(\b{x}) = \sum_{i=1}^n a_i \partial f_i(\b{x})$.
\item Affine transformation: If $f(\b{x}) = f_0(\b{A}\b{x} + \b{b})$, then $\partial f(\b{x}) = \b{A}^\top \partial f_0(\b{A}\b{x} + \b{b})$.
\item Point-wise maximum: Suppose $f(\b{x}) = \max\{f_1(\b{x}), \dots, f_n(\b{x})\}$ where $f_i$'s are differentiable. Let $I(\b{x}) := \{i | f_i = f(\b{x})\}$ states which function has the maximum value for the point $\b{x}$. At the any point other than the intersection point of functions (which is smooth), The subgradient is $g = \nabla f_i(\b{x})$ for $i \in I(\b{x})$. At the intersection point of two functions (which is not smooth), e.g. $f_i(\b{x}) = f_{i+1}(\b{x})$, we have:
\begin{align*}
\partial f(\b{x}) = \{\b{g}\, |\, t \nabla f_i(\b{x}) + (1-t) \nabla f_{i+1}(\b{x}), \forall t \in [0,1]\}.
\end{align*}
\end{itemize}
\end{lemma}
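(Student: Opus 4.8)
The plan is to prove the four properties one at a time, using the subgradient inequality (\ref{equation_subgradient}) as the single common tool, and in each set identity to argue the two inclusions separately. A recurring pattern will emerge: one inclusion drops out directly from the defining inequality, while the reverse inclusion is the genuinely hard direction.

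For the first property (smooth points), convexity and differentiability give, via Eq. (\ref{equation_convex_function_firstDerivative}), that $f(\b{y}) \geq f(\b{x}) + \nabla f(\b{x})^\top (\b{y} - \b{x})$, which is exactly (\ref{equation_subgradient}) with $\b{g} = \nabla f(\b{x})$; hence $\nabla f(\b{x}) \in \partial f(\b{x})$. For uniqueness I would take any $\b{g} \in \partial f(\b{x})$, substitute $\b{y} = \b{x} + t\b{v}$ for an arbitrary direction $\b{v}$ and small $t > 0$, divide the subgradient inequality by $t$, and let $t \to 0^+$ to obtain $\b{g}^\top \b{v} \leq \nabla f(\b{x})^\top \b{v}$; replacing $\b{v}$ by $-\b{v}$ forces equality for every $\b{v}$, so $\b{g} = \nabla f(\b{x})$.

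For the linear-combination and affine-composition properties, the inclusion ``$\supseteq$'' is immediate from (\ref{equation_subgradient}). In the combination case, given $\b{g}_i \in \partial f_i(\b{x})$, I multiply each inequality $f_i(\b{y}) \geq f_i(\b{x}) + \b{g}_i^\top(\b{y}-\b{x})$ by $a_i \geq 0$ and sum to get $\sum_i a_i \b{g}_i \in \partial f(\b{x})$. In the affine case, for $\b{g}_0 \in \partial f_0(\b{A}\b{x}+\b{b})$ the chain
\begin{align*}
f(\b{y}) = f_0(\b{A}\b{y}+\b{b}) \geq f_0(\b{A}\b{x}+\b{b}) + \b{g}_0^\top \b{A}(\b{y}-\b{x}) = f(\b{x}) + (\b{A}^\top \b{g}_0)^\top (\b{y}-\b{x})
\end{align*}
shows $\b{A}^\top \b{g}_0 \in \partial f(\b{x})$. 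For the point-wise maximum, at a tie $f_i(\b{x}) = f_{i+1}(\b{x}) = f(\b{x})$ I verify that each $\b{g} = t\nabla f_i(\b{x}) + (1-t)\nabla f_{i+1}(\b{x})$ is a subgradient by combining $f(\b{y}) \geq f_i(\b{y}) \geq f(\b{x}) + \nabla f_i(\b{x})^\top(\b{y}-\b{x})$ with the analogous bound for $f_{i+1}$ in the convex combination; away from ties, $f$ agrees locally with a single smooth $f_i$ and the first property closes the case.

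The hard part in every identity is the reverse inclusion ``$\subseteq$'': that every subgradient of the combined function actually decomposes in the stated form. This does not follow from (\ref{equation_subgradient}) alone and is the true obstacle. The standard route is a separating-hyperplane argument applied to the epigraphs (the Moreau--Rockafellar theorem for the sum, its affine analogue for the composition, and the convex-hull characterization for the maximum), and each requires a constraint qualification such as a nonempty intersection of the relative interiors of the $\textbf{dom}(f_i)$, or a range condition on $\b{A}$. I would therefore either invoke these separation theorems as known results or restrict to the regular case in which the qualification holds, rather than reconstruct the separation machinery from scratch.
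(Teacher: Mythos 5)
The paper never proves this lemma: it is stated as a list of standard convex-analysis facts, with no appendix entry and no citation of an argument, so there is no reference proof to compare yours against. On its own terms, your proposal is sound and usefully more than the paper offers. The easy inclusions are handled correctly: the gradient is a subgradient at smooth points via Eq. (\ref{equation_convex_function_firstDerivative}), and your uniqueness argument (take $\b{y} = \b{x} + t\b{v}$, divide by $t$, let $t \to 0^{+}$, then replace $\b{v}$ by $-\b{v}$) is the standard and correct way to show $\partial f(\b{x}) = \{\nabla f(\b{x})\}$. Likewise the containments $\sum_i a_i \partial f_i(\b{x}) \subseteq \partial f(\b{x})$, $\b{A}^\top \partial f_0(\b{A}\b{x}+\b{b}) \subseteq \partial f(\b{x})$, and the verification that convex combinations of gradients of active functions are subgradients of the max are all correct one-line consequences of Eq. (\ref{equation_subgradient}). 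You are also right that the reverse inclusions are the genuine content and require Moreau--Rockafellar-type separation arguments; the lemma as stated asserts equalities that, for general extended-valued convex functions, are false without a constraint qualification, so flagging this is not pedantry. One refinement: in this paper's setting the functions are finite-valued on all of $\mathbb{R}^d$ (or on a common open domain $\mathcal{D}$), and in that case the relative-interior qualification is satisfied automatically, so the equalities hold with no extra hypothesis and your proposed ``restriction to the regular case'' is vacuous rather than a real limitation. Two smaller points worth fixing: the pointwise-maximum argument needs the $f_i$ to be convex, not merely differentiable as the lemma's wording suggests (your chain $f(\b{y}) \geq f_i(\b{y}) \geq f(\b{x}) + \nabla f_i(\b{x})^\top(\b{y}-\b{x})$ uses convexity of $f_i$), and your claim that away from ties ``$f$ agrees locally with a single smooth $f_i$'' settles the case implicitly uses the fact that the subdifferential of a convex function is determined by its local behavior, which deserves a sentence since the paper's Eq. (\ref{equation_subgradient}) is a global inequality.
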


\subsubsection{Subgradient Method}

\textit{Subgradient method}, first proposed in \cite{shor2012minimization}, is used for solving the unconstrained optimization problem (\ref{equation_optimization_problem_unconstrained}) where the function $f(.)$ is not smooth, i.e., not differentiable, everywhere in its domain. It iteratively updates the solution as:
\begin{align}
\b{x}^{(k+1)} := \b{x}^{(k)} - \eta^{(k)} \b{g}^{(k)},
\end{align}
where $\b{g}^{(k)}$ is \textit{any} subgradient of function $f(.)$ in point $\b{x}$ at iteration $k$, i.e. $\b{g}^{(k)} \in \partial f(\b{x}^{(k)})$, and $\eta^{(k)}$ is the step size at iteration $k$. 
Comparing this update with Eq. (\ref{equation_GD_step_by_eta}) shows that gradient descent is a special case of the subgradient method because for a smooth function, gradient is the only member of the subdifferential set (see Lemma \ref{lemma_properties_subdifferential}); hence, the only subgradient is the gradient. 

\subsubsection{Stochastic Subgradient Method}

Consider the optimization problem (\ref{equation_optimization_problem_unconstrained_SGD}) where at least one of the $f_i(.)$ functions is not smooth.
\textit{Stochastic subgradient method} \cite{shor1998nondifferentiable} randomly samples one of the points to update the solution in every iteration:
\begin{align}\label{equation_SGD_step_by_eta_subgradient}
\b{x}^{(k+1)} := \b{x}^{(k)} - \eta^{(k)} \b{g}_i^{(k)},
\end{align}
where $\b{g}_i^{(k)} \in \partial f_i(\b{x}^{(k)})$.
Comparing this with Eq. (\ref{equation_SGD_step_by_eta}) shows that stochastic gradient descent is a special case of stochastic gradient descent because for a smooth function, gradient is the only member of the subdifferential set (see Lemma \ref{lemma_properties_subdifferential}).
Note that there is another method named stochastic subgradient method which uses a noisy unbiased subgradient for robustness to noisy data \cite{boyd2008stochastic}. Here, our focus was on random sampling of the point and not the noise.  

We can have \textit{mini-batch stochastic subgradient method} which is a generalization of mini-batch SGD for non-smooth functions. In this case, Eq. (\ref{equation_minibatch_SGD_step_by_eta}) is changed to:
\begin{align}\label{equation_minibatch_SGD_step_by_eta_subgradient}
\b{x}^{(k+1)} := \b{x}^{(k)} - \eta^{(k)} \frac{1}{b} \sum_{i \in \mathcal{B}_{k'}} \b{g}_i^{(k)}.
\end{align}

Note that, if the function is not smooth, we can also use subgradient instead of gradient in other stochastic methods such as SAG and SVRG, which were introduced before. For this, we need to use $\b{g}_i^{(k)}$ rather than $\nabla f(\b{x}^{(k)})$ in these methods.

\subsubsection{Projected Subgradient Method}

Consider the optimization problem (\ref{equation_constraint_set_optimization_problem}).
If the function $f(.)$ is not smooth, we can use he \textit{projected subgradient method} \cite{alber1998projected} which generalizes the projected gradient method introduced in Section \ref{section_projected_gradient_method}. Similar to Eq. (\ref{equation_projected_gradient_method_update}), it iteratively updates the solution as:
\begin{align}
\b{x}^{(k+1)} = \Pi_{\mathcal{S}}\big(\b{x}^{(k)} - \eta^{(k)} \b{g}^{(k)}\big),
\end{align}
until convergence of the solution. 

\section{Second-Order Optimization: Newton's Method}\label{section_second_order_methods}

\subsection{Newton's Method from the Newton-Raphson Root Finding Method}

We can find the root of a function $f: \b{x} \mapsto f(\b{x})$ by solving the equation $f(\b{x}) \overset{\text{set}}{=} 0$. The root of function can be found iteratively where we get closer to the root over iterations. One of the iterative root-finding methods is the \textit{Newton-Raphson method} \cite{stoer2013introduction}. 
In every iteration, it finds the next solution as:
\begin{align}\label{equation_Newton_Raphson_update}
\b{x}^{(k+1)} := \b{x}^{(k)} - \frac{f(\b{x}^{(k)})}{\nabla f(\b{x}^{(k)})},
\end{align}
where $\nabla f(\b{x}^{(k)})$ is the derivative of function w.r.t. $\b{x}$.
According to Eq. (\ref{equation_first_order_optimality_condition}), in unconstrained optimization, we can find the extremum (minimum or maximum) of the function by setting its derivative to zero, i.e., $\nabla f(\b{x}) \overset{\text{set}}{=} 0$.
Recall that Eq. (\ref{equation_Newton_Raphson_update}) was used for solving $f(\b{x}) \overset{\text{set}}{=} 0$. Therefore, for solving Eq. (\ref{equation_first_order_optimality_condition}), we can replace $f(\b{x})$ with $\nabla f(\b{x})$ in Eq. (\ref{equation_Newton_Raphson_update}):
\begin{align}\label{equation_Newton_update}
\b{x}^{(k+1)} := \b{x}^{(k)} - \eta^{(k)} \frac{\nabla f(\b{x}^{(k)})}{\nabla^2 f(\b{x}^{(k)})},
\end{align}
where $\nabla^2 f(\b{x}^{(k)})$ is the second derivative of function w.r.t. $\b{x}$ and we have included a step size at iteration $k$ denoted by $\eta^{(k)} > 0$. This step size can be either fixed or adaptive. 
If $\b{x}$ is multivariate, i.e. $\b{x} \in \mathbb{R}^d$, Eq. (\ref{equation_Newton_update}) is written as:
\begin{align}\label{equation_Newton_update_multivariate}
\b{x}^{(k+1)} := \b{x}^{(k)} - \eta^{(k)} \big(\nabla^2 f(\b{x}^{(k)})\big)^{-1} \nabla f(\b{x}^{(k)}),
\end{align}
where $\nabla f(\b{x}^{(k)}) \in \mathbb{R}^d$ is the gradient of function w.r.t. $\b{x}$ and $\nabla^2 f(\b{x}^{(k)}) \in \mathbb{R}^{d \times d}$ is the Hessian matrix w.r.t. $\b{x}$. 
Because of the second derivative or the Hessian, this optimization method is a second-order method. The name of this method is the \textit{Newton's method}. 

\subsection{Newton's Method for Unconstrained Optimization}

Consider the following optimization problem:
\begin{align}
\underset{\b{x}}{\text{minimize}}\quad f(\b{x}).
\end{align}
where $f(.)$ is a convex function.
Iterative optimization can be first-order or second-order. Iterative optimization updates solution iteratively as in Eq. (\ref{equation_update_point_numerical_optimization}).
The update continues until $\Delta\b{x}$ becomes very small which is the convergence of optimization.
In the first-order optimization, the step of updating is $\Delta\b{x} := -\nabla f(\b{x})$.
Near the optimal point $\b{x}^*$, gradient is very small so the second-order Taylor series expansion of function becomes:
\begin{align}
f(\b{x}) \approx &\,f(\b{x}^*) + \underbrace{\nabla f(\b{x}^*)^\top}_{\approx\, 0} (\b{x} - \b{x}^*) \nonumber\\
&+ \frac{1}{2} (\b{x} - \b{x}^*)^\top \nabla^2 f(\b{x}^*) (\b{x} - \b{x}^*) \nonumber\\
&\approx f(\b{x}^*) + \frac{1}{2} (\b{x} - \b{x}^*)^\top \nabla^2 f(\b{x}^*) (\b{x} - \b{x}^*). \label{equation_Newton_method_Taylor_expansion}
\end{align}
This shows that the function is almost quadratic near the optimal point. Following this intuition, Newton's method uses Hessian $\nabla^2 f(\b{x})$ in its updating step:
\begin{align}\label{equation_Newton_method_step}
\Delta\b{x} := - \nabla^2 f(\b{x})^{-1} \nabla f(\b{x}).
\end{align}
In the literature, this equation is sometimes restated to:
\begin{align}\label{equation_Newton_method_step_2}
\nabla^2 f(\b{x})\, \Delta\b{x} := - \nabla f(\b{x}).
\end{align}

\subsection{Newton's Method for Equality Constrained Optimization}

The optimization problem may have equality constraints:
\begin{equation}\label{equation_optimization_problem_equality_constraint}
\begin{aligned}
& \underset{\b{x}}{\text{minimize}}
& & f(\b{x}) \\
& \text{subject to}
& & \b{A} \b{x} = \b{b}.
\end{aligned}
\end{equation}
After a step of update by $\b{p} = \Delta \b{x}$, this optimization becomes:
\begin{equation}
\begin{aligned}
& \underset{\b{x}}{\text{minimize}}
& & f(\b{x} + \b{p}) \\
& \text{subject to}
& & \b{A} (\b{x} + \b{p}) = \b{b}.
\end{aligned}
\end{equation}
The Lagrangian of this optimization problem is:
\begin{align*}
\mathcal{L} = f(\b{x} + \b{p}) + \b{\nu}^\top (\b{A} (\b{x} + \b{p}) - \b{b}),
\end{align*}
where $\b{\nu}$ is the dual variable. 
The second-order Taylor series expansion of function $f(\b{x} + \b{p})$ is:
\begin{align}
f(\b{x} + \b{p}) \approx f(\b{x}) + \nabla f(\b{x})^\top \b{p} + \frac{1}{2} \b{p}^\top \nabla^2 f(\b{x}^*)\, \b{p}.
\end{align}
Substituting this into the Lagrangian gives:
\begin{align*}
\mathcal{L} = f(\b{x}) &+ \nabla f(\b{x})^\top \b{p} + \frac{1}{2} \b{p}^\top \nabla^2 f(\b{x}^*)\, \b{p} \\
&+ \b{\nu}^\top (\b{A} (\b{x} + \b{p}) - \b{b}).
\end{align*}
According to Eqs. (\ref{equation_stationarity_condition}) and (\ref{equation_derivative_Lagrangian_wrt_nu_zero}) in KKT conditions, the primal and dual residuals must be zero:
\begin{align}
& \nabla_{\b{x}} \mathcal{L} = \nabla f(\b{x}) + \nabla^2 f(\b{x})^\top \b{p} + \b{p}^\top \underbrace{\nabla^3 f(\b{x}^*)}_{\approx\, \b{0}}\, \b{p} \nonumber \\
&+ \b{A}^\top \b{\nu} \overset{\text{set}}{=} \b{0} \implies \nabla^2 f(\b{x})^\top \b{p} + \b{A}^\top \b{\nu} = - \nabla f(\b{x}), \label{equation_Newton_method_derivative_x_zero} \\
& \nabla_{\b{\nu}} \mathcal{L} = \b{A} (\b{x} + \b{p}) - \b{b} \overset{(a)}{=} \b{A} \b{p} \overset{\text{set}}{=} \b{0}, \label{equation_Newton_method_derivative_nu_zero}
\end{align}
where we have $\nabla^3 f(\b{x}^*) \approx 0$ because the gradient of function at the optimal point vanishes according to Eq. (\ref{equation_first_order_optimality_condition}) and $(a)$ is because of the constraint $\b{Ax} - \b{b} = \b{0}$ in problem (\ref{equation_optimization_problem_equality_constraint}).
Eqs. (\ref{equation_Newton_method_derivative_x_zero}) and (\ref{equation_Newton_method_derivative_nu_zero}) can be written as a system of equations:
\begin{align}\label{equation_Newton_method_equality_constraints_solution}
\begin{bmatrix}
\nabla^2 f(\b{x})^\top & \b{A}^\top \\
\b{A} & \b{0}
\end{bmatrix}
\begin{bmatrix}
\b{p} \\
\b{\nu}
\end{bmatrix} 
=
\begin{bmatrix}
-\nabla f(\b{x}) \\
\b{0}
\end{bmatrix}.
\end{align}
Solving this system of equations gives the desired step $\b{p}$ (i.e., $\Delta \b{x}$) for updating the solution at the iteration.

\hfill\break
\textbf{-- Starting with Non-feasible Initial Point:}
Newton's method can even start with a non-feasible point which does not satisfy all the constraints.
If the initial point for optimization is not a feasible point, i.e., $\b{Ax} - \b{b} \neq \b{0}$, Eq. (\ref{equation_Newton_method_derivative_nu_zero}) becomes:
\begin{align}
\nabla_{\b{\nu}} \mathcal{L} = \b{A} (\b{x} + \b{p}) - \b{b}  \overset{\text{set}}{=} \b{0} \implies \b{Ap} = -(\b{Ax} - \b{b}).
\end{align}
Hence, for the first iteration, we solve the following system rather than Eq. (\ref{equation_Newton_method_equality_constraints_solution}):
\begin{align}\label{equation_Newton_method_equality_constraints_solution_nonFeasible}
\begin{bmatrix}
\nabla^2 f(\b{x})^\top & \b{A}^\top \\
\b{A} & \b{0}
\end{bmatrix}
\begin{bmatrix}
\b{p} \\
\b{\nu}
\end{bmatrix} 
=
-
\begin{bmatrix}
\nabla f(\b{x}) \\
\b{Ax} - \b{b}
\end{bmatrix},
\end{align}
and we use Eq. (\ref{equation_Newton_method_equality_constraints_solution_nonFeasible}) for the rest of iterations because the next points will be in the feasibility set (because we force the solutions to satisfy $\b{Ax} = \b{b}$). 

\subsection{Interior-Point and Barrier Methods: Newton's Method for Inequality Constrained Optimization}\label{section_interior_point_method}

The optimization problem may have inequality constraints:
\begin{equation}\label{equation_optimization_problem_interior_point_method}
\begin{aligned}
& \underset{\b{x}}{\text{minimize}}
& & f(\b{x}) \\
& \text{subject to}
& & y_i(\b{x}) \leq 0, \quad i \in \{1, \dots, m_1\}, \\
& & & \b{A} \b{x} = \b{b}.
\end{aligned}
\end{equation}
We can solve constrained optimization problems using \textit{Barrier methods}, also known as \textit{interior-point methods} \cite{nesterov1994interior,potra2000interior,boyd2004convex,wright2005interior}. Interior-point methods were first proposed by \cite{dikin1967iterative}. 
The interior-point method is also referred to as the \textit{Unconstrained Minimization Technique (UMT)} or \textit{Sequential UMT (SUMT)} \cite{fiacco1967sequential} because it converts the problem to an unconstrained problem and solves it iteratively. 

The barrier methods or the interior-point methods, convert inequality constrained problems to equality constrained or unconstrained problems. Ideally, we can do this conversion using the indicator function $\mathbb{I}(.)$ which is zero if its input condition is satisfied and is infinity otherwise (n.b. the indicator function in optimization literature is not like the indicator in data science which is one if its input condition is satisfied and is zero otherwise). The problem is converted to:
\begin{equation}\label{equation_log_barrier_optimization}
\begin{aligned}
& \underset{\b{x}}{\text{minimize}}
& & f(\b{x}) + \sum_{i=1}^{m_1} \mathbb{I}(y_i(\b{x}) \leq 0) \\
& \text{subject to}
& & \b{A} \b{x} = \b{b}.
\end{aligned}
\end{equation}
The indicator function is not differentiable because it is not smooth: 
\begin{align}
\mathbb{I}(y_i(\b{x}) \leq 0) := 
\left\{
    \begin{array}{ll}
        0 & \mbox{if } y_i(\b{x}) \leq 0 \\
        \infty & \mbox{if } y_i(\b{x}) > 0.
    \end{array}
\right.
\end{align}
Hence, we can approximate it with differentiable functions called the \textit{barrier functions} \cite{boyd2004convex,nesterov2018lectures}.
A barrier function is logarithm, named the \textit{logarithmic barrier} or \textit{log barrier} in short. It approximates the indicator function by:
\begin{align}\label{equation_log_barrier}
\mathbb{I}(y_i(\b{x}) \leq 0) \approx -\frac{1}{t} \log(-y_i(\b{x})),
\end{align}
where $t > 0$ (usually a large number such as $t = 10^6$) and the approximation becomes more accurate by $t \rightarrow \infty$.
It changes the problem to:
\begin{equation}\label{equation_log_barrier_optimization_withLogBarrier}
\begin{aligned}
& \underset{\b{x}}{\text{minimize}}
& & f(\b{x}) - \frac{1}{t} \sum_{i=1}^{m_1} \log(-y_i(\b{x})) \\
& \text{subject to}
& & \b{A} \b{x} = \b{b}.
\end{aligned}
\end{equation}
This optimization problem is an equality constrained optimization problem which we already explained how to solve. 
Note that there exist many approximations for the barrier. One of mostly used methods is the logarithmic barrier.  

The iterative solutions of the interior-point method satisfy Eq. (\ref{equation_iterative_optimization_series}) and follow Fig. \ref{figure_iterative_primal_dual}. 
If the optimization problem is a convex problem, the solution of interior-point method is the global solution; otherwise, the solution is local. 
The interior-point and barrier methods are used in many optimization toolboxes such as CVX \cite{grant2009cvx}.

\hfill\break
\textbf{-- Accuracy of the log barrier method:}
In the following theorem, we discuss the accuracy of the log barrier method.



\begin{theorem}[On the sub-optimality of log-barrier method]\label{theorem_sub_optimality_logBarrier}
Let the optimum of problems (\ref{equation_optimization_problem_interior_point_method}) and (\ref{equation_log_barrier_optimization_withLogBarrier}) be denoted by $f^*$ and $f_r^*$, respectively. 
We have:
\begin{align}\label{equation_sub_optimality_logBarrier}
f^* - \frac{m_1}{t} \leq f_r^* \leq f^*,
\end{align}
meaning that the optimum of problem (\ref{equation_log_barrier_optimization_withLogBarrier}) is no more than $m_1/t$ from the optimum of problem (\ref{equation_optimization_problem_interior_point_method}). 
\end{theorem}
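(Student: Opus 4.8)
The plan is to prove the two inequalities by way of Lagrangian duality, reading off a dual-feasible pair of multipliers from the stationarity condition of the barrier minimizer. Throughout I assume $f$ and the $y_i$ are convex and that a minimizer $\b{x}_t$ of the barrier problem (\ref{equation_log_barrier_optimization_withLogBarrier}) exists in the strict interior, so that every $y_i(\b{x}_t) < 0$; this is guaranteed under Slater's condition. The central object is this point $\b{x}_t$: I will show that the quantity $f(\b{x}_t)-m_1/t$, which the displayed bound denotes $f_r^*$, lies below $f^*$ by weak duality and above $f^*-m_1/t$ by feasibility.

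First I would write the first-order (stationarity) condition for the equality-constrained barrier problem (\ref{equation_log_barrier_optimization_withLogBarrier}). Since $\nabla_{\b{x}}\log(-y_i(\b{x})) = \nabla y_i(\b{x})/y_i(\b{x})$, the gradient of the barrier objective together with a multiplier $\bar{\b{\nu}}$ for $\b{Ax}=\b{b}$ gives $\nabla f(\b{x}_t) - \tfrac{1}{t}\sum_{i=1}^{m_1} \nabla y_i(\b{x}_t)/y_i(\b{x}_t) + \b{A}^\top\bar{\b{\nu}} = \b{0}$. The decisive step is to define $\lambda_i^* := -1/\big(t\,y_i(\b{x}_t)\big)$ and $\b{\nu}^* := \bar{\b{\nu}}$; because $y_i(\b{x}_t)<0$ these satisfy $\lambda_i^*>0$, i.e. dual feasibility (\ref{equation_dual_constraints}). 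Substituting, the barrier stationarity condition becomes exactly the stationarity condition (\ref{equation_stationarity_condition}) of the Lagrangian $\mathcal{L}(\cdot,\b{\lambda}^*,\b{\nu}^*)$ of the original problem (\ref{equation_optimization_problem_interior_point_method}). By convexity, $\mathcal{L}(\cdot,\b{\lambda}^*,\b{\nu}^*)$ is convex in $\b{x}$, so a stationary point is a global minimizer (Lemma \ref{lemma_minimizer_gradient_zero}); hence $g(\b{\lambda}^*,\b{\nu}^*) = \mathcal{L}(\b{x}_t,\b{\lambda}^*,\b{\nu}^*)$ by the definition (\ref{equation_dual_function}) of the dual function.

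Next I would evaluate this dual value. Each term contributes $\lambda_i^* y_i(\b{x}_t) = -1/t$, the equality-constraint term vanishes because $\b{A}\b{x}_t=\b{b}$, and therefore $g(\b{\lambda}^*,\b{\nu}^*) = f(\b{x}_t) - m_1/t$. Weak duality (Lemma \ref{lemma_dual_function_lower_bound}) then gives $f(\b{x}_t) - m_1/t = g(\b{\lambda}^*,\b{\nu}^*) \le f^*$, which is the upper bound $f_r^* \le f^*$. For the lower bound, I note that $\b{x}_t$ is feasible for (\ref{equation_optimization_problem_interior_point_method}), since it satisfies the (strict) inequalities and $\b{A}\b{x}_t=\b{b}$; hence $f(\b{x}_t)\ge f^*$ by definition of the optimum, and subtracting $m_1/t$ yields $f_r^* = f(\b{x}_t)-m_1/t \ge f^*-m_1/t$. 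Combining the two bounds gives precisely (\ref{equation_sub_optimality_logBarrier}).

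The main obstacle is the algebraic identification in the second step: recognizing that the scaled gradient of the log-barrier term is exactly a nonnegative-multiplier combination of constraint gradients with $\lambda_i^*=-1/(t\,y_i(\b{x}_t))$, that these multipliers are automatically dual-feasible, and that they produce the complementary-slackness-like values $\lambda_i^* y_i(\b{x}_t)=-1/t$ that telescope to $-m_1/t$. Everything afterward (weak duality and feasibility) is routine. A secondary care point is justifying existence of the interior minimizer $\b{x}_t$ and that the stationary point of $\mathcal{L}(\cdot,\b{\lambda}^*,\b{\nu}^*)$ is a global rather than merely local minimum; both follow from convexity together with Slater's condition.
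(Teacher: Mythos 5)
Your proposal is correct, and its engine is the same as the paper's: both read off the multipliers $\lambda_i^* = -1/\big(t\,y_i(\b{x}_t)\big)$ from the stationarity condition of the barrier problem, observe that they are dual feasible because $y_i(\b{x}_t)<0$, evaluate the resulting dual function value as $f(\b{x}_t) - m_1/t$, and invoke weak duality. Where you genuinely diverge is in how the two inequalities of (\ref{equation_sub_optimality_logBarrier}) are assembled, and your assembly is the sound one. The paper takes $f_r^*$ literally as the optimal value of (\ref{equation_log_barrier_optimization_withLogBarrier}), log terms included; it then (i) silently identifies $f(\b{x}^*(t))$ with $f^*$ when computing the dual value, and (ii) obtains $f_r^* \leq f^*$ by discarding the term $-\frac{1}{t}\sum_{i}\log(-y_i(\b{x}^*))$, which is only legitimate when that term is nonpositive --- false in general (for minimizing $x$ subject to $-x \leq 0$, the barrier minimizer is $x_t = 1/t$ and the barrier optimal value is $(1+\log t)/t > 0 = f^*$ whenever $t > 1/e$, so the literal sandwich fails). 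You instead interpret $f_r^*$ as the dual value $f(\b{x}_t) - m_1/t$ --- equivalently, you prove the standard central-path bound $f^* \leq f(\b{x}_t) \leq f^* + m_1/t$ --- getting the upper bound from weak duality and the lower bound from primal feasibility of $\b{x}_t$, which is the only reading under which the theorem is true. Your version also fills a step the paper glosses over: you justify that the stationary point $\b{x}_t$ of the convex Lagrangian $\mathcal{L}(\cdot,\b{\lambda}^*,\b{\nu}^*)$ is its \emph{global} minimizer (convexity of $f$ and $y_i$ plus $\lambda_i^* \geq 0$), so the infimum defining $g(\b{\lambda}^*,\b{\nu}^*)$ is genuinely attained at $\b{x}_t$ rather than merely being stationary there.
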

\begin{proof}
Proof is available in Appendix \ref{app_sub_optimality_logBarrier}. 
\end{proof}

Theorem \ref{theorem_sub_optimality_logBarrier} indicates that by $t \rightarrow \infty$, the log-barrier method is more accurate; i.e., the solution of problem (\ref{equation_log_barrier_optimization_withLogBarrier}) is more accurately close to the solution of problem (\ref{equation_optimization_problem_interior_point_method}). 
This is expected because the approximation in Eq. (\ref{equation_log_barrier}) gets more accurate by increasing $t$. 
Note that by increasing $t$, optimization gets more accurate but harder to solve and slower to converge. 

\subsection{Wolfe Conditions and Line-Search in Newton's Method}\label{section_Wolfe_conditions}


In Sections \ref{section_GD_line_search} and \ref{section_GD_Armijo_line_search}, we introduced line-search for gradient descent. We have line-search for second-order optimization, too. 
Line-search for second-order optimization checks two conditions. These conditions are called the \textit{Wolfe conditions} \cite{wolfe1969convergence}. 
For finding the suitable step size $\eta^{(k)}$, at iteration $k$ of optimization, the Wolfe conditions are checked.
Here, we do not include the step size $\eta$ in $\b{p} = \Delta\b{x}$ and the step at iteration $k$ is $\b{p}^{(k)} = - \nabla^2 f(\b{x}^{(k)})^{-1} \nabla f(\b{x}^{(k)})$ according to Eq. (\ref{equation_Newton_method_step}). 
The Wolfe conditions are:
\begin{align}
& f(\b{x}^{(k)} + \eta^{(k)} \b{p}^{(k)}) \leq f(\b{x}^{(k)}) +  c_1 \eta^{(k)} \b{p}^{(k)\top} f(\b{x}^{(k)}), \\
& -\b{p}^{(k)\top} \nabla f(\b{x}^{(k)} + \eta^{(k)} \b{p}^{(k)}) \leq -c_2 \b{p}^{(k)\top} f(\b{x}^{(k)}),
\end{align}
where $0 < c_1 < c_2 < 1$ are the parameters of Wolfe conditions. It is recommended in \cite{nocedal2006numerical} to have $c_1 = 10^{-4}$ and $c_2 = 0.9$. 
The first condition is the \textit{Armijo} condition \cite{armijo1966minimization} which ensures the step size $\eta^{(k)}$ decreases the function value sufficiently. The second condition is the \textit{curvature condition} which ensures the step size $\eta^{(k)}$ decreases the function slope sufficiently. 
In quasi-Newton's method (introduced later in Section \ref{section_quasi_Newton_method}), the curvature condition makes sure the approximation of Hessian matrix remains positive definite. 
The Armijo and curvature conditions give an upper-bound and lower-bound on the step size, respectively.
There also exists a \textit{strong curvature condition}:
\begin{align}
& |\b{p}^{(k)\top} \nabla f(\b{x}^{(k)} + \eta^{(k)} \b{p}^{(k)})| \leq c_2 |\b{p}^{(k)\top} f(\b{x}^{(k)})|,
\end{align}
which can be used instead of the curvature condition.
Note that the Wolfe conditions can also be used for line-search in first-order methods. 

\subsection{Fast Solving System of Equations in Newton's Method}

In unconstrained Newton's method, the update of solution which is Eq. (\ref{equation_Newton_method_step_2}) is in the form of a system of linear equations. 
In constrained Newton's method (and hence, in the interior-point method), the update of solution which is Eq. (\ref{equation_Newton_method_equality_constraints_solution}) is also in the form of a system of linear equations. 
Therefore, every iteration of all optimization methods is reduced to a system of equations such as:
\begin{align}\label{equation_system_of_equation_Newton_method}
\b{M z} = \b{q},
\end{align}
where we need to calculate $\b{z}$.
Therefore, the optimization toolboxes, such as CVX \cite{grant2009cvx}, solve a system of equations to find the solution at every iteration. 
If the dimensionality of data or the number of constraints is large, the number of equations and the size of matrices in the system of equations increase. Solving the large system of equations is very time-consuming. Hence, some methods have been developed to accelerate solving the system of equations. Here, we review some of these methods. 

\subsubsection{Decomposition Methods}


We can use various matrix decomposition/factorization methods \cite{golub2013matrix} for decomposing the coefficient matrix $\b{M}$ and solving the system of equations (\ref{equation_system_of_equation_Newton_method}). We review some of them here. 

\hfill\break
\textbf{-- LU decomposition:}
We use LU decomposition to decompose $\b{M} = \b{PLU}$. We have:
\begin{align*}
\b{M z} = \b{P}\underbrace{\b{L}\underbrace{\b{U}\b{z}}_{=\b{w}_2}}_{=\b{w}_1} = \b{q},
\end{align*}
where we define $\b{w}_2 := \b{U z}$ and $\b{w}_1 := \b{L} \b{w}_2$. 
Hence, we can solve the system of equations as:
\begin{enumerate}
\item LU decomposition: $\b{M} = \b{PLU}$
\item permutation: Solve $\b{P}\b{w}_1 = \b{q}$ to find $\b{w}_1$
\item forward subtraction: Solve $\b{L}\b{w}_2 = \b{w}_1$ to find $\b{w}_2$
\item backward subtraction: Solve $\b{U}\b{z} = \b{w}_2$ to find $\b{z}$
\end{enumerate}

\hfill\break
\textbf{-- Cholesky decomposition:}
In most cases of optimization, the coefficient matrix is positive definite. for example, in Eq. (\ref{equation_Newton_method_step_2}), the coefficient matrix is the Hessian which is positive definite. 
Therefore, we can use Cholesky decomposition to decompose $\b{M} = \b{L}\b{L}^\top$. We have:
\begin{align*}
\b{M z} = \b{L}\underbrace{\b{L}^\top \b{z}}_{=\b{w}_1} = \b{q},
\end{align*}
where we define $\b{w}_1 := \b{L}^\top \b{z}$. 
Hence, we can solve the system of equations as:
\begin{enumerate}
\item Cholesky decomposition: $\b{M} = \b{L}\b{L}^\top$
\item forward subtraction: Solve $\b{L}\b{w}_1 = \b{q}$ to find $\b{w}_1$
\item backward subtraction: Solve $\b{L}^\top \b{z} = \b{w}_1$ to find $\b{z}$
\end{enumerate}

\hfill\break
\textbf{-- Schur complement:}
Assume the system of equations can be divided into blocks:
\begin{align*}
\b{Mz} =
\begin{bmatrix}
\b{M}_{11} & \b{M}_{12} \\
\b{M}_{21} & \b{M}_{22}
\end{bmatrix}
\begin{bmatrix}
\b{z}_1 \\
\b{z}_2
\end{bmatrix} 
=
\begin{bmatrix}
\b{q}_1 \\
\b{q}_2
\end{bmatrix}.
\end{align*}
From this, we have:
\begin{align*}
& \b{M}_{11} \b{z}_1 + \b{M}_{12} \b{z}_2 = \b{q}_1 \implies \b{z}_1 = \b{M}_{11}^{-1} (\b{q}_1 - \b{M}_{12} \b{z}_2). \\
& \b{M}_{21} \b{z}_1 + \b{M}_{22} \b{z}_2 = \b{q}_2 \\
&\implies \b{M}_{21} (\b{M}_{11}^{-1} (\b{q}_1 - \b{M}_{12} \b{z}_2)) + \b{M}_{22} \b{z}_2 = \b{q}_2 \\
& \implies (\b{M}_{22} - \b{M}_{21}\b{M}_{11}^{-1}\b{M}_{12}) \b{z}_2 = \b{q}_2 - \b{M}_{21} \b{M}_{11}^{-1} \b{q}_1.
\end{align*}
The term $(\b{M}_{22} - \b{M}_{21}\b{M}_{11}^{-1}\b{M}_{12})$ is the Schur complement \cite{zhang2006schur} of block matrix $\b{M}_{11}$ in matrix $\b{M}$.
We assume that the block matrix $\b{M}_{11}$ is not singular so its inverse exists. 
We use the Schur complement to solve the system of equations as:
\begin{enumerate}
\item Calculate $\b{M}_{11}^{-1} \b{M}_{12}$ and $\b{M}_{11}^{-1} \b{q}$
\item Calculate $\widetilde{\b{M}} := \b{M}_{22} - \b{M}_{21}(\b{M}_{11}^{-1}\b{M}_{12})$ and $\widetilde{\b{q}} := \b{q}_2 - \b{M}_{21} (\b{M}_{11}^{-1} \b{q}_1)$
\item Solve $\widetilde{\b{M}} \b{z}_2 = \widetilde{\b{q}}$ (as derived above) to find $\b{z}_2$
\item Solve $\b{M}_{11} \b{z}_1 = \b{q}_1 - \b{M}_{12} \b{z}_2$ (as derived above) to find $\b{z}_1$
\end{enumerate}

\subsubsection{Conjugate Gradient Method}


The \textit{conjugate gradient} method, proposed in \cite{hestenes1952methods}, iteratively solves Eq. (\ref{equation_system_of_equation_Newton_method}) faster than the regular solution. Its pacing shows off better when the matrices are very large. 
A good book on conjugate gradient is {\citep[Chapter 2]{kelley1995iterative}}.
It is noteworthy that \textit{truncated Newton's methods} \cite{nash2000survey}, which approximate the Hessian for large-scale optimization, usually use conjugate gradient as their approximation method for calculating the search direction. 

\begin{definition}[Conjugate vectors]
Two non-zero vectors $\b{x}$ and $\b{y}$ are conjugate if $\b{x}^\top \b{M} \b{y} = \b{0}$ where $\b{M} \succ \b{0}$.
\end{definition}

\begin{definition}[Krylov subspace \cite{krylov1931numerical}]
The order-$r$ Krylov subspace, denoted by $\mathcal{K}_r$, is spanned by the following bases:
\begin{align}
\mathcal{K}_r(\b{M},\b{q}) = \textbf{span}\{\b{q}, \b{Mq}, \b{M}^2\b{q}, \dots, \b{M}^{r-1}\b{q}\}.
\end{align}
\end{definition}
The solution to Eq. (\ref{equation_system_of_equation_Newton_method}) should satisfy $\b{z} = \b{M}^{-1}\b{q}$. Therefore, the solution to Eq. (\ref{equation_system_of_equation_Newton_method}) lies in the Krylov subspace. The conjugate gradient method approximates this solution lying in the Krylov subspace. Every iteration of conjugate gradient can be seen as projection onto the Krylov subspace.

According to Eq. (\ref{equation_first_order_optimality_condition}), the solution to Eq. (\ref{equation_system_of_equation_Newton_method}) minimizes the function:
\begin{align*}
f(\b{z}) = \frac{1}{2} \b{z}^\top \b{M} \b{z} - \b{z}^\top \b{q},
\end{align*}
because Eq. (\ref{equation_system_of_equation_Newton_method}) is the gradient of this function, i.e., $\nabla f(\b{z}) = \b{Mz} - \b{q}$.
conjugate gradient iteratively solves Eq. (\ref{equation_system_of_equation_Newton_method}) as:
\begin{align}\label{equation_conjugate_gradient_update}
& \b{z}^{(k+1)} := \b{z}^{(k)} + \eta^{(k)} \b{p}^{(k)}.
\end{align}
It starts by moving toward minus gradient as gradient descent does. Then, it uses the conjugate of gradient. This is the reason for the name of this method. 

At iteration $k$, the residual (error) for fulfilling  Eq. (\ref{equation_system_of_equation_Newton_method}) is:
\begin{align}\label{equation_conjugate_gradient_residual}
\b{r}^{(k)} := \b{q} - \b{M}\b{z}^{(k)} = -\nabla f(\b{z}^{(k)}).
\end{align}
We also have:
\begin{align*}
&\b{r}^{(k+1)} - \b{r}^{(k)} \overset{(\ref{equation_conjugate_gradient_residual})}{=} \b{q} - \b{M}\b{z}^{(k+1)} - \b{q} + \b{M}\b{z}^{(k)} \\
&\overset{(\ref{equation_conjugate_gradient_update})}{=} \b{M}(-\b{z}^{(k)} - \eta^{(k)} \b{p}^{(k)} + \b{z}^{(k)}) = -\eta^{(k)} \b{M} \b{p}^{(k)}.
\end{align*}
Initially, the direction is this residual, $\b{p}^{(0)} = \b{r}^{(0)}$ as in gradient descent (see Eq. (\ref{equation_GD_step_by_eta})).
If we take derivative of $f(\b{z}^{(k+1)}) \overset{(\ref{equation_conjugate_gradient_update})}{=} f(\b{z}^{(k)} + \eta^{(k)} \b{p}^{(k)})$ w.r.t. $\eta^{(k)}$, we have:
\begin{align*}
&\frac{\partial }{\partial \eta^{(k)}} f(\b{z}^{(k)} + \eta^{(k)} \b{p}^{(k)}) \overset{\text{set}}{=} 0 \\
&\implies \eta^{(k)} = \frac{\b{p}^{(k)\top} (\b{q} - \b{M} \b{x}^{(k)})}{\b{p}^{(k)\top} \b{M} \b{p}^{(k)}} \overset{(\ref{equation_conjugate_gradient_residual})}{=} \frac{\b{p}^{(k)\top} \b{r}^{(k)}}{\b{p}^{(k)\top} \b{M} \b{p}^{(k)}}.
\end{align*}
The conjugate gradient method is shown in Algorithm \ref{algorithm_conjugate_gradient}. As this algorithm shows, the direction of update $\b{p}$ is found by a linear combination of the residual (which was initialized by negative gradient as in gradient descent) and the previous direction. The weight of previous direction in this linear combination is $\beta$ which gets smaller if the residual of this step is much smaller than the residual of previous iteration. This formula for $\beta = (\b{r}^{(k+1)\top} \b{r}^{(k+1)}) / (\b{r}^{(k)\top} \b{r}^{(k)})$ is also used in the Fletcher-Reeves nonlinear conjugate gradient method \cite{fletcher1964function}, introduced in the next section.
The conjugate gradient method returns a $\b{z}$ as an approximation to the solution to Eq. (\ref{equation_system_of_equation_Newton_method}). 

\SetAlCapSkip{0.5em}
\IncMargin{0.8em}
\begin{algorithm2e}[!t]
\DontPrintSemicolon
    Initialize: $\b{z}^{(0)}$\;
    $\b{r}^{(0)} := -\nabla f(\b{z}^{(0)}) = \b{q} - \b{M}\b{z}^{(0)}$, $\b{p}^{(0)} := \b{r}^{(0)}$\;
    \For{iteration $k = 0, 1, \dots$}{
        $\eta^{(k)} = \frac{\b{p}^{(k)\top} \b{r}^{(k)}}{\b{p}^{(k)\top} \b{M} \b{p}^{(k)}}$\;
        $\b{z}^{(k+1)} := \b{z}^{(k)} + \eta^{(k)} \b{p}^{(k)}$\;
        $\b{r}^{(k+1)} := \b{r}^{(k)} - \eta^{(k)} \b{M} \b{p}^{(k)}$\;
        \If{$\|\b{r}^{(k+1)}\|_2$ is small}{
            \textbf{Break} the loop.\;
        }
        $\beta^{(k+1)} := \frac{\b{r}^{(k+1)\top} \b{r}^{(k+1)}}{\b{r}^{(k)\top} \b{r}^{(k)}} = \frac{\|\b{r}^{(k+1)}\|_2^2}{\|\b{r}^{(k)}\|_2^2}$\;
        $\b{p}^{(k+1)} := \b{r}^{(k+1)} + \beta^{(k+1)} \b{p}^{(k)}$
    }
    \textbf{Return} $\b{z}^{(k+1)}$
\caption{The conjugate gradient method to solve Eq. (\ref{equation_system_of_equation_Newton_method}).}\label{algorithm_conjugate_gradient}
\end{algorithm2e}
\DecMargin{0.8em}

\subsubsection{Nonlinear Conjugate Gradient Method}


As we saw, conjugate gradient is for solving linear equations. 
Nonlinear Conjugate Gradient (NCG) generalizes conjugate gradient to nonlinear functions. 
Recall that conjugate gradient solves Eq. (\ref{equation_system_of_equation_Newton_method}):
\begin{align*}
\b{M z} = \b{q} &\implies \b{M}^\top\b{M z} = \b{M}^\top\b{q} \\
&\implies 2\b{M}^\top(\b{M z} - \b{q}) = \b{0}.
\end{align*}
The goal of NCG is to find the minimum of a quadratic function:
\begin{align}\label{equation_NCG_function}
f(\b{z}) = \|\b{M z} - \b{q}\|_2^2,
\end{align}
using its gradient. The gradient of this function is $\nabla f(\b{z}) = 2\b{M}^\top(\b{M z} - \b{q})$ which was found above. 
The NCG method is shown in Algorithm \ref{algorithm_nonlinear_conjugate_gradient} which is very similar to Algorithm \ref{algorithm_conjugate_gradient}. It uses steepest descent for updating the solution (see Eq. (\ref{algorithm_conjugate_gradient})). 
The direction for update is found by a linear combination of the residual, initialized by negative gradient as in gradient descent, and the direction of previous iteration.
Several formulas exist for the weight $\beta$ for the previous direction in the linear combination:
\begin{equation}\label{equation_NCG_beta}
\begin{aligned}
& \beta_1^{(k+1)} := \frac{\b{r}^{(k+1)\top} \b{r}^{(k+1)}}{\b{r}^{(k)\top} \b{r}^{(k)}} = \frac{\|\b{r}^{(k+1)}\|_2^2}{\|\b{r}^{(k)}\|_2^2}, \\
& \beta_2^{(k+1)} := \frac{\b{r}^{(k+1)\top} (\b{r}^{(k+1)} - \b{r}^{(k)})}{\b{r}^{(k)\top} \b{r}^{(k)}}, \\
& \beta_3^{(k+1)} := -\frac{\b{r}^{(k+1)\top} (\b{r}^{(k+1)} - \b{r}^{(k)})}{\b{p}^{(k)\top} (\b{r}^{(k+1)} - \b{r}^{(k)})}, \\
& \beta_4^{(k+1)} := -\frac{\b{r}^{(k)\top} \b{r}^{(k)}}{\b{p}^{(k)\top} (\b{r}^{(k+1)} - \b{r}^{(k)})}.
\end{aligned}
\end{equation}
The $\beta_1$, $\beta_2$, $\beta_3$, and $\beta_4$ are the formulas for Fletcher-Reeves \cite{fletcher1964function}, Polak-Ribi{\`e}re \cite{polak1969note}, Hestenes-Stiefel \cite{hestenes1952methods}, and Dai-Yuan \cite{dai1999nonlinear} methods, respectively. 
In all these formulas, $\beta$ gets smaller if the residual of next iteration gets much smaller than the previous residual. 
The NCG method returns a $\b{z}$ as an approximation to the minimizer of the nonlinear function in Eq. (\ref{equation_NCG_function}). 

\SetAlCapSkip{0.5em}
\IncMargin{0.8em}
\begin{algorithm2e}[!t]
\DontPrintSemicolon
    Initialize: $\b{z}^{(0)}$\;
    $\b{r}^{(0)} := -\nabla f(\b{z}^{(0)})$, $\b{p}^{(0)} := \b{r}^{(0)}$\;
    \For{iteration $k = 0, 1, \dots$}{
        $\b{r}^{(k+1)} := -\nabla f(\b{z}^{(k+1)})$\;
        Compute $\beta^{(k+1)}$ by one of Eqs. (\ref{equation_NCG_beta})\;
        $\b{p}^{(k+1)} := \b{r}^{(k+1)} + \beta^{(k+1)} \b{p}^{(k)}$\;
        $\eta^{(k+1)} := \arg\min_{\eta} f(\b{z}^{(k+1)} + \eta\, \b{p}^{(k+1)})$\;
        $\b{z}^{(k+1)} := \b{z}^{(k)} + \eta^{(k+1)} \b{p}^{(k+1)}$\;
    }
    \textbf{Return} $\b{z}^{(k+1)}$
\caption{The NCG method to find the minimum of Eq. (\ref{equation_NCG_function}).}\label{algorithm_nonlinear_conjugate_gradient}
\end{algorithm2e}
\DecMargin{0.8em}

\subsection{Quasi-Newton's Methods}\label{section_quasi_Newton_method}


\subsubsection{Hessian Approximation}

Similar to what we did in Eq. (\ref{equation_Newton_method_Taylor_expansion}), we can approximate the function at the updated solution by its second-order Taylor series:
\begin{align*}
&f(\b{x}^{(k+1)}) = f(\b{x}^{(k)} + \b{p}^{(k)}) \\
&\approx f(\b{x}^{(k)}) + \nabla f(\b{x}^{(k)})^\top \b{p}^{(k)} + \frac{1}{2} \b{p}^{(k)\top} \b{B}^{(k)} \b{p}^{(k)}. 
\end{align*}
where $\b{p} = \Delta\b{x}$ is the step size and $\b{B}^{(k)} = \nabla^2 f(\b{x}^{(k)})$ is the Hessian matrix at iteration $k$. 
Taking derivative from this equation w.r.t. $\b{p}$ gives:
\begin{align}
\nabla f(\b{x}^{(k)} + \b{p}^{(k)}) \approx \nabla f(\b{x}^{(k)}) + \b{B}^{(k)} \b{p}^{(k)}.
\end{align}
This equation is called the \textit{secant equation}. Setting this derivative to zero, for optimization, gives:
\begin{align}\label{equation_quasi_Newton_method_step}
\b{B}^{(k)} \b{p}^{(k)} = -\nabla f(\b{x}^{(k)}) \implies \b{p}^{(k)} = - \b{H}^{(k)} \nabla f(\b{x}^{(k)}), 
\end{align}
where $\b{H}^{(k)} := (\b{B}^{(k)})^{-1}$ is the inverse of Hessian matrix. 
This equation is the previously found Eqs. (\ref{equation_Newton_method_step}) and (\ref{equation_Newton_method_step_2}).
Note that although the letter H seems to be used for Hessian, literature uses $\b{H}$ to denote the (approximation of) inverse of Hessian. 
Considering the step size, we can write the step $\b{s}^{(k)}$ as:
\begin{equation}
\begin{aligned}
\mathbb{R}^d \ni \b{s}^{(k)} := \Delta \b{x} &= \b{x}^{(k+1)} - \b{x}^{(k)} \\
&= - \eta^{(k)} \b{H}^{(k)} \nabla f(\b{x}^{(k)}),
\end{aligned}
\end{equation}
where the step size is found by the Wolfe conditions in line-search (see Section \ref{section_Wolfe_conditions}).

Computation of the Hessian matrix or its inverse is usually expensive in Newton's method. 
One way to approximate the Newton's solution at every iteration is the conjugate gradient method, which was introduced before. Another approach for approximating the solution is the \textit{quasi-Newton's methods} which approximate the (inverse) Hessian matrix.
The quasi-Newton's methods approximate Hessian based on the step $\b{p}^{(k)}$, the difference of gradients between iterations:
\begin{align}
\mathbb{R}^d \ni \b{y}^{(k)} := \nabla f(\b{x}^{(k+1)}) - \nabla f(\b{x}^{(k)}),
\end{align}
and the previous approximated Hessian $\b{B}^{(k)}$ or its inverse $\b{H}^{(k)}$.

Some papers approximate the Hessian by a diagonal matrix {\citep[Appendix C.1]{lee2007nonlinear}}, \cite{andrei2019diagonal}. In this situation, the inverse of approximated Hessian is simply the inverse of its diagonal elements. 
Some methods use a dense approximation for Hessian matrix. Examples for these are BFGS, DFP, Broyden, and SR1 methods, which will be introduced in the following. 
Some other methods, such as LBFGS introduced later, approximate the Hessian matrix only by a scalar. 

\subsubsection{Quasi-Newton's Algorithms}


The most well-known algorithm for quasi-Newton's method is \textit{Broyden-Fletcher-Goldfarb-Shanno (BFGS)} \cite{fletcher1987practical,dennis1996numerical}. \textit{Limited-memory BFGS (LBFGS)} \cite{nocedal1980updating,liu1989limited} is a simplified version of BFGS which utilizes less memory. 
Some other quasi-Newton's methods are \textit{Davidon-Fletcher-Powell (DFP)} \cite{davidon1991variable,fletcher1987practical}, \textit{Broyden method} \cite{broyden1965class}, and \textit{Symmetric Rank-one (SR1)} \cite{conn1991convergence}.
In the following, we review the approximations of Hessian matrix and its inverse by different methods.
More explanation on these methods can be found in {\citep[Chapter 6]{nocedal2006numerical}}.

We define:
\begin{align}
& \mathbb{R} \ni \rho^{(k)} := \frac{1}{\b{y}^{(k)\top} \b{s}^{(k)}}, \\
& \mathbb{R}^{d \times d} \ni \b{V}^{(k)} := \b{I} - \rho^{(k)} \b{y}^{(k)} \b{s}^{(k)\top} = \b{I} - \frac{\b{y}^{(k)} \b{s}^{(k)\top}}{\b{y}^{(k)\top} \b{s}^{(k)}},
\end{align}
where $\b{I}$ is the identity matrix. 

\textbf{-- BFGS: }
The approximations in BFGS method are:
\begin{equation}\label{equation_BFGS_Hessian}
\begin{aligned}
& \b{B}^{(k+1)} := \b{B}^{(k)} + \rho^{(k)} \b{y}^{(k)} \b{y}^{(k)\top} \\
&~~~~~~~~~~~~~~~~~~~~~~~~~~~~ - \frac{\b{B}^{(k)} \b{s}^{(k)} \b{s}^{(k)\top} \b{B}^{(k)\top}}{\b{s}^{(k)\top} \b{B}^{(k)} \b{s}^{(k)}}, \\
& \b{H}^{(k+1)} := \b{V}^{(k)\top} \b{H}^{(k)} \b{V}^{(k)} + \rho^{(k)} \b{s}^{(k)} \b{s}^{(k)\top}. 
\end{aligned}
\end{equation}

\textbf{-- DFP: }
The approximations in DFP method are:
\begin{equation}\label{equation_DFP_Hessian}
\begin{aligned}
& \b{B}^{(k+1)} := \b{V}^{(k)} \b{B}^{(k)} \b{V}^{(k)\top} + \rho^{(k)} \b{y}^{(k)} \b{y}^{(k)\top}, \\
& \b{H}^{(k+1)} := \b{H}^{(k)} + \rho^{(k)} \b{s}^{(k)} \b{s}^{(k)\top} \\
&~~~~~~~~~~~~~~~~~~~~~~~~~~~~ - \frac{\b{H}^{(k)} \b{y}^{(k)} \b{y}^{(k)\top} \b{H}^{(k)\top}}{\b{y}^{(k)\top} \b{H}^{(k)} \b{y}^{(k)}}.
\end{aligned}
\end{equation}

\textbf{-- Broyden: }
The approximations in Broyden method are:
\begin{equation}
\begin{aligned}
& \b{B}^{(k+1)} := \b{B}^{(k)} + \frac{(\b{y}^{(k)} - \b{B}^{(k)} \b{s}^{(k)})\, \b{s}^{(k)\top}}{\b{s}^{(k)\top} \b{s}^{(k)}}, \\
& \b{H}^{(k+1)} := \b{H}^{(k)} + \frac{(\b{s}^{(k)} - \b{H}^{(k)} \b{y}^{(k)}) \b{s}^{(k)\top} \b{H}^{(k)}}{\b{s}^{(k)\top} \b{H}^{(k)} \b{y}^{(k)}}.
\end{aligned}
\end{equation}

\textbf{-- SR1: }
The approximations in SR1 method are:
\begin{equation}
\begin{aligned}
& \b{B}^{(k+1)} := \b{B}^{(k)} + \frac{(\b{y}^{(k)} - \b{B}^{(k)} \b{s}^{(k)}) (\b{y}^{(k)} - \b{B}^{(k)} \b{s}^{(k)})^\top}{(\b{y}^{(k)} - \b{B}^{(k)} \b{s}^{(k)})^\top \b{s}^{(k)}}, \\
& \b{H}^{(k+1)} := \b{H}^{(k)} + \frac{(\b{s}^{(k)} - \b{H}^{(k)} \b{y}^{(k)}) (\b{s}^{(k)} - \b{H}^{(k)} \b{y}^{(k)})^\top}{(\b{s}^{(k)} - \b{H}^{(k)} \b{y}^{(k)})^\top \b{y}^{(k)}}.
\end{aligned}
\end{equation}
Comparing Eqs. (\ref{equation_BFGS_Hessian}) and (\ref{equation_DFP_Hessian}) shows that the BFGS and DFP methods are dual of each other. Experiments have shown that BFGS often outperforms DFP \cite{avriel2003nonlinear}.

\SetAlCapSkip{0.5em}
\IncMargin{0.8em}
\begin{algorithm2e}[!t]
\DontPrintSemicolon
    Initialize the solution $\b{x}^{(0)}$\;
    $\b{H}^{(0)} := \frac{1}{\|\nabla f(\b{x}^{(0)})\|_2} \b{I}$\;
    \For{$k = 0,1,\dots$ (until convergence)}{
        $\b{p}^{(k)} \gets \text{GetDirection}(-\nabla f(\b{x}^{(k)}), k, 1)$\;
        $\eta^{(k)} \gets $ Line-search with Wolfe conditions\;
        $\b{x}^{(k+1)} := \b{x}^{(k)} - \eta^{(k)} \b{p}^{(k)}$\;
        $\b{s}^{(k)} := \b{x}^{(k+1)} - \b{x}^{(k)} = \eta^{(k)} \b{p}^{(k)}$\;
        $\b{y}^{(k)} := \nabla f(\b{x}^{(k+1)}) - \nabla f(\b{x}^{(k)})$\;
        $\gamma^{(k+1)} := \frac{\b{s}^{(k)\top} \b{y}^{(k)}}{\b{y}^{(k)\top} \b{y}^{(k)}}$\;
        $\b{H}^{(k+1)} := \gamma^{(k+1)} \b{I}$\;
        Store $\b{y}^{(k)}$, $\b{s}^{(k)}$, and $\b{H}^{(k+1)}$\;
    }
    \textbf{return} $\b{x}^{(k+1)}$\;
    \;
    // recursive function:\;
    \textbf{Function} $\text{GetDirection}(\b{p}, k, n\_{\text{recursion}})$\;
    \uIf{$k > 0$}{
        // do up to $m$ recursions: \\
        \If{$n\_{\text{recursion}} > m$}{
            \textbf{return} $\b{p}$\;  
        }
        $\rho^{(k-1)} := \frac{1}{\b{y}^{(k-1)\top} \b{s}^{(k-1)}}$\;
        $\b{\tilde{p}} := \b{p} - \rho^{(k-1)} (\b{s}^{(k-1)\top} \b{p}) \b{y}^{(k-1)}$\;
        $\b{\widehat{p}} := \text{GetDirection}(\b{\tilde{p}}, k-1, n\_{\text{recursion}}+1)$\;
        \textbf{return} $\b{\widehat{p}} - \rho^{(k-1)} (\b{y}^{(k-1)\top} \b{\widehat{p}}) \b{s}^{(k-1)} + \rho^{(k-1)} (\b{s}^{(k-1)\top} \b{s}^{(k-1)}) \b{p}$\;
    }
    \Else{
        \textbf{return} $\b{H}^{(0)} \b{p}$\;
    }
\caption{The LBFGS algorithm}\label{algorithm_LBFGS}
\end{algorithm2e}
\DecMargin{0.8em}

\hfill\break
\textbf{-- LBFGS: }
The above methods, including BFGS, approximate the inverse Hessian matrix by a dense $(d \times d)$ matrix. For large $d$, storing this matrix is very memory-consuming. Hence, LBFGS \cite{nocedal1980updating,liu1989limited} was proposed which uses much less memory than BFGS. 
In LBFGS, the inverse of Hessian is a scalar multiplied to identity matrix, i.e., $\b{H}^{(0)} := \gamma^{(k)} \b{I}$; therefore, it approximates the $(d \times d)$ matrix with a scalar. It uses a memory of $m$ previous variables and recursively calculates the updating direction of solution. 
In other words, it has recursive unfoldings which approximate the descent directions in optimization. 
The number of recursions is a small integer, for example $m = 10$; hence, not much memory is used. By $m$ times recursion on Eq. (\ref{equation_BFGS_Hessian}), LBFGS approximates the inverse Hessian as {\citep[Algorithm 2.1]{liu1989limited}}:
\begin{align*}
& \b{H}^{(k+1)} := (\b{V}^{(k)\top} \dots \b{V}^{(k-m)\top}) \b{H}^{(0)} (\b{V}^{(k-m)} \dots \b{V}^{(k)}) \\
& + \rho^{(k-m)} (\b{V}^{(k)\top} \dots \b{V}^{(k-m+1)\top}) \b{s}^{(k-m)} \b{s}^{(k-m)\top} \\ 
&~~~~~~~~~~~~~~~~~~~~~~~~~~~~~~~~~~~~~~~~~~~~~~~~~~ (\b{V}^{(k-m+1)} \dots \b{V}^{(k)}) \\
& + \rho^{(k-m+1)} (\b{V}^{(k)\top} \dots \b{V}^{(k-m+2)\top}) \b{s}^{(k-m+1)} \b{s}^{(k-m+1)\top} \\ 
&~~~~~~~~~~~~~~~~~~~~~~ (\b{V}^{(k-m+2)} \dots \b{V}^{(k)}) + \dots + \rho^{(k)} \b{s}^{(k)} \b{s}^{(k)\top}. 
\end{align*}
The LBFGS algorithm can be implemented as shown in Algorithm \ref{algorithm_LBFGS} \cite{hosseini2020alternative} which is based on {\citep[Chapter 6]{nocedal2006numerical}}. As this algorithm shows, every iteration of optimization calls a recursive function for up to $m$ recursions and uses the stored previous $m$ memory to calculate the direction $\b{p}$ for updating the solution. As Eq. (\ref{equation_quasi_Newton_method_step}) states, the initial updating direction is the Newton's method direction, which is $\b{p} = - \b{H}^{(0)} \nabla f(\b{x}^{0})$.

\section{Non-convex Optimization by Sequential Convex Programming} 

Consider the optimization problem (\ref{equation_optimization_problem}) where the functions $f(.)$, $y_i(.)$, and $h_i(.)$ are not necessarily convex. 
The explained methods can also work for non-convex problems but they do not guarantee to find the global optimum. 
They can find local minimizers which depend on the random initial solution. 
For example, the optimization landscape of neural network is highly nonlinear but backpropagation (see Section \ref{section_backpropagation}) works very well for it. The reason for this is explained in this way: every layer of neural network pulls data to the feature space such as in kernels \cite{ghojogh2021reproducing}. In the high-dimensional feature space, all local minimizers are almost global minimizers because the local minimum values are almost equal in that space \cite{feizi2017porcupine}. Also see \cite{soltanolkotabi2018theoretical,allen2019learning,allen2019convergence} to understand why backpropagation optimization works well even in highly non-convex optimization. 
Note that another approach for highly non-convex optimization is metaheuristic optimization which will be briefly introduced in Section \ref{section_metaheuristic_optimization}.

As was explained, the already introduced first-order and second-order optimization methods can work fairly well for non-convex problems by finding local minimizers depending on the initial solution. However, there exist some specific methods for non-convex programming, divided into two categories. The \textit{local optimization methods} are faster but do not guarantee to find the global minimizer. The \textit{global optimization methods} find the global minimizer but are usually slow to find the answer \cite{duchi2018sequential}. \textit{Sequential Convex Programming (SCP)} \cite{dinh2010local} is an example for local optimization methods. It is based on a sequence of convex approximations of the non-convex problem. It is closely related to \textit{Sequential Quadratic Programming (SQP)} \cite{boggs1995sequential} which is used for constrained nonlinear optimization.
\textit{Branch and bound method}, first proposed in \cite{land1960automatic}, is an example for the global optimization methods. It divides the optimization landscape, i.e. the feasible set, into local parts by a binary tree and solves optimization in every part. It checks whether the solution of a part is the global solution or not. 
In the following, we explain SCP which is a faster but local method. 

\subsection{Convex Approximation}

SCP iteratively solves a convex problem where, at every iteration, it approximates the non-convex problem (\ref{equation_optimization_problem}) with a convex problem, based on the current solution, and restricts the variable to be in a so-called \textit{trust region} \cite{conn2000trust}. The trust region makes sure that the variable stays in a locally convex region of the optimization problem. At the iteration $k$ of SCP, we solve the following convex problem:
\begin{equation}\label{equation_optimization_problem_SCP}
\begin{aligned}
& \underset{\b{x}}{\text{minimize}}
& & \widehat{f}(\b{x}) \\
& \text{subject to}
& & \widehat{y}_i(\b{x}) \leq 0, \; i \in \{1, \ldots, m_1\}, \\
& & & \widehat{h}_i(\b{x}) = 0, \; i \in \{1, \ldots, m_2\}, \\
& & & \b{x} \in \mathcal{T}^{(k)},
\end{aligned}
\end{equation}
where $\widehat{f}(.)$, $\widehat{y}_i(.)$, and $\widehat{h}_i(.)$, are convex approximations of functions $f(.)$, $y_i(.)$, and $h_i(.)$, and $\mathcal{T}^{(k)}$ is the trust region at iteration $k$. 
This approximated convex problem is also solved iteratively itself using one of the previously introduced methods such as the interior-point method. 
There exist several approaches for convex approximation of the functions. In the following, we introduce some of these approaches. 

\subsubsection{Convex Approximation by Taylor Series Expansion}

The non-convex functions $f(.)$, $y_i(.)$, and $h_i(.)$ can be approximated by affine functions (i.e., first-order Taylor series expansion) to become convex. For example, the function $f(.)$ is approximated as:
\begin{align}
\widehat{f}(\b{x}) = {f}(\b{x}^{(k)}) + \nabla f(\b{x}^{(k)})^\top (\b{x} - \b{x}^{(k)}).
\end{align}
The functions can also be approximated by quadratic functions (i.e., second-order Taylor series expansion) to become convex. For example, the function $f(.)$ is approximated as:
\begin{align}
\widehat{f}(\b{x}) = {f}(\b{x}^{(k)}) &+ \nabla f(\b{x}^{(k)})^\top (\b{x} - \b{x}^{(k)}) \nonumber\\
&+ \frac{1}{2} (\b{x} - \b{x}^{(k)})^\top \b{P} (\b{x} - \b{x}^{(k)}),
\end{align}
where $\b{P} = \Pi_{\mathbb{S}_{+}^d}(\nabla^2 f(\b{x}^{(k)}))$ is projection of Hessian onto the symmetric positive semi-definite cone. This projection is performed by setting the negative eigenvalues of Hessian to zero. 
The same approaches can be used for approximation of functions $y_i(.)$ and $h_i(.)$ using first- or second-order Taylor expansion. 

\subsubsection{Convex Approximation by Particle Method}

We can approximate the functions $f(.)$, $y_i(.)$, and $h_i(.)$ in the domain of trust region using regression. This approach is named the particle method \cite{duchi2018sequential}. Let $\{\b{x}_i \in \mathcal{T}^{(k)}\}_{i=1}^m$ be $m$ points which lie in the trust region. 
We can use  least-squares quadratic regression to make the functions convex in the trust region:
\begin{equation}
\begin{aligned}
& \underset{a \in \mathbb{R}, \b{b} \in \mathbb{R}^d, \b{P} \in \mathbb{S}_{++}^d}{\text{minimize}}
& & \sum_{i=1}^m \Big(\frac{1}{2} (\b{x}_i - \b{x}^{(k)})^\top \b{P} (\b{x}_i - \b{x}^{(k)}) \\
& & &+ \b{b}^\top (\b{x}_i - \b{x}^{(k)}) + a - f(\b{x}_i)\Big)^2 \\
& \text{subject to}
& & \b{P} \succeq \b{0}.
\end{aligned}
\end{equation}
Then, the function $f(.)$ is replaced by its convex approximation $\widehat{f}(\b{x}) = (1/2) (\b{x}_i - \b{x}^{(k)})^\top \b{P} (\b{x}_i - \b{x}^{(k)}) + \b{b}^\top (\b{x}_i - \b{x}^{(k)}) + a$. The same approach can be used for approximation of functions $y_i(.)$ and $h_i(.)$.

\subsubsection{Convex Approximation by Quasi-linearization}

Another approach for convex approximation of functions $f(.)$, $y_i(.)$, and $h_i(.)$ is quasi-linearization.
We should state the function $f(.)$ in the form $f(\b{x}) = \b{A}(\b{x})\, \b{x} + c(\b{x})$. 
For example, we can use the second-order Taylor series expansion to do this:
\begin{align*}
f(\b{x}) \approx \frac{1}{2} \b{x}^\top \b{P} \b{x} + \b{b}^\top \b{x} + a = (\frac{1}{2} \b{P} \b{x} + \b{b})^\top \b{x} + a,
\end{align*}
so we use $\b{A}(\b{x}) := ((1/2) \b{P} \b{x} + \b{b})^\top$ and $c(\b{x}) := a$ which depend on the Taylor expansion of $f(\b{x})$. 
Hence, the convex approximation of function $f(.)$ can be:
\begin{align}
\widehat{f}(\b{x}) &= \b{A}(\b{x}^{(k)})\, \b{x} + c(\b{x}^{(k)}) = (\frac{1}{2} \b{P} \b{x}^{(k)} + \b{b})^\top \b{x} + a.
\end{align}
The same approach can be used for approximation of functions $y_i(.)$ and $h_i(.)$.

\subsection{Trust Region}

\subsubsection{Formulation of Trust Region}

The trust region can be a box around the point at that iteration:
\begin{align}
\mathcal{T}^{(k)} := \{\b{x}\, |\, |x_j - x_j^{(k)}| \leq \rho_i, \forall j \in \{1, \dots, d\}\}.
\end{align}
where $x_j$ and $x_j^{(k)}$ are the $j$-th element of $\b{x}$ and $\b{x}^{(k)}$, respectively, and $\rho_i$ is the bound of box for the $j$-th dimension. 
Another option for trust region is an ellipse around the point to have a quadratic trust region:
\begin{align}
\mathcal{T}^{(k)} := \{\b{x}\, |\, (\b{x} - \b{x}^{(k)})^\top \b{P}^{-1} (\b{x} - \b{x}^{(k)}) \leq \rho\},
\end{align}
where $\b{P} \in \mathbb{S}_{++}^d$ (is symmetric positive definite) and $\rho > 0$ is the radius of ellipse.

\subsubsection{Updating Trust Region}


The trust region gets updated in every iteration of SCP. 
In the following, we explain how the trust region can be updated. 
First, we embed the constraints in the objective function of problem (\ref{equation_optimization_problem}):
\begin{equation}\label{equation_optimization_problem_exact_penalty_method}
\begin{aligned}
& \underset{\b{x}}{\text{minimize}}
& & \phi(\b{x}) := f(\b{x}) + \lambda \Big(\sum_{i=1}^{m_1} \big(\max(y_i(\b{x}), 0)\big)^2 \\
& & &~~~~~~~~~~~~~~~~~~~~~~~~~~ + \sum_{i=1}^{m_1} |h_i(\b{x})|^2 \Big),
\end{aligned}
\end{equation}
where $\lambda > 0$ is the regularization parameter. 
This is called the \textit{exact penalty method} \cite{di1994exact} because it penalizes violation from the constraints. 
For large enough regularization parameter (which gives importance to violation of constraints), the solution of problem (\ref{equation_optimization_problem_exact_penalty_method}) is exactly equal to the solution of problem (\ref{equation_optimization_problem}). That is the reason for the term ``exact" in the name ``exact penalty method". 
Similar to Eq. (\ref{equation_optimization_problem_exact_penalty_method}), we define:
\begin{align}
\widehat{\phi}(\b{x}) := \widehat{f}(\b{x}) + \lambda \Big(&\sum_{i=1}^{m_1} \big(\max(\widehat{y}_i(\b{x}), 0)\big)^2 \nonumber \\
&+ \sum_{i=1}^{m_1} |\widehat{h}_i(\b{x})|^2 \Big),
\end{align}
for the problem (\ref{equation_optimization_problem_SCP}). 
At the iteration $k$ of SCP, let $\widehat{x}^{(k)}$ be the solution of the convex approximated problem (\ref{equation_optimization_problem_SCP}) using any method such as the interior-point method. 
We calculate the predicted and exact decreases which are $\widehat{\delta} := \phi(\b{x}^{(k)}) - \widehat{\phi}(\widehat{\b{x}})$ and $\delta := \phi(\b{x}^{(k)}) - \phi(\widehat{\b{x}})$, respectively. 
Two cases may happen:
\begin{itemize}\itemsep0em
\item We have progress in optimization if $\alpha \widehat{\delta} \leq \delta$ where $0 < \alpha < 1$ (e.g., $\alpha = 0.1$).
In this case, we accept the approximate solution, i.e. $\b{x}^{(k+1)} := \widehat{\b{x}}$, and we increase the size of trust region, for the next iteration of SCP, by $\rho^{(k+1)} := \beta \rho^{(k)}$ where $\beta \geq 1$ (e.g., $\beta = 1.1$). 
\item We do not have progress in optimization if $\alpha \widehat{\delta} > \delta$.
In this case, we reject the approximate solution, i.e. $\b{x}^{(k+1)} := \b{x}^{(k)}$, and we decrease the size of trust region, for the next iteration of SCP, by $\rho^{(k+1)} := \gamma \rho^{(k)}$ where $0 < \gamma < 1$ (e.g., $\gamma = 0.5$).
\end{itemize}
In summary, the trust region is expanded if we find a good solution; otherwise, it is made smaller. 

\section{Distributed Optimization}

\subsection{Alternating Optimization}

When we have several optimization variables, we can alternate between optimizing over each of these variables. This technique is called \textit{alternating optimization} in the literature \cite{li2019alternating} (also see {\citep[Chapter 4]{jain2017non}}). 
Consider the following multivariate optimization problem:
\begin{equation}
\begin{aligned}
& \underset{\{\b{x}_i\}_{i=1}^m}{\text{minimize}}
& & f(\b{x}_1, \dots, \b{x}_m),
\end{aligned}
\end{equation}
where the objective function depends on $m$ variables. 
\textit{Alternating optimization} alternates between updating every variable while assuming other variables are constant, set to their last updated value. After random feasible initialization, it updates solutions as \cite{li2019alternating}:
\begin{align*}
& \b{x}_1^{(k+1)} := \arg \min_{\b{x}_1} f(\b{x}_1, \b{x}_2^{(k)}, \dots, \b{x}_{m-1}^{(k)}, \b{x}_m^{(k)}), \\
& \b{x}_2^{(k+1)} := \arg \min_{\b{x}_2} f(\b{x}_1^{(k+1)}, \b{x}_2, \dots, \b{x}_{m-1}^{(k)}, \b{x}_m^{(k)}), \\
& \vdots \\
& \b{x}_m^{(k+1)} := \arg \min_{\b{x}_m} f(\b{x}_1^{(k+1)}, \b{x}_2^{(k+1)}, \dots, \b{x}_{m-1}^{(k+1)}, \b{x}_m), 
\end{align*}
until convergence. 
Any optimization methods, including first-order and second-order methods, can be used for each of the optimization lines above. 
In most cases, alternating optimization is robust to changing the order of updates of variables. 

\begin{remark}
If the function $f(\b{x}_1, \dots, \b{x}_m)$ is decomposable in terms of variables, i.e., if we have $f(\b{x}_1, \dots, \b{x}_m) = \sum_{i=1}^m f_i(\b{x}_i)$, the alternating optimization can be simplified to:
\begin{align*}
& \b{x}_1^{(k+1)} := \arg \min_{\b{x}_1} f_1(\b{x}_1), \\
& \b{x}_2^{(k+1)} := \arg \min_{\b{x}_2} f_2(\b{x}_2), \\
& \vdots \\
& \b{x}_m^{(k+1)} := \arg \min_{\b{x}_m} f_m(\b{x}_m),
\end{align*}
because other terms become constant in optimization. 
The above updates mean that if the function is completely decomposable in terms of variables, the updates of variables are independent and can be done independently. Hence, in that case, alternating optimization is reduced to $m$ independent optimization problems, each of which can be solved by any optimization method such as the first-order and second-order methods.
\end{remark}

\textit{Proximal alternating optimization} uses proximal operator, Eq. (\ref{equation_proximal_mapping_scaled}), for minimization to keep the updated solution close to the solution of previous iteration \cite{li2019alternating}:
\begin{align*}
& \b{x}_1^{(k+1)} := \arg \min_{\b{x}_1} \Big( f(\b{x}_1, \b{x}_2^{(k)}, \dots, \b{x}_{m-1}^{(k)}, \b{x}_m^{(k)}) \\
&~~~~~~~~~~~~~~~~~~~~~~~~~~~~~~ + \frac{1}{2\lambda} \|\b{x}_1 - \b{x}_1^{(k)}\|_2^2 \Big), \\
& \b{x}_2^{(k+1)} := \arg \min_{\b{x}_2} \Big( f(\b{x}_1^{(k+1)}, \b{x}_2, \dots, \b{x}_{m-1}^{(k)}, \b{x}_m^{(k)}) \\
&~~~~~~~~~~~~~~~~~~~~~~~~~~~~~~ + \frac{1}{2\lambda} \|\b{x}_2 - \b{x}_2^{(k)}\|_2^2 \Big), 
\end{align*}
\begin{align*}
& \vdots \\
& \b{x}_m^{(k+1)} := \arg \min_{\b{x}_m} \Big( f(\b{x}_1^{(k+1)}, \b{x}_2^{(k+1)}, \dots, \b{x}_{m-1}^{(k+1)}, \b{x}_m) \\
&~~~~~~~~~~~~~~~~~~~~~~~~~~~~~~ + \frac{1}{2\lambda} \|\b{x}_m - \b{x}_m^{(k)}\|_2^2 \Big).
\end{align*}

The alternating optimization methods can also be used for constrained problems:
\begin{equation}
\begin{aligned}
& \underset{\{\b{x}_i\}_{i=1}^m}{\text{minimize}}
& & f(\b{x}_1, \dots, \b{x}_m) \\
& \text{subject to}
& & \b{x}_i \in \mathcal{S}_i,\quad \forall i \in \{1, \dots, m\}.
\end{aligned}
\end{equation}
In this case, every line of the optimization is a constrained problem:
\begin{align*}
& \b{x}_1^{(k+1)} := \arg \min_{\b{x}_1} \Big( f(\b{x}_1, \b{x}_2^{(k)}, \dots, \b{x}_{m-1}^{(k)}, \b{x}_m^{(k)}), \\
&~~~~~~~~~~~~~~~~~~~~~~~~~~~~~~~~~~ \text{s.t. }\,\, \b{x}_1 \in \mathcal{S}_1 \Big), \\
& \b{x}_2^{(k+1)} := \arg \min_{\b{x}_2} \Big( f(\b{x}_1^{(k+1)}, \b{x}_2, \dots, \b{x}_{m-1}^{(k)}, \b{x}_m^{(k)}), \\
&~~~~~~~~~~~~~~~~~~~~~~~~~~~~~~~~~~ \text{s.t. }\,\, \b{x}_2 \in \mathcal{S}_2 \Big), \\
& \vdots \\
& \b{x}_m^{(k+1)} := \arg \min_{\b{x}_m} \Big( f(\b{x}_1^{(k+1)}, \b{x}_2^{(k+1)}, \dots, \b{x}_{m-1}^{(k+1)}, \b{x}_m), \\
&~~~~~~~~~~~~~~~~~~~~~~~~~~~~~~~~~~ \text{s.t. }\,\, \b{x}_m \in \mathcal{S}_m \Big).
\end{align*}
Any constrained optimization methods can be used for each of the optimization lines above. 
Some examples are projected gradient method, proximal methods, interior-point methods, etc. 

Finally, it is noteworthy that practical experiments have shown there is usually no need to use a complete optimization until convergence for every step in the alternating optimization, either unconstrained or constrained. Often, a single step of updating, such as a step of gradient descent or projected gradient method, is enough for the whole algorithm to work. 

\subsection{Dual Ascent and Dual Decomposition Methods}

Consider the following problem:
\begin{equation}\label{equation_dual_ascent_method_optimization_problem}
\begin{aligned}
& \underset{\b{x}}{\text{minimize}}
& & f(\b{x}) \\
& \text{subject to}
& & \b{A} \b{x} = \b{b}.
\end{aligned}
\end{equation}

We follow the method of multipliers as discussed in Section \ref{section_method_of_multipliers}.
The Lagrangian is:
\begin{align*}
\mathcal{L}(\b{x}, \b{\nu}) = f(\b{x}) + \b{\nu}^\top (\b{A} \b{x} - \b{b}).
\end{align*}
The dual function is:
\begin{align}\label{equation_dual_ascent_method_g}
g(\b{\nu}) = \inf_{\b{x}} \mathcal{L}(\b{x}, \b{\nu}).
\end{align}
The dual problem maximizes $g(\b{\nu})$:
\begin{align}\label{equation_dual_ascent_method_nu}
\b{\nu}^* = \arg \max_{\b{\nu}} g(\b{\nu}),
\end{align}
so the optimal primal variable is:
\begin{align}\label{equation_dual_ascent_method_x_star}
\b{x}^* = \arg \min_{\b{x}} \mathcal{L}(\b{x}, \b{\nu}^*).
\end{align}
For solving Eq. (\ref{equation_dual_ascent_method_nu}), we should take the derivative of the dual function w.r.t. the dual variable:
\begin{align*}
\nabla_{\b{\nu}} g(\b{\nu}) &\overset{(\ref{equation_dual_ascent_method_g})}{=} \nabla_{\b{\nu}}(\inf_{\b{x}} \mathcal{L}(\b{x}, \b{\nu})) \\
&\overset{(\ref{equation_dual_ascent_method_x_star})}{=} \nabla_{\b{\nu}}(f(\b{x}^*) + \b{\nu}^\top (\b{A} \b{x}^* - \b{b})) = \b{A} \b{x}^* - \b{b}.
\end{align*}
The dual problem is a maximization problem so we can use gradient ascent (see Section \ref{section_gradient_descent}) for iteratively updating the dual variable with this gradient. We can alternate between updating the optimal primal and dual variables:
\begin{align}
& \b{x}^{(k+1)} := \arg\min_{\b{x}} \mathcal{L}(\b{x}, \b{\nu}^{(k)}), \label{equation_dual_ascent_method_x_update} \\
& \b{\nu}^{(k+1)} := \b{\nu}^{(k)} + \eta^{(k)} (\b{A} \b{x}^{(k+1)} - \b{b}), \label{equation_dual_ascent_method_nu_update}
\end{align}
where $k$ is the iteration index and $\eta^{(k)}$ is the step size (also called the learning rate) at iteration $k$.
Eq. (\ref{equation_dual_ascent_method_x_update}) can be performed by any optimization method. We compute the gradient of $\mathcal{L}(\b{x}, \b{\nu}^{(k)})$ w.r.t. $\b{x}$. If setting this gradient to zero does not give $\b{x}$ in closed form, we can use gradient descent (see Section \ref{section_gradient_descent}) to perform Eq. (\ref{equation_dual_ascent_method_x_update}). Some papers approximate Eq. (\ref{equation_dual_ascent_method_x_update}) by one step or few steps of gradient descent rather than a complete gradient descent until convergence. If using one step, we can write Eq. (\ref{equation_dual_ascent_method_x_update}) as:
\begin{align}
\b{x}^{(k+1)} := \b{x}^{(k)} - \gamma \nabla_{\b{x}} \mathcal{L}(\b{x}, \b{\nu}^{(k)}),
\end{align}
where $\gamma>0$ is the step size.
It has been shown empirically that even one step of gradient descent for Eq. (\ref{equation_dual_ascent_method_x_update}) works properly for the whole alternating algorithm. 

We continue the iterations until convergence of the primal and dual variables to stable values. When we get closer to convergence, we will have $(\b{A} \b{x}^{k+1} - \b{b}) \rightarrow 0$ so that we will not have update of dual variable according to Eq. (\ref{equation_dual_ascent_method_nu_update}). This means that after convergence, we have $(\b{A} \b{x}^{k+1} - \b{b}) \approx 0$ so that the constraint in Eq. (\ref{equation_dual_ascent_method_optimization_problem}) is getting satisfied. 
In other words, the update of dual variable in Eq. (\ref{equation_dual_ascent_method_nu_update}) is taking care of satisfying the constraint. 
This method is known as the \textit{dual ascent} method because it uses gradient ascent for updating the dual variable. 

If the objective function can be distributed and decomposed on $b$ blocks $\{\b{x}_i\}_{i=1}^b$, i.e.:
\begin{align*}
f(\b{x}) = f_1(\b{x}_1) + \dots + f_1(\b{x}_b),
\end{align*}
we can have $b$ Lagrangian functions where the total Lagrangian is the summation of these functions:
\begin{align*}
&\mathcal{L}_i(\b{x}_i, \b{\nu}) = f(\b{x}_i) + \b{\nu}^\top (\b{A} \b{x}_i - \b{b}), \\
&\mathcal{L}(\b{x}_i, \b{\nu}) = \sum_{i=1}^b \big( f(\b{x}_i) + \b{\nu}^\top (\b{A} \b{x}_i - \b{b}) \big).
\end{align*}
We can divide the Eq. (\ref{equation_dual_ascent_method_x_update}) into $b$ updates, each for one of the blocks. 
\begin{align}
& \b{x}_i^{(k+1)} := \arg\min_{\b{x}_i} \mathcal{L}(\b{x}, \b{\nu}^{(k)}), \quad \forall i \in \{1, \dots, b\}, \\
& \b{\nu}^{(k+1)} := \b{\nu}^{(k)} + \eta^{(k)} (\b{A} \b{x}^{(k+1)} - \b{b}).
\end{align}
This is called \textit{dual decomposition} developed by decomposition techniques such as the Dantzig-Wolfe decomposition \cite{dantzig1960decomposition}, Bender's decomposition \cite{benders1962partitioning}, and Lagrangian decomposition \cite{everett1963generalized}. 
The dual decomposition methods can divide a problem into sub-problems and solve them in parallel. Hence, its can be used for big data but they are usually slow to converge. 

\subsection{Augmented Lagrangian Method (Method of Multipliers)}

Assume we regularize the objective function in Eq. (\ref{equation_dual_ascent_method_optimization_problem}) by a penalty on not satisfying the constraint:
\begin{equation}\label{equation_dual_ascent_method_optimization_problem_regularized}
\begin{aligned}
& \underset{\b{x}}{\text{minimize}}
& & f(\b{x}) + \frac{\rho}{2} \|\b{A} \b{x} - \b{b}\|_2^2 \\
& \text{subject to}
& & \b{A} \b{x} = \b{b},
\end{aligned}
\end{equation}
where $\rho > 0$ is the regularization parameter. 

\begin{definition}[Augmented Lagrangian \cite{hestenes1969multiplier,powell1969method}]
The Lagrangian for problem (\ref{equation_dual_ascent_method_optimization_problem_regularized}) is:
\begin{align}
\mathcal{L}_\rho(\b{x}, \b{\nu}) := f(\b{x}) + \b{\nu}^\top (\b{A} \b{x} - \b{b}) + \frac{\rho}{2} \|\b{A} \b{x} - \b{b}\|_2^2.
\end{align}
This Lagrangian is called the \textit{augmented Lagrangian} for problem (\ref{equation_dual_ascent_method_optimization_problem}). 
\end{definition}

We can use this augmented Lagrangian in Eqs. (\ref{equation_dual_ascent_method_x_update}) and (\ref{equation_dual_ascent_method_nu_update}):
\begin{align}
& \b{x}^{(k+1)} := \arg\min_{\b{x}} \mathcal{L}_\rho(\b{x}, \b{\nu}^{(k)}), \label{equation_methodOfMultipliers_x_update} \\
& \b{\nu}^{(k+1)} := \b{\nu}^{(k)} + \rho (\b{A} \b{x}^{(k+1)} - \b{b}), \label{equation_methodOfMultipliers_nu_update}
\end{align}
where we use $\rho$ for the step size of updating the dual variable. 
This method is called the \textit{augmented Lagrangian method} or the \textit{method of multipliers} \cite{hestenes1969multiplier,powell1969method,bertsekas1982method}. 

\subsection{Alternating Direction Method of Multipliers (ADMM)}

ADMM \cite{gabay1976dual,glowinski1976finite,boyd2011distributed} has been used in many recent machine learning and signal processing papers. The usefulness and goal for using ADMM (and other distributed methods) are two-fold: (1) it makes the problem distributed and parallelizable on several servers and (2) it makes it possible to solve an optimization problem with multiple variables. 

\subsubsection{ADMM Algorithm}

Consider the following problem:
\begin{equation}\label{equation_ADMM_optimization_problem}
\begin{aligned}
& \underset{\b{x}_1, \b{x}_2}{\text{minimize}}
& & f_1(\b{x}_1) + f_2(\b{x}_2) \\
& \text{subject to}
& & \b{A} \b{x}_1 + \b{B} \b{x}_2 = \b{c},
\end{aligned}
\end{equation}
which is an optimization over two variables $\b{x}_1$ and $\b{x}_2$. 
The augmented Lagrangian for this problem is:
\begin{equation}\label{equation_ADMM_augmented_Lagrangian}
\begin{aligned}
&\mathcal{L}_\rho(\b{x}_1, \b{x}_2, \b{\nu}) = f_1(\b{x}_1) + f_2(\b{x}_2) \\
&~~~ + \b{\nu}^\top (\b{A} \b{x}_1 + \b{B} \b{x}_2 - \b{c}) + \frac{\rho}{2} \|\b{A} \b{x}_1 + \b{B} \b{x}_2 - \b{c}\|_2^2.
\end{aligned}
\end{equation}
We can alternate between updating the primal variables $\b{x}_1$ and $\b{x}_2$ and the dual variable $\b{\nu}$ until convergence of these variables:
\begin{align}
& \b{x}_1^{(k+1)} := \arg\min_{\b{x}_1} \mathcal{L}_\rho(\b{x}_1, \b{x}_2^{(k)}, \b{\nu}^{(k)}), \label{equation_ADMM_x1_update} \\
& \b{x}_2^{(k+1)} := \arg\min_{\b{x}_2} \mathcal{L}_\rho(\b{x}_1^{(k+1)}, \b{x}_2, \b{\nu}^{(k)}), \label{equation_ADMM_x2_update} \\
& \b{\nu}^{(k+1)} := \b{\nu}^{(k)} + \rho (\b{A} \b{x}_1^{(k+1)} + \b{B} \b{x}_2^{(k+1)} - \b{c}). \label{equation_ADMM_nu_update}
\end{align}
Note that the order of updating primal and dual variables is important and the dual variable should be updated after the primal variables but the order of updating primal variables is not important. 
This method is called the Alternating Direction Method of Multipliers (ADMM) \cite{gabay1976dual,glowinski1976finite}. 
A good survey on ADMM is \cite{boyd2011distributed}.

As was explained before, Eqs. (\ref{equation_ADMM_x1_update}) and (\ref{equation_ADMM_x2_update}) can be performed by any optimization method such as calculating the gradient of augmented Lagrangian w.r.t. $\b{x}_1$ and $\b{x}_2$, respectively, and using a few (or even one) iterations of gradient descent for each of these equations. 

\subsubsection{Simplifying Equations in ADMM}

The last term in the augmented Lagrangian, Eq. (\ref{equation_ADMM_augmented_Lagrangian}), can be restated as:
\begin{align*}
&\b{\nu}^\top (\b{A} \b{x}_1 + \b{B} \b{x}_2 - \b{c}) + \frac{\rho}{2} \|\b{A} \b{x}_1 + \b{B} \b{x}_2 - \b{c}\|_2^2 \\
&= \b{\nu}^\top (\b{A} \b{x}_1 + \b{B} \b{x}_2 - \b{c}) + \frac{\rho}{2} \|\b{A} \b{x}_1 + \b{B} \b{x}_2 - \b{c}\|_2^2 \\
&~~~~ + \frac{1}{2\rho} \|\b{\nu}\|_2^2 - \frac{1}{2\rho} \|\b{\nu}\|_2^2 
\end{align*}
\begin{align*}
&= \frac{\rho}{2} \Big(\|\b{A} \b{x}_1 + \b{B} \b{x}_2 - \b{c}\|_2^2 + \frac{1}{\rho^2} \|\b{\nu}\|_2^2 \\
&~~~~ + \frac{2}{\rho} \b{\nu}^\top (\b{A} \b{x}_1 + \b{B} \b{x}_2 - \b{c}) \Big) - \frac{1}{2\rho} \|\b{\nu}\|_2^2 \\
&\overset{(a)}{=} \frac{\rho}{2} \big\|\b{A} \b{x}_1 + \b{B} \b{x}_2 - \b{c} + \frac{1}{\rho} \b{\nu}\big\|_2^2 - \frac{1}{2\rho} \|\b{\nu}\|_2^2 \\
&\overset{(b)}{=} \frac{\rho}{2} \big\|\b{A} \b{x}_1 + \b{B} \b{x}_2 - \b{c} + \b{u}\big\|_2^2 - \frac{1}{2\rho} \|\b{\nu}\|_2^2.
\end{align*}
where $(a)$ is because of the square of summation of two terms and $(b)$ is because we define $\b{u} := (1/\rho) \b{\nu}$. 
The last term $- (1/(2\rho)) \|\b{\nu}\|_2^2$ is constant w.r.t. to the primal variables $\b{x}_1$ and $\b{x}_2$ so we can drop that term fro Lagrangian when updating the primal variables. 
Hence, the Lagrangian can be restated as:
\begin{equation}\label{equation_ADMM_augmented_Lagrangian_simplified}
\begin{aligned}
&\mathcal{L}_\rho(\b{x}_1, \b{x}_2, \b{u}) = f_1(\b{x}_1) + f_2(\b{x}_2) \\
&~~~~~~~~~ + \frac{\rho}{2} \big\|\b{A} \b{x}_1 + \b{B} \b{x}_2 - \b{c} + \b{u}\big\|_2^2 + \text{constant}.
\end{aligned}
\end{equation}
For updating $\b{x}_1$ and $\b{x}_2$, the terms $f_2(\b{x}_2)$ and $f(\b{x}_1)$ are constant, respectively, and can be dropped (because here $\arg\min$ is important and not the minimum value). 
Hence, Eqs. (\ref{equation_ADMM_x1_update}), (\ref{equation_ADMM_x2_update}), and (\ref{equation_ADMM_nu_update}) can be restated as:
\begin{align}
& \b{x}_1^{(k+1)} := \arg\min_{\b{x}_1} \Big(f_1(\b{x}_1) \nonumber \\
&~~~~~~~~~~~~~~~~~~ + \frac{\rho}{2} \big\|\b{A} \b{x}_1 + \b{B} \b{x}_2^{(k)} - \b{c} + \b{u}^{(k)}\big\|_2^2 \Big), \label{equation_ADMM_x1_update2} \\
& \b{x}_2^{(k+1)} := \arg\min_{\b{x}_2} \Big(f_2(\b{x}_2) \nonumber \\
&~~~~~~~~~~~~~~~~~~ + \frac{\rho}{2} \big\|\b{A} \b{x}_1^{(k+1)} + \b{B} \b{x}_2 - \b{c} + \b{u}^{(k)}\big\|_2^2 \Big), \label{equation_ADMM_x2_update2} \\
& \b{u}^{(k+1)} := \b{u}^{(k)} + \rho (\b{A} \b{x}_1^{(k+1)} + \b{B} \b{x}_2^{(k+1)} - \b{c}). \label{equation_ADMM_nu_update2}
\end{align}
Again, Eqs. (\ref{equation_ADMM_x1_update2}) and (\ref{equation_ADMM_x2_update2}) can be performed by one or few steps of gradient descent or any other optimization method. The convergence of ADMM for non-convex and non-smooth functions has been analyzed in \cite{wang2019global}.

\subsection{ADMM Algorithm for General Optimization Problems and Any Number of Variables}

\subsubsection{Distributed Optimization}

ADMM can be extended to several equality and inequality constraints for several optimization variables \cite{giesen2016distributed,giesen2019combining}. 
Consider the following optimization problem with $m$ optimization variables and an equality and inequality constraint for every variable:
\begin{equation}\label{equation_ADMM_optimization_problem_general}
\begin{aligned}
& \underset{\{\b{x}_i\}_{i=1}^m}{\text{minimize}}
& & \sum_{i=1}^m f_i(\b{x}_i) \\
& \text{subject to}
& & y_i(\b{x}_i) \leq 0, \; i \in \{1, \ldots, m\}, \\
& & & h_i(\b{x}_i) = 0, \; i \in \{1, \ldots, m\}.
\end{aligned}
\end{equation}
We can convert every inequality constraint to equality constraints by this technique \cite{giesen2016distributed,giesen2019combining}:
\begin{align*}
y_i(\b{x}_i) \leq 0 \quad \equiv \quad y'_i(\b{x}_i) := \big(\!\max(0, y_i(\b{x}_i))\big)^2 = 0.
\end{align*}
Hence, the problem becomes:
\begin{equation*}
\begin{aligned}
& \underset{\{\b{x}_i\}_{i=1}^m}{\text{minimize}}
& & \sum_{i=1}^m f_i(\b{x}_i) \\
& \text{subject to}
& & y'_i(\b{x}_i) = 0, \; i \in \{1, \ldots, m\}, \\
& & & h_i(\b{x}_i) = 0, \; i \in \{1, \ldots, m\}.
\end{aligned}
\end{equation*}
Having dual variables $\b{\lambda} = [\lambda_1, \dots, \lambda_m]^\top$ and $\b{\nu} = [\nu_1, \dots, \nu_m]^\top$ and regularization parameter $\rho>0$, the augmented Lagrangian for this problem is:
\begin{equation}\label{equation_ADMM_augmented_Lagrangian_general}
\begin{aligned}
&\mathcal{L}_\rho(\{\b{x}_i\}_{i=1}^m, \b{\nu}', \b{\nu}) = \sum_{i=1}^m f_i(\b{x}_i) \\
&~~~~~~~~~~~+ \sum_{i=1}^{m} \lambda_i y'_i(\b{x}_i) + \sum_{i=1}^{m} \nu_i h_i(\b{x}_i) \\
&~~~~~~~~~~~+ \frac{\eta}{2} \sum_{i=1}^{m} (y'_i(\b{x}_i))^2 + \frac{\rho}{2} \sum_{i=1}^{m} (h_i(\b{x}_i))^2  \\
&= \sum_{i=1}^m f_i(\b{x}_i) + \b{\lambda}^\top \b{y}'(\b{x}) + \b{\nu}^\top \b{h}(\b{x}) \\
&~~~~+ \frac{\rho}{2} \|\b{y}'(\b{x})\|_2^2 + \frac{\rho}{2} \|\b{h}(\b{x})\|_2^2, 
\end{aligned}
\end{equation}
where $\mathbb{R}^m \ni \b{y}'(\b{x}) := [y'_1(\b{x}_1), \dots, y'_m(\b{x}_m)]^\top$ and $\mathbb{R}^m \ni \b{h}(\b{x}) := [h_1(\b{x}_1), \dots, h_m(\b{x}_m)]^\top$.
Updating the primal and dual variables are performed as \cite{giesen2016distributed,giesen2019combining}::
\begin{align*}
& \b{x}_i^{(k+1)} := \arg\min_{\b{x}_i} \mathcal{L}_\rho(\b{x}_i, \lambda_i^{(k)}, \nu_i^{(k)}),\, \forall i \in \{1, \dots, m\}, \\
& \b{\lambda}^{(k+1)} := \b{\lambda}^{(k)} + \rho\, \b{y}'(\b{x}^{(k+1)}), \\
& \b{\nu}^{(k+1)} := \b{\nu}^{(k)} + \rho\, \b{h}(\b{x}^{(k+1)}).
\end{align*}
Note that as the Lagrangian is completely decomposable by the $i$ indices, the optimization for every $i$-th primal or dual variable does not depend on other indices; in other words, the terms of other indices become constant for every index.
The last terms in the augmented Lagrangian, Eq. (\ref{equation_ADMM_augmented_Lagrangian_general}), can be restated as:
\begin{align*}
&\b{\lambda}^\top \b{y}'(\b{x}) + \b{\nu}^\top \b{h}(\b{x}) + \frac{\rho}{2} \|\b{y}'(\b{x})\|_2^2 + \frac{\rho}{2} \|\b{h}(\b{x})\|_2^2 \\
&=\b{\lambda}^\top \b{y}'(\b{x}) + \frac{\rho}{2} \|\b{y}'(\b{x})\|_2^2 + \frac{1}{2\rho} \|\b{\lambda}\|_2^2 - \frac{1}{2\rho} \|\b{\lambda}\|_2^2 \\
&+ \b{\nu}^\top \b{h}(\b{x}) + \frac{\rho}{2} \|\b{h}(\b{x})\|_2^2 + \frac{1}{2\rho} \|\b{\nu}\|_2^2 - \frac{1}{2\rho} \|\b{\nu}\|_2^2 \\
&\!\!= \frac{\rho}{2} \Big(\|\b{y}'(\b{x})\|_2^2 + \frac{1}{\rho^2} \|\b{\lambda}\|_2^2 + \frac{2}{\rho} \b{\lambda}^\top \b{y}'(\b{x}) \Big) - \frac{1}{2\rho} \|\b{\lambda}\|_2^2 \\
&+ \frac{\rho}{2} \Big(\|\b{h}(\b{x})\|_2^2 + \frac{1}{\rho^2} \|\b{\nu}\|_2^2 + \frac{2}{\rho} \b{\nu}^\top \b{h}(\b{x}) \Big) - \frac{1}{2\rho} \|\b{\nu}\|_2^2 \\
&= \frac{\rho}{2} \big\|\b{y}'(\b{x}) + \frac{1}{\rho} \b{\lambda}\big\|_2^2 - \frac{1}{2\rho} \|\b{\lambda}\|_2^2 \\
&+ \frac{\rho}{2} \big\|\b{h}(\b{x}) + \frac{1}{\rho} \b{\nu}\big\|_2^2 - \frac{1}{2\rho} \|\b{\nu}\|_2^2 \\
&\overset{(a)}{=} \frac{\rho}{2} \big\|\b{y}'(\b{x}) + \b{u}_\lambda\big\|_2^2 + \frac{\rho}{2} \big\|\b{h}(\b{x}) + \b{u}_\nu\big\|_2^2 - \text{constant},
\end{align*}
where $(a)$ is because we define $\b{u}_\lambda := (1/\rho) \b{\lambda}$ and $\b{u}_\nu := (1/\rho) \b{\nu}$. 
Hence, the Lagrangian can be restated as:
\begin{align*}
&\mathcal{L}_\rho(\{\b{x}_i\}_{i=1}^m, \b{u}_\lambda, \b{u}_\nu) = \sum_{i=1}^m f_i(\b{x}_i) + \\
&~~~~+ \frac{\rho}{2} \big\|\b{y}'(\b{x}) + \b{u}_\lambda\big\|_2^2 + \frac{\rho}{2} \big\|\b{h}(\b{x}) + \b{u}_\nu\big\|_2^2 + \text{constant} \\
&= \sum_{i=1}^m f_i(\b{x}_i) + \frac{\rho}{2} \sum_{i=1}^m \big[(y'_i(\b{x}_i) + u_{\lambda,i})^2 \\
&~~~~~~~~~~~~~~~~~~~~~~~~~~~+ (h_i(\b{x}_i) + u_{\nu,i})^2\big] + \text{constant},
\end{align*}
where $u_{\lambda,i} = (1/\rho) \lambda_i$ and $u_{\nu,i} = (1/\rho) \nu_i$ are the $i$-th elements of $\b{u}_\lambda$ and $\b{u}_\nu$, respectively. 
Hence, updating variables can be restated as:
\begin{align}
& \b{x}_i^{(k+1)} := \arg\min_{\b{x}_i} \Big(f_i(\b{x}_i) + \frac{\rho}{2} \big[(y'_i(\b{x}_i) + u_{\lambda,i}^{(k)})^2 \nonumber \\
&~~~~~~~~~~~~~~~+ (h_i(\b{x}_i) + u_{\nu,i}^{(k)})^2\big] \Big), \forall i \in \{1, \dots, m\}, \label{equation_ADMM_x_i_update2_general} \\
& u_{\lambda,i}^{(k+1)} := u_{\lambda,i}^{(k)} + \rho\, y'_i(\b{x}_i^{(k+1)}),\, \forall i \in \{1, \dots, m\} \label{equation_ADMM_u_lambda_update2_general} \\
& u_{\nu,i}^{(k+1)} := u_{\nu,i}^{(k)} + \rho\, h_i(\b{x}_i^{(k+1)}),\, \forall i \in \{1, \dots, m\}. \label{equation_ADMM_u_nu_update2_general}
\end{align}


\hfill\break
\textbf{-- Use of ADMM for Distributed Optimization:}
ADMM is one of the most well-known algorithms for distributed optimization. If the problem can be divided into several disjoint blocks (i.e., several primal variables), we can solve the optimization for each primal variable on a separate core or server (see Eq. (\ref{equation_ADMM_x_i_update2_general}) for every $i$). Hence, in every iteration of ADMM, the update of primal variables can be performed in parallel by distributed servers. At the end of each iteration, the updated primal variables are gathered in a central server so that the update of dual variable(s) is performed (see Eqs. (\ref{equation_ADMM_u_lambda_update2_general}) and (\ref{equation_ADMM_u_nu_update2_general})). Then, the updated dual variable(s) is sent to the distributed servers so they update their primal variables. This procedure is repeated until convergence of primal and dual variables. 
In this sense, ADMM is performed similar to the approach of \textit{federated learning} \cite{konevcny2015federated,li2020federated}. 

\subsubsection{Making Optimization Problem Distributed}

We can convert a non-distributed optimization problem to a distributed optimization problem to solve it using ADMM. 
Many recent machine learning and signal processing papers are using this technique. 

\hfill\break
\textbf{-- Univariate optimization problem:}
Consider a regular non-distributed problem with one optimization variable $\b{x}$:
\begin{equation}\label{equation_ADMM_optimization_problem_general_notDistributed}
\begin{aligned}
& \underset{\b{x}}{\text{minimize}}
& & \sum_{i=1}^m f_i(\b{x}) \\
& \text{subject to}
& & y_i(\b{x}) \leq 0, \; i \in \{1, \ldots, m\}, \\
& & & h_i(\b{x}) = 0, \; i \in \{1, \ldots, m\}.
\end{aligned}
\end{equation}
This problem can be stated as:
\begin{equation}\label{equation_ADMM_optimization_problem_general_Distributed}
\begin{aligned}
& \underset{\{\b{x}_i\}_{i=1}^m}{\text{minimize}}
& & \sum_{i=1}^m f_i(\b{x}_i) \\
& \text{subject to}
& & y_i(\b{x}_i) \leq 0, \; i \in \{1, \ldots, m\}, \\
& & & h_i(\b{x}_i) = 0, \; i \in \{1, \ldots, m\}, \\
& & & \b{x}_i = \b{z}, \; i \in \{1, \ldots, m\},
\end{aligned}
\end{equation}
where we introduce $m$ variables $\{\b{x}_i\}_{i=1}^m$ and use the trick $\b{x}_i = \b{z}, \forall i$ to make them equal to one variable. 
Eq. (\ref{equation_ADMM_optimization_problem_general_Distributed}) is similar to Eq. (\ref{equation_ADMM_optimization_problem_general}) except that it has $2m$ equality constraints rather than $m$ equality constraints. Hence, we can use ADMM updates similar to Eqs. (\ref{equation_ADMM_x_i_update2_general}), (\ref{equation_ADMM_u_lambda_update2_general}), and (\ref{equation_ADMM_u_nu_update2_general}) but with slight change because of the additional $m$ constraints. We introduce $m$ new dual variables for constraints $\b{x}_i = \b{z}, \forall i$ and update those dual variables as well as other variables. The augmented Lagrangian also has some additional terms for the new constraints. 
We do not write down the Lagrangian and ADMM updates because of its similarity to the previous equations. This is a good technique to make a problem distributed, use ADMM for solving it, and solving it in parallel servers. 

\hfill\break
\textbf{-- Multivariate optimization problem:}
Consider a regular non-distributed problem with multiple optimization variables $\{\b{x}_i\}_{i=1}^m$:
\begin{equation}\label{equation_ADMM_optimization_problem_general_notDistributed2}
\begin{aligned}
& \underset{\{\b{x}\}_{i=1}^m}{\text{minimize}}
& & \sum_{i=1}^m f_i(\b{x}_i) \\
& \text{subject to}
& & \b{x}_i \in \mathcal{S}_i, \; i \in \{1, \ldots, m\},
\end{aligned}
\end{equation}
where $\b{x}_i \in \mathcal{S}_i$ can be any constraint such as belonging to a set $\mathcal{S}_i$, an equality constraint, or an inequality constraint. 
We can embed the constraint in the objective function using an indicator function:
\begin{equation*}
\begin{aligned}
& \underset{\{\b{x}\}_{i=1}^m}{\text{minimize}}
& & \sum_{i=1}^m \big(f_i(\b{x}_i) + \phi_i(\b{x}_i)\big),
\end{aligned}
\end{equation*}
where $\phi_i(\b{x}_i) := \mathbb{I}(\b{x}_i \in \mathcal{S}_i)$ is zero if $\b{x}_i \in \mathcal{S}_i$ and is infinity otherwise. 
This problem can be stated as:
\begin{equation}\label{equation_ADMM_optimization_problem_general_Distributed2}
\begin{aligned}
& \underset{\{\b{x}_i\}_{i=1}^m}{\text{minimize}}
& & \sum_{i=1}^m \big(f_i(\b{x}_i) + \phi_i(\b{z}_i)\big) \\
& \text{subject to}
& & \b{x}_i = \b{z}_i, \; i \in \{1, \ldots, m\},
\end{aligned}
\end{equation}
where we introduce a variable $\b{z}_i$ for every $\b{x}_i$, use the introduced variable for the second term in the objective function, and we equate them in the constraint. 

As the constraints $\b{x}_i - \b{z}_i = \b{0}, \forall i$ are equality constraints, we can use Eqs. (\ref{equation_ADMM_x1_update2}), (\ref{equation_ADMM_x2_update2}), and (\ref{equation_ADMM_nu_update2}) as ADMM updates for this problem:
\begin{align}
& \b{x}_i^{(k+1)} := \arg\min_{\b{x}_i} \Big(f_i(\b{x}_i) + \frac{\rho}{2} \big\|\b{x}_i - \b{z}_i^{(k)} + \b{u}_i^{(k)}\big\|_2^2 \Big), \nonumber \\
&~~~~~~~~~~~~~~~~~~~~~~~~~~~~~~~~~~~~~~~~~~~~~~~~ \forall i \in \{1, \dots, m\},  \label{equation_ADMM_MadeDistirbuted_multivariate_x_update} \\
& \b{z}_i^{(k+1)} := \arg\min_{\b{z}_i} \Big(\phi_i(\b{z}_i) + \frac{\rho}{2} \big\|\b{x}_i^{(k+1)} - \b{z}_i + \b{u}_i^{(k)}\big\|_2^2 \Big), \nonumber \\
&~~~~~~~~~~~~~~~~~~~~~~~~~~~~~~~~~~~~~~~~~~~~~~~~ \forall i \in \{1, \dots, m\},  \label{equation_ADMM_MadeDistirbuted_multivariate_z_update} \\
& \b{u}_i^{(k+1)} := \b{u}_i^{(k)} + \rho (\b{x}_i^{(k+1)} + \b{z}_i^{(k+1)}),\, \forall i \in \{1, \dots, m\}. \nonumber
\end{align}
Comparing Eqs. (\ref{equation_ADMM_MadeDistirbuted_multivariate_x_update}) and (\ref{equation_ADMM_MadeDistirbuted_multivariate_z_update}) with Eq. (\ref{equation_proximal_mapping_scaled}) shows that these ADMM updates can be written as proximal mappings:
\begin{align}
& \b{x}_i^{(k+1)} := \textbf{prox}_{\frac{1}{\rho} f_i}(\b{z}_i^{(k)} - \b{u}_i^{(k)}),\, \forall i \in \{1, \dots, m\}, \nonumber\\
& \b{z}_i^{(k+1)} := \textbf{prox}_{\frac{1}{\rho} \phi_i}(\b{x}_i^{(k+1)} + \b{u}_i^{(k)}),\, \forall i \in \{1, \dots, m\}, \label{equation_ADMM_MadeDistirbuted_multivariate_z_update_prox}\\
& \b{u}_i^{(k+1)} := \b{u}_i^{(k)} + \rho (\b{x}_i^{(k+1)} + \b{z}_i^{(k+1)}),\, \forall i \in \{1, \dots, m\}, \nonumber
\end{align}
if we notice that $\|\b{x}_i^{(k+1)} - \b{z}_i + \b{u}_i^{(k)}\|_2^2 = \|\b{z}_i - \b{x}_i^{(k+1)} - \b{u}_i^{(k)}\|_2^2$.
Note that in many papers, such as \cite{otero2018alternate}, we only have $m=1$. In that case, we only have two primal variables $\b{x}$ and $\b{z}$. 

According to Lemma \ref{lemma_projection_onto_set}, as the function $\phi_i(.)$ is an indicator function, Eq. (\ref{equation_ADMM_MadeDistirbuted_multivariate_z_update_prox}) can be implemented by projection onto the set $\mathcal{S}_i$:
\begin{align*}
\b{z}_i^{(k+1)} := \Pi_{\mathcal{S}_i}(\b{x}_i^{(k+1)} + \b{u}_i^{(k)}),\, \forall i \in \{1, \dots, m\}.
\end{align*}
As an example, assume the variables are all matrices so we have $\b{X}_i$, $\b{Z}_i$, and $\b{U}_i$.
if the set $\mathcal{S}_i$ is the cone of orthogonal matrices, the constraint $\b{X}_i \in \mathcal{S}_i$ would be $\b{X}_i^\top \b{X}_i = \b{I}$. In this case, the update of matrix variable $\b{Z}_i$ would be done by setting the singular values of $(\b{x}_i^{(k+1)} + \b{u}_i^{(k)})$ to one (see Lemma \ref{lemma_projection_onto_orthogonal_cone}).

\section{Additional Notes}

There exist some other optimization methods which we have not covered in this paper, for brevity. Here, we review some of them. 

\subsection{Cutting-Plane Methods}
Cutting plane methods, also called the localization methods, are a family of methods which start with a large feasible set containing the solution to be found. Then, iteratively they reduce the feasible set by cutting off some piece of it \cite{boyd2007localization}. For example, a cutting-plane method starts with a polyhedron feasible set. It finds a plane at every iteration which divides the feasible sets into two disjoint parts one of which contains the ultimate solution. It gets rid of the part without solution and reduces the volume of the polyhedron. This is repeated until the polyhedron feasible set becomes very small and converges to the solution. This is somewhat a generalization of the \textit{bisection method}, also called the \textit{binary search method}, which was used for root-finding \cite{burden1963numerical} but later it was used for convex optimization. The bisection method halves the feasible set and removes the part without the solution, at every iteration (see Algorithm \ref{algorithm_bisection}). 
Some of the important cutting-plane methods are \textit{center of gravity method}, \textit{Maximum Volume Ellipsoid (MVE) cutting-plane method}, \textit{Chebyshev center cutting-plane method}, and \textit{Analytic Center Cutting-Plane Method (ACCPM)} \cite{goffin1993computation,nesterov1995cutting,atkinson1995cutting}. 
Similar to subgradient methods, cutting-plane methods can be used for optimizing non-smooth functions.

\subsection{Ellipsoid Method}
Ellipsoid method was developed by several people \cite{wolfe1980invited,rebennack2009ellipsoid}.
It was proposed in \cite{shor1977cut,yudin1976informational,yudin1977evaluation,yudin1977optimization} and it was initially applied to liner programming in a famous paper \cite{khachiyan1979polynomial}. It is similar to cutting-plane methods in cutting some part of feasible set iteratively. At every iteration, it finds an ellipsoid centered at the current solution:
\begin{align*}
\mathcal{E}(\b{x}^{(k)}, \b{P}) := \{\b{z}\, |\, (\b{z} - \b{x}^{(k)})^\top \b{P}^{-1} (\b{z} - \b{x}^{(k)}) \leq 1\},
\end{align*}
where $\b{P} \in \mathbb{S}_{++}^d$ (is symmetric positive definite). It removes half of the ellipsoid which does not contain the solution. Again, another ellipsoid is found at the updated solution. This is repeated until the ellipsoid of iteration is very small and converges to the solution.





\subsection{Minimax and Maximin Problems}


Consider a function of two variables, $f(\b{x}_1, \b{x}_2)$, and the following optimization problem:
\begin{equation}\label{equation_optimization_problem_minimax}
\begin{aligned}
& \underset{\b{x}_1}{\text{minimize}}~\Big(\underset{\b{x}_2}{\text{maximize}}~f(\b{x}_1, \b{x}_2)\Big).
\end{aligned}
\end{equation}
In this problem, we want to minimize the function w.r.t. one of the variables and maximize it w.r.t the other variable. This optimization problem is called the \textit{minimax problem}. 
We can change the order of this problem to have the so-called \textit{maximin problem}:
\begin{equation}\label{equation_optimization_problem_maximin}
\begin{aligned}
& \underset{\b{x}_1}{\text{maximize}}~\Big(\underset{\b{x}_2}{\text{minimize}}~f(\b{x}_1, \b{x}_2)\Big).
\end{aligned}
\end{equation}
Note that under certain conditions, the minimax and maximin problems are equivalent if the variables of maximization (or minimization) stay the same. In other words, under some conditions, we have \cite{du2013minimax}:
\begin{align*}
\underset{\b{x}_1}{\text{minimize}}\,(&\underset{\b{x}_2}{\text{maximize}}\,f(\b{x}_1, \b{x}_2)) \\
&= \underset{\b{x}_2}{\text{maximize}}\,(\underset{\b{x}_1}{\text{minimize}}\,f(\b{x}_1, \b{x}_2)).
\end{align*}
In the minimax and maximin problems, the two variables have conflicting or contrastive desires; one of them wants to maximize the function while the other wants to minimize it. Hence, they are widely used in the field of game theory as important strategies \cite{aumann1972some}. 

\SetAlCapSkip{0.5em}
\IncMargin{0.8em}
\begin{algorithm2e}[!t]
\DontPrintSemicolon
    Input: $l$ and $u$\;
    \For{iteration $k = 0, 1, \dots$}{
        $\b{x}^{(k+1)} := \frac{l+u}{2}$\;
        \uIf{$\nabla f(\b{x}) < 0$}{
            $l := \b{x}$\;
        }
        \Else{
            $\b{u} := \b{x}$\;
        }
        Check the convergence criterion\;
        \If{converged}{
            \textbf{return} $\b{x}^{(k+1)}$\;
        }
    }
\caption{The bisection algorithm}\label{algorithm_bisection}
\end{algorithm2e}
\DecMargin{0.8em}

\subsection{Riemannian Optimization}

In this paper, we covered optimization methods in the Euclidean space. The Euclidean optimization methods can be slightly revised to have optimization on (possibly curvy) Riemannian manifolds. 
\textit{Riemannian optimization} \cite{absil2009optimization,boumal2020introduction} optimizes a cost function while the variable lies on a Riemannian manifold $\mathcal{M}$. The optimization variable in the Riemannian optimization is usually matrix rather than vector; hence, Riemannian optimization is also called \textit{optimization on matrix manifolds}. The Riemannian optimization problem is formulated as:
\begin{equation}\label{equation_Riemannian_optimization}
\begin{aligned}
& \underset{\b{X}}{\text{minimize}}
& & f(\b{X}) \\
& \text{subject to}
& & \b{X} \in \mathcal{M}.
\end{aligned}
\end{equation}
Most of the Euclidean first-order and second-order optimization methods have their Riemannian optimization variants by some changes in the formulation of methods. 
In the Riemannian optimization methods, the solution lies on the manifold. At every iteration, the descent direction is calculated in the tangent space on the manifold at the current solution. Then, the updated solution in the tangent space is retracted (i.e., projected) onto the curvy manifold \cite{hosseini2020recent,hu2020brief}. This procedure is repeated until convergence to the final solution. 
Some well-known Riemannian manifolds which are used for optimization are Symmetric Positive Definite (SPD) \cite{sra2015conic}, quotient \cite{lee2013quotient}, Grassmann \cite{bendokat2020grassmann}, and Stiefel \cite{edelman1998geometry} manifolds.

\subsection{Metaheuristic Optimization}\label{section_metaheuristic_optimization}

In this paper, we covered classical optimization methods. 
There are some other optimization methods, called \textit{metaheuristic optimization} \cite{talbi2009metaheuristics}, which are a family of methods finding the optimum of a cost function using efficient, and not brute-force, search. 
They fall in the field of \textit{soft computing} and can be used in highly non-convex optimization with many constraints, where classical optimization is a little difficult and slow to perform. 
Some very well-known categories of the Metaheuristic optimization methods are \textit{nature-inspired optimization} \cite{yang2010nature}, \textit{evolutionary computing} \cite{simon2013evolutionary}, and \textit{particle-based optimization}. 
The general shared idea of metaheuristic methods is as follows. We perform local search by some particles in various parts of the feasible set. Wherever a smaller cost was found, we tend to do more local search in that area; although, we should also keep exploring other areas because that better area might be just a local optimum. Hence, both local and global searches are used for exploitation and exploration of the cost function, respectively. 

There exist many metaheuristic methods.
Two popular and fundamental ones are Genetic Algorithm (GA) \cite{holland1992adaptation} and Particle Swarm Optimization (PSO) \cite{kennedy1995particle}. GA, which is an evolutionary method inspired by natural selection, uses chromosomes as particles where the particles tend to cross-over (or marry) with better particles in terms of smaller cost function. This results in a better next generation of particles. Mutations are also done for global search and exploration. Iterations of making new generations results in very good particles finding the optimum. 

PSO is a nature-inspired method whose particles can be seen as a herd fishes or birds. For better understanding, assume particles are humans digging ground in a vast area to find treasure. They find various things which differ in terms of value. When someone finds a roughly good thing, people tend to move toward that person bu also tend to search more than before, around themselves at the same time. Hence, they move in a combined direction tending a little more toward the better found object. The fact that they still dig around themselves is that the found object may not be the real treasure (i.e., may be a local optimum) so they also search around for the sake of exploration. 

\section{Conclusion}\label{section_conclusion}

This paper was a tutorial and survey paper on KKT conditions, numerical optimization (both first-order and second-order methods), and distributed optimization. We covered various optimization algorithms in this paper. Reading it can be useful for different people in different fields of science and engineering. We did not assume much on the background of reader and explained methods in detail. 

\section*{Acknowledgement}

The authors hugely thank Prof. Stephen Boyd for his great courses Convex Optimization 1 and 2 of Stanford University available on YouTube (The course Convex Optimization 1 mostly focuses on second-order and interior-point methods and the course Convex Optimization 2 focuses on more advanced non-convex optimization, non-smooth optimization, ellipsoid method, distributed optimization, proximal algorithms, and some first-order methods). 
They also thank Prof. Kimon Fountoulakis, Prof. James Geelen, Prof. Oleg Michailovich, Prof. Massoud Babaie-Zadeh, Prof. Lieven Vandenberghe, Prof. Mark Schmidt, Prof. Ryan Tibshirani, Prof. Reshad Hosseini, Prof. Saeed Sharifian, and some other professors whose lectures partly covered some materials and proofs mentioned in this tutorial paper. 
The great books of Prof. Yurii Nesterov \cite{nesterov1998introductory,nesterov2003introductory} were also influential on some of proofs.

\appendix

\section{Proofs for Section \ref{section_preliminaries}}

\subsection{Proof for Lemma \ref{lemma_fundamental_theorem_calculus_corollary}}\label{app_fundamental_theorem_calculus_corollary}

\begin{align*}
&f(\b{y}) \overset{(\ref{equation_fundamental_theorem_calculus})}{=} f(\b{x}) + \nabla f(\b{x})^\top (\b{y} - \b{x}) \\
&~~~~~~+ \int_0^1 \Big(\nabla f\big(\b{x} + t(\b{y} - \b{x})\big) - \nabla f(\b{x}) \Big)^\top (\b{y} - \b{x}) dt \\
&\overset{(a)}{\leq} f(\b{x}) + \nabla f(\b{x})^\top (\b{y} - \b{x}) \\
&~~~+ \int_0^1 \|\nabla f\big(\b{x} + t(\b{y} - \b{x})\big) - \nabla f(\b{x}) \|_2 \|\b{y} - \b{x}\|_2 dt \\
&\overset{(b)}{\leq} f(\b{x}) + \nabla f(\b{x})^\top (\b{y} - \b{x}) + \int_0^1 L t \|\b{y} - \b{x}\|_2^2 dt \\
&= f(\b{x}) + \nabla f(\b{x})^\top (\b{y} - \b{x}) + L \|\b{y} - \b{x}\|_2^2 \int_0^1 t dt \\
&= f(\b{x}) + \nabla f(\b{x})^\top (\b{y} - \b{x}) + \frac{L}{2} \|\b{y} - \b{x}\|_2^2,
\end{align*}
where $(a)$ is because of the Cauchy-Schwarz inequality and $(b)$ is because, according to Eq. (\ref{equation_gradient_L_smooth}), we have $\|\nabla f(\b{x} + t(\b{y} - \b{x})) - \nabla f(\b{x}) \|_2 = L \|\b{x} + t(\b{y} - \b{x}) - \b{x}\|_2 = L t \|\b{y} - \b{x}\|_2$. Q.E.D.

\subsection{Proof for Lemma \ref{lemma_function_and_gradient_difference_bounds}}\label{app_function_and_gradient_difference_bounds}

\textbf{-- Proof for the first equation:}
As $f(.)$ is convex, according to Eq. (\ref{equation_convex_function_firstDerivative}), 
\begin{align}
&f(\b{z}) - f(\b{y}) \geq \nabla f(\b{y})^\top (\b{z} - \b{y}) \nonumber \\
&\implies f(\b{y}) - f(\b{z}) \leq \nabla f(\b{y})^\top (\b{y} - \b{z}). \label{equation_fz_fy_gradf_z_y}
\end{align}
\begin{align}\label{equation_fy_fz_fz_fx}
&f(\b{y}) - f(\b{x}) = \big(f(\b{y}) - f(\b{z})\big) + \big(f(\b{z}) - f(\b{x})\big).
\end{align}
Also, according to Eq. (\ref{equation_fundamental_theorem_calculus_Lipschitz}), we have:
\begin{align}\label{equation_fz_fx_gradf_z_x_L_z_x}
&f(\b{z}) - f(\b{x}) \leq \nabla f(\b{x})^\top (\b{z} - \b{x}) + \frac{L}{2} \|\b{z} - \b{x}\|_2^2.
\end{align}
Using Eqs. (\ref{equation_fz_fy_gradf_z_y}) and (\ref{equation_fz_fx_gradf_z_x_L_z_x}) in Eq. (\ref{equation_fy_fz_fz_fx}) gives:
\begin{align*}
f(\b{y}) - f(\b{x}) \leq &\,\nabla f(\b{y})^\top (\b{y} - \b{z}) + \nabla f(\b{x})^\top (\b{z} - \b{x}) \\
&+ \frac{L}{2} \|\b{z} - \b{x}\|_2^2.
\end{align*}
For this, we can minimize this upper-bound (the right-hand side) by setting its derivative w.r.t. $\b{z}$ to zero. It gives:
\begin{align*}
\b{z} = \b{x} - \frac{1}{L} \big(\nabla f(\b{x}) - \nabla f(\b{y})\big).
\end{align*}
Putting this in the upper-bound gives:
\begin{align*}
&f(\b{y}) - f(\b{x}) \leq \nabla f(\b{y})^\top (\b{y} - \b{x}) \\
&+ \frac{1}{L} \nabla f(\b{y})^\top (\nabla f(\b{x}) - \nabla f(\b{y})) \\
&- \frac{1}{L} \nabla f(\b{x})^\top (\nabla f(\b{x}) - \nabla f(\b{y})) \\
&+ \frac{L}{2} \frac{1}{L^2} \|\nabla f(\b{x}) - \nabla f(\b{y})\|_2^2 \\
&\overset{(a)}{=} \nabla f(\b{y})^\top (\b{y} - \b{x}) - \frac{1}{L} \|\nabla f(\b{x}) - \nabla f(\b{y})\|_2^2 \\
&+ \frac{1}{2L} \|\nabla f(\b{x}) - \nabla f(\b{y})\|_2^2 \\
&= \nabla f(\b{y})^\top (\b{y} - \b{x}) - \frac{1}{2L} \|\nabla f(\b{x}) - \nabla f(\b{y})\|_2^2,
\end{align*}
were $(a)$ is because $\|\nabla f(\b{x}) - \nabla f(\b{y})\|_2^2 = (\nabla f(\b{x}) - \nabla f(\b{y}))^\top (\nabla f(\b{x}) - \nabla f(\b{y}))$. 

\textbf{-- Proof for the second equation:}
If we exchange the points $\b{x}$ and $\b{y}$ in Eq. (\ref{equation_lemma_fy_fx_grady_y_x}), we have:
\begin{align}
f(\b{x}) - f(\b{y}) \leq &\,\nabla f(\b{x})^\top (\b{x} - \b{y}) \nonumber \\
&- \frac{1}{2 L} \|\nabla f(\b{y}) - \nabla f(\b{x})\|_2^2. \label{equation_lemma_fy_fx_grady_y_x_2}
\end{align}
Adding Eqs. (\ref{equation_lemma_fy_fx_grady_y_x}) and (\ref{equation_lemma_fy_fx_grady_y_x_2}) gives:
\begin{align*}
0 \leq &\,(\nabla f(\b{y}) - \nabla f(\b{x}))^\top (\b{y} - \b{x}) - \frac{1}{L} \|\nabla f(\b{y}) - \nabla f(\b{x})\|_2^2.
\end{align*}
Q.E.D.

\subsection{Proof for Lemma \ref{lemma_global_min_convex_function}}\label{appendix_lemma_global_min_convex_function}

Consider any $\b{y} \in \mathcal{D}$. We define $\b{z} := \alpha \b{y} + (1 - \alpha) \b{x}$. We choose a small enough $\alpha$ to have $\|\b{z} - \b{x}\|_2 \leq \epsilon$. Hence:
\begin{align*}
\epsilon &\geq \|\b{z} - \b{x}\|_2 = \|\alpha \b{y} + (1 - \alpha) \b{x}-\b{x}\|_2 = \|\alpha \b{y} - \alpha \b{x}\|_2 \\
&= \alpha \|\b{y} - \b{x}\|_2 \implies \alpha \leq \frac{\epsilon}{\|\b{y} - \b{x}\|_2}.
\end{align*}
As $\alpha \in [0,1]$, we should have $0 \leq \alpha \min(\epsilon / \|\b{y} - \b{x}\|_2, 1)$.
As $\b{x}$ is a local minimizer, according to Eq. (\ref{equation_local_minimizer}), we have $\exists\, \epsilon > 0 : \forall \b{z} \in \mathcal{D},\, \|\b{z} - \b{x}\|_2 \leq \epsilon \implies f(\b{x}) \leq f(\b{z})$.
As the function is convex, according to Eq. (\ref{equation_convex_function}), we have $f(\b{z}) = f\big(\alpha \b{y} + (1-\alpha) \b{x}\big) \leq \alpha f(\b{y}) + (1-\alpha) f(\b{x})$ (n.b. we have exchanged the variables $\b{x}$ and $\b{y}$ in Eq. (\ref{equation_convex_function})).
Hence, overall, we have:
\begin{align*}
&f(\b{x}) \leq f(\b{z}) \leq \alpha f(\b{y}) + (1-\alpha) f(\b{x}) \\
&\implies f(\b{x}) - (1-\alpha) f(\b{x}) \leq \alpha f(\b{y}) \\
&\implies \alpha f(\b{x}) \leq \alpha f(\b{y}) \implies f(\b{x}) \leq f(\b{y}), \forall \b{y} \in \mathcal{D}.
\end{align*}
So, $\b{x}$ is the global minimizer. Q.E.D.

\subsection{Proof for Lemma \ref{lemma_minimizer_gradient_zero}}\label{appendix_lemma_minimizer_gradient_zero}

\textbf{-- Proof for side} ($\b{x}^*$ is minimizer $\implies$ $\nabla f(\b{x}^*) = \b{0}$):

According to the definition of directional derivative, we have:
\begin{align*}
\nabla f(\b{x})^\top (\b{y} - \b{x}) = \lim_{t \rightarrow 0} \frac{f(\b{x} + t(\b{y} - \b{x})) - f(\b{x})}{t}.
\end{align*}
For a minimizer $\b{x}^*$, we have $f(\b{x}^*) \leq f(\b{x}^* + t(\b{y} - \b{x}^*))$ because it minimizes $f(.)$ for a neighborhood around it (n.b. $\b{x}^* + t(\b{y} - \b{x}^*)$ is a neighborhood of $\b{x}^*$ because $t$ tends to zero). Hence, we have:
\begin{align*}
0 &\leq \lim_{t \rightarrow 0} \frac{f(\b{x}^* + t(\b{y} - \b{x}^*)) - f(\b{x}^*)}{t} \\
&= \nabla f(\b{x}^*)^\top (\b{y} - \b{x}^*).
\end{align*}
As $\b{y}$ can be any point in the domain $\mathcal{D}$, we can choose it to be $\b{y} = \b{x}^* - \nabla f(\b{x}^*)$ so we have $\nabla f(\b{x}^*) = \b{x}^* - \b{y}$ and therefore:
\begin{align*}
0 &\leq \nabla f(\b{x}^*)^\top (\b{y} - \b{x}^*) = - \nabla f(\b{x}^*)^\top \nabla f(\b{x}^*) \\
&= - \|\nabla f(\b{x}^*)\|_2^2 \leq 0 \implies \nabla f(\b{x}^*) = \b{0}. 
\end{align*}

\textbf{-- Proof for side} ($\nabla f(\b{x}^*) = \b{0}$ $\implies$ $\b{x}^*$ is minimizer):

As the function $f(.)$ is convex, according to Eq. (\ref{equation_convex_function_firstDerivative}), we have:
\begin{align*}
f(\b{y}) \geq f(\b{x}^*) + \nabla f(\b{x}^*)^\top (\b{y} - \b{x}^*),
\end{align*}
$\forall \b{y} \in \mathcal{D}$. As we have $\nabla f(\b{x}^*) = \b{0}$, we can say $f(\b{y}) \geq f(\b{x}^*), \forall \b{y}$. So, $\b{x}^*$ is the global minimizer.

\subsection{Proof for Lemma \ref{lemma_first_order_optimality_condition}}\label{app_first_order_optimality_condition}

As $\b{x}^*$ is a local minimizer, according to Eq. (\ref{equation_local_minimizer}), we have $\exists\, \epsilon > 0 : \forall \b{y} \in \mathcal{D},\, \|\b{y} - \b{x}^*\|_2 \leq \epsilon \implies f(\b{x}^*) \leq f(\b{y})$.
Also, by Eq. (\ref{equation_fundamental_theorem_calculus}), we have $f(\b{y}) = f(\b{x}^*) + \nabla f(\b{x}^*)^\top (\b{y} - \b{x}^*) + o(\b{y} - \b{x}^*)$. From these two, we have:
\begin{align*}
&f(\b{x}^*) \leq f(\b{x}^*) + \nabla f(\b{x}^*)^\top (\b{y} - \b{x}^*) + o(\b{y} - \b{x}^*) \\
&\implies \nabla f(\b{x}^*)^\top (\b{y} - \b{x}^*) \geq 0.
\end{align*}
As $\b{y}$ can be any point in the domain $\mathcal{D}$, we can choose it to be $\b{y} = \b{x}^* - \nabla f(\b{x}^*)$ so we have $\nabla f(\b{x}^*) = \b{x}^* - \b{y}$ and therefore $\nabla f(\b{x}^*)^\top (\b{y} - \b{x}^*) = -\|\nabla f(\b{x}^*)\|_2^2 \geq 0$. Hence, $\nabla f(\b{x}^*) = 0$. Q.E.D.

\section{Proofs for Section \ref{section_first_order_methods}}

\subsection{Proof for Lemma \ref{lemma_time_complexity_line_search}}\label{app_time_complexity_line_search}

Because we halve the step size every time, after $\tau$ internal iterations of line-search, we have:
\begin{align*}
\eta^{(\tau)} := (\frac{1}{2})^\tau \eta^{(\tau)} = (\frac{1}{2})^\tau,
\end{align*}
where $\eta^{(\tau)} = 1$ is the initial step size. 
According to Eq. (\ref{equation_GD_step_size_eta_L_2}), we have:
\begin{align*}
&\eta^{(\tau)} = (\frac{1}{2})^\tau < \frac{1}{L} \overset{(a)}{\implies} \log_{\frac{1}{2}} (\frac{1}{2})^\tau > \log_{\frac{1}{2}} \frac{1}{L} \\
&\implies \tau > \log_{\frac{1}{2}} \frac{1}{L} = \frac{\log \frac{1}{L}}{\log \frac{1}{2}} = -\frac{\log \frac{1}{L}}{\log 2} \\
&= -\frac{1}{\log 2} (\log 1 - \log L) = \frac{\log L}{\log 2}.
\end{align*}
Q.E.D.

\subsection{Proof for Theorem \ref{theorem_GD_convergence_rate}}\label{app_GD_convergence_rate}

We re-arrange Eq. (\ref{equation_GD_decrease_cost}):
\begin{align}
&\|\nabla f(\b{x}^{(k)})\|_2^2 \leq 2L \big(f(\b{x}^{(k)}) - f(\b{x}^{(k+1)})\big), \forall k, \nonumber \\
&\implies \sum_{k=0}^t \|\nabla f(\b{x}^{(k)})\|_2^2 \leq 2L \sum_{k=0}^t \big(f(\b{x}^{(k)}) - f(\b{x}^{(k+1)})\big). \label{equation_proof_GD_convergence_rate_1}
\end{align}
The right-hand side of Eq. (\ref{equation_proof_GD_convergence_rate_1}) is a telescopic summation:
\begin{align*}
&\sum_{k=0}^t \big(f(\b{x}^{(k)}) - f(\b{x}^{(k+1)})\big) = f(\b{x}^{(0)}) - f(\b{x}^{(1)}) \\
&~~~~~+ f(\b{x}^{(1)}) - f(\b{x}^{(2)}) + \dots - f(\b{x}^{(t)}) + f(\b{x}^{(t)}) \\
&~~~~~- f(\b{x}^{(t+1)}) = f(\b{x}^{(0)}) - f(\b{x}^{(t+1)}).
\end{align*}
The left-hand side of Eq. (\ref{equation_proof_GD_convergence_rate_1}) is larger than summation of its smallest term for $(t+1)$ times:
\begin{align*}
(t+1) \min_{0\leq k \leq t} \|\nabla f(\b{x}^{(k)})\|_2^2 \leq \sum_{k=0}^t \|\nabla f(\b{x}^{(k)})\|_2^2.
\end{align*}
Overall, Eq. (\ref{equation_proof_GD_convergence_rate_1}) becomes:
\begin{align}\label{equation_proof_GD_convergence_rate_2}
(t+1) \min_{0\leq k \leq t} \|\nabla f(\b{x}^{(k)})\|_2^2 \leq 2L \big( f(\b{x}^{(0)}) - f(\b{x}^{(t+1)}) \big).
\end{align}
As $f^*$ is the minimum of function, we have:
\begin{align*}
&f(\b{x}^{(t+1)}) \geq f^* \\
&\implies
2L \big( f(\b{x}^{(0)}) - f(\b{x}^{(t+1)}) \big) \leq 2L \big( f(\b{x}^{(0)}) - f^*) \big).
\end{align*}
Hence, Eq. (\ref{equation_proof_GD_convergence_rate_2}) becomes:
\begin{align*}
(t+1) \min_{0\leq k \leq t} \|\nabla f(\b{x}^{(k)})\|_2^2 \leq 2L \big( f(\b{x}^{(0)}) - f^*) \big),
\end{align*}
which gives Eq. (\ref{equation_GD_upperbound_min_norm_gradient}). The right-hand side of Eq. (\ref{equation_GD_upperbound_min_norm_gradient}) is of the order $\mathcal{O}(1/t)$, resulting in Eq. (\ref{equation_GD_norm_gradient_sublinear_rate}). 
Moreover, for convergence, we desire:
\begin{align*}
\min_{0\leq k \leq t} \|\nabla f(\b{x}^{(k)})\|_2^2 \leq \epsilon \overset{(\ref{equation_GD_upperbound_min_norm_gradient})}{\implies} \frac{2 L (f(\b{x}^{(0)}) - f^*)}{t+1} \leq \epsilon,
\end{align*}
which gives Eq. (\ref{equation_GD_t_lowerbound_for_convergence}) by re-arranging for $t$. Q.E.D.

\subsection{Proof for Theorem \ref{theorem_GD_convergence_rate_convexFunction}}\label{app_GD_convergence_rate_convexFunction}

\begin{align}
&\|\b{x}^{(k+1)} - \b{x}^*\|_2^2 \overset{(a)}{=} \big\|\b{x}^{(k)} - \b{x}^* - \frac{1}{L} \nabla f(\b{x}^{(k)})\big\|_2^2 \nonumber\\
&=\! \big(\b{x}^{(k)} - \b{x}^* - \frac{1}{L} \nabla f(\b{x}^{(k)})\big)^\top \big(\b{x}^{(k)} - \b{x}^* - \frac{1}{L} \nabla f(\b{x}^{(k)})\big) \nonumber\\
&= \|\b{x}^{(k)} - \b{x}^*\|_2^2 -\frac{2}{L} (\b{x}^{(k)} - \b{x}^*)^\top \nabla f(\b{x}^{(k)}) \nonumber\\
&~~~~~+ \frac{1}{L^2} \|\nabla f(\b{x}^{(k)})\|_2^2, \label{equation1_theorem_GD_convergence_rate_convexFunction}
\end{align}
where $(a)$ is because of Eqs. (\ref{equation_update_point_numerical_optimization}) and (\ref{equation_GD_step_by_L}).
Moreover, according to Eq. (\ref{equation_lemma_gradfy_gradfx_y_x}), we have:
\begin{align}
&\big( \nabla f(\b{x}^{(k)}) - \nabla f(\b{x}^*) \big)^\top (\b{x}^{(k)} - \b{x}^*) \nonumber\\
&~~~~~~~~~~~~~~~~~~~~~~~~~\geq \frac{1}{L} \|\nabla f(\b{x}^{(k)}) - \nabla f(\b{x}^*)\|_2^2 \nonumber\\
&\overset{(\ref{equation_first_order_optimality_condition})}{\implies} \nabla f(\b{x}^{(k)})^\top (\b{x}^{(k)} - \b{x}^*) \geq \frac{1}{L} \|\nabla f(\b{x}^{(k)})\|_2^2. \nonumber \\
&\implies -\frac{2}{L}\nabla f(\b{x}^{(k)})^\top (\b{x}^{(k)} - \b{x}^*) \leq -\frac{2}{L^2} \|\nabla f(\b{x}^{(k)})\|_2^2. \label{equation2_theorem_GD_convergence_rate_convexFunction}
\end{align}
Using Eq. (\ref{equation2_theorem_GD_convergence_rate_convexFunction}) in Eq. (\ref{equation1_theorem_GD_convergence_rate_convexFunction}) gives:
\begin{align*}
&\|\b{x}^{(k+1)} - \b{x}^*\|_2^2 \leq \|\b{x}^{(k)} - \b{x}^*\|_2^2 -\frac{2}{L^2} \|\nabla f(\b{x}^{(k)})\|_2^2 \\
&+ \frac{1}{L^2} \|\nabla f(\b{x}^{(k)})\|_2^2 = \|\b{x}^{(k)} - \b{x}^*\|_2^2 -\frac{1}{L^2} \|\nabla f(\b{x}^{(k)})\|_2^2.
\end{align*}
This shows that at every iteration of gradient descent, the distance of point to $\b{x}^*$ is decreasing; hence:
\begin{align}\label{equation3_theorem_GD_convergence_rate_convexFunction}
&\|\b{x}^{(k)} - \b{x}^*\|_2^2 \leq \|\b{x}^{(0)} - \b{x}^*\|_2^2.
\end{align}
As the function is convex, according to Eq. (\ref{equation_convex_function_firstDerivative}), we have:
\begin{align}
&f(\b{x}^*) \geq f(\b{x}^{(k)}) + \nabla f(\b{x}^{(k)})^\top (\b{x}^* - \b{x}^{(k)}) \nonumber\\
&\implies f(\b{x}^{(k)}) - f(\b{x}^*) \leq \nabla f(\b{x}^{(k)})^\top (\b{x}^{(k)} - \b{x}^*) \nonumber\\
&~~~~~~~~~~~~~~~~~~~~~~~~~~~~\overset{(a)}{\leq} \|\nabla f(\b{x}^{(k)})\|_2 \|\b{x}^{(k)} - \b{x}^*\|_2 \nonumber\\
&~~~~~~~~~~~~~~~~~~~~~~~~~~~~\overset{(\ref{equation3_theorem_GD_convergence_rate_convexFunction})}{\leq} \|\nabla f(\b{x}^{(k)})\|_2 \|\b{x}^{(0)} - \b{x}^*\|_2 \nonumber\\
&\implies \frac{-1}{2L} \frac{\big(f(\b{x}^{(k)}) - f(\b{x}^*)\big)^2}{\|\b{x}^{(0)} - \b{x}^*\|_2^2} \geq \frac{-1}{2L} \|\nabla f(\b{x}^{(k)})\|_2^2, \label{equation4_theorem_GD_convergence_rate_convexFunction}
\end{align}
where $(a)$ is because of the Cauchy-Schwarz inequality. 
Also, from Eq. (\ref{equation_GD_decrease_cost}), we have:
\begin{align}
&f(\b{x}^{(k+1)}) \leq f(\b{x}^{(k)}) -\frac{1}{2L}\|\nabla f(\b{x}^{(k)})\|_2^2 \nonumber\\
&\implies f(\b{x}^{(k+1)}) - f(\b{x}^*) \nonumber\\
&\leq f(\b{x}^{(k)}) - f(\b{x}^*) -\frac{1}{2L}\|\nabla f(\b{x}^{(k)})\|_2^2 \nonumber\\
&\overset{(\ref{equation4_theorem_GD_convergence_rate_convexFunction})}{\leq} f(\b{x}^{(k)}) - f(\b{x}^*) - \frac{1}{2L} \frac{\big(f(\b{x}^{(k)}) - f(\b{x}^*)\big)^2}{\|\b{x}^{(0)} - \b{x}^*\|_2^2}. \label{equation5_theorem_GD_convergence_rate_convexFunction}
\end{align}
We define $\delta_k := f(\b{x}^{(k)}) - f(\b{x}^*)$ and $\mu := 1 /  (2L \|\b{x}^{(0)} - \b{x}^*\|_2^2)$. 
According to Eq. (\ref{equation_GD_decrease_cost}), we have:
\begin{align}\label{equation6_theorem_GD_convergence_rate_convexFunction}
\delta_{k+1} \leq \delta_k \implies \mu\, \frac{\delta_k}{\delta_{k+1}} \geq \mu. 
\end{align}
Eq. (\ref{equation5_theorem_GD_convergence_rate_convexFunction}) can be restated as:
\begin{align*}
&\delta_{k+1} \leq \delta_k - \mu\, \delta_k^2 \implies \mu\, \frac{\delta_k}{\delta_{k+1}} \leq \frac{1}{\delta_{k+1}} - \frac{1}{\delta_k} \\
&\overset{(\ref{equation6_theorem_GD_convergence_rate_convexFunction})}{\implies} \mu \leq \frac{1}{\delta_{k+1}} - \frac{1}{\delta_k} \implies \mu \sum_{k=0}^t (1) \leq \sum_{k=0}^t (\frac{1}{\delta_{k+1}} - \frac{1}{\delta_k}).
\end{align*}
The last term is a telescopic summation; hence:
\begin{align*}
&\mu (t+1) \leq \sum_{k=0}^t (\frac{1}{\delta_{k+1}} - \frac{1}{\delta_k}) \\
&= \frac{1}{\delta_1} - \frac{1}{\delta_0} + \frac{1}{\delta_2} - \frac{1}{\delta_1} + \dots + \frac{1}{\delta_{t+1}} = \frac{1}{\delta_{t+1}} - \frac{1}{\delta_0} \\
&\overset{(a)}{\leq} \frac{1}{\delta_{t+1}} \implies \delta_{t+1} \leq \frac{1}{\mu (t+1)} \\
&\implies f(\b{x}^{(t+1)}) - f^* \leq \frac{2 L \|\b{x}^{(0)} - \b{x}^*\|_2^2}{t+1},
\end{align*}
where $(a)$ is because $\delta_0 = f(\b{x}^{(0)}) - f(\b{x}^*) \geq 0$ because $f^*$ is the minimum in the convex function. Q.E.D.

\section{Proofs for Section \ref{section_second_order_methods}}

\subsection{Proof for Theorem \ref{theorem_sub_optimality_logBarrier}}\label{app_sub_optimality_logBarrier}

The Lagrangian for problem (\ref{equation_log_barrier_optimization_withLogBarrier}) is:
\begin{align*}
&\mathcal{L}(\b{x}, \b{\nu}) = f(\b{x}) - \frac{1}{t} \sum_{i=1}^{m_1} \log(-y_i(\b{x})) + \b{\nu}^\top (\b{A} \b{x} - \b{b}).
\end{align*}
According to Eq. (\ref{equation_KKT_x_dagger}), for parameter $t$, $\b{x}^\dagger(t)$ minimizes the Lagrangian:
\begin{align}
&\nabla_{\b{x}} \mathcal{L}(\b{x}^*(t),\b{\nu}) = \nabla_{\b{x}} f(\b{x}^*(t)) \nonumber\\
&~~~~~~~~~~+ \sum_{i=1}^{m_1} \frac{-1}{t\, y_i(\b{x}^*(t))} \nabla_{\b{x}} y_i(\b{x}^*(t)) + \b{A}^\top \b{\nu} \overset{\text{set}}{=} \b{0} \nonumber\\
&\implies \nabla_{\b{x}} \mathcal{L}(\b{x}^*(t),\b{\nu}) = \nabla_{\b{x}} f(\b{x}^*(t)) \nonumber\\
&~~~~~~~~~~+ \sum_{i=1}^{m_1} \lambda_i^*(t) \nabla_{\b{x}} y_i(\b{x}^*(t)) + \b{A}^\top \b{\nu} = \b{0}, \label{equation_theorem_sub_optimality_logBarrier_derivative_Lagrangian_wrt_x}
\end{align}
where we define $\lambda_i^*(t) := -1 / (t\, y_i(\b{x}^*(t)))$. Let $\b{\lambda}^*(t) := [\lambda_1^*(t), \dots, \lambda_{m_1}^*(t)]^\top$ and let $\b{\nu}^*(t))$ be the optimal dual variable $\b{\nu}$ for parameter $t$.
We take integral from Eq. (\ref{equation_theorem_sub_optimality_logBarrier_derivative_Lagrangian_wrt_x}) w.r.t. $\b{x}$ to retrieve the Lagrangian again:
\begin{align}
&\mathcal{L}(\b{x},\b{\lambda}^*(t), \b{\nu}^*(t)) = f(\b{x}^*(t)) \nonumber \\
&~~~~~~~~~~ + \sum_{i=1}^{m_1} \lambda_i^*(t) \nabla_{\b{x}} y_i(\b{x}) + \b{\nu}^*(t)^\top (\b{A} \b{x} - \b{b}). \label{equation_theorem_sub_optimality_logBarrier_Lagrangian}
\end{align}
The dual function is:
\begin{align*}
g(\b{\lambda}, \b{\nu}) = \inf_{\b{x}} \mathcal{L}(\b{x},\b{\lambda}^*(t), \b{\nu}^*(t)).
\end{align*}
The dual problem is:
\begin{align*}
&\sup_{\b{\lambda}, \b{\nu}}\, g(\b{\lambda}, \b{\nu}) = \sup_{\b{\lambda}, \b{\nu}}\, \inf_{\b{x}}\, \mathcal{L}(\b{x},\b{\lambda}^*(t), \b{\nu}^*(t)) \\
&= \mathcal{L}(\b{x}^*(t),\b{\lambda}^*(t), \b{\nu}^*(t)) \overset{(\ref{equation_theorem_sub_optimality_logBarrier_Lagrangian})}{=} f(\b{x}^*(t)) \\
&+ \sum_{i=1}^{m_1} \lambda_i^*(t) \nabla_{\b{x}} y_i(\b{x}^*(t)) + \b{\nu}^*(t)^\top (\underbrace{\b{A} \b{x}^*(t) - \b{b}}_{=\,\b{0}}) \\
&\overset{(a)}{=} f(\b{x}^*(t)) -\frac{1}{t} \sum_{i=1}^{m_1} (1) = f(\b{x}^*(t)) - \frac{m_1}{t} = f^* - \frac{m_1}{t},
\end{align*}
where $(a)$ is because we had defined $\lambda_i^*(t) = -1 / (t\, y_i(\b{x}^*(t)))$ and $\b{A} \b{x}^*(t) - \b{b} = \b{0}$ because the point is feasible. 
According to Eq. (\ref{equation_weak_duality}) for weak duality, we have $\sup_{\b{\lambda}, \b{\nu}}\, g(\b{\lambda}, \b{\nu}) \leq f_r^*$ where $f_r^*$ is the optimum of problem (\ref{equation_log_barrier_optimization_withLogBarrier}); hence, $f^* - (m_1/t) \leq f_r^*$. On the other hand, we $f_r^*$ is the optimum, we have:
\begin{align*}
f_r^* \overset{(\ref{equation_log_barrier_optimization_withLogBarrier})}{=} f(\b{x}^*) - \frac{1}{t} \sum_{i=1}^{m_1} \log(-y_i(\b{x}^*)) \leq f(\b{x}^*) = f^*.
\end{align*}
Hence, overall, Eq. (\ref{equation_sub_optimality_logBarrier}) is obtained. Q.E.D.

\bibliography{References}

\begin{thebibliography}{170}
\providecommand{\natexlab}[1]{#1}
\providecommand{\url}[1]{\texttt{#1}}
\expandafter\ifx\csname urlstyle\endcsname\relax
  \providecommand{\doi}[1]{doi: #1}\else
  \providecommand{\doi}{doi: \begingroup \urlstyle{rm}\Url}\fi

\bibitem[Absil et~al.(2009)Absil, Mahony, and Sepulchre]{absil2009optimization}
Absil, P-A, Mahony, Robert, and Sepulchre, Rodolphe.
\newblock \emph{Optimization algorithms on matrix manifolds}.
\newblock Princeton University Press, 2009.

\bibitem[Alber et~al.(1998)Alber, Iusem, and Solodov]{alber1998projected}
Alber, Ya~I, Iusem, Alfredo~N., and Solodov, Mikhail~V.
\newblock On the projected subgradient method for nonsmooth convex optimization
  in a {Hilbert} space.
\newblock \emph{Mathematical Programming}, 81\penalty0 (1):\penalty0 23--35,
  1998.

\bibitem[Allen-Zhu et~al.(2019{\natexlab{a}})Allen-Zhu, Li, and
  Liang]{allen2019learning}
Allen-Zhu, Zeyuan, Li, Yuanzhi, and Liang, Yingyu.
\newblock Learning and generalization in overparameterized neural networks,
  going beyond two layers.
\newblock \emph{Advances in neural information processing systems},
  2019{\natexlab{a}}.

\bibitem[Allen-Zhu et~al.(2019{\natexlab{b}})Allen-Zhu, Li, and
  Song]{allen2019convergence}
Allen-Zhu, Zeyuan, Li, Yuanzhi, and Song, Zhao.
\newblock A convergence theory for deep learning via over-parameterization.
\newblock In \emph{International Conference on Machine Learning}, pp.\
  242--252, 2019{\natexlab{b}}.

\bibitem[Andrei(2019)]{andrei2019diagonal}
Andrei, Neculai.
\newblock A diagonal quasi-{Newton} updating method for unconstrained
  optimization.
\newblock \emph{Numerical Algorithms}, 81\penalty0 (2):\penalty0 575--590,
  2019.

\bibitem[Armijo(1966)]{armijo1966minimization}
Armijo, Larry.
\newblock Minimization of functions having {Lipschitz} continuous first partial
  derivatives.
\newblock \emph{Pacific Journal of mathematics}, 16\penalty0 (1):\penalty0
  1--3, 1966.

\bibitem[Atkinson \& Vaidya(1995)Atkinson and Vaidya]{atkinson1995cutting}
Atkinson, David~S and Vaidya, Pravin~M.
\newblock A cutting plane algorithm for convex programming that uses analytic
  centers.
\newblock \emph{Mathematical Programming}, 69\penalty0 (1):\penalty0 1--43,
  1995.

\bibitem[Aumann \& Maschler(1972)Aumann and Maschler]{aumann1972some}
Aumann, Robert~J and Maschler, Michael.
\newblock Some thoughts on the minimax principle.
\newblock \emph{Management Science}, 18\penalty0 (5-part-2):\penalty0 54--63,
  1972.

\bibitem[Avriel(2003)]{avriel2003nonlinear}
Avriel, Mordecai.
\newblock \emph{Nonlinear programming: analysis and methods}.
\newblock Courier Corporation, 2003.

\bibitem[Banaschewski \& Maranda(1961)Banaschewski and
  Maranda]{banaschewski1961proximity}
Banaschewski, Bernhard and Maranda, Jean-Marie.
\newblock Proximity functions.
\newblock \emph{Mathematische Nachrichten}, 23\penalty0 (1):\penalty0 1--37,
  1961.

\bibitem[Bauschke \& Borwein(1996)Bauschke and Borwein]{bauschke1996projection}
Bauschke, Heinz~H and Borwein, Jonathan~M.
\newblock On projection algorithms for solving convex feasibility problems.
\newblock \emph{SIAM review}, 38\penalty0 (3):\penalty0 367--426, 1996.

\bibitem[Beck(2017)]{beck2017first}
Beck, Amir.
\newblock \emph{First-order methods in optimization}.
\newblock SIAM, 2017.

\bibitem[Beck \& Teboulle(2009)Beck and Teboulle]{beck2009fast}
Beck, Amir and Teboulle, Marc.
\newblock A fast iterative shrinkage-thresholding algorithm for linear inverse
  problems.
\newblock \emph{SIAM journal on imaging sciences}, 2\penalty0 (1):\penalty0
  183--202, 2009.

\bibitem[Benders(1962)]{benders1962partitioning}
Benders, Jacques~F.
\newblock Partitioning procedures for solving mixed-variables programming
  problems.
\newblock \emph{Numerische mathematik}, 4\penalty0 (1):\penalty0 238--252,
  1962.

\bibitem[Bendokat et~al.(2020)Bendokat, Zimmermann, and
  Absil]{bendokat2020grassmann}
Bendokat, Thomas, Zimmermann, Ralf, and Absil, P-A.
\newblock A {Grassmann} manifold handbook: Basic geometry and computational
  aspects.
\newblock \emph{arXiv preprint arXiv:2011.13699}, 2020.

\bibitem[Bertsekas(1982)]{bertsekas1982method}
Bertsekas, Dimitri~P.
\newblock The method of multipliers for equality constrained problems.
\newblock \emph{Constrained optimization and Lagrange multiplier methods}, pp.\
   96--157, 1982.

\bibitem[Boggs \& Tolle(1995)Boggs and Tolle]{boggs1995sequential}
Boggs, Paul~T and Tolle, Jon~W.
\newblock Sequential quadratic programming.
\newblock \emph{Acta numerica}, 4:\penalty0 1--51, 1995.

\bibitem[Bottou et~al.(2018)Bottou, Curtis, and
  Nocedal]{bottou2018optimization}
Bottou, L{\'e}on, Curtis, Frank~E, and Nocedal, Jorge.
\newblock Optimization methods for large-scale machine learning.
\newblock \emph{SIAM Review}, 60\penalty0 (2):\penalty0 223--311, 2018.

\bibitem[Bottou et~al.(1998)]{bottou1998online}
Bottou, L{\'e}on et~al.
\newblock Online learning and stochastic approximations.
\newblock \emph{On-line learning in neural networks}, 17\penalty0 (9):\penalty0
  142, 1998.

\bibitem[Boumal(2020)]{boumal2020introduction}
Boumal, Nicolas.
\newblock \emph{An introduction to optimization on smooth manifolds}.
\newblock Available online, 2020.

\bibitem[Boyd \& Mutapcic(2008)Boyd and Mutapcic]{boyd2008stochastic}
Boyd, Stephen and Mutapcic, Almir.
\newblock Stochastic subgradient methods.
\newblock Technical report, Lecture Notes for EE364b, Stanford University,
  2008.

\bibitem[Boyd \& Vandenberghe(2004)Boyd and Vandenberghe]{boyd2004convex}
Boyd, Stephen and Vandenberghe, Lieven.
\newblock \emph{Convex optimization}.
\newblock Cambridge university press, 2004.

\bibitem[Boyd \& Vandenberghe(2007)Boyd and Vandenberghe]{boyd2007localization}
Boyd, Stephen and Vandenberghe, Lieven.
\newblock Localization and cutting-plane methods.
\newblock Technical report, Stanford EE 364b lecture notes, 2007.

\bibitem[Boyd et~al.(2011)Boyd, Parikh, and Chu]{boyd2011distributed}
Boyd, Stephen, Parikh, Neal, and Chu, Eric.
\newblock \emph{Distributed optimization and statistical learning via the
  alternating direction method of multipliers}.
\newblock Now Publishers Inc, 2011.

\bibitem[Broyden(1965)]{broyden1965class}
Broyden, Charles~G.
\newblock A class of methods for solving nonlinear simultaneous equations.
\newblock \emph{Mathematics of computation}, 19\penalty0 (92):\penalty0
  577--593, 1965.

\bibitem[Bubeck(2014)]{bubeck2014convex}
Bubeck, S{\'e}bastien.
\newblock Convex optimization: Algorithms and complexity.
\newblock \emph{arXiv preprint arXiv:1405.4980}, 2014.

\bibitem[Burden \& Faires(1963)Burden and Faires]{burden1963numerical}
Burden, Richard~L. and Faires, J.~Douglas.
\newblock \emph{Numerical Analysis}.
\newblock PWS Publishers, 1963.

\bibitem[Chen \& Ozdaglar(2012)Chen and Ozdaglar]{chen2012fast}
Chen, Annie~I and Ozdaglar, Asuman.
\newblock A fast distributed proximal-gradient method.
\newblock In \emph{2012 50th Annual Allerton Conference on Communication,
  Control, and Computing (Allerton)}, pp.\  601--608. IEEE, 2012.

\bibitem[Chen et~al.(2021)Chen, Yuan, Garrigos, and Gower]{chen2021san}
Chen, Jiabin, Yuan, Rui, Garrigos, Guillaume, and Gower, Robert~M.
\newblock {SAN}: Stochastic average {Newton} algorithm for minimizing finite
  sums.
\newblock \emph{arXiv preprint arXiv:2106.10520}, 2021.

\bibitem[Chong \& Zak(2004)Chong and Zak]{chong2004introduction}
Chong, Edwin~KP and Zak, Stanislaw~H.
\newblock \emph{An introduction to optimization}.
\newblock John Wiley \& Sons, 2004.

\bibitem[Chong(2021)]{chong2021MH}
Chong, Yidong~D.
\newblock Complex methods for the sciences.
\newblock Technical report, Nanyang Technological University, 2021.

\bibitem[Conn et~al.(1991)Conn, Gould, and Toint]{conn1991convergence}
Conn, Andrew~R, Gould, Nicholas~IM, and Toint, Ph~L.
\newblock Convergence of quasi-newton matrices generated by the symmetric rank
  one update.
\newblock \emph{Mathematical programming}, 50\penalty0 (1):\penalty0 177--195,
  1991.

\bibitem[Conn et~al.(2000)Conn, Gould, and Toint]{conn2000trust}
Conn, Andrew~R, Gould, Nicholas~IM, and Toint, Philippe~L.
\newblock \emph{Trust region methods}.
\newblock SIAM, 2000.

\bibitem[Curry(1944)]{curry1944method}
Curry, Haskell~B.
\newblock The method of steepest descent for non-linear minimization problems.
\newblock \emph{Quarterly of Applied Mathematics}, 2\penalty0 (3):\penalty0
  258--261, 1944.

\bibitem[Dai \& Yuan(1999)Dai and Yuan]{dai1999nonlinear}
Dai, Yu-Hong and Yuan, Yaxiang.
\newblock A nonlinear conjugate gradient method with a strong global
  convergence property.
\newblock \emph{SIAM Journal on optimization}, 10\penalty0 (1):\penalty0
  177--182, 1999.

\bibitem[Dantzig(1963)]{dantzig1963linear}
Dantzig, George.
\newblock \emph{Linear programming and extensions}.
\newblock Princeton university press, 1963.

\bibitem[Dantzig(1983)]{dantzig1983reminiscences}
Dantzig, George~B.
\newblock Reminiscences about the origins of linear programming.
\newblock In \emph{Mathematical Programming The State of the Art}, pp.\
  78--86. Springer, 1983.

\bibitem[Dantzig \& Wolfe(1960)Dantzig and Wolfe]{dantzig1960decomposition}
Dantzig, George~B and Wolfe, Philip.
\newblock Decomposition principle for linear programs.
\newblock \emph{Operations research}, 8\penalty0 (1):\penalty0 101--111, 1960.

\bibitem[Davidon(1991)]{davidon1991variable}
Davidon, William~C.
\newblock Variable metric method for minimization.
\newblock \emph{SIAM Journal on Optimization}, 1\penalty0 (1):\penalty0 1--17,
  1991.

\bibitem[Dennis~Jr \& Schnabel(1996)Dennis~Jr and
  Schnabel]{dennis1996numerical}
Dennis~Jr, John~E and Schnabel, Robert~B.
\newblock \emph{Numerical methods for unconstrained optimization and nonlinear
  equations}.
\newblock SIAM, 1996.

\bibitem[Di~Pillo(1994)]{di1994exact}
Di~Pillo, Gianni.
\newblock Exact penalty methods.
\newblock In \emph{Algorithms for Continuous Optimization}, pp.\  209--253.
  Springer, 1994.

\bibitem[Dikin(1967)]{dikin1967iterative}
Dikin, I.I.
\newblock Iterative solution of problems of linear and quadratic programming.
\newblock In \emph{Doklady Akademii Nauk}, volume 174, pp.\  747--748. Russian
  Academy of Sciences, 1967.

\bibitem[Dinh \& Diehl(2010)Dinh and Diehl]{dinh2010local}
Dinh, Quoc~Tran and Diehl, Moritz.
\newblock Local convergence of sequential convex programming for nonconvex
  optimization.
\newblock In \emph{Recent Advances in Optimization and its Applications in
  Engineering}, pp.\  93--102. Springer, 2010.

\bibitem[Domingos(1999)]{domingos1999role}
Domingos, Pedro.
\newblock The role of {Occam's} razor in knowledge discovery.
\newblock \emph{Data mining and knowledge discovery}, 3\penalty0 (4):\penalty0
  409--425, 1999.

\bibitem[Donoho(2006)]{donoho2006most}
Donoho, David~L.
\newblock For most large underdetermined systems of linear equations the
  minimal $\ell_1$-norm solution is also the sparsest solution.
\newblock \emph{Communications on Pure and Applied Mathematics: A Journal
  Issued by the Courant Institute of Mathematical Sciences}, 59\penalty0
  (6):\penalty0 797--829, 2006.

\bibitem[Drummond \& Iusem(2004)Drummond and Iusem]{drummond2004projected}
Drummond, LM~Grana and Iusem, Alfredo~N.
\newblock A projected gradient method for vector optimization problems.
\newblock \emph{Computational Optimization and applications}, 28\penalty0
  (1):\penalty0 5--29, 2004.

\bibitem[Du \& Pardalos(2013)Du and Pardalos]{du2013minimax}
Du, Ding-Zhu and Pardalos, Panos~M.
\newblock \emph{Minimax and applications}, volume~4.
\newblock Springer Science \& Business Media, 2013.

\bibitem[Duchi et~al.(2011)Duchi, Hazan, and Singer]{duchi2011adaptive}
Duchi, John, Hazan, Elad, and Singer, Yoram.
\newblock Adaptive subgradient methods for online learning and stochastic
  optimization.
\newblock \emph{Journal of machine learning research}, 12\penalty0 (7), 2011.

\bibitem[Duchi et~al.(2018)Duchi, Boyd, and Mattingley]{duchi2018sequential}
Duchi, John, Boyd, Stephen, and Mattingley, Jacob.
\newblock Sequential convex programming.
\newblock Technical report, Notes for EE364b, Stanford University, 2018.

\bibitem[Edelman et~al.(1998)Edelman, Arias, and Smith]{edelman1998geometry}
Edelman, Alan, Arias, Tom{\'a}s~A, and Smith, Steven~T.
\newblock The geometry of algorithms with orthogonality constraints.
\newblock \emph{SIAM journal on Matrix Analysis and Applications}, 20\penalty0
  (2):\penalty0 303--353, 1998.

\bibitem[Everett~III(1963)]{everett1963generalized}
Everett~III, Hugh.
\newblock Generalized {Lagrange} multiplier method for solving problems of
  optimum allocation of resources.
\newblock \emph{Operations research}, 11\penalty0 (3):\penalty0 399--417, 1963.

\bibitem[Fan(1951)]{fan1951maximum}
Fan, Ky.
\newblock Maximum properties and inequalities for the eigenvalues of completely
  continuous operators.
\newblock \emph{Proceedings of the National Academy of Sciences of the United
  States of America}, 37\penalty0 (11):\penalty0 760, 1951.

\bibitem[Feizi et~al.(2017)Feizi, Javadi, Zhang, and Tse]{feizi2017porcupine}
Feizi, Soheil, Javadi, Hamid, Zhang, Jesse, and Tse, David.
\newblock Porcupine neural networks:(almost) all local optima are global.
\newblock \emph{arXiv preprint arXiv:1710.02196}, 2017.

\bibitem[Fercoq \& Richt{\'a}rik(2015)Fercoq and
  Richt{\'a}rik]{fercoq2015accelerated}
Fercoq, Olivier and Richt{\'a}rik, Peter.
\newblock Accelerated, parallel, and proximal coordinate descent.
\newblock \emph{SIAM Journal on Optimization}, 25\penalty0 (4):\penalty0
  1997--2023, 2015.

\bibitem[Fiacco \& McCormick(1967)Fiacco and McCormick]{fiacco1967sequential}
Fiacco, Anthony~V and McCormick, Garth~P.
\newblock The sequential unconstrained minimization technique ({SUMT}) without
  parameters.
\newblock \emph{Operations Research}, 15\penalty0 (5):\penalty0 820--827, 1967.

\bibitem[Fletcher \& Reeves(1964)Fletcher and Reeves]{fletcher1964function}
Fletcher, Reeves and Reeves, Colin~M.
\newblock Function minimization by conjugate gradients.
\newblock \emph{The computer journal}, 7\penalty0 (2):\penalty0 149--154, 1964.

\bibitem[Fletcher(1987)]{fletcher1987practical}
Fletcher, Roger.
\newblock \emph{Practical methods of optimization}.
\newblock John Wiley \& Sons, 1987.

\bibitem[Frank \& Wolfe(1956)Frank and Wolfe]{frank1956algorithm}
Frank, Marguerite and Wolfe, Philip.
\newblock An algorithm for quadratic programming.
\newblock \emph{Naval research logistics quarterly}, 3\penalty0 (1-2):\penalty0
  95--110, 1956.

\bibitem[Friedman et~al.(2001)Friedman, Hastie, Tibshirani,
  et~al.]{friedman2001elements}
Friedman, Jerome, Hastie, Trevor, Tibshirani, Robert, et~al.
\newblock \emph{The elements of statistical learning}, volume~1.
\newblock Springer series in statistics New York, 2001.

\bibitem[Gabay \& Mercier(1976)Gabay and Mercier]{gabay1976dual}
Gabay, Daniel and Mercier, Bertrand.
\newblock A dual algorithm for the solution of nonlinear variational problems
  via finite element approximation.
\newblock \emph{Computers \& mathematics with applications}, 2\penalty0
  (1):\penalty0 17--40, 1976.

\bibitem[Geman \& Geman(1984)Geman and Geman]{geman1984stochastic}
Geman, Stuart and Geman, Donald.
\newblock Stochastic relaxation, {Gibbs} distributions, and the {Bayesian}
  restoration of images.
\newblock \emph{IEEE Transactions on pattern analysis and machine
  intelligence}, \penalty0 (6):\penalty0 721--741, 1984.

\bibitem[Ghojogh \& Crowley(2019)Ghojogh and Crowley]{ghojogh2019theory}
Ghojogh, Benyamin and Crowley, Mark.
\newblock The theory behind overfitting, cross validation, regularization,
  bagging, and boosting: tutorial.
\newblock \emph{arXiv preprint arXiv:1905.12787}, 2019.

\bibitem[Ghojogh et~al.(2020)Ghojogh, Nekoei, Ghojogh, Karray, and
  Crowley]{ghojogh2020sampling}
Ghojogh, Benyamin, Nekoei, Hadi, Ghojogh, Aydin, Karray, Fakhri, and Crowley,
  Mark.
\newblock Sampling algorithms, from survey sampling to {Monte} {Carlo} methods:
  Tutorial and literature review.
\newblock \emph{arXiv preprint arXiv:2011.00901}, 2020.

\bibitem[Ghojogh et~al.(2021)Ghojogh, Ghodsi, Karray, and
  Crowley]{ghojogh2021reproducing}
Ghojogh, Benyamin, Ghodsi, Ali, Karray, Fakhri, and Crowley, Mark.
\newblock Reproducing kernel {Hilbert} space, {Mercer}'s theorem,
  eigenfunctions, {N}ystr\"om method, and use of kernels in machine learning:
  Tutorial and survey.
\newblock \emph{arXiv preprint arXiv:2106.08443}, 2021.

\bibitem[Giesen \& Laue(2016)Giesen and Laue]{giesen2016distributed}
Giesen, Joachim and Laue, S{\"o}ren.
\newblock Distributed convex optimization with many convex constraints.
\newblock \emph{arXiv preprint arXiv:1610.02967}, 2016.

\bibitem[Giesen \& Laue(2019)Giesen and Laue]{giesen2019combining}
Giesen, Joachim and Laue, S{\"o}ren.
\newblock Combining {ADMM} and the augmented {Lagrangian} method for
  efficiently handling many constraints.
\newblock In \emph{International Joint Conference on Artificial Intelligence},
  pp.\  4525--4531, 2019.

\bibitem[Glowinski \& Marrocco(1976)Glowinski and
  Marrocco]{glowinski1976finite}
Glowinski, R and Marrocco, A.
\newblock Finite element approximation and iterative methods of solution for
  2-{D} nonlinear magnetostatic problems.
\newblock In \emph{Proceeding of International Conference on the Computation of
  Electromagnetic Fields (COMPUMAG)}, 1976.

\bibitem[Goffin \& Vial(1993)Goffin and Vial]{goffin1993computation}
Goffin, Jean-Louis and Vial, Jean-Philippe.
\newblock On the computation of weighted analytic centers and dual ellipsoids
  with the projective algorithm.
\newblock \emph{Mathematical Programming}, 60\penalty0 (1):\penalty0 81--92,
  1993.

\bibitem[Golub \& Van~Loan(2013)Golub and Van~Loan]{golub2013matrix}
Golub, Gene~H and Van~Loan, Charles~F.
\newblock \emph{Matrix computations}, volume~3.
\newblock JHU press, 2013.

\bibitem[Goodfellow et~al.(2016)Goodfellow, Bengio, and
  Courville]{goodfellow2016deep}
Goodfellow, Ian, Bengio, Yoshua, and Courville, Aaron.
\newblock \emph{Deep learning}.
\newblock MIT press, 2016.

\bibitem[Gower(2018)]{gower2018convergence}
Gower, Robert~M.
\newblock Convergence theorems for gradient descent.
\newblock \emph{Lecture notes for Statistical Optimization}, 2018.

\bibitem[Grant et~al.(2009)Grant, Boyd, and Ye]{grant2009cvx}
Grant, Michael, Boyd, Stephen, and Ye, Yinyu.
\newblock {CVX}: Matlab software for disciplined convex programming, 2009.

\bibitem[Hadamard(1908)]{hadamard1908memoire}
Hadamard, Jacques.
\newblock \emph{M{\'e}moire sur le probl{\`e}me d'analyse relatif {\`a}
  l'{\'e}quilibre des plaques {\'e}lastiques encastr{\'e}es}, volume~33.
\newblock Imprimerie nationale, 1908.

\bibitem[Hastie et~al.(2019)Hastie, Tibshirani, and
  Wainwright]{hastie2019statistical}
Hastie, Trevor, Tibshirani, Robert, and Wainwright, Martin.
\newblock \emph{Statistical learning with sparsity: the lasso and
  generalizations}.
\newblock Chapman and Hall/CRC, 2019.

\bibitem[Hestenes(1969)]{hestenes1969multiplier}
Hestenes, Magnus~R.
\newblock Multiplier and gradient methods.
\newblock \emph{Journal of optimization theory and applications}, 4\penalty0
  (5):\penalty0 303--320, 1969.

\bibitem[Hestenes \& Stiefel(1952)Hestenes and Stiefel]{hestenes1952methods}
Hestenes, Magnus~Rudolph and Stiefel, Eduard.
\newblock \emph{Methods of conjugate gradients for solving linear systems},
  volume~49.
\newblock NBS Washington, DC, 1952.

\bibitem[Hinton et~al.(2012)Hinton, Srivastava, and Swersky]{hinton2012neural}
Hinton, Geoffrey, Srivastava, Nitish, and Swersky, Kevin.
\newblock Neural networks for machine learning lecture 6a overview of
  mini-batch gradient descent.
\newblock Technical report, Department of Computer Science, University of
  Toronto, 2012.

\bibitem[Hjorungnes \& Gesbert(2007)Hjorungnes and
  Gesbert]{hjorungnes2007complex}
Hjorungnes, Are and Gesbert, David.
\newblock Complex-valued matrix differentiation: Techniques and key results.
\newblock \emph{IEEE Transactions on Signal Processing}, 55\penalty0
  (6):\penalty0 2740--2746, 2007.

\bibitem[H{\"o}lder(1889)]{holder1889ueber}
H{\"o}lder, Otto.
\newblock Ueber einen mittelwerthabsatz.
\newblock \emph{Nachrichten von der K{\"o}nigl. Gesellschaft der Wissenschaften
  und der Georg-Augusts-Universit{\"a}t zu G{\"o}ttingen}, 1889:\penalty0
  38--47, 1889.

\bibitem[Holland et~al.(1992)]{holland1992adaptation}
Holland, John~Henry et~al.
\newblock \emph{Adaptation in natural and artificial systems: an introductory
  analysis with applications to biology, control, and artificial intelligence}.
\newblock MIT press, 1992.

\bibitem[Hosseini \& Sra(2020{\natexlab{a}})Hosseini and
  Sra]{hosseini2020alternative}
Hosseini, Reshad and Sra, Suvrit.
\newblock An alternative to {EM} for {Gaussian} mixture models: batch and
  stochastic {Riemannian} optimization.
\newblock \emph{Mathematical Programming}, 181\penalty0 (1):\penalty0 187--223,
  2020{\natexlab{a}}.

\bibitem[Hosseini \& Sra(2020{\natexlab{b}})Hosseini and
  Sra]{hosseini2020recent}
Hosseini, Reshad and Sra, Suvrit.
\newblock Recent advances in stochastic {Riemannian} optimization.
\newblock \emph{Handbook of Variational Methods for Nonlinear Geometric Data},
  pp.\  527--554, 2020{\natexlab{b}}.

\bibitem[Hu et~al.(2020)Hu, Liu, Wen, and Yuan]{hu2020brief}
Hu, Jiang, Liu, Xin, Wen, Zai-Wen, and Yuan, Ya-Xiang.
\newblock A brief introduction to manifold optimization.
\newblock \emph{Journal of the Operations Research Society of China},
  8\penalty0 (2):\penalty0 199--248, 2020.

\bibitem[Huber(1992)]{huber1992robust}
Huber, Peter~J.
\newblock Robust estimation of a location parameter.
\newblock In \emph{Breakthroughs in statistics}, pp.\  492--518. Springer,
  1992.

\bibitem[Iusem(2003)]{iusem2003convergence}
Iusem, Alfredo~N.
\newblock On the convergence properties of the projected gradient method for
  convex optimization.
\newblock \emph{Computational \& Applied Mathematics}, 22:\penalty0 37--52,
  2003.

\bibitem[Jain \& Kar(2017)Jain and Kar]{jain2017non}
Jain, Prateek and Kar, Purushottam.
\newblock Non-convex optimization for machine learning.
\newblock \emph{arXiv preprint arXiv:1712.07897}, 2017.

\bibitem[Johnson \& Zhang(2013)Johnson and Zhang]{johnson2013accelerating}
Johnson, Rie and Zhang, Tong.
\newblock Accelerating stochastic gradient descent using predictive variance
  reduction.
\newblock \emph{Advances in neural information processing systems},
  26:\penalty0 315--323, 2013.

\bibitem[Karush(1939)]{karush1939minima}
Karush, William.
\newblock Minima of functions of several variables with inequalities as side
  constraints.
\newblock Master's thesis, Department of Mathematics, University of Chicago,
  Chicago, Illinois, 1939.

\bibitem[Kelley(1995)]{kelley1995iterative}
Kelley, Carl~T.
\newblock \emph{Iterative methods for linear and nonlinear equations}.
\newblock SIAM, 1995.

\bibitem[Kennedy \& Eberhart(1995)Kennedy and Eberhart]{kennedy1995particle}
Kennedy, James and Eberhart, Russell.
\newblock Particle swarm optimization.
\newblock In \emph{Proceedings of ICNN'95-international conference on neural
  networks}, volume~4, pp.\  1942--1948. IEEE, 1995.

\bibitem[Khachiyan(1979)]{khachiyan1979polynomial}
Khachiyan, Leonid~Genrikhovich.
\newblock A polynomial algorithm in linear programming.
\newblock In \emph{Doklady Akademii Nauk}, volume 244, pp.\  1093--1096.
  Russian Academy of Sciences, 1979.

\bibitem[Kingma \& Ba(2014)Kingma and Ba]{kingma2014adam}
Kingma, Diederik~P and Ba, Jimmy.
\newblock Adam: A method for stochastic optimization.
\newblock \emph{arXiv preprint arXiv:1412.6980}, 2014.

\bibitem[Kjeldsen(2000)]{kjeldsen2000contextualized}
Kjeldsen, Tinne~Hoff.
\newblock A contextualized historical analysis of the {Kuhn}--{Tucker} theorem
  in nonlinear programming: the impact of world war {II}.
\newblock \emph{Historia mathematica}, 27\penalty0 (4):\penalty0 331--361,
  2000.

\bibitem[Kone{\v{c}}n{\`y} et~al.(2015)Kone{\v{c}}n{\`y}, McMahan, and
  Ramage]{konevcny2015federated}
Kone{\v{c}}n{\`y}, Jakub, McMahan, Brendan, and Ramage, Daniel.
\newblock Federated optimization: Distributed optimization beyond the
  datacenter.
\newblock \emph{arXiv preprint arXiv:1511.03575}, 2015.

\bibitem[Krylov(1931)]{krylov1931numerical}
Krylov, AN.
\newblock On the numerical solution of equation by which are determined in
  technical problems the frequencies of small vibrations of material systems.
\newblock \emph{News Acad. Sci. USSR}, 7:\penalty0 491--539, 1931.

\bibitem[Kuhn \& Tucker(1951)Kuhn and Tucker]{kuhn1951nonlinear}
Kuhn, Harold~W and Tucker, Albert~W.
\newblock Nonlinear programming.
\newblock In \emph{Berkeley Symposium on Mathematical Statistics and
  Probability}, pp.\  481--492. Berkeley: University of California Press, 1951.

\bibitem[Land \& Doig(1960)Land and Doig]{land1960automatic}
Land, Ailsa~H and Doig, Alison~G.
\newblock An automatic method for solving discrete programming problems.
\newblock \emph{Econometrica}, 28\penalty0 (3):\penalty0 497--520, 1960.

\bibitem[Lee \& Verleysen(2007)Lee and Verleysen]{lee2007nonlinear}
Lee, John~A and Verleysen, Michel.
\newblock \emph{Nonlinear dimensionality reduction}.
\newblock Springer Science \& Business Media, 2007.

\bibitem[Lee(2013)]{lee2013quotient}
Lee, John~M.
\newblock Quotient manifolds.
\newblock In \emph{Introduction to Smooth Manifolds}, pp.\  540--563. Springer,
  2013.

\bibitem[Lee \& Sidford(2013)Lee and Sidford]{lee2013efficient}
Lee, Yin~Tat and Sidford, Aaron.
\newblock Efficient accelerated coordinate descent methods and faster
  algorithms for solving linear systems.
\newblock In \emph{2013 ieee 54th annual symposium on foundations of computer
  science}, pp.\  147--156. IEEE, 2013.

\bibitem[Lemar{\'e}chal(2012)]{lemarechal2012cauchy}
Lemar{\'e}chal, Claude.
\newblock Cauchy and the gradient method.
\newblock \emph{Doc Math Extra}, 251\penalty0 (254):\penalty0 10, 2012.

\bibitem[Lemar{\'e}chal \& Sagastiz{\'a}bal(1997)Lemar{\'e}chal and
  Sagastiz{\'a}bal]{lemarechal1997practical}
Lemar{\'e}chal, Claude and Sagastiz{\'a}bal, Claudia.
\newblock Practical aspects of the {Moreau}--{Yosida} regularization:
  Theoretical preliminaries.
\newblock \emph{SIAM journal on optimization}, 7\penalty0 (2):\penalty0
  367--385, 1997.

\bibitem[Levitin \& Polyak(1966)Levitin and Polyak]{levitin1966constrained}
Levitin, Evgeny~S and Polyak, Boris~T.
\newblock Constrained minimization methods.
\newblock \emph{USSR Computational mathematics and mathematical physics},
  6\penalty0 (5):\penalty0 1--50, 1966.

\bibitem[Li et~al.(2019)Li, Zhu, and Tang]{li2019alternating}
Li, Qiuwei, Zhu, Zhihui, and Tang, Gongguo.
\newblock Alternating minimizations converge to second-order optimal solutions.
\newblock In \emph{International Conference on Machine Learning}, pp.\
  3935--3943, 2019.

\bibitem[Li et~al.(2020)Li, Sahu, Talwalkar, and Smith]{li2020federated}
Li, Tian, Sahu, Anit~Kumar, Talwalkar, Ameet, and Smith, Virginia.
\newblock Federated learning: Challenges, methods, and future directions.
\newblock \emph{IEEE Signal Processing Magazine}, 37\penalty0 (3):\penalty0
  50--60, 2020.

\bibitem[Liu \& Nocedal(1989)Liu and Nocedal]{liu1989limited}
Liu, Dong~C and Nocedal, Jorge.
\newblock On the limited memory {BFGS} method for large scale optimization.
\newblock \emph{Mathematical programming}, 45\penalty0 (1):\penalty0 503--528,
  1989.

\bibitem[Liu et~al.(2021)Liu, Wang, and Singh]{liu2021smooth}
Liu, Yusha, Wang, Yining, and Singh, Aarti.
\newblock Smooth bandit optimization: Generalization to {Holder} space.
\newblock In \emph{International Conference on Artificial Intelligence and
  Statistics}, pp.\  2206--2214, 2021.

\bibitem[Luo \& Tseng(1992)Luo and Tseng]{luo1992convergence}
Luo, Zhi-Quan and Tseng, Paul.
\newblock On the convergence of the coordinate descent method for convex
  differentiable minimization.
\newblock \emph{Journal of Optimization Theory and Applications}, 72\penalty0
  (1):\penalty0 7--35, 1992.

\bibitem[Luo \& Tseng(1993)Luo and Tseng]{luo1993error}
Luo, Zhi-Quan and Tseng, Paul.
\newblock Error bounds and convergence analysis of feasible descent methods: a
  general approach.
\newblock \emph{Annals of Operations Research}, 46\penalty0 (1):\penalty0
  157--178, 1993.

\bibitem[Magnus \& Neudecker(1985)Magnus and Neudecker]{magnus1985matrix}
Magnus, Jan~R and Neudecker, Heinz.
\newblock Matrix differential calculus with applications to simple, hadamard,
  and kronecker products.
\newblock \emph{Journal of Mathematical Psychology}, 29\penalty0 (4):\penalty0
  474--492, 1985.

\bibitem[Moreau(1962)]{moreau1962decomposition}
Moreau, Jean~Jacques.
\newblock D{\'e}composition orthogonale d'un espace {Hilbertien} selon deux
  c{\^o}nes mutuellement polaires.
\newblock \emph{Comptes rendus hebdomadaires des s{\'e}ances de l'Acad{\'e}mie
  des sciences}, 255:\penalty0 238--240, 1962.

\bibitem[Moreau(1965)]{moreau1965proximite}
Moreau, Jean~Jacques.
\newblock Proximit{\'e} et dualit{\'e} dans un espace hilbertien.
\newblock \emph{Bulletin de la Soci{\'e}t{\'e} math{\'e}matique de France},
  93:\penalty0 273--299, 1965.

\bibitem[Nash(2000)]{nash2000survey}
Nash, Stephen~G.
\newblock A survey of truncated-{Newton} methods.
\newblock \emph{Journal of computational and applied mathematics}, 124\penalty0
  (1-2):\penalty0 45--59, 2000.

\bibitem[Nesterov(1983)]{nesterov1983method}
Nesterov, Yurii.
\newblock A method for solving the convex programming problem with convergence
  rate {O}(1/k\^{}2).
\newblock In \emph{Dokl. Akad. Nauk SSSR}, volume 269, pp.\  543--547, 1983.

\bibitem[Nesterov(1988)]{nesterov1988approach}
Nesterov, Yurii.
\newblock On an approach to the construction of optimal methods of minimization
  of smooth convex functions.
\newblock \emph{Ekonomika i Mateaticheskie Metody}, 24\penalty0 (3):\penalty0
  509--517, 1988.

\bibitem[Nesterov(1995)]{nesterov1995cutting}
Nesterov, Yurii.
\newblock Cutting plane algorithms from analytic centers: efficiency estimates.
\newblock \emph{Mathematical Programming}, 69\penalty0 (1):\penalty0 149--176,
  1995.

\bibitem[Nesterov(1998)]{nesterov1998introductory}
Nesterov, Yurii.
\newblock Introductory lectures on convex programming volume {I}: Basic course.
\newblock \emph{Lecture notes}, 3\penalty0 (4):\penalty0 5, 1998.

\bibitem[Nesterov(2003)]{nesterov2003introductory}
Nesterov, Yurii.
\newblock \emph{Introductory lectures on convex optimization: A basic course},
  volume~87.
\newblock Springer Science \& Business Media, 2003.

\bibitem[Nesterov(2005)]{nesterov2005smooth}
Nesterov, Yurii.
\newblock Smooth minimization of non-smooth functions.
\newblock \emph{Mathematical programming}, 103\penalty0 (1):\penalty0 127--152,
  2005.

\bibitem[Nesterov(2013)]{nesterov2013gradient}
Nesterov, Yurii.
\newblock Gradient methods for minimizing composite functions.
\newblock \emph{Mathematical Programming}, 140\penalty0 (1):\penalty0 125--161,
  2013.

\bibitem[Nesterov(2018)]{nesterov2018lectures}
Nesterov, Yurii.
\newblock \emph{Lectures on convex optimization}, volume 137.
\newblock Springer, 2018.

\bibitem[Nesterov \& Nemirovskii(1994)Nesterov and
  Nemirovskii]{nesterov1994interior}
Nesterov, Yurii and Nemirovskii, Arkadii.
\newblock \emph{Interior-point polynomial algorithms in convex programming}.
\newblock SIAM, 1994.

\bibitem[Nocedal(1980)]{nocedal1980updating}
Nocedal, Jorge.
\newblock Updating quasi-{Newton} matrices with limited storage.
\newblock \emph{Mathematics of computation}, 35\penalty0 (151):\penalty0
  773--782, 1980.

\bibitem[Nocedal \& Wright(2006)Nocedal and Wright]{nocedal2006numerical}
Nocedal, Jorge and Wright, Stephen.
\newblock \emph{Numerical optimization}.
\newblock Springer Science \& Business Media, 2 edition, 2006.

\bibitem[Otero et~al.(2018)Otero, La~Torre, Michailovich, and
  Vrscay]{otero2018alternate}
Otero, Daniel, La~Torre, Davide, Michailovich, Oleg~V, and Vrscay, Edward~R.
\newblock Alternate direction method of multipliers for unconstrained
  structural similarity-based optimization.
\newblock In \emph{International Conference Image Analysis and Recognition},
  pp.\  20--29. Springer, 2018.

\bibitem[Parikh \& Boyd(2014)Parikh and Boyd]{parikh2014proximal}
Parikh, Neal and Boyd, Stephen.
\newblock Proximal algorithms.
\newblock \emph{Foundations and Trends in optimization}, 1\penalty0
  (3):\penalty0 127--239, 2014.

\bibitem[Petersen \& Pedersen(2012)Petersen and Pedersen]{petersen2012matrix}
Petersen, Kaare~Brandt and Pedersen, Michael~Syskind.
\newblock The matrix cookbook.
\newblock \emph{Technical University of Denmark}, 15, 2012.

\bibitem[Polak \& Ribiere(1969)Polak and Ribiere]{polak1969note}
Polak, Elijah and Ribiere, Gerard.
\newblock Note sur la convergence de m{\'e}thodes de directions conjugu{\'e}es.
\newblock \emph{ESAIM: Mathematical Modelling and Numerical
  Analysis-Mod{\'e}lisation Math{\'e}matique et Analyse Num{\'e}rique},
  3\penalty0 (R1):\penalty0 35--43, 1969.

\bibitem[Potra \& Wright(2000)Potra and Wright]{potra2000interior}
Potra, Florian~A and Wright, Stephen~J.
\newblock Interior-point methods.
\newblock \emph{Journal of computational and applied mathematics}, 124\penalty0
  (1-2):\penalty0 281--302, 2000.

\bibitem[Powell(1969)]{powell1969method}
Powell, Michael~JD.
\newblock A method for nonlinear constraints in minimization problems.
\newblock \emph{Optimization}, pp.\  283--298, 1969.

\bibitem[Rebennack(2009)]{rebennack2009ellipsoid}
Rebennack, Steffen.
\newblock Ellipsoid method.
\newblock \emph{Encyclopedia of Optimization}, pp.\  890--899, 2009.

\bibitem[Reddi et~al.(2016)Reddi, Sra, P{\'o}czos, and
  Smola]{reddi2016stochastic}
Reddi, Sashank~J, Sra, Suvrit, P{\'o}czos, Barnab{\'a}s, and Smola, Alex.
\newblock Stochastic {Frank}-{Wolfe} methods for nonconvex optimization.
\newblock In \emph{2016 54th Annual Allerton Conference on Communication,
  Control, and Computing (Allerton)}, pp.\  1244--1251. IEEE, 2016.

\bibitem[Riedmiller \& Braun(1992)Riedmiller and Braun]{riedmiller1992rprop}
Riedmiller, Martin and Braun, Heinrich.
\newblock {Rprop}-a fast adaptive learning algorithm.
\newblock In \emph{Proceedings of the International Symposium on Computer and
  Information Science VII}, 1992.

\bibitem[Robbins \& Monro(1951)Robbins and Monro]{robbins1951stochastic}
Robbins, Herbert and Monro, Sutton.
\newblock A stochastic approximation method.
\newblock \emph{The annals of mathematical statistics}, pp.\  400--407, 1951.

\bibitem[Rockafellar(1976)]{rockafellar1976monotone}
Rockafellar, R~Tyrrell.
\newblock Monotone operators and the proximal point algorithm.
\newblock \emph{SIAM journal on control and optimization}, 14\penalty0
  (5):\penalty0 877--898, 1976.

\bibitem[Roux et~al.(2012)Roux, Schmidt, and Bach]{roux2012stochastic}
Roux, Nicolas~Le, Schmidt, Mark, and Bach, Francis.
\newblock A stochastic gradient method with an exponential convergence rate for
  finite training sets.
\newblock In \emph{Advances in Neural Information Processing Systems},
  volume~25, 2012.

\bibitem[Rumelhart et~al.(1986)Rumelhart, Hinton, and
  Williams]{rumelhart1986learning}
Rumelhart, David~E, Hinton, Geoffrey~E, and Williams, Ronald~J.
\newblock Learning representations by back-propagating errors.
\newblock \emph{nature}, 323\penalty0 (6088):\penalty0 533--536, 1986.

\bibitem[Sammon(1969)]{sammon1969nonlinear}
Sammon, John~W.
\newblock A nonlinear mapping for data structure analysis.
\newblock \emph{IEEE Transactions on computers}, 100\penalty0 (5):\penalty0
  401--409, 1969.

\bibitem[Schmidt et~al.(2011)Schmidt, Roux, and Bach]{schmidt2011convergence}
Schmidt, Mark, Roux, Nicolas~Le, and Bach, Francis.
\newblock Convergence rates of inexact proximal-gradient methods for convex
  optimization.
\newblock \emph{arXiv preprint arXiv:1109.2415}, 2011.

\bibitem[Schmidt et~al.(2017)Schmidt, Le~Roux, and Bach]{schmidt2017minimizing}
Schmidt, Mark, Le~Roux, Nicolas, and Bach, Francis.
\newblock Minimizing finite sums with the stochastic average gradient.
\newblock \emph{Mathematical Programming}, 162\penalty0 (1-2):\penalty0
  83--112, 2017.

\bibitem[Shor(1977)]{shor1977cut}
Shor, Naum~Zuselevich.
\newblock Cut-off method with space extension in convex programming problems.
\newblock \emph{Cybernetics}, 13\penalty0 (1):\penalty0 94--96, 1977.

\bibitem[Shor(1998)]{shor1998nondifferentiable}
Shor, Naum~Zuselevich.
\newblock \emph{Nondifferentiable optimization and polynomial problems},
  volume~24.
\newblock Springer Science \& Business Media, 1998.

\bibitem[Shor(2012)]{shor2012minimization}
Shor, Naum~Zuselevich.
\newblock \emph{Minimization methods for non-differentiable functions},
  volume~3.
\newblock Springer Science \& Business Media, 2012.

\bibitem[Simon(2013)]{simon2013evolutionary}
Simon, Dan.
\newblock \emph{Evolutionary optimization algorithms}.
\newblock John Wiley \& Sons, 2013.

\bibitem[Slater(1950)]{slater1950lagrange}
Slater, Morton.
\newblock Lagrange multipliers revisited.
\newblock Technical report, Cowles Commission Discussion Paper: Mathematics
  403, Yale University, 1950.

\bibitem[Soltanolkotabi et~al.(2018)Soltanolkotabi, Javanmard, and
  Lee]{soltanolkotabi2018theoretical}
Soltanolkotabi, Mahdi, Javanmard, Adel, and Lee, Jason~D.
\newblock Theoretical insights into the optimization landscape of
  over-parameterized shallow neural networks.
\newblock \emph{IEEE Transactions on Information Theory}, 65\penalty0
  (2):\penalty0 742--769, 2018.

\bibitem[Sra \& Hosseini(2015)Sra and Hosseini]{sra2015conic}
Sra, Suvrit and Hosseini, Reshad.
\newblock Conic geometric optimization on the manifold of positive definite
  matrices.
\newblock \emph{SIAM Journal on Optimization}, 25\penalty0 (1):\penalty0
  713--739, 2015.

\bibitem[Steele(2004)]{steele2004cauchy}
Steele, J~Michael.
\newblock \emph{The {Cauchy}-{Schwarz} master class: an introduction to the art
  of mathematical inequalities}.
\newblock Cambridge University Press, 2004.

\bibitem[Stoer \& Bulirsch(2013)Stoer and Bulirsch]{stoer2013introduction}
Stoer, Josef and Bulirsch, Roland.
\newblock \emph{Introduction to numerical analysis}, volume~12.
\newblock Springer Science \& Business Media, 2013.

\bibitem[Su et~al.(2016)Su, Boyd, and Candes]{su2016differential}
Su, Weijie, Boyd, Stephen, and Candes, Emmanuel~J.
\newblock A differential equation for modeling {Nesterov}'s accelerated
  gradient method: Theory and insights.
\newblock \emph{The Journal of Machine Learning Research}, 17\penalty0
  (1):\penalty0 5312--5354, 2016.

\bibitem[Talbi(2009)]{talbi2009metaheuristics}
Talbi, El-Ghazali.
\newblock \emph{Metaheuristics: from design to implementation}, volume~74.
\newblock John Wiley \& Sons, 2009.

\bibitem[Tibshirani(1996)]{tibshirani1996regression}
Tibshirani, Robert.
\newblock Regression shrinkage and selection via the lasso.
\newblock \emph{Journal of the Royal Statistical Society: Series B
  (Methodological)}, 58\penalty0 (1):\penalty0 267--288, 1996.

\bibitem[Tieleman \& Hinton(2012)Tieleman and Hinton]{tieleman2012lecture}
Tieleman, Tijmen and Hinton, Geoffrey.
\newblock Lecture 6.5-rmsprop: Divide the gradient by a running average of its
  recent magnitude.
\newblock \emph{COURSERA: Neural networks for machine learning}, 4\penalty0
  (2):\penalty0 26--31, 2012.

\bibitem[Tikhomirov(1996)]{tikhomirov1996evolution}
Tikhomirov, Vladimir~M.
\newblock The evolution of methods of convex optimization.
\newblock \emph{The American Mathematical Monthly}, 103\penalty0 (1):\penalty0
  65--71, 1996.

\bibitem[Tseng(2001)]{tseng2001convergence}
Tseng, Paul.
\newblock Convergence of a block coordinate descent method for
  nondifferentiable minimization.
\newblock \emph{Journal of optimization theory and applications}, 109\penalty0
  (3):\penalty0 475--494, 2001.

\bibitem[Tseng(2008)]{tseng2008accelerated}
Tseng, Paul.
\newblock On accelerated proximal gradient methods for convex-concave
  optimization.
\newblock Technical report, MIT University, 2008.

\bibitem[Wang et~al.(2019)Wang, Yin, and Zeng]{wang2019global}
Wang, Yu, Yin, Wotao, and Zeng, Jinshan.
\newblock Global convergence of {ADMM} in nonconvex nonsmooth optimization.
\newblock \emph{Journal of Scientific Computing}, 78\penalty0 (1):\penalty0
  29--63, 2019.

\bibitem[Wolfe(1969)]{wolfe1969convergence}
Wolfe, Philip.
\newblock Convergence conditions for ascent methods.
\newblock \emph{SIAM review}, 11\penalty0 (2):\penalty0 226--235, 1969.

\bibitem[Wolfe(1980)]{wolfe1980invited}
Wolfe, Philip.
\newblock Invited note—some references for the ellipsoid algorithm.
\newblock \emph{Management Science}, 26\penalty0 (8):\penalty0 747--749, 1980.

\bibitem[Wright(2005)]{wright2005interior}
Wright, Margaret.
\newblock The interior-point revolution in optimization: history, recent
  developments, and lasting consequences.
\newblock \emph{Bulletin of the American mathematical society}, 42\penalty0
  (1):\penalty0 39--56, 2005.

\bibitem[Wright(2015)]{wright2015coordinate}
Wright, Stephen~J.
\newblock Coordinate descent algorithms.
\newblock \emph{Mathematical Programming}, 151\penalty0 (1):\penalty0 3--34,
  2015.

\bibitem[Wu \& Lange(2008)Wu and Lange]{wu2008coordinate}
Wu, Tong~Tong and Lange, Kenneth.
\newblock Coordinate descent algorithms for lasso penalized regression.
\newblock \emph{The Annals of Applied Statistics}, 2\penalty0 (1):\penalty0
  224--244, 2008.

\bibitem[Yang(2010)]{yang2010nature}
Yang, Xin-She.
\newblock \emph{Nature-inspired metaheuristic algorithms}.
\newblock Luniver press, 2010.

\bibitem[Yosida(1965)]{yosida1965functional}
Yosida, K{\^o}saku.
\newblock \emph{Functional analysis}.
\newblock Springer Berlin Heidelberg, 1965.

\bibitem[Yudin \& Nemirovski(1976)Yudin and Nemirovski]{yudin1976informational}
Yudin, DB and Nemirovski, Arkadi~S.
\newblock Informational complexity and efficient methods for the solution of
  convex extremal problems.
\newblock \emph{{\`E}kon Math Metod, English translation: Matekon}, 13\penalty0
  (2):\penalty0 22--45, 1976.

\bibitem[Yudin \& Nemirovski(1977{\natexlab{a}})Yudin and
  Nemirovski]{yudin1977evaluation}
Yudin, DB and Nemirovski, AS.
\newblock Evaluation of the informational complexity of mathematical
  programming problems.
\newblock \emph{{\`E}kon Math Metod, English translation: Matekon}, 13\penalty0
  (2):\penalty0 3--24, 1977{\natexlab{a}}.

\bibitem[Yudin \& Nemirovski(1977{\natexlab{b}})Yudin and
  Nemirovski]{yudin1977optimization}
Yudin, DB and Nemirovski, AS.
\newblock Optimization methods adapting to the ``significant" dimension of the
  problem.
\newblock \emph{Autom Telemekhanika, English translation: Automation and Remote
  Control}, 38\penalty0 (4):\penalty0 513--524, 1977{\natexlab{b}}.

\bibitem[Zeng et~al.(2021)Zeng, Zha, Ma, and Yao]{zeng2021stochastic}
Zeng, Jinshan, Zha, Yixuan, Ma, Ke, and Yao, Yuan.
\newblock On stochastic variance reduced gradient method for semidefinite
  optimization.
\newblock \emph{arXiv preprint arXiv:2101.00236}, 2021.

\bibitem[Zhang(2006)]{zhang2006schur}
Zhang, Fuzhen.
\newblock \emph{The Schur complement and its applications}, volume~4.
\newblock Springer Science \& Business Media, 2006.

\bibitem[Zou et~al.(2019)Zou, Shen, Jie, Zhang, and Liu]{zou2019sufficient}
Zou, Fangyu, Shen, Li, Jie, Zequn, Zhang, Weizhong, and Liu, Wei.
\newblock A sufficient condition for convergences of {Adam} and {RMSProp}.
\newblock In \emph{Proceedings of the IEEE/CVF Conference on Computer Vision
  and Pattern Recognition}, pp.\  11127--11135, 2019.

\end{thebibliography}
\bibliographystyle{icml2016}

\end{document}